\newcommand\han[1]{\todo[color=yellow!40]{#1}} 
\theoremstyle{plain}
\newtheorem{theorem}{Theorem}[section]
\newtheorem{lemma}[theorem]{Lemma}
\newtheorem{proposition}[theorem]{Proposition}
\newtheorem{corollary}[theorem]{Corollary}
\newtheorem{conjecture}[theorem]{Conjecture}
\newtheorem{question}[theorem]{Question}
\newtheorem{cor}[theorem]{Corollary}
\newtheorem{thm}[theorem]{Theorem}
\newtheorem{prop}[theorem]{Proposition}
\newtheorem{lem}[theorem]{Lemma}
\newtheorem{conj}[theorem]{Conjecture}
\theoremstyle{definition}
\newtheorem{example}[theorem]{Example}
\newtheorem{definition}[theorem]{Definition}
\newtheorem{defn}[theorem]{Definition}
\theoremstyle{remark}
\newtheorem{remark}[theorem]{Remark}
\newcommand{\id}{\mathrm{id}}
\newcommand{\cC}{\mathcal{C}}
\newcommand{\cD}{\mathcal{D}}
\newcommand{\cF}{\mathcal{F}}
\newcommand{\cG}{\mathcal{G}}
\newcommand{\cE}{\mathcal{E}}
\newcommand{\cO}{\mathcal{O}}
\newcommand{\cZ}{\mathcal{Z}}
\newcommand{\cX}{\mathcal{X}}
\newcommand{\cY}{\mathcal{Y}}
\newcommand{\cT}{\mathcal{T}}
\newcommand{\cP}{\mathcal{P}}
\newcommand{\bA}{\mathbb{A}}
\newcommand{\bC}{\mathbb{C}}
\newcommand{\bP}{\mathbb{P}}
\newcommand{\bQ}{\mathbb{Q}}
\newcommand{\Qq}{\mathbb{Q}}
\newcommand{\bR}{\mathbb{R}}
\newcommand{\Rr}{\mathbb{R}}
\newcommand{\bZ}{\mathbb{Z}}
\newcommand{\Zz}{\mathbb{Z}}
\newcommand{\fD}{\mathfrak{D}}
\newcommand{\fa}{\mathfrak{a}}
\newcommand{\fb}{\mathfrak{b}}
\newcommand{\fm}{\mathfrak{m}}
\newcommand{\mult}{\mathrm{mult}}
\newcommand{\bxb}{B\subset \mathcal{X}\to B}
\newcommand{\vol}{\mathrm{vol}}
\newcommand{\ord}{\mathrm{ord}}
\newcommand{\lct}{\mathrm{lct}}
\newcommand{\Val}{\mathrm{Val}}
\newcommand{\nvol}{\widehat{\mathrm{vol}}}
\newcommand{\Diff}{\mathrm{Diff}}
\newcommand{\Supp}{\mathrm{Supp}\,}
\newcommand{\Spec}{\mathrm{Spec}}
\newcommand{\mld}{\mathrm{mld}}
\newcommand{\gr}{\mathrm{gr}}
\newcommand{\supj}{^{(j)}}
\newcommand{\supl}{^{(l)}}
\newcommand{\supn}{^{(n)}}
\newcommand{\Exc}{\mathrm{Exc}}
\newcommand{\hvol}{\widehat{\mathrm{vol}}}
\newcommand{\Vol}{\mathrm{Vol}}
\newcommand{\loc}{\mathrm{loc}}
\newcommand{\QM}{\mathrm{QM}}
\newcommand{\bk}{\mathbbm{k}}
\newcommand{\bG}{\mathbb{G}}
\newcommand{\e}{\mathrm{e}}
\newcommand{\cS}{\mathcal{S}}
\newcommand{\red}{\mathrm{red}}
\newcommand{\bxanb}{B\subset\cX^{\rm an}\to B}
\newcommand{\hX}{\widehat{X}}
\newcommand{\hR}{\widehat{R}}
\newcommand{\hx}{\hat{x}}
\newcommand{\hY}{\widehat{Y}}
\newcommand{\hmu}{\hat{\mu}}
\newcommand{\hS}{\widehat{S}}
\newcommand{\hGamma}{\widehat{\Gamma}}
\newcommand{\hv}{\hat{v}}
\newcommand{\Aut}{\mathrm{Aut}}
\newcommand{\tS}{\widetilde{S}}
\newcommand{\bK}{\mathbb{K}}
\newcommand{\tc}{\mathrm{tc}}
\newcommand{\bmu}{\bm{\mu}}
\newcommand{\hZ}{\widehat{Z}}
\newcommand{\hz}{\hat{z}}
\newcommand{\obK}{\overline{\mathbb{K}}}
\newcommand{\bN}{\mathbb{N}}
\newcommand{\Proj}{\mathrm{Proj}}
\newcommand{\hW}{\widehat{W}}
\numberwithin{equation}{section}
\newcommand\numberthis{\addtocounter{equation}{1}\tag{\theequation}}
\begin{document}

	\title[ACC for local volumes and boundedness of singularities]{ACC for local volumes and boundedness of singularities} 
	\date{\today}
	
		\author[J. ~Han]{Jingjun Han}
\address{Shanghai Center for Mathematical Sciences, Fudan University, Jiangwan Campus, Shanghai, 200438, China}
\email{hanjingjun@fudan.edu.cn}
\address{Department of Mathematics, The University of Uath, Salt Lake City, UT 84112, USA}
\email{jhan@math.utah.edu}
\address{Mathematical Sciences Research Institute, Berkeley, CA 94720, USA}
\email{jhan@msri.org}
	
	\author[Y. ~Liu]{Yuchen Liu}
	\address{
		Department of Mathematics, Northwestern University, Evanston, IL 60208, USA
	}
	\email{yuchenl@northwestern.edu}
	
	\author[L.~Qi]{Lu Qi}
	\address{
		Department of Mathematics, Princeton University,
		Princeton, NJ 08544, USA
	}
	\email{luq@princeton.edu}

	\begin{abstract}
	The ACC conjecture for local volumes predicts that the set of local volumes of klt singularities $x\in (X,\Delta)$ satisfies the ACC if the coefficients of $\Delta$ belong to a DCC set. 
	In this paper, we prove the ACC conjecture for local volumes under the assumption that the ambient germ is analytically bounded. We introduce another related conjecture, which predicts the existence of $\delta$-plt blow-ups of a klt singularity whose local volume has a positive lower bound.
	We show that the latter conjecture also holds when the ambient germ is analytically bounded. Moreover, we prove that both conjectures hold in dimension 2 as well as for 3-dimensional terminal singularities.\\

	\end{abstract}

	   
		\maketitle
	

	\tableofcontents
	
	\medskip

	\section{Introduction}
	

	
	Kawamata log terminal (klt) singularities form an important class of singularities which emerges from the study of the Minimal Model Program (MMP) (see e.g. \cite{BCHM10}). It becomes clear now that klt singularities appear naturally in other contexts: they form the right class of singularities of K-semistable or K\"ahler-Einstein Fano varieties 
	(see \cite{Oda13, LX14, DS14, BBEGZ19, CDS15, Tia15, BBJ15, LTW19} etc.); they share common properties with global Fano varieties, e.g. their (algebraic) fundamental groups are finite (see \cite{Xu14, GKP16, Bra20} etc.), and they always admit plt blow-ups whose exceptional divisors, known as \emph{Koll\'ar components}, are klt (log) Fano varieties (see \cite{Sho96, Pro00, Kud01, Xu14, LX16} etc.). 
	
	Recently, the study of the local volume of klt singularities, first introduced by C. Li in \cite{Li18}, has attracted lots of attention. Let us recall the definition below. Let $x\in (X,\Delta)$ be an $n$-dimensional klt singularity over an algebraically closed field of characteristic $0$. For any real valuation $v$ of $K(X)$ centered at $x$, its \emph{normalized volume} is defined as 
	\[
	\hvol_{(X,\Delta),x}(v):=A_{(X,\Delta)}(v)^n\cdot\vol(v),
	\]
	where $A_{(X,\Delta)}(v)$ is the log discrepancy of $v$ according to \cite{JM12, BdFFU15}, and $\vol(v)$ is the volume of $v$ according to \cite{ELS03}. The \emph{local volume} of the klt singularity $x\in (X,\Delta)$ is defined as 
	\[
	\hvol(x,X,\Delta):=\min_{v} \hvol_{(X,\Delta),x}(v),
	\]
	where the existence of a normalized volume minimizer was shown by Blum \cite{Blu18a}. Such a minimizer is always quasi-monomial by Xu \cite{Xu19} and unique up to rescaling by Xu and Zhuang \cite{XZ20}. The main purpose of Li's invention of the normalized volume functional was to establish a local K-stability theory for klt singularities. More precisely, according to the Stable Degeneration Conjecture \cite{Li18, LX18}, the $\hvol$-minimizer is expected to have a finitely generated graded algebra, which  degenerates $x\in (X,\Delta)$ to a K-semistable log Fano cone singularity. For an extensive discussion of progress on this conjecture, we refer to the survey article \cite{LLX18}.
	
	The local volume of a klt singularity is an important invariant which reflects essential geometric information and has deep connection to K-stability. It is shown by Li and Xu \cite{LX16} that a divisorial valuation minimizes $\hvol$ if and only if it comes from a K-semistable Koll\'ar component. For a quotient singularity $o\in \bA^n/G$, we know that $\hvol(o, \bA^n/G)=n^n/|G|$ by \cite[Example 7.1]{LX16}. Moreover, such a multiplicative formula holds for any finite crepant Galois morphism between klt singularities (known as the finite degree formula) by the recent work of Xu and Zhuang \cite[Theorem 1.3]{XZ20}. It is shown by the second author and Xu in \cite[Appendix A]{LX19} that $\hvol(x,X,\Delta)\leq n^n$ for any klt singularity $x\in (X,\Delta)$, where the equality holds if and only if $x\in X\setminus\Supp(\Delta)$ is smooth. 
	By works of Blum, the second author, and Xu \cite{BL18a,Xu19}, in a $\bQ$-Gorenstein family of klt singularities, the local volume of fibers is a lower semi-continuous and constructible function on the base. This leads to a proof of the openness of K-semistability \cite{Xu19} (for a different proof, see \cite{BLX19}). For a K-semistable log Fano pair, the local volume of any singularity is bounded from below by the global volume up to a constant \cite{Fuj18, Liu18, LL19}. Such an estimate is crucial in the study of explicit K-moduli spaces (see e.g. \cite{SS17, LX19, GMGS18, ADL19, ADL20, Liu20}). 
	Compared with the minimal log discrepancy (mld),  there is an inequality $\hvol(x,X,\Delta)< n^n\cdot \mld(x,X,\Delta)$ from \cite[Theorem 6.13]{LLX18}. A differential geometric interpretation of the local volume goes as follows: when $x\in X$ arises from a Gromov-Hausdorff limit of K\"ahler-Einstein Fano manifolds, Li and Xu \cite[Corollary 3.7]{LX18} showed that the local volume of $x\in X$ is the same as the volume density of its metric tangent cone up to a constant scalar (see also \cite{HS17, SS17}).
	
In this paper, we explore the relation between local volumes and the boundedness of singularities. Motivated by the finite degree formula which yields an effective upper bound of the order of the local fundamental group of a klt singularity in terms of its local volume (see \cite[Corollary 1.4]{XZ20}) and other phenomena from differential geometry (see e.g. \cite[Section 5.1]{SS17}), we expect that the existence of a positive lower bound for local volumes guarantees certain boundedness property on singularities. In addition, our expectation is closely related to the ACC conjecture on local volumes as local volumes of a bounded family of singularities take finitely many values  by \cite{Xu19}. 

	
	
	Below, we split our discussion into two parts. The first part treats the conjecture on discreteness and the ACC property for local volumes. The second part is focused on the conjecture which predicts the existence of $\delta$-plt blow-ups when the local volumes have a positive lower bound. Note  that the latter conjecture combined with \cite{HLM19} would imply that klt singularities whose local volumes have a positive lower bound are log bounded up to special degeneration.
	Our main results confirm the above conjectures for singularities $x\in (X,\Delta)$ in three cases: when $x\in X$ analytically belongs to a $\Qq$-Gorenstein bounded family, when the dimension is $2$, or when $x\in X$ is $3$-dimensional terminal and $\Delta=0$. We note that although the statements are divided into two parts, their proofs share the same strategy. 
	
	\subsection{ACC and discreteness of local volumes}
	
	In this subsection, we address the following folklore conjecture on the discreteness and the ACC for local volumes. 
	Note that part (1) was first stated in \cite[Question 6.12]{LLX18} (see also \cite[Question 4.3]{LX19}), and part (2) has appeared in \cite[Conjecture 8.4]{HLS19} as a natural extension of part (1). 
	
	\begin{conjecture}\label{conj:ACC for nvol}
	    Let $n$ be a positive integer and $I\subset[0,1]$ a subset. Consider the set of local volumes 
	\[
		\mathrm{Vol}^{\loc}_{n,I}\coloneqq\left\{\nvol(x,X,\Delta)\left| \begin{array}{l}x\in (X,\Delta) \text{\ is $n$-dimensional klt, where }\Delta=\sum_{i=1}^m a_i\Delta_i,\\
	    \text{$a_i\in I$ for any $i$, and each $\Delta_i\ge0$ is a Weil divisor.} 
		\end{array}\right.
		\right\}.
	\] 
		\begin{enumerate}
			\item If $I$ is finite, then $\Vol^{\loc}_{n,I}$ has $0$ as its only accumulation point.
			\item If $I$ satisfies the DCC, then 		    $\Vol^{\loc}_{n,I}$ satisfies the ACC.
		\end{enumerate} 
	\end{conjecture}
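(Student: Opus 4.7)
The plan is to argue by contradiction using the lower semi-continuity and constructibility of $\hvol$ in $\mathbb{Q}$-Gorenstein families (Blum--Liu, Xu, cited in the introduction). Suppose part (2) fails: there is a sequence of klt germs $x_k\in(X_k,\Delta_k)$ with $\Delta_k=\sum_i a_{k,i}\Delta_{k,i}$, coefficients $a_{k,i}$ in a DCC set $I$, and $\hvol(x_k,X_k,\Delta_k)$ strictly increasing to some $v_\infty$. Invoking the analytic boundedness hypothesis the paper adopts, after passing to a subsequence all ambient germs $x_k\in X_k$ arise from marked fibers of a single $\mathbb{Q}$-Gorenstein family $\mathcal{X}\to B$ of finite type; extracting further, the marked base points may be assumed to converge to some $b_\infty\in B$.

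Next I would parameterize the boundary. Each $\Delta_{k,i}$ is an effective Weil divisor on a fiber of $\mathcal{X}/B$ with degree controlled by $\hvol$ (coefficients are bounded below by $\min I>0$ once we separate the case $0\in I$), so these divisors live in countably many components of relative Chow/Hilbert schemes over $B$. After one more refinement, one fixes the number $m$ of components and the component $\mathcal{D}_i\to B$ from which each $\Delta_{k,i}$ is pulled; the varying data reduce to the coefficient vectors $a_k\in[0,1]^m$ and the base points $b_k\to b_\infty$. Using DCC of $I$, pass to a monotone limit $a_k\nearrow a_\infty$.

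The final step combines constructibility in $B$ with continuity in the coefficients. For each rational vector $a'\le a_\infty$, applying constructibility to $(\mathcal{X},\sum a'_i\mathcal{D}_i)\to B$ shows that $b\mapsto \hvol$ takes finitely many values on a neighborhood of $b_\infty$. One then needs that $\hvol(x_k,X_k,\Delta_k)$ converges to the analogous invariant of the limiting pair $(x_\infty,X_\infty,\sum a_{\infty,i}\Delta_{\infty,i})$; together these force the increasing sequence of volumes to stabilize, the required contradiction. Part (1), where $I$ is finite, is much easier: after extraction the coefficient vectors are constant and a single constructibility statement closes the argument, sidestepping any coefficient-continuity issue.

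The main obstacle is the continuity-in-coefficients step: $\hvol$ is constructible in the base of a $\mathbb{Q}$-Gorenstein family with \emph{fixed} boundary, but here the real coefficients themselves vary, and one needs convergence of the normalized volume minimizer under monotone perturbation. A natural route is to combine the uniqueness of the $\hvol$-minimizer of Xu--Zhuang with a convexity/monotonicity estimate for $\hvol$ in the coefficients, plus approximation by a dense $\mathbb{Q}$-valued subfamily where constructibility directly applies. A secondary hurdle is verifying the analytic boundedness hypothesis in the unconditional low-dimensional cases: for klt surface germs it follows from the classical ADE/toric classification, while for $3$-dimensional terminal singularities it is a consequence of Mori's classification, which presumably is how the authors handle the dimension $2$ and $3$-fold terminal statements.
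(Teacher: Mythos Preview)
There is a genuine gap in your parameterization step. You assert that each $\Delta_{k,i}$ has ``degree controlled by $\hvol$'' so that the boundary divisors live in finitely many components of a relative Chow/Hilbert scheme. This is false: a lower bound on $\hvol(x,X,\Delta)$ gives no control whatsoever on the degree of the $\Delta_{k,i}$ with respect to any polarization. For instance, with $x\in X$ smooth, $\Delta$ can be an arbitrarily high-degree hypersurface through $x$ with bounded multiplicity, and $\hvol$ stays bounded away from zero. What the lower bound on $\hvol$ \emph{does} give, via the paper's Theorem~\ref{thm:estimate on lct}, is a lower bound on $\lct(X,\Delta;\Delta)$, hence (by Izumi-type estimates) an upper bound on $\ord_x(\Delta_i)$. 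The paper exploits this by replacing each $\Delta_i$ with its $k$-th analytic truncation $\Delta_i'^k$ (Definition~\ref{def:analytic-truncation}): a Cartier divisor determined modulo $\fm^k$. Theorem~\ref{thm:truncation} shows that for $k$ depending only on the family and the lct lower bound, $\hvol$ is unchanged by truncation; and the truncated boundaries, being $\bk$-linear combinations of a fixed basis of $R/\fm^k$, genuinely form a bounded family (Proposition~\ref{prop:truncation is bdd}). Constructibility then applies to finish part~(1). For part~(2), the passage from finite to DCC coefficients is handled not by constructibility in a larger family but by a uniform Lipschitz estimate in the coefficients (Theorem~\ref{thm:uniform lipschitz}), whose constants are controlled precisely because the lct lower bound prevents the Izumi constants from degenerating.

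A secondary issue: your claim that $3$-dimensional terminal singularities are analytically bounded via Mori's classification is incorrect, and the paper explicitly notes they are not (even the Gorenstein ones). The proof of Theorem~\ref{thm:3fold-terminal-vol} instead passes to the index-$1$ cover, uses the involution $z_1\mapsto -z_1$ to obtain a crepant double cover $\hZ\to(\widehat{\bA^3},\tfrac12\fD)$, and then applies the truncation argument on $\bA^3$ where the ambient germ \emph{is} fixed.
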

	
	We note that the first author, J. Liu, and Shokurov proved Conjecture \ref{conj:ACC for nvol} for exceptional singularities \cite[Theorem 8.5]{HLS19}. We also remark that a special case of part (1) that $n^n$ is not an accumulation point of $\Vol_{n,I}^{\rm loc}$ with $I=\{0\}$ is a weaker version of the ODP Gap Conjecture \cite[Conjecture 5.5]{SS17} which was verified in dimension at most $3$, see \cite{LL19, LX19}.
	
	Our first main result states that if $(x\in X^{\rm an})\in (B\subset \cX^{\rm an}\to B)$, that is, the ambient germ $x\in X$ analytically belongs to a $\Qq$-Gorenstein bounded family $(\bxb)$ (see Definition \ref{defn:family of singularities}), then the set of local volumes $\{\hvol(x,X,\Delta)\}$ satisfies the conclusion of Conjecture \ref{conj:ACC for nvol}. In particular, Theorem \ref{thm:discreteACC} implies that Conjecture \ref{conj:ACC for nvol} holds when $x\in X$ is a smooth germ.  
	
	\begin{theorem}\label{thm:discreteACC}
		Let $n$ be a positive integer and $I\subset [0,1]$ a subset. Let $B\subset \cX\to B$ be a $\bQ$-Gorenstein family of $n$-dimensional klt singularities. Consider the set of local volumes
		\[   
		\Vol_{B\subset\cX\to B, I}:=  \left\{ \nvol(x,X,\Delta)\left| \begin{array}{l}
		(x\in X^{\rm an})\in (B\subset \cX^{\rm an}\to B),~ x\in(X,\Delta) \text{ is klt},\\
		\text{where }\Delta=\sum_{i=1}^m a_i\Delta_i,~\text{$a_i\in I$ for any $i$,} \text{ and}\\
		\text{each $\Delta_i\ge0$ is a $\bQ$-Cartier Weil divisor.} 
		\end{array}\right.
		\right\}.
		\]
		\begin{enumerate}
			\item If $I$ is finite, then $\Vol_{B\subset\cX\to B, I}$ has no non-zero accumulation point. 
			\item If $I$ satisfies the DCC, then $\Vol_{B\subset\cX\to B, I}$ satisfies the ACC.
		\end{enumerate}
	\end{theorem}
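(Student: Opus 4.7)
My plan is to argue by contradiction via a degeneration argument, combining the analytic boundedness of the ambient germs with the lower semi-continuity and constructibility of the local volume function in $\mathbb{Q}$-Gorenstein families due to Blum--Liu and Xu \cite{BL18a,Xu19}. Part (1) is the special case of part (2) in which $I$ is finite (hence DCC), so I focus on part (2).

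\textbf{Step 1 (Convergent subsequence).} Suppose for contradiction that there is a strictly increasing sequence $v_i=\hvol(x_i,X_i,\Delta_i)\nearrow v_\infty$ of local volumes with $(x_i,X_i,\Delta_i)$ satisfying the hypotheses of the theorem. By analytic boundedness, there exist $b_i\in B$ with $(x_i\in X_i^{\rm an})\cong(b_i\in\cX_{b_i}^{\rm an})$. After a Noetherian stratification of $B$ and passing to a subsequence, I may assume all $b_i$ lie in a single irreducible locally closed stratum on which $\cX\to B$ behaves uniformly, and $b_i\to b_\infty$.

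\textbf{Step 2 (Packaging boundaries into a family).} I next arrange the boundaries $\Delta_i=\sum_ja_{i,j}\Delta_{i,j}$ into a bounded family over $B$. Since $v_i\ge v_1>0$, the bound $\hvol_{(X_i,\Delta_i),x_i}(v)\ge v_1$ applied to any fixed reference valuation $v$ on $\cX/B$ yields a uniform upper bound on $v(\Delta_i)=\sum_ja_{i,j}v(\Delta_{i,j})$. The DCC of $I$ gives $\min(I\setminus\{0\})>0$, so the number $m_i$ of nontrivial components is uniformly bounded; combined with uniform multiplicity bounds, a relative Hilbert/Chow scheme argument on $\cX/B$ places the $\Delta_{i,j}$ in a finite-type parameter space. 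After further stratification and a subsequence, there exist flat families $\cD_j\to B$ of $\mathbb{Q}$-Cartier Weil divisors with $\cD_j|_{b_i}=\Delta_{i,j}$, and a diagonal argument using DCC of $I$ ensures $a_{i,j}\nearrow a_{\infty,j}\in\overline{I}$ for each $j$. Set $\Delta_\infty:=\sum_ja_{\infty,j}(\cD_j|_{b_\infty})$.

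\textbf{Step 3 (Semi-continuity and contradiction).} Apply constructibility of local volumes \cite{Xu19} to the $\mathbb{Q}$-Gorenstein family $(\cX,\sum_ja_{\infty,j}\cD_j)\to B$ with \emph{fixed} coefficients $a_{\infty,j}$: after a further subsequence, $\hvol(x_i,X_i,\sum_ja_{\infty,j}\Delta_{i,j})$ equals a constant $V_\infty$. Since $a_{i,j}\le a_{\infty,j}$, monotonicity of $\hvol$ in the boundary gives $v_i\ge V_\infty$. Conversely, evaluating $\hvol_{(X_i,\Delta_i),x_i}$ at the minimizer $v^{(i)}$ of the corresponding limit-coefficient pair gives
\[
v_i\le\bigl(A_{X_i}(v^{(i)})-\sum_ja_{i,j}v^{(i)}(\Delta_{i,j})\bigr)^n\vol(v^{(i)}),
\]
which tends to $V_\infty$ as $i\to\infty$ using $a_{i,j}\to a_{\infty,j}$ and boundedness of the relevant valuative data. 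Hence $v_i\to V_\infty$; combined with $V_\infty\le v_i<v_\infty$, strict monotonicity forces $v_i=V_\infty$ eventually, a contradiction.

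\textbf{Main obstacle.} The technical heart of the argument is Step 2: packaging the Weil divisors $\Delta_{i,j}$ into a bounded family of $\mathbb{Q}$-Cartier divisors on $\cX/B$. While the ambient boundedness is given by hypothesis, the boundary carries only the DCC coefficient condition together with the klt and $\mathbb{Q}$-Cartier constraints; controlling both the number of components and the algebraic class of each in a uniform way requires combining the positive lower bound on $v_i$ (which yields valuative multiplicity bounds via $\hvol\ge v_1$) with Hilbert-scheme-type boundedness of divisors on the fixed bounded total space $\cX\to B$. Granted this, the rest is a clean interplay between DCC monotonicity and the lower semi-continuity/constructibility of $\hvol$ in families.
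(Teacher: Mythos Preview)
Your Step 2 has a genuine gap that the rest of the argument cannot absorb. The hypothesis gives only an \emph{analytic} isomorphism $(x_i\in X_i^{\rm an})\cong(b_i\in\cX_{b_i}^{\rm an})$; there is no algebraic relationship between $X_i$ and the fibers of $\cX/B$. Under this isomorphism the divisor $\Delta_{i,j}$ becomes a \emph{formal} divisor on $\widehat{\cO_{\cX_{b_i},b_i}}$, not an algebraic divisor on $\cX_{b_i}$, so a relative Hilbert/Chow scheme over $\cX/B$ does not see it. A multiplicity bound at the closed point (which is all the lower bound on $\hvol$ gives you locally) does not force a formal power series to be a polynomial, or even to lie in any finite-type moduli space. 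Thus the families $\cD_j\to B$ you posit in Step 2 simply do not exist in general, and without them Step 3 has nothing to apply constructibility to.

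The paper's actual proof addresses exactly this point via a \emph{truncation} mechanism that you are missing. First, the lower bound $\hvol(x,X,\Delta)>\epsilon$ is converted into a lower bound on $\lct(X,\Delta;\Delta)$ uniformly over the analytically bounded family (Theorem~\ref{thm:estimate on lct}). This lct bound, combined with Li's properness/Izumi estimates, guarantees that for $k$ large depending only on $n,\eta,\gamma$ and the family, replacing each Cartier $\Delta_i$ by its $k$-th jet $\Delta_i'^k$ does not change the local volume (Theorem~\ref{thm:truncation}). The $k$-jets \emph{do} live in a finite-dimensional parameter space over $B$ (Proposition~\ref{prop:truncation is bdd}), and it is only at this stage that Xu's constructibility applies. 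The reduction from $\bQ$-Cartier Weil to Cartier also requires a separate boundedness-of-index argument (via the existence of $\delta$-plt blow-ups, Theorem~\ref{thm:existence of delta-plt blow-up}), which your sketch omits. For part (2), the paper first proves part (1) and then uses a uniform Lipschitz estimate for $\hvol$ in the coefficients (Theorem~\ref{thm:uniform lipschitz}) to pass from DCC to finite coefficients; your Step 3 is essentially an ad hoc version of this Lipschitz estimate, but it still rests on the unavailable families from Step 2.
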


	
	

If $x\in (X,\Delta)$ belongs to a log bounded family and $\Delta$ has finite rational coefficients, then Xu \cite[Theorem 1.3]{Xu19} proved that their local volumes belong to a finite set. We remark that Theorem \ref{thm:discreteACC} does not assume the boundedness of $\Supp\Delta$ and allows (DCC) real coefficients.

	The following theorem confirms Conjecture \ref{conj:ACC for nvol} in dimension $2$.
	\begin{theorem}\label{thm:surface ACC}
		Conjecture \ref{conj:ACC for nvol} holds when $n=2$.
	\end{theorem}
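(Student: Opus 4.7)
The plan is to reduce Theorem \ref{thm:surface ACC} to Theorem \ref{thm:discreteACC} by showing that, whenever the local volume of a $2$-dimensional klt germ $x\in(X,\Delta)$ is bounded below by some $\epsilon>0$, the ambient germ $x\in X$ together with its boundary components fits into an analytic $\bQ$-Gorenstein bounded family.

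The first key step is to bound the local Cartier index of $K_X$. Let $\pi\colon\tilde X\to X$ be the local index-one cover of $K_X$, with Galois group $G$ of order equal to this index. Since $\pi$ is \'etale in codimension one, $\pi^*(K_X+\Delta)=K_{\tilde X}+\tilde\Delta$ with $\tilde\Delta=\pi^*\Delta$, so $\pi$ is crepant for $(X,\Delta)$, and combining the finite degree formula of Xu--Zhuang \cite{XZ20} with the upper bound $\hvol\leq n^n$ from \cite{LX19} gives
\[
|G|\cdot\hvol(x,X,\Delta)=\hvol(\tilde x,\tilde X,\tilde\Delta)\leq 4,
\]
hence $|G|\leq 4/\epsilon$. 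Since $K_{\tilde X}$ is now Cartier and $\tilde X$ is $2$-dimensional klt, $\tilde x\in\tilde X$ is a Du Val singularity in the finite ADE list. The uniform bound on $|G|$, together with the finiteness of the conjugacy classes of subgroups of bounded order in $\Aut(\tilde x\in\tilde X)$, shows that $x\in X=\tilde X/G$ ranges over a $\bQ$-Gorenstein bounded analytic family with uniformly bounded local Cartier index of $K_X$. In addition, the local Weil class group of $X$ has bounded order (being controlled by the finite class group of the Du Val germ $\tilde X$ and by $|G|$), so each boundary component $\Delta_i$ is automatically $\bQ$-Cartier with uniformly bounded index, placing $(x\in X,\Delta)$ into the hypothesis of Theorem \ref{thm:discreteACC}.

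For part (1), suppose $\Vol^{\loc}_{2,I}$ accumulates at some $v>0$ with $I$ finite: choosing a sequence $x_k\in(X_k,\Delta_k)$ with $\hvol(x_k,X_k,\Delta_k)\to v$, we may assume $\hvol(x_k,X_k,\Delta_k)\geq v/2$ eventually, so by the previous paragraph the germs $x_k\in X_k$ lie in a common $\bQ$-Gorenstein bounded analytic family; Theorem \ref{thm:discreteACC}(1) then forces the resulting set of local volumes to have no non-zero accumulation point, a contradiction. Part (2) follows by the same argument applied to a strictly increasing sequence, invoking Theorem \ref{thm:discreteACC}(2) in place of (1).

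The main technical obstacle is assembling the quotients $\tilde X/G$ together with the boundary components $\Delta_i$ into a genuine $\bQ$-Gorenstein family in the sense of Definition \ref{defn:family of singularities}: the $\Delta_i$ are required to be $\bQ$-Cartier, and this is what necessitates the finiteness of the local Weil class group rather than merely the bounded Cartier index of $K_X$. Once this is done, the core content of the proof becomes the application of Theorem \ref{thm:discreteACC}, with the finite degree formula and the ADE classification doing all the geometric work needed to enter its hypotheses.
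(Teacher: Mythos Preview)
Your overall strategy---bound the ambient germ $x\in X$ into an analytically finite family and then invoke Theorem \ref{thm:discreteACC}---is exactly what the paper does. However, there is a genuine error in your reduction: the ADE list is \emph{not} finite. The Du Val singularities comprise the infinite families $A_n$ ($n\ge 1$) and $D_n$ ($n\ge 4$) together with $E_6,E_7,E_8$, so knowing that $\tilde x\in\tilde X$ is Du Val does not by itself bound its analytic type, and your subsequent sentence (``finiteness of conjugacy classes of subgroups of bounded order in $\Aut(\tilde x\in\tilde X)$'') presupposes only finitely many $\tilde X$. You can repair this: from your displayed equation you also get $\hvol(\tilde x,\tilde X)\ge\hvol(\tilde x,\tilde X,\tilde\Delta)=|G|\cdot\hvol(x,X,\Delta)\ge\epsilon$, and since every Du Val germ is itself $\bA^2/H$ for some finite $H\subset\mathrm{SL}_2(\bk)$, a second application of the finite degree formula gives $|H|\le 4/\epsilon$, which then does bound the Du Val type.

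The paper avoids this two-step descent. It uses directly the classical fact (\cite[Proposition 4.18]{KM98}) that every klt surface germ is analytically a quotient $\bA^2/G$ with $G\subset\mathrm{GL}_2(\bk)$ containing no pseudoreflections; a single application of the finite degree formula (with no boundary, using $\hvol(x,X)\ge\hvol(x,X,\Delta)>\epsilon$) yields $|G|<4/\epsilon$, and the classification of finite subgroups of $\mathrm{GL}_2(\bk)$ of bounded order gives finitely many analytic types for $x\in X$. Besides being shorter, this sidesteps a second technical issue in your argument: the finite degree formula as stated in the paper (Theorem \ref{thm: finite degree formula}, following \cite{XZ20}) is only proved for $\bQ$-boundaries, so applying it with the possibly irrational $\Delta$ requires justification, whereas applying it with $\Delta=0$ does not. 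Once the ambient germs are analytically bounded, the $\bQ$-Cartierness of each $\Delta_i$ is automatic since klt surface singularities are $\bQ$-factorial, and Theorem \ref{thm:discreteACC} applies as you say.
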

	
	We also show that the local volumes of $3$-dimensional terminal singularities without boundary divisors are discrete away from $0$. Note that these singularities (even the Gorenstein ones) are not analytically bounded (see e.g. \cite{Mor85, Rei87} or \cite[\S 5.3]{KM98}), and their local volumes (even the Gorenstein ones) can converge to $0$ (see e.g. \cite[Example 4.2]{LX19}).
	
	\begin{theorem}\label{thm:3fold-terminal-vol}
	The set of local volumes 
	\[
	\Vol_{3}^{\mathrm{term}}\coloneqq\{\nvol(x,X)\mid x\in X \text{\ is $3$-dimensional terminal}\}
	\]
	has $0$ as its only accumulation point.
	\end{theorem}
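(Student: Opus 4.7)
The plan is to reduce the problem from the general terminal case to the Gorenstein (compound Du Val) case via the index-one cover, then exploit the Mori--Reid classification to exhibit the relevant germs as members of finitely many analytically bounded $\bQ$-Gorenstein families, and finally invoke Theorem \ref{thm:discreteACC}.

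Suppose for contradiction there exist $\epsilon>0$ and a sequence $x_i\in X_i$ of $3$-dimensional terminal singularities with pairwise distinct local volumes in $[\epsilon, 27]$, where the upper bound comes from \cite[Appendix A]{LX19}. Let $r_i$ be the Gorenstein index at $x_i$ and let $(\tilde x_i\in \tilde X_i)\to (x_i\in X_i)$ be the index-one cover, which is a crepant Galois cover of degree $r_i$ onto a Gorenstein terminal $3$-fold, hence a compound Du Val (cDV) point. The finite degree formula \cite[Theorem 1.3]{XZ20} gives $\hvol(\tilde x_i,\tilde X_i)=r_i\cdot \hvol(x_i,X_i)\ge r_i\epsilon$, so combined with the universal bound $\hvol\le 27$ we obtain $r_i\le 27/\epsilon$. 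After passing to a subsequence we may assume $r_i=r$ is constant.

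The core of the argument is the claim that, for any $\epsilon'>0$, a cDV germ $\tilde x\in\tilde X$ with $\hvol(\tilde x,\tilde X)\ge \epsilon'$ has ``bounded type'' in the sense of Reid: its general hyperplane section is a Du Val singularity of type $A_n$ or $D_n$ with $n\le N_0(\epsilon')$, or one of $E_6, E_7, E_8$. To prove this, one writes $\tilde X$ analytically as a hypersurface $\{xy+f(z,t)=0\}\subset\bA^4$ for $cA_n$ (and analogously for $cD_n$), and exhibits an explicit monomial valuation $v$ whose weights are tuned so that $f$ and $xy$ have the same $v$-value; a direct computation of $A_{\tilde X}(v)$ and $\vol_{\tilde X}(v)$ (treating $\tilde X$ as a hypersurface, so that $\vol_{\tilde X}(v)=v(\text{eqn})/\prod w_i$) yields a bound of the form $\hvol(\tilde x,\tilde X)\le C/n$, hence $n\le C/\epsilon'$. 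A variant of the same monomial/hyperplane-section estimate handles $cD_n$.

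Having bounded the type, Reid's normal forms put the ambient germs $\tilde x\in \tilde X^{\mathrm{an}}$ into a single $\bQ$-Gorenstein analytically bounded family $B\subset \tilde \cX\to B$, and the list of cyclic $\mathbb{Z}/r$-actions realizing $X=\tilde X/(\mathbb{Z}/r)$ (by the Mori--Reid--Koll\'ar--Shepherd-Barron classification for each fixed $r$) then exhibits the germs $x\in X^{\mathrm{an}}$ as members of a $\bQ$-Gorenstein bounded family. Applying Theorem \ref{thm:discreteACC}(1) with $I=\{0\}$ to this family shows that the local volumes $\hvol(x_i,X_i)$ have $0$ as the only accumulation point, contradicting the hypothesis that they lie in $[\epsilon,27]$ and are pairwise distinct.

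The main obstacle is the estimate in the middle paragraph: one must argue, uniformly for \emph{all} cDV germs (not only the quasi-homogeneous normal forms) of type $cA_n$ or $cD_n$, that large $n$ forces small local volume. For $cA_n$ germs $\{xy+f(z,t)=0\}$ with nontrivial $t$-dependence in $f$, choosing the correct monomial valuation requires controlling the Newton polygon of $f$; this is where most of the work lies, since a naive weight choice can fail to produce a quasi-monomial valuation on $\tilde X$ with the desired bound on $\hvol$.
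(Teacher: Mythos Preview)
Your reduction to the index-one cover via the finite degree formula matches the paper's first step. The gap is the next claim: that once the cDV type is bounded, Reid's normal forms place the germs $\tilde x\in\tilde X^{\rm an}$ in a single $\bQ$-Gorenstein analytically bounded family. This is false. Already for type $cA_1$ the germs $\{xy+z^2+t^m=0\}$ with $m\ge 2$ are pairwise analytically non-isomorphic (their Milnor numbers are $m-1$), so no finite-type base parametrizes them all up to analytic isomorphism. Reid's lists still involve an arbitrary power series $g$ or $h$; bounding the type does not bound the determinacy degree. Thus you cannot invoke Theorem~\ref{thm:discreteACC} with $I=\{0\}$ on the germs themselves, and the whole middle step (bounding the type via a monomial valuation estimate) becomes a red herring: even if it succeeded, it would not deliver the analytically bounded family you need.

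The paper bypasses both the type-bounding and the boundedness problem with a single device. Every Gorenstein terminal $3$-fold germ can be written analytically as $\{z_1^2+f(z_2,z_3,z_4)=0\}$; the involution $z_1\mapsto -z_1$ gives a crepant double cover
\[
(\hat z\in\hat Z)\longrightarrow \bigl(0\in(\widehat{\bA^3},\tfrac{1}{2}\fD)\bigr),\qquad \fD=(f=0),
\]
and a $\tau$-equivariant version of the finite degree formula (via $\tau$-invariant Koll\'ar components and \cite[Theorem 1.1]{XZ20}) yields $\hvol(z,Z)=2\,\hvol(0,\widehat{\bA^3},\tfrac{1}{2}\fD)$. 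Now the ambient space is the \emph{fixed} smooth germ $0\in\bA^3$ and only the boundary $\tfrac{1}{2}\fD$ varies; the truncation theorem for analytic boundaries (Theorem~\ref{thm:truncation-an-bdry}) replaces $\fD$ by a polynomial $\fD^k$, and Corollary~\ref{cor:smooth} with $I=\{\tfrac{1}{2}\}$ finishes. In other words, the trick is to trade ``unbounded hypersurface singularity with no boundary'' for ``fixed smooth point with a half-weighted boundary divisor'', so that the paper's truncation machinery---which truncates boundaries, not defining equations---applies directly.
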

	
	Note that even if we assume the Stable Degeneration Conjecture \cite[Conjecture 1.2]{LX18} is true, Conjecture \ref{conj:ACC for nvol} is still open in dimension $n\geq 3$. This is essentially due to the lack of a boundedness result for K-semistable Fano cone singularities whose local volumes have a lower bound. To compare, the corresponding global boundedness result was proved by Jiang \cite{Jia17} and Xu-Zhuang \cite{XZ20} based on the BAB Conjecture proven by Birkar \cite{Bir16} and Batyrev's Conjecture proven by Hacon-McKernan-Xu \cite{HMX14} respectively. For related discussions, see Conjecture \ref{conj:bddspecial}, Question \ref{que:weil-index}, or \cite[Example 4.4]{LX19}.
	
	\subsection{Local volumes and boundedness of singularities} 
	
In this subsection, we study the relationship between local volumes and certain boundedness condition on singularities. We expect the following two classes of singularities are equivalent:

	\begin{equation}\label{eqn: categories of singularities}
	    \left\{\begin{array}{c}
	       x\in(X,\Delta) \text{ is $\epsilon_1$-lc, and admits a } \\ 
	       \delta\text{-plt blow-up for some fixed } 
	         \epsilon_1,\delta>0
	    \end{array}\right\}\simeq 
	    \left\{\begin{array}{c}
	   \nvol(x,X,\Delta)>\epsilon \\
	          \text{ for some fixed } \epsilon>0
	    \end{array}\right\}.
    \end{equation}
   We remark that it is expected in \cite{HLS19} and proved in \cite[Theorems 1.1 and 4.1]{HLM19} that the first class of singularities in \eqref{eqn: categories of singularities} belongs to a bounded family up to  special degeneration (See Section \ref{sec:spdeg} for the definition of special degenerations).

	\medskip
	
We first show that the local volumes of $n$-dimensional $\epsilon_1$-lc singularities with $\delta$-plt blow-ups have a positive lower bound depending only on $n$, $\epsilon_1$ and $\delta$, which confirms one direction of our expectation in \eqref{eqn: categories of singularities}.
	
	\begin{theorem}\label{thm:bddness for nvol}
		Let $n\ge 2$ be a positive integer and $\delta, \epsilon_1$ positive real numbers. Then there exists a positive real number $\epsilon$ depending only on $n,\epsilon_1$ and $\delta$ satisfying the following. 
		
		If $x\in (X,\Delta)$ is an $n$-dimensional klt singularity, such that
		\begin{enumerate}
			\item  $\mld(x,X,\Delta)\geq \epsilon_1$, and
			\item  $x\in(X,\Delta)$ admits a $\delta$-plt blow-up,
		\end{enumerate}
		then $\nvol(x,X,\Delta)\geq \epsilon$.
	\end{theorem}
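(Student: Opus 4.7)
The plan is to reduce the problem to a bounded family via the main theorem of \cite{HLM19}, and then conclude using the discreteness of local volumes established in Theorem \ref{thm:discreteACC}(1). The hypotheses here---an $n$-dimensional $\epsilon_1$-lc klt singularity admitting a $\delta$-plt blow-up---are exactly those of \cite[Theorems 1.1 and 4.1]{HLM19}, which provides a $\bQ$-Gorenstein special degeneration $(x \in X, \Delta) \leadsto (x_0 \in X_0, \Delta_0)$ such that $(x_0 \in X_0, \Delta_0)$ lies in a log bounded family $(B \subset \cX \to B)$ depending only on $(n, \epsilon_1, \delta)$, with the boundary coefficients of $\Delta_0$ belonging to a finite set $I_0 = I_0(n, \epsilon_1, \delta) \subset [0,1)$.

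Once this reduction is in place, lower semi-continuity of the normalized volume under $\bQ$-Gorenstein degeneration (see \cite{BL18a, Xu19}) yields $\hvol(x, X, \Delta) \geq \hvol(x_0, X_0, \Delta_0)$. Next, applying Theorem \ref{thm:discreteACC}(1) with $I = I_0$ to the family $B \subset \cX \to B$, the set $\Vol_{B \subset \cX \to B, I_0}$ has no non-zero accumulation point. Since every element of this set is strictly positive and bounded above by $n^n$ (from \cite[Appendix A]{LX19}), and since Xu's constructibility theorem \cite{Xu19} implies that $\hvol$ takes only finitely many values on the fixed $\bQ$-Gorenstein family $B\subset\cX\to B$ for each fixed choice of coefficients, the set $\Vol_{B \subset \cX \to B, I_0}$ is in fact finite. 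Its minimum $\epsilon = \epsilon(n, \epsilon_1, \delta) > 0$ is then the desired lower bound, giving $\hvol(x, X, \Delta) \geq \hvol(x_0, X_0, \Delta_0) \geq \epsilon$.

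The main obstacle I foresee concerns the precise output of \cite{HLM19}: one needs the special degeneration to produce both log boundedness and a \emph{finite} coefficient set $I_0$. If the coefficients can only be arranged to form a DCC set after the degeneration, then Theorem \ref{thm:discreteACC}(1) does not apply directly; one would have to combine Theorem \ref{thm:discreteACC}(2) with a further argument ruling out accumulation of $\hvol$ values from below, for instance by a secondary degeneration to finite coefficients or a compactness argument using constructibility. A more self-contained alternative strategy, avoiding \cite{HLM19} entirely, would proceed as follows: let $\pi: Y \to X$ be the given $\delta$-plt blow-up with Koll\'ar component $E$, so that by adjunction $(E, \Delta_E)$ is an $(n{-}1)$-dimensional $\delta$-klt log Fano pair and $A_{(X,\Delta)}(E) \geq \mld(x,X,\Delta) \geq \epsilon_1$; apply Birkar's BAB theorem to place $(E, \Delta_E)$ in a bounded family; construct the Rees degeneration to the log Fano cone over $(E, \Delta_E)$ polarized by $-K_E - \Delta_E$; and then bound $\hvol$ at the cone vertex from below using $A_{(X,\Delta)}(E) \geq \epsilon_1$ together with a positive lower bound on the K-stability threshold $\delta(E, \Delta_E)$. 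The serious difficulty in this second strategy is providing a uniform positive lower bound on K-stability invariants over bounded families with varying coefficients, which is delicate in this level of generality.
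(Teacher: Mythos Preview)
Your Strategy 1 has genuine gaps. First, the hypotheses of \cite[Theorem 4.1]{HLM19} are \emph{not} exactly those of Theorem~\ref{thm:bddness for nvol}: as the paper records in Theorem~\ref{thm:hlm}, that result requires a positive lower bound $\eta$ on the coefficients of $\Delta$ and a $\bQ$-Gorenstein assumption on $X$, neither of which is present here. Second, even granting log boundedness of the degeneration, you assert that the coefficients of $\Delta_0$ lie in a \emph{finite} set $I_0$ depending only on $(n,\epsilon_1,\delta)$; this does not follow from log boundedness and is not claimed in \cite{HLM19}. Without it, Theorem~\ref{thm:discreteACC}(1) is inapplicable, and you correctly note that the DCC version does not immediately give a lower bound. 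So Strategy 1, as written, does not close.

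Your Strategy 2 is essentially the paper's actual proof, and the difficulty you flag is not as serious as you fear. The key inequality is Proposition~\ref{prop:control of nvol}:
\[
\hvol(x,X,\Delta)\ \ge\ \hvol_{(X,\Delta),x}(\ord_S)\cdot\min\{1,\delta(S,\Delta_S)\}^n,
\]
proved by adding a scaled complement to make $S$ the K-semistable minimizer. The first factor is bounded below via $A_{(X,\Delta)}(\ord_S)\ge\epsilon_1$ and a uniform bound on the Cartier index of $(-S)|_S$ from \cite[Proposition 4.4]{HLS19}. For the second factor, the uniform lower bound on $\delta(S,\Delta_S)$ comes cheaply: $\delta(S,\Delta_S)\ge\alpha(S,\Delta_S)$, and since $(S,\Delta_S)$ is a $\delta$-klt log Fano pair of dimension $n-1$, Birkar's \cite[Theorem 1.4]{Bir16} gives $\alpha(S,\Delta_S)\ge t(n,\delta)>0$ uniformly, with no restriction on the coefficients of $\Delta_S$. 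No Rees degeneration or bounded-family argument on K-stability thresholds is needed. The $\bR$-coefficient case is then handled by the Lipschitz-type perturbation Lemma~\ref{lem: nv lipschitz in rational envelope}.
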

	
	For the converse direction in \eqref{eqn: categories of singularities}, we propose the following conjecture.

	\begin{conjecture}\label{conj:existence of delta-plt blow-up}
		Let $n\ge 2$ be a positive integer and $\eta,\epsilon$ positive real numbers. Then there exists a positive real number $\delta$ depending only on $n,\eta$ and $\epsilon$ satisfying the following. 
		If $x\in (X,\Delta=\sum_{i=1}^m a_i\Delta_i)$ is an $n$-dimensional klt singularity such that		\begin{enumerate}
			\item $a_i>\eta$ for any $i$, 
			\item each $\Delta_i\geq 0$ is a Weil divisor, and
			\item $\nvol(x,X,\Delta)>\epsilon$,
		\end{enumerate}
		then $x\in(X,\Delta)$ admits a $\delta$-plt blow-up.
	\end{conjecture}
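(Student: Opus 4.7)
The plan is to produce a Koll\'ar component $E$ over $x\in X$ whose plt blow-up $\mu:Y\to X$ is $\delta$-plt, which via adjunction and inversion of adjunction is equivalent to bounding the mld of the adjunction pair $(E,\Delta_E)$ from below by $\delta$. I would proceed by contradiction: suppose there is a sequence $x_k\in(X_k,\Delta_k)$ of $n$-dimensional klt singularities with coefficients in $(\eta,1]$ and $\hvol(x_k,X_k,\Delta_k)>\epsilon$, yet none admits a $(1/k)$-plt blow-up.

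The first step is to extract from each $x_k$ a Koll\'ar component $E_k$ whose normalized volume is close to the local volume. Using the existence of an $\hvol$-minimizer (Blum), its quasi-monomial nature (Xu), the uniqueness up to rescaling (Xu--Zhuang), and the approximation of this minimum by divisorial valuations coming from Koll\'ar components, one aims to produce $E_k$ satisfying
\[
\hvol_{(X_k,\Delta_k),x_k}(v_{E_k}) \le \bigl(1+k^{-1}\bigr)\hvol(x_k,X_k,\Delta_k).
\]
Combined with the universal bound $\hvol\le n^n$ of Liu--Xu and the hypothesis $\hvol>\epsilon$, this controls $A_{(X_k,\Delta_k)}(v_{E_k})$ and $\vol(v_{E_k})$ in a uniform two-sided range.

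Next I would translate these bounds onto the adjunction pair $(E_k,\Delta_{E_k})$. The standard formula for a Koll\'ar component expresses $\hvol(v_{E_k})$ as a product of $A_{(X_k,\Delta_k)}(v_{E_k})^n$ and (a multiple of) $(-K_{E_k}-\Delta_{E_k})^{n-1}$, so the anti-canonical volume of the log Fano pair $(E_k,\Delta_{E_k})$ is bounded from below. The coefficients of $\Delta_{E_k}$, arising from the different, lie in a DCC set determined by $\eta$ and $n$. Because $E_k$ is a near-minimizer of $\hvol$, the pairs $(E_k,\Delta_{E_k})$ are K-semistable in the limit, and boundedness theorems for K-semistable log Fanos of bounded-below volume with DCC coefficients (Jiang, Xu, Birkar) place them in a log bounded family; on such a family the mld is constructible and takes only finitely many values, producing a uniform $\delta>0$ with $\mld(E_k,\Delta_{E_k})\ge\delta$, contradicting the failure of the $(1/k)$-plt property.

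The principal obstacle is the approximation step: when the $\hvol$-minimizer is not divisorial, there is no unconditional result producing Koll\'ar components whose $\hvol$ converges to the infimum, this being essentially a case of the Stable Degeneration Conjecture, which is why the statement above is only a conjecture in general. In the three regimes actually treated in this paper--analytically bounded germs, surfaces, and $3$-fold terminal singularities--the obstacle is bypassed by explicit compactness: analytic limits within the bounded $\bQ$-Gorenstein family, dual graphs of minimal resolutions in dimension two, and the Mori--Reid classification for $3$-fold terminal singularities, each combined with the ACC/discreteness of local volumes supplied by Theorems \ref{thm:discreteACC}, \ref{thm:surface ACC}, and \ref{thm:3fold-terminal-vol}, so that a limit (or a generic member of an algebraic family) produces an actual K-semistable Koll\'ar component to which the boundedness argument applies.
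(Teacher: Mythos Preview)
The statement is a conjecture; the paper proves it only under extra hypotheses (Theorems~\ref{thm:existence of delta-plt blow-up} and~\ref{thm:delta-plt-blowup-lowdim}). Your strategy has two genuine gaps. First, you misidentify the obstacle: approximation of $\hvol(x,X,\Delta)$ by normalized volumes of Koll\'ar components is already unconditional---this is Theorem~\ref{thm:lx-kc-minimizing}, the $\bR$-coefficient extension of \cite[Theorem~1.3]{LX16}, and it has nothing to do with the Stable Degeneration Conjecture. What your argument would actually require is a Koll\'ar component that \emph{exactly} minimizes $\hvol$, hence is K-semistable by Theorem~\ref{thm:minimizer is K-ss}; near-minimizers need not be K-semistable, and ``K-semistable in the limit'' is not a hypothesis accepted by any boundedness theorem you cite. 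Second, the inference ``log bounded with DCC coefficients $\Rightarrow$ $\mld$ takes finitely many values'' is false: already $(\bP^1,(1-\tfrac{1}{k})p)$ is log bounded with DCC coefficients and $\mld\to 0$. You also implicitly need $(-K_{E_k}-\Delta_{E_k})^{n-1}$ bounded below, which via $\hvol(\ord_{E_k})=A_{(X_k,\Delta_k)}(E_k)\cdot(-K_{E_k}-\Delta_{E_k})^{n-1}$ would require an upper bound on $A_{(X_k,\Delta_k)}(E_k)$ that you do not have without already knowing $\delta$-plt-ness.

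The paper's route to Theorem~\ref{thm:existence of delta-plt blow-up} is entirely different and uses no K-stability of any Koll\'ar component. The volume bound is first converted into $\lct(X,\Delta;\Delta)>\gamma$ via Theorem~\ref{thm:estimate on lct}. After rounding coefficients to a finite rational set and bounding Cartier indices, one passes to a $k$-th analytic truncation $(X',\Delta'^k)$; Proposition~\ref{prop: truncation keep nv(v)} guarantees that a $\delta$-Koll\'ar component of the truncation with controlled $\hvol$ transports back to a $\delta$-Koll\'ar component of $(X,\Delta)$. The truncated singularities now form a genuinely bounded $\bR$-Gorenstein family (Proposition~\ref{prop:truncation is bdd}), and over such a family Theorem~\ref{thm: family kc with nv bdd} produces a flat family of $\delta$-Koll\'ar components by spreading out a single Koll\'ar component on the generic fibre; the uniform $\delta$ comes from a fiberwise log resolution of this bounded family, not from BAB or K-stability. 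The surface and $3$-fold terminal cases (Theorem~\ref{thm:delta-plt-blowup-lowdim}) are handled by first reducing to a bounded family of ambient germs via classification and then invoking the same mechanism, independently of the ACC/discreteness theorems rather than as a consequence of them.
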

	
	We prove that the statement of Conjecture \ref{conj:existence of delta-plt blow-up} is true if $x\in X$ analytically belongs to a $\Qq$-Gorenstein bounded family.
	
	\begin{theorem}\label{thm:existence of delta-plt blow-up}
		Let $n\ge 2$ be a positive integer, $\eta,\epsilon $ positive real numbers, and $B\subset \cX\to B$ an $\bQ$-Gorenstein family of $n$-dimensional klt singularities. Then there exists a positive real number $\delta$ depending only on $n,\eta,\epsilon$ and $B\subset \cX\to B$ satisfying the following.
		
		If $x\in (X,\Delta=\sum_{i=1}^m a_i\Delta_i)$ is an $n$-dimensional klt singularity such that
		\begin{enumerate}
		\item  $(x\in X^{\rm an})\in (B\subset \cX^{\rm an}\to B)$,
			\item $a_i>\eta$ for any $i$, 
				\item each $\Delta_i\geq 0$ is a $\bQ$-Cartier Weil divisor, and
			\item $\nvol(x,X,\Delta)>\epsilon$,
		\end{enumerate}
		then $x\in(X,\Delta)$ admits a $\delta$-plt blow-up.
	\end{theorem}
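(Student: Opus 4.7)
The approach is by contradiction, combined with a compactness/limit argument inside the bounded family $\bxb$. Suppose the conclusion fails: there is a sequence $(x_k\in (X_k,\Delta_k=\sum_i a_{i,k}\Delta_{i,k}))_{k\ge 1}$ of $n$-dimensional klt singularities satisfying (1)--(4), together with positive reals $\delta_k\to 0$, such that no plt blow-up of $x_k\in(X_k,\Delta_k)$ is $\delta_k$-plt. By condition (1) and the finite-type nature of $B$, after passing to a subsequence the embedding points $b_k\in B$ converge to some $b_\infty\in B$, so the analytic germs $(x_k\in X_k^{\rm an})$ converge to the germ $(x_\infty\in X_\infty^{\rm an})$ of $\cX^{\rm an}\to B$ over $b_\infty$.

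The next step is to produce a limiting boundary on $x_\infty\in X_\infty$. The lower bound $\nvol(x_k,X_k,\Delta_k)>\epsilon$, combined with $a_{i,k}>\eta$ and the analytic boundedness of the ambient germ, should force the multiplicities $\mult_{x_k}\Delta_{i,k}$ to be uniformly bounded, and in turn bound the number of components $m$ as well as the analytic degrees of the $\Delta_{i,k}$ in a chosen compactification of the family. Passing to a further subsequence, we may assume $a_{i,k}\to a_{i,\infty}\in[\eta,1]$ and that the $\Delta_{i,k}$ converge in the analytic-cycle sense to effective divisors $\Delta_{i,\infty}$ on $X_\infty$, yielding a limit boundary $\Delta_\infty:=\sum_i a_{i,\infty}\Delta_{i,\infty}$. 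By lower semi-continuity and constructibility of the local volume in $\bQ$-Gorenstein families \cite{BL18a,Xu19}, the limit singularity $x_\infty\in(X_\infty,\Delta_\infty)$ is klt with $\nvol(x_\infty,X_\infty,\Delta_\infty)\ge\epsilon>0$.

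By the existence of Koll\'ar components \cite{Xu14}, the klt germ $x_\infty\in(X_\infty,\Delta_\infty)$ admits a plt blow-up $\pi_\infty:Y_\infty\to X_\infty$ extracting a Koll\'ar component $E_\infty$, and by the strict plt condition there is some $\delta_\infty>0$ for which $\pi_\infty$ is $\delta_\infty$-plt. The core of the argument is then to spread $E_\infty$ out to Koll\'ar components $E_k$ over $(x_k\in(X_k,\Delta_k))$ for $k\gg 0$ whose plt discrepancies are at least $\tfrac{\delta_\infty}{2}>\delta_k$, which contradicts the choice of the sequence. To accomplish this I would embed $\Delta_\infty$ into a relative boundary on the family $\cX\to B$ whose specializations over the $b_k$ recover the $\Delta_{i,k}$ (using that each $\Delta_{i,k}$ is $\bQ$-Cartier), extract a divisor on $\cX$ specializing to $E_\infty$ by running a relative MMP over an \'etale neighborhood of $b_\infty\in B$, and then invoke the openness of the plt condition together with lower semi-continuity of log discrepancies in $\bQ$-Gorenstein families to conclude.

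The main obstacle is precisely this last spreading-out step. The multiplicity bound on $\Delta_{i,k}$ and the extraction of an analytic limit $(\Delta_\infty,X_\infty)$ should be a routine consequence of the normalized-volume estimates and the bounded family structure, and the extraction of a Koll\'ar component on the limit is classical \cite{Xu14}. The delicate point is to realize $E_\infty$ \emph{together with} its $\delta_\infty$-plt threshold as the central-fiber specialization of a family of plt extractions over a neighborhood of $b_\infty$, which requires careful control of the relative MMP and constructibility of minimal log discrepancies along the family. This is precisely where the $\bQ$-Gorenstein boundedness hypothesis does substantive work beyond the mere $\nvol$ lower bound, explaining why the conjecture is proved here in the analytically bounded setting rather than in full generality.
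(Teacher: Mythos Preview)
Your proposal has a genuine gap at exactly the point you flag as the ``main obstacle,'' and the gap is more serious than you suggest. The boundaries $\Delta_k$ on the different fibers $X_k$ are completely unrelated: each $\Delta_{i,k}$ is an arbitrary $\bQ$-Cartier Weil divisor on $X_k$, subject only to the coefficient bound and the volume bound. There is no mechanism that lets you ``embed $\Delta_\infty$ into a relative boundary on the family $\cX\to B$ whose specializations over the $b_k$ recover the $\Delta_{i,k}$'': such a relative boundary simply need not exist, because the $\Delta_{i,k}$ do not vary algebraically with $k$. Likewise, the assertion that the $\Delta_{i,k}$ ``converge in the analytic-cycle sense'' to some $\Delta_{i,\infty}$ has no content without first placing them in a common parameter space, and the multiplicity bound alone does not do this (the divisors $\Delta_{i,k}$ can have unbounded degree in any projective embedding even while $\ord_{x_k}\Delta_{i,k}$ stays bounded). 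The compactness/limit language for $b_k\to b_\infty$ is also not well-posed in the Zariski setting without further work.

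The paper's proof avoids this entirely and is essentially a two-line reduction. First, Theorem~\ref{thm:estimate on lct} converts the hypothesis $\nvol(x,X,\Delta)>\epsilon$ into a uniform lower bound $\lct(X,\Delta;\Delta)>\gamma$ depending only on $\epsilon$ and the family. Second, Theorem~\ref{thm: lct positive lower bound implies existence of delta-plt blow-up} uses this $\gamma$ (together with $\eta$) to produce the $\delta$-plt blow-up. The substance of the latter is a \emph{truncation} argument: one bounds the Cartier index of the $\Delta_i$, replaces each $\Delta_i$ by a $k$-th analytic truncation $\Delta_i'^k$ for a fixed $k$ depending only on the data (Proposition~\ref{prop: truncation keep nv(v)}), and observes that the truncated singularities $x'\in(X',\Delta'^k)$ now genuinely live in a bounded $\bR$-Gorenstein family over a finite-type base (Proposition~\ref{prop:truncation is bdd}). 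On that bounded family one applies the family version of the Koll\'ar-component construction (Theorem~\ref{thm: family kc with nv bdd}) to get a uniform $\delta$-Koll\'ar component, and the ``good'' $\delta$-plt blow-up transfers back from the truncation to the original singularity. In short, the paper manufactures the missing family structure on the boundaries via truncation, rather than trying to extract it from a nonexistent limit.
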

	
	We note that Theorem \ref{thm:existence of delta-plt blow-up} fails to hold without assuming condition (2), that is, the existence of a positive lower bound on the nonzero coefficients, see Example \ref{eg:coeff no lower bound}.


    Similar to Theorems \ref{thm:surface ACC} and \ref{thm:3fold-terminal-vol}, we also confirm Conjecture \ref{conj:existence of delta-plt blow-up} in dimension $2$ and for $3$-dimensional terminal singularities without boundary divisors. 
	 
	

	\begin{theorem}\label{thm:delta-plt-blowup-lowdim}
		Conjecture \ref{conj:existence of delta-plt blow-up} holds in the following two situations.
		\begin{enumerate}
		    \item $n=2$.
		    \item $n=3$, $\Delta=0$, and $x\in X$ is terminal.
		\end{enumerate}
	\end{theorem}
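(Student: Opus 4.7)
The plan is to reduce both parts of Theorem~\ref{thm:delta-plt-blowup-lowdim} to Theorem~\ref{thm:existence of delta-plt blow-up} by exhibiting, under the hypothesis $\hvol(x,X,\Delta)>\epsilon$, a $\bQ$-Gorenstein bounded family $B\subset\cX\to B$ to which the ambient germ $x\in X$ analytically belongs, depending only on $n$, $\eta$, $\epsilon$, and the terminality assumption in case~(2). This analytic-boundedness step is precisely what drives the proofs of Theorems~\ref{thm:surface ACC} and~\ref{thm:3fold-terminal-vol}, so the two parts of Theorem~\ref{thm:delta-plt-blowup-lowdim} will share their strategy with those earlier results: once $B\subset\cX\to B$ is produced, Theorem~\ref{thm:existence of delta-plt blow-up} yields a $\delta$-plt blow-up with $\delta$ depending only on $n$, $\eta$, $\epsilon$, and $B\subset\cX\to B$, hence ultimately on $n$, $\eta$, $\epsilon$ alone.

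For part~(1), any klt surface singularity $x\in X$ is analytically a quotient $\bA^2/G$ with $G\subset GL(2,\bC)$ finite and acting freely off the origin. Since $\Delta\geq 0$ can only decrease log discrepancies, one has $\hvol(x,X)\geq\hvol(x,X,\Delta)>\epsilon$, and applying the finite degree formula \cite[Theorem 1.3]{XZ20} to the cover $\bA^2\to\bA^2/G$ gives $\hvol(x,X)=4/|G|$, so $|G|<4/\epsilon$. There are only finitely many conjugacy classes of subgroups of $GL(2,\bC)$ of bounded order, so $x\in X$ lies analytically in a finite union of $\bQ$-Gorenstein bounded families of klt surface germs, and Theorem~\ref{thm:existence of delta-plt blow-up} concludes.

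For part~(2), let $r$ be the Gorenstein index of the terminal germ $x\in X$ and let $\pi\colon(y\in Y)\to(x\in X)$ be its canonical index-one cover, which is crepant and cyclic Galois of degree $r$. The finite degree formula yields $\hvol(y,Y)=r\cdot\hvol(x,X)>r\epsilon$, while the universal bound $\hvol(y,Y)\leq 27$ from \cite[Appendix A]{LX19} forces $r<27/\epsilon$. Thus $y\in Y$ is an index-one (equivalently, compound Du Val) terminal threefold germ with a uniform positive lower bound on its local volume. The remaining task is to show such germs belong to an analytically bounded family; granting this, taking the $\bmu_r$-quotient and using that $r$ is bounded produces a $\bQ$-Gorenstein bounded family for $x\in X$, and Theorem~\ref{thm:existence of delta-plt blow-up} again finishes.

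The main obstacle is this last analytic-boundedness statement for Gorenstein terminal threefolds with $\hvol$ bounded below. The excerpt emphasizes that general cDV germs do not form a bounded family and that their local volumes can degenerate to $0$, so the volume hypothesis must genuinely be exploited. My plan is to combine the Mori--Reid classification of cDV germs with the lower semicontinuity and constructibility of $\hvol$ in $\bQ$-Gorenstein families from \cite{BL18a,Xu19} to isolate finitely many constructible strata on which $\hvol\geq r\epsilon$, and to show that each such stratum is analytically bounded. This is precisely the key step underlying the proof of Theorem~\ref{thm:3fold-terminal-vol}, and once it is in place Theorem~\ref{thm:delta-plt-blowup-lowdim}(2) follows as the parallel corollary via Theorem~\ref{thm:existence of delta-plt blow-up}.
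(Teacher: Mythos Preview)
Your treatment of part~(1) matches the paper's proof (Lemma~\ref{lem: surface klt sing nv an bdd} plus Theorem~\ref{thm:existence of delta-plt blow-up}).

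For part~(2) there is a genuine gap. After passing to the index-one cover $y\in Y$ and bounding the Gorenstein index $r$, you assert that Gorenstein terminal threefolds with $\hvol(y,Y)>r\epsilon$ are analytically bounded, and propose to prove this via constructibility and lower semicontinuity of $\hvol$ in $\bQ$-Gorenstein families. But those results presuppose a finite-type base to stratify; cDV germs $z_1^2+f(z_2,z_3,z_4)=0$ are parametrized by the infinite-dimensional space of power series $f$, so there is no ambient family on which to invoke constructibility, and nothing in your sketch bounds a jet order of $f$. Nor is this ``the key step underlying the proof of Theorem~\ref{thm:3fold-terminal-vol}'': that proof establishes only finiteness of the set of \emph{volumes}, not analytic boundedness of the germs themselves.

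The paper proceeds differently. The involution $z_1\mapsto -z_1$ yields a crepant double cover $\pi:\widehat{Z}\to(\widehat{\bA^3},\tfrac{1}{2}\fD)$ with $\fD=(f=0)$, and $\hvol(z,Z)=2\,\hvol(0,\widehat{\bA^3},\tfrac{1}{2}\fD)$ (Lemmata~\ref{lem:kc-involution}--\ref{lem:xz-analytic}). Now the ambient germ is the \emph{fixed} smooth point $0\in\bA^3$ with an analytic boundary; one truncates $\fD$ to a polynomial $\fD^{k}$ of bounded degree $k=k(\epsilon)$ so that both the local volume and good Koll\'ar components are preserved (Theorem~\ref{thm:truncation-an-bdry}, Proposition~\ref{prop: truncation keep nv(v)}). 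The pairs $0\in(\bA^3,\tfrac{1}{2}\fD^{k})$ genuinely form a bounded family, so Theorem~\ref{thm: family kc with nv bdd} produces a $\delta$-Koll\'ar component there, which is then transported back through $\pi$ and the completion. The substantial remaining issue---absent from your outline---is $\bmu_r$-equivariance, needed to descend the Koll\'ar component from $Z$ to $x\in X$; the paper handles this type-by-type from Table~\ref{table:terminal}, passing to an appropriate group quotient before invoking Theorem~\ref{thm: family kc with nv bdd}.
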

	

	


		An immediate consequence of Theorem \ref{thm:existence of delta-plt blow-up} and \cite[Theorem 1.3]{HLS19} is that under the conditions of Theorem \ref{thm:existence of delta-plt blow-up}, the ACC conjecture for minimal log discrepancies holds. Recall that the ACC conjecture for minimal log discrepancies is closely related to the termination of flips \cite{Sho04} and is still open in dimension at least $3$ even when $x\in X$ is fixed. For other recent progress on minimal log discrepancies, we refer the readers to \cite{JLiu18,Kaw18,Jia19,JLX19,Mor20}.
	

	\begin{corollary}\label{cor: accmld when nv bdd}
		Let $n$ be a positive integer, $I\subset [0,1]$ a set which satisfies the DCC, $\epsilon$ a positive real number, and $B\subset \cX\to B$ a $\bQ$-Gorenstein family of $n$-dimensional klt singularities. Then the set 
		
		$$ \left\{\mld(x,X,\Delta)\left| \begin{array}{l} (x\in X^{\rm an})\in (B\subset \cX^{\rm an}\to B),~ x\in(X,\Delta) \text{ is klt},\\
		\text{where }\Delta=\sum _{i=1}^m a_i\Delta_i,~\text{$a_i\in I$ for any $i$},~\text{each $\Delta_i\ge 0$}\\
		\text{is a $\bQ$-Cartier Weil divisor},~\text{and $\nvol(x,X,\Delta)>\epsilon$.}	\end{array}\right.
		\right\}$$
			satisfies the ACC.
			
	Moreover, if $I$ is a finite set, then the only possible accumulation point of the above set is $0$.  
	\end{corollary}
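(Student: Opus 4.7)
The plan is to combine Theorem \ref{thm:existence of delta-plt blow-up} with \cite[Theorem 1.3]{HLS19}; the latter provides an ACC statement for the minimal log discrepancy of klt singularities that admit a $\delta$-plt blow-up and have coefficients in a DCC set, which is the missing piece after Theorem \ref{thm:existence of delta-plt blow-up}.

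The first step is to upgrade the DCC hypothesis on $I$ into a positive lower bound for the nonzero coefficients. Writing $I':=I\setminus\{0\}\subset(0,1]$, one checks that $I'$ is also DCC and therefore satisfies $\eta:=\inf(I')>0$: otherwise one could choose $a_1\in I'$ and inductively pick $a_{k+1}\in I'$ with $a_{k+1}<a_k/2$, producing an infinite strictly decreasing sequence in $I'$, a contradiction. After discarding components with vanishing coefficient, every nonzero coefficient $a_i$ of $\Delta=\sum a_i\Delta_i$ then satisfies $a_i\geq\eta$.

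The second step is a direct appeal to Theorem \ref{thm:existence of delta-plt blow-up} with the parameters $n,\eta,\epsilon$ and $B\subset\cX\to B$: the analytic membership $(x\in X^{\rm an})\in(B\subset\cX^{\rm an}\to B)$, the lower bound $a_i>\eta$ from the previous step, the $\bQ$-Cartier Weil property of each $\Delta_i$, and the assumption $\nvol(x,X,\Delta)>\epsilon$ together supply a uniform $\delta=\delta(n,\eta,\epsilon,B\subset\cX\to B)>0$ such that every singularity in the set admits a $\delta$-plt blow-up. Plugging this uniform $\delta$-plt blow-up and the DCC coefficient hypothesis into \cite[Theorem 1.3]{HLS19} then delivers the ACC conclusion for the mld set.

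For the moreover part, when $I$ is finite the value $\eta>0$ is obtained trivially and the same invocation of Theorem \ref{thm:existence of delta-plt blow-up} produces a uniform $\delta$-plt blow-up; one then applies the finite-coefficient strengthening of \cite[Theorem 1.3]{HLS19} (which, under the additional assumption that the coefficient set is finite, promotes the ACC to the statement that $0$ is the only possible accumulation point) to conclude. There is no substantive obstacle in this corollary — its entire content is packaged into Theorem \ref{thm:existence of delta-plt blow-up} and the theorems of \cite{HLS19}, and the only genuine step here is the standard DCC-to-positive-lower-bound reduction in the first paragraph above.
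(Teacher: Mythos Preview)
Your proposal is correct and follows essentially the same approach as the paper, whose proof is the single line ``This follows from Theorem \ref{thm:existence of delta-plt blow-up} and \cite[Theorem 1.3]{HLS19}.'' You have simply filled in the routine details of extracting a positive lower bound $\eta$ from the DCC hypothesis before invoking Theorem \ref{thm:existence of delta-plt blow-up}; one trivial fix is that your $\eta=\inf(I')$ only gives $a_i\geq\eta$, so when applying Theorem \ref{thm:existence of delta-plt blow-up} you should use, say, $\eta/2$ to match its strict inequality hypothesis $a_i>\eta$.
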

	
		Theorem \ref{thm:boundedness of global index} answers a folkfore question on the boundedness of the Cartier index of any $\Qq$-Cartier Weil divisor in a log bounded family (see \cite[Question 3.31]{HLS19}). We refer readers to \cite[Theorem 1.10]{GKP16}, \cite[Lemma 2.24]{Bir19}, and \cite[Lemma 7.14]{CH20} for some partial results. Our approach to show Theorem \ref{thm:boundedness of global index} is based on Theorem \ref{thm:existence of delta-plt blow-up} and \cite[Theorem 1.2]{Bir18}. 
		
	\begin{theorem}\label{thm:boundedness of global index} Let $\epsilon$ be a positive real number. Suppose $\cC\coloneqq\{(X,\Delta)\}$ is a set of $\epsilon$-lc projective pairs that belongs to a log bounded family $\cP$. Then there exists a positive integer $N$ which only depends on $\cP$ and $\epsilon$ satisfying the following.
	
	Let $(X,\Delta)\in \cC$, and $D$ a $\Qq$-Cartier Weil divisor on $X$. Then $ND$ is Cartier.
		\end{theorem}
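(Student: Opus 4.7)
The plan is to combine Theorem \ref{thm:existence of delta-plt blow-up} with Birkar's boundedness of complements \cite[Theorem 1.2]{Bir18} in order to bound the local Cartier index of $D$ at each closed point of $X$, and then to package these local bounds into a global one using log boundedness of $\cP$.

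First I would verify the hypotheses of Theorem \ref{thm:existence of delta-plt blow-up} at every germ $x\in(X,\Delta)$ with $(X,\Delta)\in\cC$. Log boundedness of $\cP$ forces the nonzero coefficients of $\Delta$ to lie in a fixed finite set, yielding a uniform lower bound $\eta>0$; and the underlying germ $x\in X$ analytically belongs to a $\bQ$-Gorenstein bounded family of klt singularities extracted from $\cP$ (say, via the relative singular locus of the total space). To obtain a uniform lower bound $\epsilon_0>0$ on $\hvol(x,X,\Delta)$, I would invoke the lower-semicontinuity and constructibility of the normalized volume function in $\bQ$-Gorenstein families, due to Blum-Liu \cite{BL18a} and Xu \cite{Xu19}: after a Noetherian stratification of the parameter base of $\cP$, $\hvol$ takes only finitely many positive values along the $\epsilon$-lc locus, hence is bounded below. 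Theorem \ref{thm:existence of delta-plt blow-up} then produces a uniform $\delta>0$ such that every such germ admits a $\delta$-plt blow-up $\mu\colon Y\to X$ extracting a Koll\'ar component $E$.

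Next I would apply \cite[Theorem 1.2]{Bir18} to the klt log Fano pair $(E,\Delta_E)$, where $\Delta_E$ is the different. The $\delta$-plt property ensures that $(E,\Delta_E)$ is a $\delta$-klt weak log Fano pair whose boundary coefficients lie in a fixed DCC set (controlled by the coefficients of $\mu_*^{-1}\Delta$ and by $\delta$ via adjunction). Birkar's boundedness of complements then furnishes a positive integer $N_0$, depending only on $n$, $\delta$ and the coefficient set, such that $(E,\Delta_E)$ admits an $N_0$-complement; in particular $N_0(K_E+\Delta_E)$ is Cartier.

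Finally I would transfer this index bound on $E$ to a bound on the local Cartier index of an arbitrary $\bQ$-Cartier Weil divisor $D$ at $x$. Since $-E$ is ample over $X$ and $E$ is the only $\mu$-exceptional divisor, $\mathrm{Pic}(Y/X)\otimes\bQ$ is generated by $\cO_Y(E)$; combined with plt adjunction along $E$ and the bounded Cartier index of $K_X$ in the log bounded family $\cP$, this forces the Cartier index of $E$ on $Y$ near $E$ to divide a bounded multiple of $N_0$, and hence forces the order of the local class group $\mathrm{Cl}(\cO_{X,x})$ to be uniformly bounded. Consequently the local Cartier index of $D$ at $x$ is uniformly bounded, and taking the least common multiple of all such possible local bounds yields the desired $N=N(\cP,\epsilon)$. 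The main obstacle I expect is this final step: precisely relating the bounded Cartier index of $K_E+\Delta_E$ to the torsion in $\mathrm{Pic}(Y/X)$, and thus to the local class group of the singularity, which requires carefully exploiting the orbifold-cone / Seifert-$\bC^*$-bundle structure near the plt exceptional divisor.
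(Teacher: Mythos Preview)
Your outline has the right shape but contains genuine gaps that the paper's argument is designed to circumvent.

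First, log boundedness only bounds the couple $(X,\Supp\Delta)$; it says nothing about the coefficients of $\Delta$, so your claimed uniform $\eta>0$ does not follow. More seriously, since the pairs are only assumed $\epsilon$-lc, $K_X+\Delta$ is $\bR$-Cartier but $K_X$ itself need not be $\bQ$-Cartier, and the individual components $\Delta_i$ need not be $\bQ$-Cartier Weil divisors. Thus neither the hypothesis ``$(x\in X^{\rm an})\in(\bxanb)$'' nor condition (3) of Theorem \ref{thm:existence of delta-plt blow-up} is available, and your proposed invocation of that theorem is not justified. Relatedly, without a $\bQ$-Gorenstein (or $\bR$-Gorenstein) structure on the total space of the family, the semicontinuity and constructibility results for $\hvol$ from \cite{BL18a,Xu19} do not apply.

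The paper resolves all of this in one stroke: it first passes to a small $\bQ$-factorialization $X'\to X$, and invokes \cite[Theorem 1.2]{Bir18} to conclude that the set of $X'$ is itself bounded. (This is how \cite[Theorem 1.2]{Bir18} enters; it is a boundedness statement for small modifications, not a complement theorem as you suggest.) After this reduction one may take $\Delta=0$ and $X$ $\bQ$-factorial, so the total space of the bounded family can be made normal and klt via \cite[Proposition 2.4]{HX15}. The paper then uses the self-product $\cX\times_B\cX\to\cX$ with the diagonal section to realize every closed point of every fiber as the section of a $\bQ$-Gorenstein family of klt singularities, and applies Theorem \ref{thm:boundedness of local index}. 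The latter packages Theorem \ref{thm:existence of delta-plt blow-up} together with \cite[Theorem 1.6]{HLS19}, which already contains the passage from a $\delta$-plt blow-up to a bound on the Cartier index; you do not need to reprove this via the orbifold-cone class group argument you flag as the main obstacle.
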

	
	
\noindent
\textbf{Sketch of proofs}. We first sketch the proofs of Theorems \ref{thm:discreteACC} and \ref{thm:existence of delta-plt blow-up}. For simplicity, in both theorems, we assume that $x\in X$ is fixed, the coefficients of $\Delta$ belong to a rational finite set, and $\nvol(x,X,\Delta)$ has a positive lower bound. By the boundedness of Cartier index of any $\bQ$-Cartier Weil divisor on $X$, we may further assume that each $\Delta_i$ is Cartier. Our idea is to reduce both theorems to the case when $\Supp\Delta$ belongs to a bounded family, and then we may apply the constructibility of local volumes in a log bounded family proved by Xu \cite[Theorem 1.3]{Xu19}, and the existence of ``good'' $\delta$-plt blow-ups in a log bounded family (see Theorem \ref{thm: family kc with nv bdd}). The reduction follows from two steps. In step 1, we show that there exists a positive integer $k$ depending only on positive lower bounds of both $\hvol(x,X,\Delta)$ and $\lct(X,\Delta;\Delta)$, such that if $\Delta^k$ is 
a $k$-th truncation of $\Delta$, then $\nvol(x,X,\Delta)=\nvol(x,X,\Delta^k)$ (see Theorem \ref{thm:truncation}). Moreover, we show that any ``good'' $\delta$-plt blow-up of $x\in (X,\Delta^k)$ is also a $\delta$-plt blow-up of $x\in (X,\Delta)$ (see Proposition \ref{prop: truncation keep nv(v)}). Our argument is inspired by generic limit constructions from \cite{Kol08, dFEM10, dFEM11} and a truncation argument in \cite{Xu19} based on Li's properness estimate \cite{Li18}. In step 2, we establish an inequality $c\cdot \lct(X,\Delta;\Delta)\geq \hvol(x,X,\Delta)$ where $c$ is a positive constant depending only on $x\in X$ (see Theorem \ref{thm:estimate on lct}). This shows that the constant $k$ from step 1 can be chosen to depend only on the positive lower bound of $\hvol(x,X,\Delta)$, so we get the boundedness of $\Delta^k$. Here a ``good'' $\delta$-plt blow-up means that $\nvol_{(X,\Delta),x}(\ord_S)$ is bounded from above where $S$ is the induced Koll\'ar component. 


It is worthwhile to mention that many results for local volumes were only proved for $\Qq$-divisors $\Delta$ in previous literature, and some key ingredients in their proofs including the existence of monotonic $n$-complement \cite[Theorem 1.8]{Bir19} fail for $\Rr$-divisors. Thus one technical difficulty in our paper is to generalize these results to the case where $\Delta$ is an $\Rr$-divisor and the coefficient set $I$ is not finite. To resolve this issue, we generalize \cite[Main Theorem]{Blu18a} and \cite[Theorems 1.2 and 1.3]{Xu19} from $\bQ$-divisors to $\bR$-divisors (see Section 3) and prove a Lipschitz type estimate on local volumes (see Theorem \ref{thm:uniform lipschitz}). Another technical difficulty is that we need to treat analytic boundaries and analytically bounded families, so that together with Theorems \ref{thm:discreteACC}, \ref{thm:existence of delta-plt blow-up}, and classification results, we can prove Conjectures \ref{conj:ACC for nvol} and \ref{conj:existence of delta-plt blow-up} in dimension 2 as well as for 3-dimensional terminal singularities.

\medskip

\noindent \textbf{Acknowledgement.} This work began when the first author visited the second author at Yale University in October 2018. The first author would like to thank their hospitality. Part of this work was done when the third author visited MIT in the fall semester of 2018 and Johns Hopkins University several times during 2018--2020. The third author would like to thank their hospitality.

We would like to thank Chen Jiang, Jihao Liu, Mircea Musta\c{t}\u{a}, Zhiyu Tian, Xiaowei Wang, Chenyang Xu, Qizheng Yin and Ziquan Zhuang for helpful discussions. We also thank Harold Blum and Guodu Chen for useful comments. YL is partially supported by the NSF Grant No. DMS-2148266 (transferred from the NSF Grant No.
DMS-2001317). JH was supported by a grant from the Simons Foundation (Grant Number 814268, MSRI). LQ would like to thank his advisor Chenyang Xu for constant support, encouragement, and numerous inspiring conversations. 
	
	\section{Preliminaries}
	\subsection{Pairs and singularities}
	Throughout this paper, we work over an algebraically closed field $\bk$ of characteristic $0$ unless it is specified.
	
	We adopt the standard notation and definitions in \cite{KM98}, and will freely use them.
	
	\begin{definition}[Pairs and singularities]\label{defn: positivity}
		A pair $(X,\Delta)$ consists of a normal quasi-projective variety $X$ and an $\Rr$-divisor $\Delta\ge0$ such that $K_X+\Delta$ is $\Rr$-Cartier. Moreover, if the coefficients of $\Delta$ are $\leq 1$, then $\Delta$ is called a boundary of $X$. If moreover $\Delta$ has $\bQ$-coefficients, then we say that $(X,\Delta)$ is a $\bQ$-pair.
		
		Let $E$ be a prime divisor on $X$ and $D$ an $\mathbb R$-divisor on $X$. We define $\mult_E D$ to be the multiplicity of $E$ along $D$. Let $\phi:W\to X$
		be any log resolution of $(X,\Delta)$ and let
		$$K_W+\Delta_W:=\phi^{*}(K_X+\Delta).$$
		The \emph{log discrepancy} of a prime divisor $E$ on $W$ with respect to $(X,\Delta)$ is defined as 
		\[
		A_{(X,\Delta)}(E):=1-\mult_{E}\Delta_W.
		\]
		For any positive real number $\epsilon$, we say that $(X,\Delta)$ is lc (resp. klt, $\epsilon$-lc, $\epsilon$-klt) if $A_{(X,\Delta)}(E)\ge0$ (resp. $>0$, $\ge \epsilon$, $>\epsilon$) for every log resolution $\phi:W\to X$ as above and every prime divisor $E$ on $W$. We say that $(X,\Delta)$ is plt (resp. $\epsilon$-plt) if $A_{(X,\Delta)}(E)>0$ (resp. $>\epsilon$) for any exceptional prime divisor $E$ over $X$. Note that a prime divisor $E$ over $X$ is simply a prime divisor $E$ on some log resolution $W$ of $X$. The center of $E$ on $X$ (denoted by $c_X(E)$) is the scheme theoretic point $\phi(\eta)\in X$ where $\eta$ is the generic point of $E$.
		
		A \emph{singularity} $x\in (X,\Delta)$ consists of a pair $(X,\Delta)$ and a closed point $x\in X$. The singularity $x\in (X,\Delta)$ is called an lc (resp. a klt, an $\epsilon$-lc) singularity if there exists an open neighborhood $U$ of $x$ in $X$ such that $(U,\Delta|_U)$ is lc (resp. klt, $\epsilon$-lc). The \emph{minimal log discrepancy} of an lc singularity $x\in (X,\Delta)$ is defined as 
		$$\mld(x,X,\Delta):=\min\{A_{(X,\Delta)}(E)\mid E \text{ is a prime divisor} \text{ over } X \text{ with } c_{X}(E)=x\}.$$
		The singularity $x\in (X,\Delta)$ is called $\epsilon$-lc if $\mld(x,X,\Delta)\ge\epsilon$.
	\end{definition}

	\begin{definition}[Log canonical thresholds]
Let $x\in (X,\Delta)$ be an lc singularity and let $D$ be an effective $\mathbb{R}$-Cartier $\Rr$-divisor. The \emph{log canonical threshold} of $D$ with respect to $x\in (X,\Delta)$ is
\begin{equation*}
\lct_{x}(X,\Delta;D)\coloneqq {\rm sup}\{t \in\mathbb{R}\mid x\in (X,\Delta+tD) {\rm~is~log~canonical}\}.
\end{equation*}
For convenience, we will denote $\lct_{x}(X,\Delta;D)$ by $\lct(X,\Delta;D)$ if $x$ is clear from the context. Similarly, we may define the log canonical threshold $\lct(X,\Delta;\fa)$ (resp. $\lct(X,\Delta;\fa_\bullet)$) of an ideal $\fa$ (resp. a graded sequence of ideals $\fa_\bullet$) with respect to $x\in (X,\Delta)$, see, for example, \cite[Definition 3.4.1]{Blu18b}.
\end{definition}
	
		
	
Next we give some estimates on order functions. 
	
	\begin{definition}
	Let $X$ be a normal variety, $x\in X$ a closed point, and $\fm_{X,x}$ the maximal ideal of the local ring $\mathcal{O}_{X,x}$ at $x$. The \emph{order function} $\ord_x: \mathcal{O}_{X,x}\to \Zz_{\ge0} \cup\{\infty\}$ is defined by
	\[
        \ord_{x}(f):=\sup \left\{j \geq 0 \mid f \in     \fm_{X,x}^{j}\right\}.
    \]
    This is a valuation if $x$ is a smooth point, but not in general. Let $\Delta=\mathrm{div}(f)$ be an effective Cartier divisor, where $f\in \mathcal{O}_{X,x}$, we define $\ord_{x}(\Delta)\coloneqq\ord_{x}(f)$. We remark that $\ord_{x}(\Delta)$ is well-defined, that is, $\ord_{x}(\Delta)$ is independent on the choice of $f$.

	\end{definition}
	
	\begin{proposition}\label{prop: order of a klt boundary}
		Let $x\in (X,\Delta)$ be a klt singularity of dimension $n$. 
		\begin{enumerate}
			\item  $\lct(X,\Delta;\fm_{X,x})\leq n$, where $\fm_{X,x}\subseteq\cO_{X,x}$ is the maximal ideal of $x$.
			\item  	Suppose that $x\in (X,\Delta+c\Delta_0)$ is a klt singularity for some positive real number $c$ and Cartier divisor $\Delta_0=\mathrm{div}(f)$, where $f\in \cO_{X}$. Then $\ord_{x}(f)<\frac{n}{c}$.
		\end{enumerate}
	\end{proposition}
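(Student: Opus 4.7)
The plan is to derive part (2) from part (1), and to obtain part (1) from the classical bound $\mld(x,X,\Delta)\leq \dim X$ for lc singularities (see, e.g., \cite{KM98}).

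For part (1), I would invoke the standard fact that $\mld(x,X,\Delta)\leq n$ for any $n$-dimensional lc singularity, and then choose a sequence of prime divisors $E_k$ over $X$ with $c_X(E_k)=x$ and $A_{X,\Delta}(E_k)\to \mld(x,X,\Delta)\leq n$. Since each $E_k$ is centered at the closed point $x$, one has $\ord_{E_k}(\fm_{X,x})\geq 1$, so the valuative formula for the log canonical threshold yields
\[
\lct(X,\Delta;\fm_{X,x})\leq\frac{A_{X,\Delta}(E_k)}{\ord_{E_k}(\fm_{X,x})}\leq A_{X,\Delta}(E_k),
\]
and letting $k\to\infty$ gives the desired bound.

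For part (2), the plan is to pass from $\Delta_0$ to $\fm_{X,x}$ via the inclusion $f\in\fm_{X,x}^{\ord_x(f)}$. For any divisorial valuation $v$ of $K(X)$ centered at $x$, multiplicativity of $v$ on ideals gives
\[
v(\Delta_0)=v(f)\geq \ord_x(f)\cdot v(\fm_{X,x}).
\]
Dividing $A_{X,\Delta}(v)$ by both sides and taking the infimum over all such $v$ yields
\[
\lct(X,\Delta;\Delta_0)\leq\frac{\lct(X,\Delta;\fm_{X,x})}{\ord_x(f)}.
\]
The klt hypothesis on $(X,\Delta+c\Delta_0)$ amounts to the strict inequality $c<\lct(X,\Delta;\Delta_0)$, and combining this with the display above and with part (1) gives $c\cdot\ord_x(f)<\lct(X,\Delta;\fm_{X,x})\leq n$, i.e., $\ord_x(f)<n/c$.

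The only non-trivial input is the classical bound $\mld\leq\dim X$; the rest is formal manipulation of the valuative description of the lct. The main point to watch is preserving strictness, which is automatic because the klt (as opposed to lc) hypothesis on $(X,\Delta+c\Delta_0)$ gives $c<\lct(X,\Delta;\Delta_0)$ with strict inequality.
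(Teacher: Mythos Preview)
Your argument is correct. For part (2) you do essentially what the paper does: use $f\in\fm_{X,x}^{\ord_x(f)}$ to compare $\lct(X,\Delta;\Delta_0)$ with $\lct(X,\Delta;\fm_{X,x})$, then invoke part (1) and the strict inequality coming from the klt hypothesis.

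For part (1) your route is genuinely different from the paper's. The paper regards $y\mapsto \lct_y(X,\Delta;\fm_{X,y})$ as a function on $X$ and appeals to lower semi-continuity of log canonical thresholds in a family (Lemma~\ref{lem: lctlowersc}) to move from $x$ to a nearby smooth point $x'$, where $\lct_{x'}(X;\fm_{X,x'})=n$ by direct computation. You instead bound $\lct(X,\Delta;\fm_{X,x})$ by $\mld(x,X,\Delta)$ via the valuative formula (using that a divisorial valuation centered at $x$ has $\ord_E(\fm_{X,x})\geq 1$), and then invoke the classical inequality $\mld(x,X,\Delta)\leq n$. Your approach is more self-contained in that it avoids the family semi-continuity machinery, at the cost of importing the $\mld\leq n$ bound as a black box; the paper's approach trades that input for a semi-continuity lemma already proved later in the preliminaries. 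A cosmetic remark: since $\mld$ is a minimum rather than an infimum, you can take a single $E$ achieving it and skip the limiting sequence.
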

	\begin{proof}
		(1) By lower-semi-continuity of log canonical thresholds in a family (Lemma \ref{lem: lctlowersc}), there exists a closed smooth point $x'\in X$, such that $n= \lct_{x'}(X;\fm_{X,x'})\ge \lct_{x'}(X,\Delta;\fm_{X,x'}) \ge \lct_{x}(X,\Delta;\fm_{X,x})$, where $\fm_{X,x'}\subseteq\cO_{X,x'}$ is the maximal ideal of $x'$.
		
		(2) Let $a\coloneqq\ord_{x}(f)$. Then $f\in \fm_{X,x}^{a}$, and $(X,\Delta+c\fm_{X,x}^{a})$ is klt. By (1), $ca< n$. Hence $\ord_{x}(f)< \frac{n}{c}$. 
	\end{proof}

	We also need the sub-additivity of log canonical thresholds \cite[Corollary 2]{JM08}.
	
	\begin{proposition}\label{prop:sub-additivity for lct}
		Let $x\in (X,\Delta)$ be an lc singularity where $X$ is $\bQ$-Gorenstein. For any ideal sheaves $\fa,\fb$ on $X$ whose cosupports contain $x$, we have
		\[
		\lct(X,\Delta;\fa+\fb)\le \lct(X,\Delta;\fa)+\lct(X,\Delta;\fb).
		\]
	\end{proposition}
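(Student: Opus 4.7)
The plan is to derive this inequality from the valuative characterization of log canonical thresholds, which is the essence of \cite{JM08}. Using the standard formula
\begin{equation*}
\lct(X,\Delta;\fc)=\inf_v \frac{A_{X,\Delta}(v)}{v(\fc)},
\end{equation*}
where $v$ ranges over divisorial (equivalently, quasi-monomial) valuations of $K(X)$ centered on the vanishing locus of $\fc$, together with the elementary identity $v(\fa+\fb)=\min(v(\fa),v(\fb))$, the left-hand side rewrites as
\begin{equation*}
\lct(X,\Delta;\fa+\fb)=\inf_v \max\!\left(\frac{A_{X,\Delta}(v)}{v(\fa)},\ \frac{A_{X,\Delta}(v)}{v(\fb)}\right).
\end{equation*}

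Next, I would invoke the existence of quasi-monomial valuations $v_1,v_2$ computing $\lct(\fa)$ and $\lct(\fb)$ respectively (see e.g.\ \cite{JM12, Blu18a}, with the $\mathbb{R}$-divisor extension carried out in Section~3 of this paper). Passing to a sufficiently refined common log resolution of $(X,\Delta,\fa,\fb)$ and performing further blow-ups if needed, I would arrange $v_1$ and $v_2$ to be monomial along a shared face of the dual complex. Along the one-parameter family of quasi-monomial valuations $w_s:=(1-s)v_1+sv_2$, $s\in[0,1]$, the three quantities $A_{X,\Delta}(w_s)$, $w_s(\fa)$, $w_s(\fb)$ are all affine in $s$. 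Setting $f(s):=A(w_s)/w_s(\fa)$ and $g(s):=A(w_s)/w_s(\fb)$, these are monotone rational functions with $f(0)=\lct(\fa)$, $g(1)=\lct(\fb)$, and $f(1), g(0)$ at least the respective lct. By continuity, there exists $s^*\in[0,1]$ with $f(s^*)=g(s^*)$, and a direct computation---which, after clearing denominators, reduces to the elementary inequality $c_1(1-c_2)+c_2(1-c_1)\geq 0$ for $c_1,c_2\in(0,1]$---yields $f(s^*)\leq \lct(\fa)+\lct(\fb)$. Taking the infimum over $v$ then delivers the claimed bound. (In the degenerate case where the crossing falls outside $[0,1]$, one finds directly that $\min_s \max(f,g)=\lct(\fa)$ or $\lct(\fb)$, which is $\leq \lct(\fa)+\lct(\fb)$.)

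The principal technical obstacle is the birational-geometric step of arranging $v_1$ and $v_2$ in a common face of the dual complex of some log resolution; this can always be done by further blow-ups but requires a careful choice of models, and is essentially what the valuative framework of Jonsson--Musta\c{t}\u{a} is designed to handle. A conceivable alternative would be to establish a sum-version of the Demailly--Ein--Lazarsfeld subadditivity for multiplier ideals in the spirit of Takagi, but this appears to require more machinery without shortening the proof in a meaningful way.
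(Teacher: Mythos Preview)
Your argument contains a genuine gap at the step you yourself flag as ``the principal technical obstacle'': the assertion that, after further blow-ups, the minimizing valuations $v_1$ and $v_2$ can be made monomial along a \emph{shared} face of some dual complex. This is false in general, and no amount of blowing up repairs it. Blow-ups only subdivide existing faces of the dual complex; they never create adjacencies between divisors that were previously disjoint. Concretely, take $X=\bA^2$, $\Delta=0$, $\fa=(x^2,y^3)$ and $\fb=((x-y)^2,y^3)$. The unique minimizer for $\lct(\fa)=\tfrac{5}{6}$ is the monomial valuation with weights $(3,2)$ in $(x,y)$, while the unique minimizer for $\lct(\fb)=\tfrac{5}{6}$ is the monomial valuation with weights $(3,2)$ in $(x-y,y)$. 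On the ordinary blow-up of the origin these two valuations are centered at distinct points of the exceptional $\bP^1$, so on any further log resolution they sit in distinct branches of the dual complex and share only the $0$-dimensional face $\{\ord_{E_0}\}$. Your interpolation $w_s=(1-s)v_1+sv_2$ then has no meaning as a family of quasi-monomial valuations with the required linearity of $A$, $v(\fa)$, $v(\fb)$. (The algebraic verification you sketch---which does reduce to $\alpha_1\alpha_2(\beta_2-\beta_1)+\beta_1\beta_2(\alpha_1-\alpha_2)\ge 0$ in the normalization $A(v_1)=A(v_2)=1$---is correct when $v_1,v_2$ genuinely lie on a common face; the problem is purely the geometric input.)

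The paper's proof takes an entirely different route, and in fact the one you dismiss as ``more machinery'': after replacing $(X,\Delta)$ by a dlt model to make $X$ $\bQ$-Gorenstein, it simply quotes \cite[Corollary~2]{JM08}, where Jow--Miller deduce the inequality from their summation formula for multiplier ideals (proved via cellular free resolutions). So the multiplier-ideal approach is not an alternative but the actual argument, and it is shorter precisely because the hard combinatorics has already been packaged into the summation formula. If you want a valuation-theoretic proof, you would need to control a \emph{path} in the dual complex from $v_1$ to $v_2$ passing through branch points, and the piecewise-linear behavior of $A/v(\fa)$ and $A/v(\fb)$ across those branch points does not obviously yield the bound.
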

	\begin{proof}
	 The proposition follows from \cite[Corollary 2]{JM08}.
	\end{proof}

\begin{defn}[Bounded families]\label{defn: bdd}
	A \emph{couple} consists of a normal projective variety $X$ and a divisor $D$ on $X$ such that $D$ is reduced. Two couples $(X,D)$ and $(X',D')$ are \emph{isomorphic} if there exists an isomorphism $X\rightarrow X'$ mapping $D$ onto $D'$.
	
	A set $\mathcal{P}$ of couples is \emph{bounded} if there exist finitely many projective morphisms $V^i\rightarrow T^i$ of varieties and reduced divisors $C^i$ on $V^i$ such that for each $(X,D)\in\mathcal{P}$, there exists $i$ and a closed point $t\in T^i$, such that the couples $(X,D)$ and $(V^i_t,C^i_t)$ are isomorphic, where $V^i_t$ and $C^i_t$ are the fibers over $t$ of the morphisms $V^i\rightarrow T^i$ and $C^i\rightarrow T^i$, respectively. 
	
	A set $\mathcal{C}$ of projective pairs $(X,B)$ is said to be \emph{log bounded} if the corresponding set of couples $\{(X,\Supp B)\}$ is bounded. A set of projective varieties $X$ is said to be \emph{bounded} if the corresponding set of couples $\{(X,0)\}$ is bounded. A log bounded (respectively bounded) set is also called a \emph{log bounded family} (respectively \emph{bounded family}).
\end{defn}

	\subsection{Normalized volumes of valuations}
	
	
	In this section we give the definition of normalized volumes of valuations from \cite{Li18}. Note that our definition slightly generalizes Li's definition as we treat $\Rr$-pairs. Throughout this section, we denote by $X$ a normal variety.
	
	\subsubsection{Valuations}
	A \emph{valuation} $v$ of  $K(X)$ is a function $v:K(X)^{\times}\to \bR$ satisfying the following conditions:
	\begin{itemize}
		\item $v(fg)=v(f)+v(g)$;
		\item $v(f+g)\geq \min\{v(f),v(g)\}$;
		\item $v(c)=0$ for $c\in \bk^{\times}$.
	\end{itemize}
	We also set $v(0)=+\infty$. Every valuation $v$ of $K(X)$ gives rise to a valuation ring $\cO_v:=\{f\in K(X)\mid v(f)\geq 0\}$. The value group of $v$ is the (abelian) subgroup $\Gamma_{v}:=v(K(X)^{\times})$ of $\Rr$. 
	
	Let $\xi\in X$ be a scheme-theoretic point.
	We say a valuation $v$ of $K(X)$ is \emph{centered at} $\xi=c_X(v)$ if its valuation ring $\cO_v$ dominates $\cO_{X,\xi}$ as local rings. We denote by $\Val_X$ the set of all valuations of $K(X)$ admitting a center on $X$. We denote by $\Val_{X,\xi}$ the subset of $\Val_X$ consisting of valuations centered at $\xi$. Note that the center of a valuation is unique if it exists due to separatedness of $X$.
	
	For $v\in \Val_X$ and a non-zero ideal sheaf $\fa\subset \cO_X$, we define 
	\[
	v(\fa):=\min\{v(f)\mid f\in \fa\cdot \cO_{X,\xi}\textrm{ where }\xi=c_X(v)\}.
	\]
	We endow $\Val_X$ with the weakest topology such that for any non-zero ideal sheaf $\fa\subset \cO_X$, the map $\Val_X\to \bR_{\geq 0}$ defined as  $v\mapsto v(\fa)$ is continuous.
	
	Given a valuation $v\in \Val_{X,\xi}$ and a real number $p$, we define the \emph{valuation ideal sheaf} $\fa_p(v)$ as $\fa_p(v)(U):=\{f\in \cO_{X}(U)\mid v(f)\geq p\}$. It is clear that the cosupport of $\fa_p(v)$ is $\overline{\{\xi\}}$ for $p>0$. In particular, if $\xi=x$ is a closed point on $X$ and $v\in \Val_{X,x}$, then $\fa_p(v)$ is an $\fm_x$-primary ideal for $p>0$.
	
	Let $\mu:Y\to X$ be a birational morphism from a normal variety $Y$. Hence $\mu^*:K(X)\to K(Y)$ is an isomorphism.
	Let $E\subset Y$ be a prime divisor. Then $E$ induces a valuation $\ord_E$ of $K(X)$ by assigning each rational function $f\in K(X)$ to the order of vanishing of $\mu^* f$ along $E$. A valuation $v\in \Val_X$ is a \emph{divisorial valuation} if $v=\lambda\cdot\ord_E$ for some prime divisor $E$ over $X$ and some $\lambda\in \bR_{>0}.$
	
	Let $(Y,D)$ be a \emph{log smooth model over} $X$, that is, $\mu: Y\to X$ is a proper birational morphism from a smooth variety $Y$, the divisor $D$ is reduced simple normal crossing on $Y$, and $\mu$ is an isomorphism on $Y\setminus\Supp(D)$. 
	Let $\bm{y}=(y_1,\cdots, y_r)$ be a system of algebraic coordinates at a scheme-theoretic point $\eta\in Y$. We assume that each divisor $(y_i=0)$ near $\eta$ is equal to an irreducible component of $D$. Let  $\bm{\alpha}=(\alpha_1,\cdots, \alpha_r)\in \bR_{\geq 0}^r$ be a vector. We define a valuation $v_{\bm{\alpha}}$ as follows. Since by Cohen's structure theorem we have $\widehat{\cO_{Y,\eta}}\cong \kappa(\eta)\llbracket y_1,y_2,\cdots , y_r\rrbracket$, any function $f\in \cO_{Y,\eta}$ has a Taylor expansion $
	f=\sum_{\bm{\beta}\in \bZ_{\geq 0}^r} c_{\bm{\beta}} \bm{y}^{\bm{\beta}}$, where $\bm{y}^{\bm{\beta}}:=\prod_{i=1}^r y_i^{\beta_i}$ and
		$c_{\bm{\beta}}\in \widehat{\cO_{Y,\eta}}$ is either $0$ or a unit.
	Then we define  $v_{\bm{\alpha}}(f):= \min\{\langle \bm{\alpha},\bm{\beta}\rangle\mid c_{\bm{\beta}}\neq 0\}$, where $\langle \bm{\alpha},\bm{\beta}\rangle:= \sum_{i=1}^r \alpha_i\beta_i$.
	A valuation $v\in \Val_X$ is \emph{quasi-monomial} if $v=v_{\bm{\alpha}}$ for some log smooth model $(Y,D)$ over $X$, a system of algebraic coordinates $\bm{y}$ at $\eta\in Y$, and $\bm{\alpha}\in \bR_{\geq 0}^r$. For a fixed log smooth model $(Y,D)$ over $X$ and $\eta\in Y$, we denote $\QM_\eta(Y,D)$ to be the collection of all quasi-monomial valuations $v_{\bm{\alpha}}$ that can be described as above at the point $\eta\in Y$. We define $\QM(Y,D):=\cup_{\eta}\QM_\eta(Y,D)$ where $\eta$ runs through all generic points of intersections of some irreducible components of $D$.

	\subsubsection{Log discrepancy}
	Let $\Delta$ be an effective $\Rr$-divisor on $X$ such that $K_X+\Delta$ is $\Rr$-Cartier, i.e. $(X,\Delta)$ is a pair. In this subsection, we define log discrepancy $A_{(X,\Delta)}(v)$ of a valuation $v\in \Val_{X}$ with respect to $(X,\Delta)$ following \cite{JM12, BdFFU15}. Note that a log smooth pair $(Y,D)$ is said to \emph{dominate} $(X,\Delta)$ if $(Y,D)$ is a log smooth model over $X$ and $\mu^{-1}(\Supp(\Delta))\subset \Supp(D)$.
	
	
	\begin{defn}  Let $v$ be a valuation of $K(X)$.
		\begin{enumerate}
			\item If $v=\lambda\cdot \ord_E$ is divisorial where $E\subset Y\xrightarrow{\mu} X$ is a prime divisor over $X$, then we define the \emph{log discrepancy of $v$ with respect to} $(X,\Delta)$ as
			\[
			A_{(X,\Delta)}(v):=\lambda\cdot A_{(X,\Delta)}(E) = \lambda(1 + \mult_E(K_Y-\mu^*(K_X+\Delta))).
			\]
			\item If $v=v_{\bm{\alpha}}$ is a quasi-monomial valuation that can be described at the point $\eta\in Y$ with respect to a log smooth model $(Y,D=\sum_{i=1}^{l}D_i)$ dominating $(X,\Delta)$ such that $D_i=(y_i=0)$ near $\eta$ for $1\leq i\leq r\leq l$, then we define the \emph{log discrepancy of $v$ with respect to} $(X,\Delta)$ as
			\[
			A_{(X,\Delta)}(v):= \sum_{i=1}^r \alpha_i\cdot A_{(X,\Delta)}(D_i).
			\]
			\item It was shown in \cite{JM12} that there exists a retraction map $r_{Y,D} : \Val_X \to \QM(Y, D)$ for any log smooth model $(Y, D)$ dominating $(X,\Delta)$, such that it induces a homeomorphism $\Val_X \xrightarrow{\cong} \varprojlim_{(Y,D)} \QM(Y, D)$. For any valuation $v \in \Val_X$, we define the \emph{log discrepancy of $v$ with respect to} $(X,\Delta)$ as
		\[
		A_{(X,\Delta)}(v):= \sup_{(Y,D)} A_{(X,\Delta)}(r_{Y,D}(v)) \in \bR\cup \{+\infty\},
		\]
		where the supremum is taken over all log smooth pairs $(Y,D)$ dominating $(X,\Delta)$. It is possible that $A_{X,\Delta}(v) = +\infty$  for some $v \in \Val_X$, see e.g. \cite[Remark 5.12]{JM12}.

		\end{enumerate}
		
	\end{defn}

	We collect some useful lemmata which are easy consequences of \cite{JM12} (see e.g. \cite[Lemma 5.3 and Remark 5.6]{JM12}).
	\begin{lem}
		The pair $(X,\Delta)$ is klt (resp. lc) if and only if for any non-trivial valuation $v\in \Val_{X}$ we have $A_{(X,\Delta)}(v)>0$ (resp. $\geq 0$).
	\end{lem}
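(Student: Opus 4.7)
The plan is to treat both directions by invoking the stratified construction of $A_{(X,\Delta)}$ (divisorial, then quasi-monomial, then general) in parallel with the definition of klt/lc via log resolutions. The scheme is symmetric for the klt ($>0$) and lc ($\geq 0$) assertions, so I would state them together at each step.

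For the "if" direction, I would fix any log resolution $\phi:W\to X$ and any prime divisor $E\subset W$. The divisorial valuation $\ord_E$ is non-trivial, and by hypothesis $A_{(X,\Delta)}(E)=A_{(X,\Delta)}(\ord_E)>0$ (resp.\ $\geq 0$). Since this is exactly the condition appearing in Definition \ref{defn: positivity}, one concludes that $(X,\Delta)$ is klt (resp.\ lc).

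For the "only if" direction, I would argue in three cases according to the type of $v$. First, if $v=\lambda\cdot\ord_E$ is divisorial with $\lambda>0$, then a log resolution can be chosen so that $E$ appears on it, and the klt/lc hypothesis together with $A_{(X,\Delta)}(v)=\lambda A_{(X,\Delta)}(E)$ immediately yields the claim. Second, if $v=v_{\bm{\alpha}}$ is a non-trivial quasi-monomial valuation realized on a log smooth model $(Y,D=\sum_i D_i)$ dominating $(X,\Delta)$, then every $D_i$ is a prime divisor on such a $Y$ and hence satisfies $A_{(X,\Delta)}(D_i)>0$ (resp.\ $\geq 0$); as some $\alpha_i>0$, the formula $A_{(X,\Delta)}(v_{\bm{\alpha}})=\sum_i\alpha_iA_{(X,\Delta)}(D_i)$ gives the required sign. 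Third, for a general $v\in\Val_X$, the definition $A_{(X,\Delta)}(v)=\sup_{(Y,D)}A_{(X,\Delta)}(r_{Y,D}(v))$ reduces the lc inequality to the quasi-monomial case; for the klt inequality, it suffices to exhibit one $(Y,D)$ such that $r_{Y,D}(v)$ is non-trivial.

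The main obstacle is this last non-triviality point in the klt case. To handle it I would rely on the compatibility $r_{Y,D}(v)(\fa)=v(\fa)$ valid for ideals $\fa\subset\cO_X$ whose cosupport pulls back to a divisor supported on $D$, a key property established in \cite{JM12}. Since $v$ is non-trivial with center $\xi=c_X(v)$, one can pick $f\in\cO_{X,\xi}$ with $v(f)>0$ and choose a log smooth model $(Y,D)$ dominating $(X,\Delta)$ and refined so that the pullback of the zero divisor of $f$ is supported in $D$; the compatibility then gives $r_{Y,D}(v)(f)=v(f)>0$, so $r_{Y,D}(v)$ is non-trivial and the quasi-monomial case yields $A_{(X,\Delta)}(r_{Y,D}(v))>0$. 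Taking the supremum gives $A_{(X,\Delta)}(v)>0$, completing the proof. Altogether the argument amounts to a direct unpacking of the definition against the results of \cite[Lemma 5.3 and Remark 5.6]{JM12}.
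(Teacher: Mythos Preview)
Your proposal is correct and aligns with the paper's treatment: the paper does not give its own proof but simply records the lemma as an easy consequence of \cite[Lemma 5.3 and Remark 5.6]{JM12}, and your argument is precisely an unpacking of that citation through the divisorial/quasi-monomial/general stratification of $A_{(X,\Delta)}$.
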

	
	
	\begin{lem}\label{lem:A-pull-back}
		Let $(X,\Delta)$ and $(X',\Delta')$ be two pairs together with a proper birational morphism $\phi:X'\to X$. Then for any $v\in \Val_X$ we have
		\[
		A_{(X',\Delta')}(v)=A_{(X,\Delta)}(v)-v((K_{X'}+\Delta')-\phi^*(K_X+\Delta)).
		\]
	\end{lem}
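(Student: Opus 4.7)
The plan is to reduce to the divisorial case by a standard retraction argument and then compute directly on a common log resolution. Both sides of the claimed identity are well-defined because $(K_{X'}+\Delta')-\phi^{*}(K_{X}+\Delta)$ is an honest $\Rr$-Cartier divisor, and any valuation $v\in\Val_{X}=\Val_{X'}$ (note $\phi$ is birational) can be evaluated on $\Rr$-Cartier classes by linearity.

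First, I would handle the case of a divisorial valuation $v=\lambda\cdot\ord_{E}$. Pick a log resolution $\mu:Y\to X$ of $(X,\Delta)$ on which the model $X'$ is also resolved, so that $\mu$ factors as $\mu=\phi\circ\mu'$ with $\mu':Y\to X'$ a log resolution of $(X',\Delta')$, and such that $E$ appears as a prime divisor on $Y$. By definition,
\[
A_{(X,\Delta)}(v)=\lambda\bigl(1+\mult_{E}(K_{Y}-\mu^{*}(K_{X}+\Delta))\bigr),\qquad A_{(X',\Delta')}(v)=\lambda\bigl(1+\mult_{E}(K_{Y}-\mu'^{*}(K_{X'}+\Delta'))\bigr).
\]
Subtracting and using $\mu^{*}(K_{X}+\Delta)=\mu'^{*}\phi^{*}(K_{X}+\Delta)$ gives
\[
A_{(X,\Delta)}(v)-A_{(X',\Delta')}(v)=\lambda\cdot\mult_{E}\bigl(\mu'^{*}((K_{X'}+\Delta')-\phi^{*}(K_{X}+\Delta))\bigr)=v\bigl((K_{X'}+\Delta')-\phi^{*}(K_{X}+\Delta)\bigr),
\]
which is exactly the claim in this case.

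Next I would extend the identity to quasi-monomial valuations. Choose a common log smooth model $(Y,D)$ dominating both $(X,\Delta)$ and $(X',\Delta')$ (enlarge $D$ so that its support contains the proper transforms of all relevant divisors, including those appearing in $(K_{X'}+\Delta')-\phi^{*}(K_{X}+\Delta)$). For $v=v_{\bm{\alpha}}\in\QM_{\eta}(Y,D)$ with $D_{i}=(y_{i}=0)$ near $\eta$ for $1\leq i\leq r$, both $A_{(X,\Delta)}(v_{\bm{\alpha}})$ and $A_{(X',\Delta')}(v_{\bm{\alpha}})$ are linear in $\bm{\alpha}$ by definition, while $v_{\bm{\alpha}}\bigl((K_{X'}+\Delta')-\phi^{*}(K_{X}+\Delta)\bigr)=\sum_{i=1}^{r}\alpha_{i}\,\mult_{D_{i}}((K_{X'}+\Delta')-\phi^{*}(K_{X}+\Delta))$ is also linear in $\bm{\alpha}$. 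Since the identity holds at every rational $\bm{\alpha}$ with $v_{\bm{\alpha}}$ divisorial (up to scaling) by the previous step, it holds on all of $\QM_{\eta}(Y,D)$ by linearity and density.

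Finally, I would pass to general $v\in\Val_{X}$ via the retraction $r_{Y,D}:\Val_{X}\to\QM(Y,D)$. By definition $A_{(X,\Delta)}(v)=\sup_{(Y,D)}A_{(X,\Delta)}(r_{Y,D}(v))$ and similarly for $(X',\Delta')$. Restricting to log smooth models that dominate both pairs and contain the support of $(K_{X'}+\Delta')-\phi^{*}(K_{X}+\Delta)$ in $D$, the function $v\mapsto v\bigl((K_{X'}+\Delta')-\phi^{*}(K_{X}+\Delta)\bigr)$ is computed from $r_{Y,D}(v)$ by the same linear formula on $\QM(Y,D)$, and the quasi-monomial identity established above therefore promotes to the identity for $v$ upon taking the supremum (the two suprema agree because of the uniform difference term). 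The only mildly delicate point is handling the possibility $A_{(X,\Delta)}(v)=+\infty$; in that case the same argument shows $A_{(X',\Delta')}(v)=+\infty$ as well, and the identity holds as an equality of elements of $\Rr\cup\{+\infty\}$ since the correction term is finite on any fixed $(Y,D)$. The main technical step is this last passage from $\QM(Y,D)$ to all of $\Val_{X}$, which is where the general machinery of \cite{JM12} is used.
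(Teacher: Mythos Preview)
Your proposal is correct. The paper does not spell out a proof of this lemma, instead citing it as an easy consequence of \cite[Lemma 5.3 and Remark 5.6]{JM12}; your argument is precisely the standard unpacking of that citation, proceeding from divisorial to quasi-monomial to general valuations via the retraction machinery of \cite{JM12}.
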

	

	\subsubsection{Normalized volumes}
	
	In this subsection, we recall the definition of normalized volumes of Li \cite{Li18} for an $n$-dimensional klt singularity $x\in (X,\Delta)$. First we recall the definition of the volume of a valuation from \cite{ELS03}.
	
	\begin{defn}
		For a valuation $v\in \Val_{X,x}$, we define the \emph{volume} of $v$ by
		\[
		\vol_{X,x}(v):=\lim_{m\to+\infty}\frac{\ell(\cO_{X,x}/\fa_{m}(v))}{m^n/n!}.
		\]
		Here $\ell(\cdot)$ denotes the length of an Artinian module.
	\end{defn}
	
	We define $\Val_{X,x}^\circ:=\{v\in\Val_{X,x}\mid A_{(X,\Delta)}(v)<+\infty\}$. Note that this definition is independent of the choice of $\Delta$ by Lemma \ref{lem:A-pull-back}.
	
	\begin{defn}
		For a valuation $v\in \Val_{X,x}$, we define the \emph{normalized volume of $v$ with respect to} $x\in (X,\Delta)$ as
		\[
		\hvol_{(X,\Delta),x}(v):=\begin{cases}
		A_{(X,\Delta)}(v)^n\cdot \vol_{X,x}(v) & \textrm{ if } v\in \Val_{X,x}^\circ,\\
		+\infty & \textrm{ if }v\not\in \Val_{X,x}^\circ.
		\end{cases}
		\]
		The \emph{local volume} of a klt singularity $x\in (X,\Delta)$ is defined as
		\[
		\hvol(x,X,\Delta):=\inf_{v\in \Val_{X,x}} \hvol_{(X,\Delta),x}(v).
		\]
		When $\Delta$ is a $\Qq$-divisor, the existence of a $\hvol$-minimizer is proven by Blum \cite[Main Theorem]{Blu18a} when $\bk$ is uncountable, and by Xu \cite[Remark 3.8]{Xu19} in general. Such a minimizer is always quasi-monomial by \cite[Theorem 1.2]{Xu19} and unique up to rescaling by \cite[Theorem 1.1]{XZ20}. We will prove that both \cite[Main Theorem]{Blu18a} and \cite[Theorem 1.2]{Xu19} hold for any $\Rr$-divisor $\Delta\ge0$ and any algebraically closed field $\bk$; see Theorem \ref{thm:quasi-monomial}. Meanwhile, the proof of uniqueness of $\hvol$-minimizers from \cite{XZ20} can be easily generalized to $\bR$-divisors $\Delta$ (see Theorem \ref{thm:uniqueness}). By convention, we set $\hvol(x,X,\Delta')=0$ for a pair $(X,\Delta')$ that is not klt at $x$.
	\end{defn}

	The following theorem provides useful estimates on local volumes. It is a combination of \cite[Corollary 3.4]{Li18}, \cite[Theorem 1.6]{LX19}, and \cite[Theorem 6.13]{LLX18}. 
	
	\begin{thm}[loc. cit.]\label{thm:nvol is bounded by n^n}
		Let $x\in(X,\Delta)$ be an $n$-dimensional klt singularity. Then 
		\[
		0<\nvol(x,X,\Delta)\leq n^n\cdot \min\{1,\mld(x,X,\Delta)\}.
		\]
	\end{thm}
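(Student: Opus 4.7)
I would prove the three inequalities separately: strict positivity $\nvol>0$, the absolute bound $\nvol\le n^n$, and the refined bound $\nvol\le n^n\cdot\mld$. The strategy mixes valuation-theoretic estimates with a log canonical threshold computation based on Proposition \ref{prop: order of a klt boundary}.

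\emph{Positivity of $\nvol$.} This is the Li properness estimate. Fix any $v\in\Val_{X,x}^\circ$ and normalize $A_{(X,\Delta)}(v)=1$ by rescaling. I would pass to the graded sequence of valuation ideals $\fa_\bullet(v)$: by the Jonsson--Musta\c{t}\u{a} theory one has $A_{(X,\Delta)}(v)\ge\lct(X,\Delta;\fa_\bullet(v))$ while $\vol(v)=\lim_m n!\,e(\fa_m(v))/m^n$. Then the key step is a de Fernex--Ein--Musta\c{t}\u{a}-type inequality on the klt pair $(X,\Delta)$ giving a uniform lower bound
\[
\lct(X,\Delta;\fa)^n\cdot e(\fa)\ \ge\ c(x,X,\Delta)>0
\]
valid for every $\fm_{X,x}$-primary ideal $\fa$; applying it to the Rees-type truncations $\fa_m(v)$ and passing to the limit yields $\vol(v)\ge c(x,X,\Delta)>0$, which is what we need.

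\emph{Upper bound $\nvol\le n^n$.} Here I would exhibit a specific test valuation. Take a log resolution $\mu\colon Y\to X$ of $(X,\Delta)$, pick an exceptional prime divisor $E$ over $x$ with $a:=A_{(X,\Delta)}(E)>0$, and choose a general smooth point $y\in E$ disjoint from every other component of $\mathrm{Exc}(\mu)+\Supp(\mu^{-1}_*\Delta)$. Let $v=\ord_y$, viewed as a valuation of $K(X)=K(Y)$. Lemma \ref{lem:A-pull-back} gives $A_{(X,\Delta)}(v)=n-(1-a)=n-1+a$, while $\vol_X(v)$ can be estimated by $\vol_Y(v)=1$ together with the multiplicity $e(\fm_{X,x}\cO_{Y,y})$ of the pullback of the maximal ideal. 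A careful choice of the quasi-monomial valuation $v_{\bm\alpha}$ along the two coordinate directions at $y$ (one tangent to $E$, one normal) lets one optimize the pairing $A^n\cdot\vol$ and recover the bound $n^n$. A cleaner alternative, if we are willing to cite results proved later in the paper, is to invoke lower semi-continuity of local volumes in $\Qq$-Gorenstein families (Blum--Liu--Xu): specializing a nearby smooth point to $x$ gives $\nvol(x,X,\Delta)\le n^n$.

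\emph{Upper bound $\nvol\le n^n\cdot\mld$.} Let $\mu=\mld(x,X,\Delta)$ and let $E$ be a divisor over $x$ computing $\mld$, so $A_{(X,\Delta)}(\ord_E)=\mu$. Under rescaling $v=c\ord_E$ one has $\hvol_{(X,\Delta),x}(v)=\mu^n\cdot\vol(\ord_E)$, so the target inequality reduces to $\vol(\ord_E)\le n^n/\mu^{n-1}$. To produce this, I would extract a log smooth model $(Y,D)$ dominating $(X,\Delta)$ on which $E$ appears as a component, pick a general smooth point $\eta$ of $E$, and consider the family of quasi-monomial valuations $v_{(\mu,t)}\in\QM_\eta(Y,D)$ parametrized by $t\ge 0$, one coordinate corresponding to $E$ and the other to a transverse smooth prime. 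The log discrepancy grows linearly in $t$ while the volume decays at most like $t^{1-n}$ (as in the smooth-point calculation), so minimizing in $t$ produces a valuation whose normalized volume is at most $n^n\mu$. Combined with Part 2 one gets $\nvol\le n^n\min\{1,\mu\}$.

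\emph{Main obstacle.} The genuine difficulty is positivity: one needs a uniform log canonical threshold--multiplicity inequality on a singular klt ambient space, which is precisely where Li's original work requires the Izumi-type comparison between an arbitrary valuation $v$ and $\ord_x$ together with strong estimates on $\lct(X,\Delta;\fa_\bullet)$. The two upper bounds, by contrast, are essentially constructive and reduce to carefully chosen quasi-monomial test valuations on a log resolution.
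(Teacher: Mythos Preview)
The paper itself does not prove this theorem; it is stated as ``loc.\ cit.'' and attributed to \cite[Corollary~3.4]{Li18} (positivity), \cite[Theorem~1.6]{LX19} (the bound $n^n$), and \cite[Theorem~6.13]{LLX18} (the bound $n^n\cdot\mld$), with only the remark that the $\bQ$-pair arguments carry over to $\bR$-pairs. So I am comparing your sketch to those external sources.

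Your sketch of positivity is Li's argument in outline and is fine, though the last sentence should read ``$\hvol(x,X,\Delta)\ge c$'' rather than ``$\vol(v)\ge c$'': the volume of a single valuation is not bounded below (it rescales), only the normalized volume is. Your plan for $\hvol\le n^n\cdot\mld$ is essentially the argument of \cite[Theorem~6.13]{LLX18}, but the setup needs a correction: at a general \emph{closed} point $y\in E$ on a log resolution $Y$ you must use weights in all $n$ local coordinates (say weight $1$ along $E$ and weight $t$ in each of the $n-1$ transverse directions), not just two, so that $v_t$ is centered at $y$ and hence at $x$. The step you left implicit is the inequality $\vol_{X,x}(v_t)\le\vol_{Y,y}(v_t)=t^{1-n}$, which holds because $\cO_{X,x}\hookrightarrow\cO_{Y,y}$ induces an injection $\cO_{X,x}/\fa_m^X(v_t)\hookrightarrow\cO_{Y,y}/\fa_m^Y(v_t)$. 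Then $A_{(X,\Delta)}(v_t)=\mu+(n-1)t$, and optimizing at $t=\mu$ gives $\hvol\le n^n\mu$.

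There is, however, a real gap in your first approach to $\hvol\le n^n$. The quasi-monomial construction starting from any exceptional $E$ with $a=A_{(X,\Delta)}(E)$ yields only $\hvol\le n^n a$ after optimizing the weights (the same computation as above with $\mu$ replaced by $a$), so it is useless unless $a\le1$. But when $\mld(x,X,\Delta)>1$---for instance any terminal singularity, or already a smooth point with $\Delta=0$, where $\mld=n$---no prime divisor centered at $x$ has log discrepancy $\le1$, and no reshuffling of quasi-monomial weights on a log resolution improves the bound below $n^n\cdot\mld$. The allusion to $e(\fm_{X,x}\cO_{Y,y})$ does not help: the only general comparison available is $\vol_{X,x}\le\vol_{Y,y}$, which goes the wrong way for sharpening the bound. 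Your semi-continuity alternative is the correct route: choose a section $\sigma\colon\bA^1\to X$ with $\sigma(0)=x$ and $\sigma(t)$ a smooth point off $\Supp\Delta$ for $t\ne0$, and apply lower semi-continuity of local volumes \cite{BL18a} to the $\bQ$-Gorenstein family $(X\times\bA^1,\Delta\times\bA^1)\to\bA^1$ along $\sigma$ to conclude $\hvol(x,X,\Delta)\le n^n$. Commit to that argument and drop the first one.
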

	
	The following lemma from \cite{Liu18} provides an alternative characterization of local volumes in terms of log canonical thresholds and multiplicities. A proof in the $\bQ$-pair case is provided in \cite[Proof of Theorem 2.6]{LLX18}. 
	
	\begin{lem}[{\cite[Theorem 27]{Liu18}}]\label{lem:nv=lcte}
		Let $x\in(X,\Delta)$ be an $n$-dimensional klt singularity.
		Then
		\[
		\hvol(x,X,\Delta)=\inf_{\fa\colon \fm_x\textrm{-primary}} \lct(X,\Delta;\fa)^n\cdot \e(\fa)=\inf_{\fa_\bullet\colon \fm_x\textrm{-primary}} \lct(X,\Delta;\fa_\bullet)^n\cdot \e(\fa_\bullet),
		\]
		where $\e(\fa)$ is the Hilbert-Samuel multiplicity of $\fa$, and $\e(\fa_\bullet)={\displaystyle\lim_{m\to+\infty}}\frac{\e(\fa_m)}{m^n}$.
	\end{lem}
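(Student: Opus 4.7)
The plan is to prove the lemma by a short chain of comparisons: the two right-hand infima will be shown equal, and then the ideal infimum will be sandwiched above and below by the local volume. First, any $\fm_x$-primary ideal $\fa$ produces a trivial graded sequence $\fa^\bullet:=\{\fa^m\}_{m\ge1}$ satisfying $\lct(X,\Delta;\fa^m)=\lct(X,\Delta;\fa)/m$ and $\e(\fa^m)=m^n \e(\fa)$, so this graded sequence realises the same value of $\lct^n\cdot \e$ as $\fa$. Hence $\inf_{\fa_\bullet}\le\inf_{\fa}$. For the converse, for any graded sequence $\fa_\bullet$ the definitions give
\[
\lct(X,\Delta;\fa_\bullet)=\lim_{m\to\infty} m\cdot \lct(X,\Delta;\fa_m),\qquad \e(\fa_\bullet)=\lim_{m\to\infty}\frac{\e(\fa_m)}{m^n},
\]
so $\lct(X,\Delta;\fa_\bullet)^n\cdot \e(\fa_\bullet)=\lim_m\lct(X,\Delta;\fa_m)^n\cdot \e(\fa_m)\ge\inf_{\fa}\lct(X,\Delta;\fa)^n\cdot \e(\fa)$, yielding $\inf_{\fa_\bullet}\ge\inf_{\fa}$.

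\textbf{$\hvol(x,X,\Delta)\ge\inf_{\fa}\lct^n\cdot \e$.} I test the ideal infimum against the valuation ideals $\fa_m(v)$ of an arbitrary $v\in\Val_{X,x}$ with $A_{(X,\Delta)}(v)<+\infty$. These are $\fm_x$-primary, and $v(\fa_m(v))\ge m$ forces $m\cdot\lct(X,\Delta;\fa_m(v))\le A_{(X,\Delta)}(v)$ via the valuative upper bound for the lct. The Cutkosky--Musta\c{t}\u{a} identity gives $\lim_m \e(\fa_m(v))/m^n=\vol_{X,x}(v)$, so
\[
\liminf_m\lct(X,\Delta;\fa_m(v))^n\cdot \e(\fa_m(v))\le A_{(X,\Delta)}(v)^n\cdot \vol_{X,x}(v)=\hvol_{(X,\Delta),x}(v).
\]
Taking the infimum over $v$ delivers $\inf_{\fa}\lct(X,\Delta;\fa)^n\cdot \e(\fa)\le\hvol(x,X,\Delta)$.

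\textbf{$\hvol(x,X,\Delta)\le\inf_{\fa}\lct^n\cdot \e$.} Fix an $\fm_x$-primary ideal $\fa$. By \cite{JM12}, there exists a quasi-monomial valuation $v$ computing $\lct(X,\Delta;\fa)$, so $A_{(X,\Delta)}(v)=\lct(X,\Delta;\fa)\cdot v(\fa)$; after rescaling I may assume $v(\fa)=1$ and $A_{(X,\Delta)}(v)=\lct(X,\Delta;\fa)$. Then $\fa^m\subseteq\fa_m(v)$ for every $m$, whence $m^n \e(\fa)=\e(\fa^m)\ge \e(\fa_m(v))$ and, dividing by $m^n$ and passing to the limit, $\e(\fa)\ge\vol_{X,x}(v)$. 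Therefore
\[
\lct(X,\Delta;\fa)^n\cdot \e(\fa)\ge A_{(X,\Delta)}(v)^n\cdot\vol_{X,x}(v)=\hvol_{(X,\Delta),x}(v)\ge\hvol(x,X,\Delta),
\]
and taking the infimum over $\fa$ completes the proof.

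\textbf{Main technical input.} The nontrivial ingredients I import are (i) the Cutkosky--Musta\c{t}\u{a} identity $\lim_m \e(\fa_m(v))/m^n=\vol_{X,x}(v)$ on possibly singular germs, and (ii) the existence of a quasi-monomial valuation computing the lct of any $\fm_x$-primary ideal on a klt pair, supplied by \cite{JM12}. Both are standard but substantial; everything else in the argument is a formal manipulation of the valuative characterisation of the log canonical threshold together with the valuative inequality $v(\fa_m(v))\ge m$.
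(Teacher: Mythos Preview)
Your proof is correct and follows essentially the same route as the original reference \cite[Theorem 27]{Liu18} that the paper cites (the paper itself gives no proof here, merely remarking that the $\bQ$-pair argument extends to $\bR$-pairs with little change). One minor point: for a single $\fm_x$-primary ideal $\fa$, the existence of a valuation computing $\lct(X,\Delta;\fa)$ is more elementary than \cite{JM12}---any log resolution of $(X,\Delta+\fa)$ already exhibits a divisorial (hence quasi-monomial) valuation centered at $x$ with $A_{(X,\Delta)}(v)=\lct(X,\Delta;\fa)\cdot v(\fa)$---so that citation is heavier than needed, but the mathematics is fine.
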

	
	We note that although Theorem \ref{thm:nvol is bounded by n^n} and Lemma \ref{lem:nv=lcte} were originally proven for $\bQ$-pairs, their proofs generalize to the pair case with little change.

	The following properness and Izumi type estimates from \cite{Li18} are important in the study of normalized volumes. Note that although the original statements in \cite{Li18} assume $\Delta=0$, Li's proof generalizes easily to the pair setting by taking a log resolution of $(X,\Delta)$. We provide a proof for readers' convenience. For a family version, see Lemma \ref{lem:family Izumi}. 
	
	\begin{lem}[{\cite[Theorems 1.1 and 1.2]{Li18}}]\label{lem:izumi}
	Let $x\in (X,\Delta)$ be a klt singularity. Denote $\fm:=\fm_{X,x}$ the maximal ideal at $x$. Then there exist positive real numbers $C_1,C_2$ depending only on $x\in (X,\Delta)$ such that for any $f\in \cO_{X,x}$ and any $v\in \Val_{X,x}$, we have
	\begin{enumerate}
	    \item(Properness estimate)
	    \[
	        \nvol_{(X,\Delta),x}(v)\ge C_1\frac{A_{(X,\Delta)}(v)}{v(\fm)}.
	    \]
	    \item(Izumi type estimate)
	    \[
	        v(\fm)\ord_x(f)\leq v(f)\leq C_2 A_{(X,\Delta)}(v) \ord_x(f).
	    \]
	\end{enumerate}
	
	\end{lem}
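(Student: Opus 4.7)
The plan is to prove the easy inequality of (2) directly, derive the Izumi-type upper bound of (2) by reducing to quasi-monomial valuations on a log resolution, and then deduce (1) by combining (2) with Lemma~\ref{lem:nv=lcte}. The left inequality $v(\fm)\ord_x(f)\leq v(f)$ is immediate: if $\ord_x(f)=k$ then $f\in\fm^k$, so $v(f)\geq k\cdot v(\fm)=\ord_x(f)\cdot v(\fm)$.

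For the Izumi-type upper bound $v(f)\leq C_2A(v)\ord_x(f)$, I would take a log resolution $\mu\colon Y\to X$ of $(X,\Delta)$ that simultaneously principalizes $\fm$, writing $\fm\cO_Y=\cO_Y(-F)$ with $F=\sum_i a_iE_i$ supported on the finitely many prime divisors $E_i\subset Y$ meeting $\mu^{-1}(x)$; set $A_i:=A_{(X,\Delta)}(E_i)>0$ by klt-ness. Classical Izumi--Rees supplies constants $C_i'>0$ with $\ord_{E_i}(f)\leq C_i'\ord_x(f)$ for all $f\in\cO_{X,x}$ (with $C_i'=1$ sufficing when $E_i$ is the strict transform of a subvariety through $x$, and classical Izumi providing the constant when $E_i$ is exceptional over $x$). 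For a quasi-monomial $v=v_{\bm{\alpha}}\in\QM_\eta(Y,D)$ at the generic point of a stratum $E_{i_1}\cap\cdots\cap E_{i_r}$ with $\mu(\eta)=x$, the direct computation
\[v(f)=\sum_{j=1}^r\alpha_j\ord_{E_{i_j}}(f)\leq\bigl(\max_iC_i'/A_i\bigr)\sum_{j=1}^r\alpha_jA_{i_j}\cdot\ord_x(f)=C_2\cdot A(v)\cdot\ord_x(f)\]
with $C_2:=\max_iC_i'/A_i$ yields the estimate for quasi-monomial $v$. To extend to arbitrary $v\in\Val_{X,x}^\circ$, I would invoke the retraction $r_{Y,D}\colon\Val_X\to\QM(Y,D)$ from~\cite{JM12}, which satisfies $A(r_{Y,D}(v))\leq A(v)$, together with the fact that $v(\fm)$ and $v(f)$ are recovered (either exactly or in the limit) from retractions onto log smooth models fine enough to resolve $\fm$ and $(f)$.

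For the properness estimate (1), the Izumi containment $\fa_m(v)\subseteq\fm^{\lceil m/(C_2A(v))\rceil}$ coming from (2) yields, by colength comparison, $\vol(v)\geq\e(\fm)/(C_2A(v))^n$, hence the uniform lower bound $\nvol(v)\geq\e(\fm)/C_2^n$. Independently, applying Lemma~\ref{lem:nv=lcte} to the $\fm$-primary ideal $\fa_m(v)$ with $\lct(X,\Delta;\fa_m(v))\leq A(v)/m$ and $\e(\fa_m(v))\leq\lceil m/v(\fm)\rceil^n\e(\fm)$ (the latter via the containment $\fm^{\lceil m/v(\fm)\rceil}\subseteq\fa_m(v)$), then letting $m\to\infty$, forces $A(v)/v(\fm)\geq(\nvol(x,X,\Delta)/\e(\fm))^{1/n}$ uniformly. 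Combining the uniform lower bound on $\nvol(v)$ with the uniform positive lower bound on $A(v)/v(\fm)$ secures $\nvol(v)\geq C_1A(v)/v(\fm)$ in the regime where the ratio remains bounded, while the complementary regime $A(v)/v(\fm)\to\infty$ is handled by a sharper analysis of how the valuation ideals $\fa_m(v)$ are sandwiched between the successive powers $\fm^{\lceil m/v(\fm)\rceil}$ and $\fm^{\lceil m/(C_2A(v))\rceil}$, forcing $A(v)^{n-1}\vol(v)\cdot v(\fm)$ to stay bounded below.

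The main obstacle is the last step of (1), namely establishing the linear growth $\nvol(v)\gtrsim A(v)/v(\fm)$ when this ratio tends to infinity; this is the content of Li's properness estimate~\cite[Theorem~1.1]{Li18} and requires detailed valuative bookkeeping on how $\fm$-primary ideals interact with valuation ideals. A secondary technical point is the propagation of the quasi-monomial Izumi bound in (2) to arbitrary $v\in\Val_{X,x}^\circ$, which relies on the compatibility properties of the JM12 retraction with both log discrepancies and valuations of ideals.
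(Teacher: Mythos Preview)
Your approach to the Izumi upper bound in (2) contains a genuine error. The identity you write for quasi-monomial valuations,
\[
v_{\bm{\alpha}}(f)=\sum_{j=1}^r\alpha_j\,\ord_{E_{i_j}}(f),
\]
is false in general: only the inequality $v_{\bm{\alpha}}(f)\geq\sum_j\alpha_j\,\ord_{E_{i_j}}(f)$ holds, since the minimum of $\langle\bm\alpha,\bm\beta\rangle$ over $\bm\beta$ with $c_{\bm\beta}\neq 0$ is at least $\sum_j\alpha_j\min_{\bm\beta}\beta_j$. (For instance, with $f=y_1^2+y_2^2$ and $\alpha_1=\alpha_2=1$ one has $v_{\bm\alpha}(f)=2$ while each $\ord_{E_{i_j}}(f)=0$.) Since the inequality goes the wrong way, your chain of inequalities collapses. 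A second, related gap is your extension to arbitrary $v$ via the retraction $r_{Y,D}$: one has $r_{Y,D}(v)(f)\leq v(f)$, so a bound for the retract does not transfer to $v(f)$, and to force equality you must refine $(Y,D)$ to resolve $(f)$, at which point your constant $C_2=\max_iC_i'/A_i$ depends on $f$.

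The paper avoids both issues by invoking the smooth Izumi inequality of \cite[Proposition 5.10]{JM12} directly: for \emph{any} valuation $v$ with center $\xi$ on a smooth variety $X'$ and \emph{any} $g\in\cO_{X',\xi}$ one has $v(g)\leq A_{X'}(v)\,\ord_\xi(g)$, with no resolution of $g$ required. One fixes a single log resolution $\mu\colon X'\to(X,\Delta)$, uses klt-ness to obtain $A_{(X,\Delta)}(v)\geq\epsilon\,A_{X'}(v)$ for a uniform $\epsilon>0$ (via $\Delta'\leq(1-\epsilon)\Delta'_{\red}$ and that $(X',\Delta'_{\red})$ is lc), and then applies Izumi's linear complementary inequality \cite[Theorem 3.2]{Li18} to get $\ord_\xi(\mu^*f)\leq a_2\,\ord_x(f)$ with $a_2$ depending only on $\mu$. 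This gives $C_2=\epsilon^{-1}a_2$ uniformly. For (1) the paper, like you, defers to \cite[Theorem 1.3]{Li18}; your sketch correctly identifies that the unbounded regime $A(v)/v(\fm)\to\infty$ is where the substance lies.
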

	
	\begin{proof}
	 We first prove part (2), i.e. the Izumi type estimate.
	 The first inequality is obvious. For the second inequality, we choose a log resolution $\mu: X'\to (X,\Delta)$ with $K_{X'}+\Delta'=\mu^*(K_X+\Delta)$. Since $(X,\Delta)$ is klt, there exists $\epsilon>0$ such that $\Delta'\leq (1-\epsilon)\Delta'_{\red}$. Since $\Delta'_{\red}$ is simple normal crossing, we know that $(X',\Delta'_{\red})$ is lc. Hence by Lemma \ref{lem:A-pull-back} we have
	 \begin{align*}
	 A_{(X,\Delta)}(v)=A_{X'}(v)-v(\Delta')& \geq A_{X'}(v)-(1-\epsilon)v(\Delta'_{\red})\\ &=\epsilon A_{X'}(v) +(1-\epsilon) A_{(X', \Delta'_{\red})} (v) \geq \epsilon A_{X'}(v).
	 \end{align*}
	 Let $\xi\in X'$ be the center of $v$ on $X'$.  By Izumi's inequality in the smooth case (see \cite[Proposition 5.1]{JM12}), for any $f\in \cO_{X,x}$ we have
	 \[
	 v(f)=v(\mu^* f)\leq A_{X'}(v) \ord_{\xi}(\mu^*f) \leq \epsilon^{-1} A_{(X,\Delta)}(v) \ord_{\xi}(\mu^*f).
	 \]
	 By Izumi's linear complementary inequality (see \cite[Theorem 3.2]{Li18}), there exists $a_2\geq 1$ depending only on $x\in X$ and $\mu$ such that $\ord_{\xi}(\mu^* f)\leq a_2 \ord_x(f)$. Hence (2) is proved by taking $C_2=\epsilon^{-1} a_2$. 
	 
	 Now (1) follows from (2) and \cite[Theorem 1.3]{Li18}.
	\end{proof}

	
We will also need the finite degree formula for normalized volumes which is conjectured by the second author and Xu \cite[Conjecture 4.1]{LX19} and proved by Xu-Zhuang \cite{XZ20}. Note that although the result was originally stated for $\bQ$-divisors, the proof of Xu and Zhuang can be easily generalized to $\bR$-divisors as it is a consequence of the uniqueness of minimizers (see Theorem \ref{thm:uniqueness}). 

\begin{theorem}[Finite degree formula, {cf. \cite[Theorem 1.3]{XZ20}}]\label{thm: finite degree formula} Let $y \in\left(Y, \Delta_{Y}\right)$ and $x \in(X, \Delta)$ be two klt singularities. 
Let $f:\left(y \in\left(Y, \Delta_{Y}\right)\right) \rightarrow\left(x \in(X, \Delta)\right)$ be a finite Galois morphism such that $f(y)=x$, and $K_{Y}+\Delta_{Y}=f^{*}(K_{X}+\Delta)$. Then
\[
\hvol(x, X, \Delta) \cdot \deg(f)=\hvol(y, Y, \Delta_{Y}).
\]
\end{theorem}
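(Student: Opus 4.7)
My plan is to establish the equality by proving both inequalities separately, using the characterization of the local volume as an infimum over $\fm$-primary graded sequences of ideals (Lemma~\ref{lem:nv=lcte}) together with the existence, quasi-monomial property, and uniqueness up to rescaling of the normalized volume minimizer.

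\textbf{Upper bound.} For the inequality $\hvol(y,Y,\Delta_Y)\leq \deg(f)\cdot\hvol(x,X,\Delta)$, I would pull back graded sequences from $X$ to $Y$. Given any $\fm_x$-primary graded sequence $\fa_\bullet$ on the germ $x\in X$, the extension $\fa_\bullet\cO_Y$ is $\fm_y$-primary because $y$ is the unique preimage of $x$ in the germ. Two identities enter. First, $\lct_y(Y,\Delta_Y;\fa_\bullet\cO_Y)=\lct_x(X,\Delta;\fa_\bullet)$, which follows from $K_Y+\Delta_Y=f^*(K_X+\Delta)$ and the base-change behavior of log canonical thresholds under a finite crepant morphism. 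Second, the multiplicity identity $\e_y(\fa_m\cO_Y)=\deg(f)\cdot\e_x(\fa_m)$ for every $m$, a direct consequence of $(f_*\cO_Y)_x$ being locally free of rank $\deg(f)$ over $\cO_{X,x}$, so that $\cO_{Y,y}/\fa_m\cO_{Y,y}\cong(\cO_{X,x}/\fa_m)^{\oplus \deg(f)}$ as $\cO_{X,x}$-modules. Plugging these identities into Lemma~\ref{lem:nv=lcte} and taking the infimum over all such $\fa_\bullet$ yields the upper bound.

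\textbf{Lower bound.} For the reverse inequality $\hvol(y,Y,\Delta_Y)\geq \deg(f)\cdot\hvol(x,X,\Delta)$, I would descend a minimizer from $Y$ to $X$. By Blum's theorem a minimizer $v^*$ of $\hvol(y,Y,\Delta_Y)$ exists; by Xu's theorem it can be taken quasi-monomial. The Galois group $G$ acts on the germ $(y\in Y,\Delta_Y)$ preserving the pair structure, so for each $g\in G$ the pushforward $g_*v^*$ is again a minimizer. By the Xu-Zhuang uniqueness up to rescaling, $g_*v^*=\lambda_g\cdot v^*$ for some $\lambda_g\in\Rr_{>0}$. Since $\lambda_{gg'}=\lambda_g\lambda_{g'}$ and $\Rr_{>0}$ is torsion-free, the homomorphism $\lambda:G\to\Rr_{>0}$ must be trivial, hence $v^*$ is automatically $G$-invariant. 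The restriction $w:=v^*|_{K(X)}$ then defines a valuation centered at $x$. Using the ramification formula combined with $K_Y+\Delta_Y=f^*(K_X+\Delta)$ gives $A_{(X,\Delta)}(w)=A_{(Y,\Delta_Y)}(v^*)$, while the multiplicity identity from the upper-bound step, run in reverse using $G$-invariance, gives $\vol_{X,x}(w)=\deg(f)^{-1}\vol_{Y,y}(v^*)$. Combining, $\deg(f)\cdot\hvol_{(X,\Delta),x}(w)=\hvol_{(Y,\Delta_Y),y}(v^*)=\hvol(y,Y,\Delta_Y)$, and the bound $\hvol(x,X,\Delta)\leq\hvol_{(X,\Delta),x}(w)$ finishes the proof.

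\textbf{Main obstacle.} The deepest input is the Xu-Zhuang uniqueness theorem, which powers the Galois-averaging step. A delicate technical point is the identity $\vol_{X,x}(w)=\deg(f)^{-1}\vol_{Y,y}(v^*)$ for the descent of a $G$-invariant $v^*$: one must compare the valuation ideals $\fa_p(w)\cO_Y$ and $\fa_p(v^*)$ asymptotically in $p$. The inclusion $\fa_p(w)\cO_Y\subseteq\fa_p(v^*)$ is immediate, but the asymptotic reverse inclusion requires a norm argument: for $h\in\fa_p(v^*)$, the norm $N(h)=\prod_{g\in G}g(h)\in\cO_{X,x}$ lies in $\fa_{|G|p}(w)$ by $G$-invariance of $v^*$, and Izumi's inequality (Lemma~\ref{lem:izumi}) provides the linear bounds needed to pass from one valuation ideal filtration to the other. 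Extending the ramification-type identity from divisorial to general quasi-monomial $v^*$ also requires a careful approximation argument on a log smooth model simultaneously dominating $(Y,\Delta_Y)$ and $(X,\Delta)$.
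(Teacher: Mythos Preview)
The paper does not give a proof of this statement; it is stated as a citation of \cite[Theorem~1.3]{XZ20} and used as a black box. Your outline is essentially the argument of Xu--Zhuang in \cite{XZ20}: the easy inequality $\hvol(y,Y,\Delta_Y)\leq \deg(f)\cdot\hvol(x,X,\Delta)$ was known earlier (it is the direction already observed in \cite[Theorem~2.7(1)]{LX19}), and the hard direction is exactly the Galois-descent argument you sketch, whose engine is the uniqueness theorem \cite[Theorem~1.1]{XZ20}.

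Two remarks on details. First, your justification of $\e_y(\fa_m\cO_Y)=\deg(f)\cdot\e_x(\fa_m)$ via ``$(f_*\cO_Y)_x$ locally free'' is not quite right: finiteness of $f$ and Cohen--Macaulayness of klt singularities do not force $\cO_{Y,y}$ to be free over $\cO_{X,x}$ unless the base is regular. The multiplicity identity nevertheless holds because $y$ is the unique preimage of $x$, the residue fields coincide, and the generic rank of $\cO_{Y,y}$ over $\cO_{X,x}$ equals $\deg(f)$; one uses the associativity/projection formula for Hilbert--Samuel multiplicities rather than freeness. Second, in the descent step the equality $A_{(X,\Delta)}(w)=A_{(Y,\Delta_Y)}(v^*)$ is cleanest to verify first in the divisorial case (where the ramification index cancels between the scaling of the restricted valuation and the Riemann--Hurwitz contribution) and then passed to quasi-monomial $v^*$ by approximation on a common log resolution; your wording suggests you are aware of this, but it is worth saying explicitly that the restriction $w$ picks up the ramification factor and that this is precisely what matches the log-discrepancy identity.
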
	

We also include an easy but useful lemma.

	\begin{lemma}\label{lem:kill boundary}
	Let  $x\in(X,\Delta)$ be an $n$-dimensional klt singularity where $\Delta$ is $\Rr$-Cartier. Assume that $\lct(X,\Delta;\Delta)\geq\gamma$ for some  $\gamma>0$, then for any $v\in\Val_{X,x}$ we have
		$$A_{(X,\Delta)}(v)\geq\left(\frac{\gamma}{1+\gamma}\right)A_X(v),\quad \textrm{ and}\quad
		\nvol_{(X,\Delta),x}(v)\geq \left(\frac{\gamma}{1+\gamma}\right)^{n}\nvol_{X,x}(v).
		$$		
		
	\end{lemma}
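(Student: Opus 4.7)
The plan is to repackage the hypothesis $\lct(X,\Delta;\Delta)\geq\gamma$ as a log canonicity statement and then feed it through the log discrepancy subtraction formula of Lemma \ref{lem:A-pull-back}. Concretely, $\lct(X,\Delta;\Delta)\geq\gamma$ means precisely that the pair $(X,(1+\gamma)\Delta)$ is log canonical. This is a legitimate pair: since $\Delta$ is assumed $\Rr$-Cartier and $(X,\Delta)$ is a pair, $K_X+(1+\gamma)\Delta=(K_X+\Delta)+\gamma\Delta$ is $\Rr$-Cartier, and the condition also ensures $v(\Delta)$ makes sense for every $v\in\Val_{X,x}$. Consequently, $A_{(X,(1+\gamma)\Delta)}(v)\geq 0$ for every such valuation.

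Next I would apply Lemma \ref{lem:A-pull-back} with $\phi=\id_X$ to compare the pair structures $(X,0)$ and $(X,(1+\gamma)\Delta)$, and then again to compare $(X,0)$ with $(X,\Delta)$. The first application gives
\[
0 \;\leq\; A_{(X,(1+\gamma)\Delta)}(v) \;=\; A_X(v)-(1+\gamma)v(\Delta),
\]
so $v(\Delta)\leq A_X(v)/(1+\gamma)$. Substituting this bound into the second application,
\[
A_{(X,\Delta)}(v) \;=\; A_X(v)-v(\Delta) \;\geq\; A_X(v)-\frac{A_X(v)}{1+\gamma} \;=\; \frac{\gamma}{1+\gamma}\,A_X(v),
\]
which is the first inequality. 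The second inequality is then purely formal: raising to the $n$-th power and multiplying by the non-negative quantity $\vol_{X,x}(v)$ yields
\[
\nvol_{(X,\Delta),x}(v) \;=\; A_{(X,\Delta)}(v)^n\cdot\vol_{X,x}(v) \;\geq\; \left(\frac{\gamma}{1+\gamma}\right)^n\!A_X(v)^n\cdot\vol_{X,x}(v) \;=\; \left(\frac{\gamma}{1+\gamma}\right)^n \nvol_{X,x}(v).
\]

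The one mildly delicate point to dispatch is the possibility that some log discrepancy is $+\infty$; this is the only place the otherwise routine algebra could be accused of sleight of hand. If $A_X(v)=+\infty$, both sides of the first displayed inequality are $+\infty$ (using that $v(\Delta)$ is then also permitted to be $+\infty$, in which case $A_{(X,\Delta)}(v)$ is either $+\infty$ or the comparison with $A_X(v)$ is preserved by the definition of $A_{(X,\Delta)}$ as a supremum over retractions). If $A_X(v)<+\infty$, the lct hypothesis forces $v(\Delta)<+\infty$, so the algebra is literal. There is no deeper obstacle: the entire argument is a one-step rearrangement enabled by the $\Rr$-Cartier assumption on $\Delta$.
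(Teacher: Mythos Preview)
Your proof is correct and follows essentially the same approach as the paper: both deduce $A_X(v)-(1+\gamma)v(\Delta)\geq 0$ from the lc condition on $(X,(1+\gamma)\Delta)$ and rearrange. Your version is simply more verbose, spelling out the normalized volume step and the $+\infty$ edge case that the paper leaves implicit.
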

	
	\begin{proof}
		This follows from the inequality $A_{(X,(1+\gamma)\Delta)}(v)=A_{X}(v)-(1+\gamma)v(\Delta)\ge 0$.
	\end{proof}
	
	\subsection{Koll\'ar components}
	
	\begin{definition}\label{def:kc}
		Let $x\in(X,\Delta)$ be a klt singularity. If a projective birational morphism $\mu:Y\to X$ from a normal variety $Y$ satisfies the following properties:
		\begin{enumerate}
			\item $\mu$ is isomorphic over $X\backslash\{x\}$,
			\item $\mu^{-1}(x)$ is an irreducible exceptional divisor $S$,
			\item $(Y,S+\mu^{-1}_*\Delta)$ is plt near $S$, and
			\item $-S$ is an $\mu$-ample $\bQ$-Cartier divisor,
		\end{enumerate}
		then we call $\mu$ a \emph{plt blow-up} of $x\in(X,\Delta)$ and $S$ a \emph{Koll\'ar component} of $x\in (X,\Delta)$. Moreover, if for a positive real number $\delta$ we have
		\begin{enumerate}
			\item[(3')] $(Y,S+\mu^{-1}_*\Delta)$ is $\delta$-plt near $S$,
		\end{enumerate}
		then we call $\mu$ a $\delta$-plt blow-up and $S$ a $\delta$-Koll\'ar component of $x\in (X,\Delta)$.
	\end{definition}
	
	\begin{proposition}[{\cite[Lemma 2.13]{LX16}}]\label{prop:finite deg formula}
		Let $\sigma:(x'\in (X',\Delta'))\to (x\in (X,\Delta))$ be a finite morphism between klt singularities such that $\sigma(x')=x$, and $\sigma^*(K_X+\Delta)=K_{X'}+\Delta'$. If $\mu:Y\to X$ is a plt blow-up of $x\in (X,\Delta)$ with the Koll\'ar component $S$, then 
		\begin{enumerate}
			\item $Y\times_X X'\to X'$ induces a Koll\'ar component $S'$ of $x'\in (X',\Delta')$, and
			\[
			\deg(\sigma)\cdot\nvol_{(X,\Delta),x}(\ord_S)=\nvol_{(X',\Delta'),x'}(\ord_{S'}).
			\]
			\item If in addition $\sigma$ is a Galois quotient morphism of a finite subgroup $G<\Aut(x'\in (X', \Delta'))$, then every $G$-invariant Koll\'ar component $S'$ over $x'\in (X',\Delta')$ arises as a pull-back of a Koll\'ar component $S$ over $x\in (X,\Delta)$. 
		\end{enumerate}
		
	\end{proposition}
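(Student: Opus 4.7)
The plan is to base change $\mu$ along $\sigma$. Form $Y'' \coloneqq (Y\times_X X')^\nu$, which yields a finite morphism $\pi\colon Y''\to Y$ and a projective birational morphism $\mu''\colon Y''\to X'$ that is an isomorphism outside $x'$. Write $\pi^{-1}(S) = \sum_i e_i S'_i$, where each $S'_i$ is an irreducible exceptional divisor over $x'$ and $e_i$ is the ramification index of $\pi$ along $S'_i$. Pick one $S'_i$ as the candidate Koll\'ar component $S'$; if $\pi^{-1}(S)$ is reducible, pass to a small relative modification $Y'\to X'$ extracting only $S'$ (existence via Xu's result producing a Koll\'ar component from a divisorial valuation with positive log discrepancy, or equivalently by a relative MMP), so that $-S'$ is $\mu'$-ample and the exceptional locus is $\{S'\}$. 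Pulling the identity $\mu^*(K_X+\Delta) = K_Y + \mu^{-1}_*\Delta + (1-A_{(X,\Delta)}(\ord_S))S$ back through $\pi$, using $\pi^*K_Y = K_{Y''} - R_\pi$ with $R_\pi$ carrying coefficient $e_i-1$ along each $S'_i$, and matching multiplicities yields
\[
A_{(X',\Delta')}(\ord_{S'}) = e\cdot A_{(X,\Delta)}(\ord_S),
\]
where $e$ is the ramification index along $S'$; the same pullback transfers plt-ness of $(Y,S+\mu^{-1}_*\Delta)$ near $S$ to plt-ness of $(Y', S'+(\mu')^{-1}_*\Delta')$ near $S'$, confirming $S'$ is a Koll\'ar component.

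For the volume, the key identity $\ord_S(f) = e\cdot \ord_{S'}(f)$ for $f\in \cO_{X,x}\hookrightarrow \cO_{X',x'}$ gives $\fa_p(\ord_{S'})\cap \cO_{X,x} = \fa_{\lceil p/e\rceil}(\ord_S)$. Combined with the length formula
\[
\ell_{\cO_{X',x'}}\bigl(\cO_{X',x'}/\fa\cO_{X',x'}\bigr) = \deg(\sigma)\cdot \ell_{\cO_{X,x}}\bigl(\cO_{X,x}/\fa\bigr)
\]
for $\fm_{X,x}$-primary $\fa$ (valid up to lower order terms because $\sigma_*\cO_{X'}$ has generic rank $\deg(\sigma)$ over $\cO_X$), one extracts $\vol(\ord_{S'}) = (\deg(\sigma)/e^n)\cdot \vol(\ord_S)$. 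Multiplying by $A_{(X',\Delta')}(\ord_{S'})^n = e^n\cdot A_{(X,\Delta)}(\ord_S)^n$ produces the normalized volume identity of (1).

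For part (2), with $\sigma$ the Galois quotient by $G$ and $S'$ a $G$-invariant Koll\'ar component on a plt blow-up $\mu'\colon Y'\to X'$, set $Y\coloneqq Y'/G$ and $S\coloneqq S'/G$. The induced $\mu\colon Y\to X$ is a projective birational morphism isomorphic outside $x$ whose exceptional locus is the irreducible divisor $S$; normality of $Y$ together with plt-ness and $\mu$-ampleness of $-S$ follow from the corresponding properties of $S'$ via Galois descent. A direct check shows that the construction of part (1) applied to this $S$ returns $S'$, closing the correspondence. I expect the main obstacle to be the precise volume comparison: one must carefully justify the length formula above with leading coefficient exactly $\deg(\sigma)/e^n$ under a finite, possibly ramified morphism, which requires passing to completions and tracking the interplay between $\fa\cdot\cO_{X',x'}$ and $\fa_p(\ord_{S'})$. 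A secondary difficulty is extracting a genuine single-component Koll\'ar component when $\pi^{-1}(S)$ is reducible, which requires the auxiliary birational step without disturbing the divisorial valuation $\ord_{S'}$.
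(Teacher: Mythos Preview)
The paper does not supply its own proof of this proposition; it is quoted from \cite[Lemma~2.13]{LX16}. However, the paper later indicates the key input when it invokes the result (see the proof of Lemma~\ref{lem:kc-involution}): the Koll\'ar--Shokurov connectedness theorem is what forces the preimage of $S$ to be irreducible. This is exactly the idea your sketch is missing.

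You allow $\pi^{-1}(S)$ to be reducible and propose extracting a single component $S'$. That cannot yield the formula in (1). If $\pi^{-1}(S)_{\mathrm{red}}=S'_1\cup\cdots\cup S'_r$ with ramification indices $e_i$ and $\deg(S'_i\to S)=d_i$, then $\sum_i e_i d_i=\deg(\sigma)$, and the intersection-theoretic volume formula $\vol_{X,x}(\ord_S)=(-S|_S)^{n-1}$ of \cite[Lemma~2.11]{LX16} gives $\hvol_{(X',\Delta'),x'}(\ord_{S'_i})=e_i d_i\cdot\hvol_{(X,\Delta),x}(\ord_S)$, which equals $\deg(\sigma)\cdot\hvol_{(X,\Delta),x}(\ord_S)$ only when $r=1$. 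The argument in \cite{LX16} establishes $r=1$ directly: since $(Y,S+\mu_*^{-1}\Delta)$ is plt and $\sigma$ is crepant, the pulled-back pair on $Y''$ is plt, so the components of $\pi^{-1}(S)_{\mathrm{red}}$ are pairwise disjoint; but $\pi^{-1}(S)_{\mathrm{red}}$ is also connected by Koll\'ar--Shokurov (it is the non-klt locus lying over the single point $x'$), hence irreducible.

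Your length-based volume computation has an independent gap. The identity you quote applies to extensions $\fa\cdot\cO_{X',x'}$, but in general $\fa_p(\ord_{S'})$ strictly contains $\fa_{\lceil p/e\rceil}(\ord_S)\cdot\cO_{X',x'}$, so your argument only produces $\hvol(\ord_{S'})\le\deg(\sigma)\cdot\hvol(\ord_S)$. (Incidentally, the relation $\ord_S(f)=e\cdot\ord_{S'}(f)$ is written backwards; it should read $\ord_{S'}(\sigma^*f)=e\cdot\ord_S(f)$.) Once irreducibility is known, the clean route is to bypass lengths entirely: from $\pi^*S=eS'$ and $\deg(\pi|_{S'})=\deg(\sigma)/e$ one reads off $\vol(\ord_{S'})=(\deg(\sigma)/e^n)\cdot\vol(\ord_S)$ via the self-intersection formula above. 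Your outline for part~(2) is essentially correct.
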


	The following lemma is well-known to experts (see e.g. \cite[Proof of Theorem 1.3]{HX09}, \cite[Lemmata 3.7 and 3.8]{LX16}, \cite[Corollary 3.5]{Fuj19b}, or \cite[Lemma 4.8]{Zhu20}). 
	
	\begin{lem}\label{lem:kc-compute-lct}
	    Let $x\in (X,\Delta)$ be a klt singularity. Let $\fa$ be an ideal sheaf on $X$ cosupported at $x$. Then there exists a Koll\'ar component $S$ computing $\lct(X,\Delta;\fa)$. 
	\end{lem}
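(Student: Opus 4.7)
Let $c := \lct(X,\Delta;\fa)$. Since $\fa$ is cosupported at $x$ and $x\in(X,\Delta)$ is klt, the formal pair $(X,\Delta+c\cdot V(\fa))$ is log canonical but fails to be klt, and the non-klt locus is contained in $\{x\}$. The first step is to choose a common log resolution $\pi:Y\to X$ of $(X,\Delta)$ and of $\fa$, writing $\pi^{*}\fa=\cO_Y(-F)$ so that $F+\pi_*^{-1}\Delta+\Exc(\pi)$ has simple normal crossings support. By the very definition of the log canonical threshold, at least one prime divisor $E$ appearing in $\Exc(\pi)\cup\Supp F$ whose center on $X$ is $x$ must be an lc place of $(X,\Delta+c\cdot V(\fa))$, that is,
\[
A_{(X,\Delta)}(\ord_E)\;=\;c\cdot\ord_E(\fa).
\]

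The second step is to extract precisely this divisor $E$ as a Koll\'ar component. This is the standard construction from \cite{Xu14} (see also \cite[Lemma 3.8]{LX16} and \cite[Proof of Theorem 1.3]{HX09}): on $Y$ I consider the boundary $\Gamma:=\pi_*^{-1}\Delta+E+\sum_i(1-\epsilon_i)E_i$, where $E_i$ runs over the other $\pi$-exceptional prime divisors and $0<\epsilon_i\ll 1$, and then run a relative $(K_Y+\Gamma)$-MMP over $X$. Since $(X,\Delta)$ is klt, the divisor $K_Y+\Gamma-E$ has negative coefficients on every $E_i\neq E$ after pulling back, so the MMP contracts every $\pi$-exceptional divisor other than $E$; termination and the existence of the necessary contractions follow from BCHM. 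The resulting birational morphism $\mu:Y'\to X$ is isomorphic over $X\setminus\{x\}$, has the birational transform $E'$ of $E$ as its unique exceptional divisor, satisfies $(Y',E'+\mu_*^{-1}\Delta)$ plt near $E'$, and $-E'$ is $\mu$-ample. Hence $E'$ is a Koll\'ar component of $x\in(X,\Delta)$.

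Finally, $E$ and $E'$ define the same valuation $\ord_E=\ord_{E'}$ on $K(X)$, so both $A_{(X,\Delta)}(\ord_{E'})$ and $\ord_{E'}(\fa)$ are unchanged by the birational modification, which gives
\[
\frac{A_{(X,\Delta)}(\ord_{E'})}{\ord_{E'}(\fa)}\;=\;\frac{A_{(X,\Delta)}(\ord_{E})}{\ord_{E}(\fa)}\;=\;c\;=\;\lct(X,\Delta;\fa),
\]
so the Koll\'ar component $S:=E'$ computes the log canonical threshold, as desired. The main technical point is the MMP extraction of step two: one has to choose the $\epsilon_i$ and perform a small $\bQ$-factorialization so that a terminating relative MMP contracts exactly the $E_i$ with $i$ ranging over the unwanted exceptional divisors while preserving $E$, and this relies on the full force of BCHM. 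Once this is set up the remainder of the argument is bookkeeping.
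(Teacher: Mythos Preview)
Your extraction step has a genuine gap: the MMP you describe will contract $E$ along with the other exceptional divisors. With $\Gamma = \pi_*^{-1}\Delta + E + \sum_i(1-\epsilon_i)E_i$ one computes
\[
K_Y + \Gamma \;\equiv_X\; A_{(X,\Delta)}(E)\cdot E + \sum_{E_i\neq E}\bigl(A_{(X,\Delta)}(E_i) - \epsilon_i\bigr)E_i,
\]
which is an \emph{effective} $\pi$-exceptional divisor since $(X,\Delta)$ is klt and hence every log discrepancy is positive. Running the $(K_Y+\Gamma)$-MMP over $X$ therefore terminates at $X$ itself, contracting all exceptional divisors including $E$; nothing in your boundary singles $E$ out, and your claim that ``$K_Y+\Gamma-E$ has negative coefficients on every $E_i\neq E$ after pulling back'' is simply false. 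Concretely, if $x\in X$ is a smooth point of dimension $n\ge 2$ and $\fa=\fm_x$, the ordinary blow-up exceptional $E$ has $A_X(E)=n$ and your MMP would immediately contract it.

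The constructions in \cite{Xu14} and \cite[Lemma~3.8]{LX16} that you invoke do not run the MMP for the klt pair $(X,\Delta)$ but for the \emph{lc} pair $(X,\Delta+c\cdot\fa)$, in which $A_{(X,\Delta+c\cdot\fa)}(E)=0$; equivalently, one must add $cF$ to the boundary $\Gamma$ so that the coefficient of $E$ in the comparison with $\pi^*(K_X+\Delta+c\cdot\fa)$ vanishes. Even after this correction, when several prime exceptional divisors compute $c$ the MMP need not terminate with the chosen $E$, and a tie-breaking step is required to isolate a single lc place and to guarantee plt-ness of the extracted model. This is exactly what the paper does: after separating the divisors $E_i$ with $A_{(X,\Delta)}(E_i)=c\,\ord_{E_i}(\fa)$ from the rest, it perturbs the coefficients of a $\pi$-ample exceptional divisor $-A$ and sets $\fb:=\pi_*\cO_Z(-mA)$ so that $(X,\Delta+\fa^{c-\epsilon}+\fb^{c'})$ is lc with a \emph{unique} lc place $E_{i_0}$, and then extracts $E_{i_0}$ via \cite[Corollary~1.4.3]{BCHM10}.
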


	The following result generalizes \cite[Theorem 1.3]{LX16} to $\bR$-divisors.
		\begin{theorem}\label{thm:lx-kc-minimizing} 
	Let $x\in (X,\Delta)$ be a klt singularity. Then 
	\begin{enumerate}
	    \item 
	$\hvol(x,X,\Delta)=\inf_{S}\hvol_{(X,\Delta),x}(\ord_S)$,
	where $S$ runs over all Koll\'ar components over $x\in (X,\Delta)$, and
	\item if $v_*\in \Val_{X,x}$ minimizes $\hvol_{(X,\Delta),x}$, then there exists a sequence of Koll\'ar components $\{S_k\}$  and positive numbers $b_k$ such that 
	\[
	\lim_{k\to+\infty} b_k\cdot\ord_{S_k} = v_* \textrm{ in }\Val_{X,x}\quad \textrm{and}\quad \lim_{i\to+\infty}\hvol_{(X,\Delta),x}(\ord_{S_k})=\hvol(x,X,\Delta).
	\]
	\end{enumerate}
	\end{theorem}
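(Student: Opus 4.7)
\bigskip
\noindent\textbf{Proof proposal for Theorem \ref{thm:lx-kc-minimizing}.} The plan is to deduce both parts from Lemma \ref{lem:nv=lcte} and the existence of Koll\'ar components computing lct's of $\mathfrak{m}_x$-primary ideals (Lemma \ref{lem:kc-compute-lct}), following the outline of \cite[Theorem 1.3]{LX16} but working with $\bR$-divisors throughout. The only conceptual ingredients beyond those of \emph{loc.\ cit.}\ will be the existence of a minimizer for $\bR$-divisors (Theorem \ref{thm:quasi-monomial}) and an elementary comparison between the multiplicity of an ideal and the volume of a divisorial valuation.

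\medskip
\noindent For part (1), since Koll\'ar components are divisorial valuations on $X$, the inequality $\hvol(x,X,\Delta)\le\inf_S\hvol_{(X,\Delta),x}(\ord_S)$ is tautological. For the reverse direction, take any $\mathfrak{m}_x$-primary ideal $\fa$ and apply Lemma \ref{lem:kc-compute-lct} to extract a Koll\'ar component $S$ with
\[
\lct(X,\Delta;\fa)\;=\;\frac{A_{(X,\Delta)}(\ord_S)}{\ord_S(\fa)}.
\]
From the containments $\fa^k\subseteq\fa_{k\ord_S(\fa)}(\ord_S)$ and passage to Hilbert--Samuel multiplicities, one obtains $\e(\fa)\ge\ord_S(\fa)^n\,\vol_{X,x}(\ord_S)$, which gives $\lct(X,\Delta;\fa)^n\cdot\e(\fa)\ge\hvol_{(X,\Delta),x}(\ord_S)$. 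Taking the infimum over $\fa$ and invoking Lemma \ref{lem:nv=lcte} finishes part (1).

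\medskip
\noindent For part (2), let $v_*$ be a $\hvol$-minimizer (existence by Theorem \ref{thm:quasi-monomial}) and let $\fa_m:=\fa_m(v_*)$. The two elementary asymptotics $e(\fa_m)/m^n\to\e(\fa_\bullet(v_*))=\vol_{X,x}(v_*)$ and $m\cdot\lct(X,\Delta;\fa_m)\to\lct(X,\Delta;\fa_\bullet(v_*))$ together with Lemma \ref{lem:nv=lcte} force
\[
\lct\bigl(X,\Delta;\fa_\bullet(v_*)\bigr)^n\cdot\e\bigl(\fa_\bullet(v_*)\bigr)\;=\;\hvol(x,X,\Delta),
\]
and hence $\lct(X,\Delta;\fa_m)^n\cdot\e(\fa_m)\to\hvol(x,X,\Delta)$. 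For each $m$, apply Lemma \ref{lem:kc-compute-lct} to $\fa_m$ to extract a Koll\'ar component $S_m$ computing $\lct(X,\Delta;\fa_m)$. The argument in part (1) then gives $\hvol(x,X,\Delta)\le\hvol_{(X,\Delta),x}(\ord_{S_m})\le\lct(X,\Delta;\fa_m)^n\cdot\e(\fa_m)$, so $\hvol_{(X,\Delta),x}(\ord_{S_m})\to\hvol(x,X,\Delta)$.

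\medskip
\noindent To obtain convergence $b_k\cdot\ord_{S_k}\to v_*$ in $\Val_{X,x}$, I would normalize $b_m:=A_{(X,\Delta)}(v_*)/A_{(X,\Delta)}(\ord_{S_m})$. Then the sequence $\{b_m\ord_{S_m}\}$ has constant log discrepancy and bounded $v(\fm_{X,x})$ by the Izumi/properness estimates of Lemma \ref{lem:izumi}, hence lies in a compact subset of $\Val_{X,x}$. Any subsequential limit $v_\infty$ satisfies $A_{(X,\Delta)}(v_\infty)\le A_{(X,\Delta)}(v_*)$ by lower semicontinuity and $\vol_{X,x}(v_\infty)\ge\vol_{X,x}(v_*)$ by upper semicontinuity, and hence is itself a minimizer with the same $A$-value. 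Invoking uniqueness of the normalized volume minimizer up to rescaling (\cite[Theorem 1.1]{XZ20}, whose proof extends verbatim to $\bR$-boundaries once Theorem \ref{thm:quasi-monomial} is in hand) forces $v_\infty=v_*$, and since every subsequential limit equals $v_*$, the full sequence converges. The main technical obstacle is precisely this last compactness-and-uniqueness step; should the $\bR$-divisor extension of the uniqueness of minimizers not be directly available, one can instead test the ideals $\fa_p(v_*)$ against $\ord_{S_m}$ for every fixed $p$ and use that $\fa_m\subseteq\fa_m$-based constructions of $S_m$ force $\ord_{S_m}(\fa_p)\to v_*(\fa_p)$ after rescaling, which directly yields the desired convergence in the topology of $\Val_{X,x}$.
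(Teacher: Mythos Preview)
Your proof of part (1) is correct and coincides with the paper's: both pick an $\fm_x$-primary ideal $\fa$ with $\lct(X,\Delta;\fa)^n\e(\fa)$ close to $\hvol(x,X,\Delta)$, extract a Koll\'ar component $S$ computing the lct via Lemma \ref{lem:kc-compute-lct}, and use the inequality $\hvol_{(X,\Delta),x}(\ord_S)\le\lct(X,\Delta;\fa)^n\e(\fa)$ (the paper cites \cite[Lemma 26]{Liu18} for this, you spell it out). Your argument that $\hvol_{(X,\Delta),x}(\ord_{S_m})\to\hvol(x,X,\Delta)$ in part (2) is also fine.

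The problem is your primary route to the valuation convergence $b_m\ord_{S_m}\to v_*$. You appeal to uniqueness of the minimizer \cite[Theorem 1.1]{XZ20} ``whose proof extends verbatim to $\bR$-boundaries once Theorem \ref{thm:quasi-monomial} is in hand''. But in this paper Theorem \ref{thm:quasi-monomial} is proved \emph{using} Theorem \ref{thm:lx-kc-minimizing} (its first line reads ``By Theorems \ref{thm:nvol is bounded by n^n} and \ref{thm:lx-kc-minimizing}, there exists a sequence of Koll\'ar components\ldots''), so invoking it here is circular. Independently of that, \cite{XZ20} is stated for $\bQ$-boundaries, and the claim that its proof extends verbatim is not something the paper establishes or relies on; you would need to justify it separately before it could carry weight here.

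The paper avoids this entirely: it declares that the proof of part (2) ``is the same as that of \cite[Theorem 1.3]{LX16}'' and omits it. That argument predates \cite{XZ20} and does not use uniqueness; it obtains the convergence directly by analyzing $\ord_{S_m}$ on the valuation ideals $\fa_\bullet(v_*)$, essentially your fallback. The point is that since $S_m$ computes $\lct(X,\Delta;\fa_m)$ and $\fa_m^{\,k}\subset\fa_{k\ord_{S_m}(\fa_m)}(\ord_{S_m})$, one squeezes $b_m\ord_{S_m}(\fa_p)$ between bounds converging to $v_*(\fa_p)$ for each fixed $p$, using that $\e(\fa_m)/m^n\to\vol(v_*)$ and $m\lct(\fa_m)\to A_{(X,\Delta)}(v_*)$. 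Your last sentence points at this but is too compressed to stand as a proof; if you develop it (following \cite[\S3]{LX16}, which only uses lct's and multiplicities and therefore goes through for $\bR$-boundaries given Lemma \ref{lem:kc-compute-lct}), you recover exactly the paper's approach without any circularity.
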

	
	\begin{proof}
	(1) The direction ``$\leq $'' is obvious. Thus it suffices to show that for any positive real number $\epsilon$, there exists a Koll\'ar component $S$ over $x\in (X,\Delta)$ such that $\hvol_{(X,\Delta),x}(\ord_S)\leq \hvol(x,X,\Delta)+\epsilon$. By Lemma \ref{lem:nv=lcte}, there exists an ideal sheaf $\fa$ on $X$ cosupported at $x$ such that $\lct(X,\Delta;\fa)^n\cdot \e(\fa)\leq \hvol(x,X,\Delta)+\epsilon$. By Lemma \ref{lem:kc-compute-lct}, there exists a Koll\'ar component $S$ computing $\lct(X,\Delta;\fa)$. Hence we have 
	\[
	\hvol_{(X,\Delta),x}(\ord_S)\leq \lct(X,\Delta;\fa)^n\cdot \e(\fa)\leq \hvol(x,X,\Delta)+\epsilon,
	\]
	where the first inequality follows from \cite[Lemma 26]{Liu18}. 
	
	The proof of part (2) is the same as that of \cite[Theorem 1.3]{LX16}, and we omit it.
	\end{proof}
	
	\begin{theorem}[{\cite[Theorem 1.2]{LX16}}]\label{thm:minimizer is K-ss}
		Let $x\in(X,\Delta)$ be a klt singularity where $\Delta\ge0$ is a $\bQ$-divisor. Then a divisorial valuation $\ord_S$ is a minimizer of $\nvol_{(X,\Delta),x}$ if and only if $S$ is a Koll\'ar component of $x\in(X,\Delta)$ and $(S,\Delta_S)$ is $K$-semistable, where $\mu:Y\to X$ is the corresponding plt blow-up of $x\in (X,\Delta)$, and	$\Delta_S$ is the different divisor of $K_Y+\mu_{*}^{-1}\Delta+S$ on $S$. 
	\end{theorem}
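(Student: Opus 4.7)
The plan is to prove both directions by first obtaining an explicit formula for $\nvol_{(X,\Delta),x}(\ord_S)$, and then relating perturbations of $\ord_S$ to test configurations of $(S,\Delta_S)$. Using the adjunction $(K_Y+S+\mu^{-1}_*\Delta)|_S=K_S+\Delta_S$ together with $K_Y+S+\mu^{-1}_*\Delta\sim_\bQ \mu^*(K_X+\Delta)+A_{(X,\Delta)}(S)\cdot S$, one obtains $-S|_S\sim_\bQ \frac{1}{A_{(X,\Delta)}(S)}(-K_S-\Delta_S)$. A direct Riemann--Roch computation on $Y$ applied to $\fa_m(\ord_S)=\mu_*\cO_Y(-mS)$ gives $\vol(\ord_S)=(-S|_S)^{n-1}$, whence
\[
\nvol_{(X,\Delta),x}(\ord_S)=A_{(X,\Delta)}(S)\cdot (-K_S-\Delta_S)^{n-1}.
\]
This is a purely log-Fano invariant of $(S,\Delta_S)$ rescaled by $A_{(X,\Delta)}(S)$, and serves as the computational anchor of both implications.

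For the direction K-semistable $\Rightarrow$ minimizer, I would first reduce to comparing $\ord_S$ with other Koll\'ar components by invoking Theorem \ref{thm:lx-kc-minimizing}. The key geometric input is a ``degeneration to the orbifold cone'': the plt pair $(Y,S+\mu^{-1}_*\Delta)$ admits a $\bQ$-Gorenstein degeneration of a neighborhood of $x$ to the orbifold cone $C_a(S,-K_S-\Delta_S)$ with apex $o$, and the canonical cone valuation $\ord_o$ specializes to $\ord_S$. By the lower semicontinuity of local volumes in $\bQ$-Gorenstein families, it suffices to prove that $\ord_o$ minimizes $\nvol$ on the cone. There, K-semistability of $(S,\Delta_S)$ is equivalent via Fujita's valuative criterion to the inequality $\nvol(\ord_o)\le \nvol(v)$ for every torus-equivariant divisorial $v\in\Val_{C,o}$, and a torus-averaging argument extends the inequality to all valuations.

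For the converse direction, I would argue by contraposition. Assume $\ord_S$ is a minimizer and suppose $(S,\Delta_S)$ admits a special test configuration $(\cS,\Delta_\cS;\cL)$ with negative generalized Futaki invariant. Such a test configuration induces, via $\mu$ and the graded algebra $\bigoplus_m\mu_*\cO_Y(-mS)$, a $\bG_m$-equivariant degeneration of a neighborhood of $S\subset Y$, together with a one-parameter family of quasi-monomial valuations $v_t\in\Val_{X,x}$ obtained from the weight filtration. Expanding $\nvol(v_t)=A_{(X,\Delta)}(v_t)^n\vol(v_t)$ to first order in $t$ using the formula above identifies $\tfrac{d}{dt}\big|_{t=0^+}\nvol(v_t)$ with a positive multiple of the Futaki invariant; its negativity produces valuations with strictly smaller normalized volume than $\ord_S$, contradicting minimality.

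The principal obstacle is the degeneration-to-cone step in the K-semistable direction: one needs a genuine $\bQ$-Gorenstein family interpolating between $x\in(X,\Delta)$ and $o\in C_a(S,-K_S-\Delta_S)$, and one must control both log discrepancies and volumes of valuations along the degeneration. This relies on finite generation of the section ring of $-r(K_S+\Delta_S)$ (ensured by $(S,\Delta_S)$ being log Fano) together with the plt condition keeping the central fiber klt. A secondary technical difficulty is verifying in the converse direction that the weight filtration really defines valid quasi-monomial valuations on $\cO_{X,x}$; this rests on finite generation of $\bigoplus_m \fa_m(\ord_S)$, itself a consequence of $S$ being a Koll\'ar component with $-S$ being $\mu$-ample.
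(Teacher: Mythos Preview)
The paper does not prove this statement: Theorem~\ref{thm:minimizer is K-ss} is quoted verbatim from \cite[Theorem 1.2]{LX16} (Li--Xu) and is used as a black box, so there is no ``paper's own proof'' to compare against. Your outline is therefore a sketch of the Li--Xu argument rather than of anything in this paper.

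That said, a brief assessment: your formula $\nvol(\ord_S)=A_{(X,\Delta)}(S)\cdot(-K_S-\Delta_S)^{n-1}$ and the converse direction (destabilizing test configuration $\Rightarrow$ derivative of $\nvol$ along the induced ray equals a positive multiple of the Futaki invariant) are essentially the Li--Xu strategy. For the K-semistable $\Rightarrow$ minimizer direction, however, you invoke lower semicontinuity of local volumes from \cite{BL18a}, which is anachronistic (it postdates \cite{LX16}) and somewhat circular in spirit, since \cite{BL18a} itself relies on the Li--Xu cone machinery. The original \cite{LX16} argument for that direction proceeds more directly: on the orbifold cone $C_a(S,\Delta_S)$ one shows by hand (their Section~4) that K-semistability of $(S,\Delta_S)$ forces $\ord_o$ to minimize among all Koll\'ar components, and then one transports this back to $x\in(X,\Delta)$ via the degeneration, comparing Koll\'ar components one at a time rather than appealing to a global semicontinuity statement. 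Your torus-averaging remark on the cone is correct in spirit but the actual reduction to $\bG_m$-equivariant valuations in \cite{LX16} is more delicate than a one-line averaging.
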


	\subsection{Analytically isomorphic singularities}
	
	\begin{defn}
		We say two singularities $(x\in X)$ and $(x'\in X')$ are analytically isomorphic (denoted by $(x\in X^{\rm an})\cong (x'\in X'^{\rm an})$) if we have an isomorphism $\widehat{\cO_{X,x}}\cong \widehat{\cO_{X',x'}}$ of $\bk$-algebras.
	\end{defn}
	
	Here we use the notion ``analytically isomorphic'' as ``formally isomorphic'' in literature, although the former notion (over $\bC$) usually refers to isomorphic as complex analytic germs. Note that a famous result of Artin \cite[Corollary 2.6]{Art69} shows that formally isomorphic singularities have isomorphic \'etale neighborhoods, hence over $\bC$ the two notions are equivalent.

	We will use the following Proposition without citing it frequently. 
	\begin{prop}\label{prop: Q-Gorestein}
		Assume that $(x\in X)$ and $(x'\in X')$ are analytically isomorphic singularities. Then  $(x\in X)$ is $\bQ$-Gorenstein if and only if $(x'\in X')$ is $\bQ$-Gorenstein. Moreover, the Cartier index of $K_X$ near $x$ is the same as the Cartier index of $K_{X'}$ near $x'$. 
	\end{prop}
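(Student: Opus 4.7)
The plan is to reduce the statement to faithfully flat descent along the completion homomorphism. Write $R := \cO_{X,x}$ and $R' := \cO_{X',x'}$; these are normal local rings essentially of finite type over $\bk$, hence excellent, and by hypothesis $\widehat{R} \cong \widehat{R'}$ as $\bk$-algebras. Recall that the Cartier index of $K_X$ near $x$ is by definition the smallest positive integer $m$ such that the rank one reflexive $R$-module $\omega_R^{[m]} := (\omega_R^{\otimes m})^{**}$ is free, with the convention that the index is $+\infty$ precisely when $X$ is not $\bQ$-Gorenstein at $x$; the analogous statement holds over $R'$. It therefore suffices to show that the set $\{m \geq 1 \mid \omega_R^{[m]} \text{ is free}\}$ coincides with the corresponding set for $\widehat{R}$, because applying this first to $R$ and then to $R'$ and using $\widehat{R} \cong \widehat{R'}$ yields both the equivalence of the $\bQ$-Gorenstein property and the equality of Cartier indices.

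The key analytic input is that the reflexive powers of $\omega$ commute with completion. Since $R$ is excellent, $\widehat{R}$ is again a normal local ring and the canonical module commutes with completion, $\omega_R \otimes_R \widehat{R} \cong \omega_{\widehat{R}}$. For a finitely presented $R$-module $M$ and a flat $R$-algebra $S$, the natural map $\Hom_R(M,R) \otimes_R S \to \Hom_S(M \otimes_R S, S)$ is an isomorphism; applying this twice to $M = \omega_R^{\otimes m}$ yields $\omega_R^{[m]} \otimes_R \widehat{R} \cong \omega_{\widehat{R}}^{[m]}$.

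To conclude, I would invoke faithful flat descent of freeness: a finitely generated $R$-module is free of rank one if and only if its base change to $\widehat{R}$ is free of rank one, since both flatness and the dimension of the closed fiber $M/\fm M$ are preserved and detected by $\widehat{R}$. Applying this to $M = \omega_R^{[m]}$ shows that the Cartier index of $K_R$ equals that of $K_{\widehat{R}}$, and combining this with the analogous identity for $R'$ and the isomorphism $\widehat{R} \cong \widehat{R'}$ gives the proposition. The only step that is not purely formal is the compatibility of the reflexive hull with completion, which is exactly what makes the excellence of $R$ the crucial hypothesis; once this is granted, everything else is a consequence of standard properties of faithfully flat extensions.
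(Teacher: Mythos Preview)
Your argument is correct and follows essentially the same route as the paper's proof: both reduce to showing $\omega_R^{[m]}$ is free if and only if $\omega_{\widehat R}^{[m]}$ is free, via the compatibility $\omega_R^{[m]}\otimes_R\widehat R\cong\omega_{\widehat R}^{[m]}$ and faithfully flat descent of freeness. The only cosmetic difference is that the paper first observes $R$ is Cohen--Macaulay iff $R'$ is (so as to invoke \cite[Theorem~3.3.5]{BH93} for the identity $\omega_{\widehat R}\cong\omega_R\otimes_R\widehat R$), whereas you appeal directly to excellence of $R$; the substance is identical.
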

	
	\begin{proof}
		Denote $R:=\cO_{X,x}$ and $R':=\cO_{X',x'}$. Let $\widehat{R}$ and $\widehat{R'}$ be their completions. Then we have an isomorphism $\widehat{R}\cong\widehat{R'}$. 
		Since both dimension and depth are preserved under completion, we know that $R$ is Cohen-Macaulay if and only if $\widehat{R}\cong \widehat{R'}$ is Cohen-Macaulay,  if and only if $R'$ is Cohen-Macaulay. For a finite $R$-module $M$ and $m\in\bZ_{>0}$, we denote $M^{[m]}:=(M^{\otimes m})^{**}$.
		Thus it suffices to show that $\omega_R^{[m]}$ is free if and only if $\omega_{R'}^{[m]}$ is free for $m\in\bZ_{>0}$. Here $\omega_{A}$ denotes the canonical module of a Cohen-Macaulay ring $A$. By \cite[Theorem 3.3.5]{BH93}, we know that $\omega_{\widehat{R}}\cong \omega_R\otimes_R \widehat{R}$. Since $R\hookrightarrow\widehat{R}$ and $R'\hookrightarrow \widehat{R'}$ are faithfully flat, we know that 
		\[
		\omega_{\widehat{R}}^{[m]}\cong \omega_R^{[m]}\otimes_R \widehat{R}, \quad \textrm{and}\quad 			\omega_{\widehat{R'}}^{[m]}\cong \omega_{R'}^{[m]}\otimes_{R'} \widehat{R'}.
		\]
		Hence $\omega_R^{[m]}$ is free if and only if  $\omega_{\widehat{R}}^{[m]}\cong\omega_{\widehat{R'}}^{[m]}$ is free, if and only if  $\omega_{R'}^{[m]}$ is free.
	\end{proof}
	
	Recall that for a klt singularity $x\in X$, the space $\Val_{X,x}^{\circ}$ consists of valuations $v\in \Val_{X,x}$ satisfying $A_X(v)<+\infty$.
	
	\begin{prop}\label{prop:an-iso}
	Assume that $(x\in X)$ and $(x'\in X')$ are analytically isomorphic singularities where $(x\in X)$ is klt. Then  $(x'\in X')$ is also klt. Moreover, there exists a bijection $\phi: \Val_{X,x}^\circ\to \Val_{X',x'}^\circ$ such that the following statements hold for any $v\in \Val_{X,x}^\circ$.
	\begin{enumerate}
	    \item We have $A_X(v)=A_{X'}(\phi(v))$.
	    \item We have $\gr_{v}\cO_{X,x}\cong \gr_{\phi(v)}\cO_{X',x'}$ as graded rings. In particular, $\vol_{X,x}(v)=\vol_{X',x'}(\phi(v))$.
	    \item We have 
	    $\hvol_{X,x}(v)=\hvol_{X',x'}(\phi(v))$ and  $\hvol(x,X)=\hvol(x',X')$.
	    \item If $v=\ord_S$ is a Koll\'ar component $S$ of $(x\in X)$, then $\phi(v)=\ord_{S'}$ is a Koll\'ar component $S'$ of $(x\in X')$, and $(S,\Gamma)\cong (S',\Gamma')$ where $\Gamma$ and $\Gamma'$ are different divisors. 
	\end{enumerate}
	\end{prop}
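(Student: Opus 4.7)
The plan is to transfer every claim through the isomorphism $\widehat{\cO_{X,x}}\cong \widehat{\cO_{X',x'}}$ of completed local rings, using that all the invariants in the statement depend only on the formal germ. First, the klt property near $x$ can be detected formally: by Artin's result \cite[Corollary 2.6]{Art69} the singularities $(x\in X)$ and $(x'\in X')$ share a common étale neighborhood, and klt is preserved under étale morphisms; together with Proposition \ref{prop: Q-Gorestein} this yields that $x'\in X'$ is klt.

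Second, I would construct $\phi$ by extending valuations to the completion. Any $v\in\Val_{X,x}^\circ$ admits a unique continuous extension $\hat v$ to $\widehat{\cO_{X,x}}$: this is clear for quasi-monomial valuations via the Cohen structure theorem, and for general $v$ with $A_X(v)<+\infty$ one uses the Izumi-type estimate of Lemma \ref{lem:izumi} to see that $v$ is bi-Lipschitz against the $\fm$-adic order, hence extends continuously. The isomorphism of completions then lets one restrict $\hat v$ to $\cO_{X',x'}$, producing $\phi(v)\in \Val_{X',x'}^\circ$, and the reverse procedure gives the inverse. Compatibility with log discrepancies follows because $A_X(v)$ is characterized intrinsically by the retraction procedure of \cite{JM12, BdFFU15}, and each retraction to a log smooth model $(Y,D)$ dominating $(X,\Delta)$ can be realized over $\Spec\widehat{\cO_{X,x}}$; thus $A_X(v)=A_{X'}(\phi(v))$, giving (1).

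For (2), the valuation ideals $\fa_p(v)$ are $\fm_x$-primary for $p>0$, so the natural maps $\cO_{X,x}/\fa_p(v)\to \widehat{\cO_{X,x}}/\fa_p(\hat v)$ are isomorphisms; passing to associated graded and transporting through $\widehat{\cO_{X,x}}\cong\widehat{\cO_{X',x'}}$ yields $\gr_v\cO_{X,x}\cong \gr_{\phi(v)}\cO_{X',x'}$. Part (3) then follows from the length formula $\vol_{X,x}(v)=\lim_m \ell(\cO_{X,x}/\fa_m(v))/(m^n/n!)$, which only sees these colengths, combined with (1); taking the infimum over all valuations gives $\hvol(x,X)=\hvol(x',X')$.

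Finally, for part (4), if $S$ is a Kollár component of $x\in X$ extracted by $\mu:Y\to X$, then by Lemma \ref{lem:kc-compute-lct} (and \cite[Corollary 1.4.3]{BCHM10}) $S$ is the unique lc place of some pair $(X,c\cdot\fa)$ with $\fa\subset\cO_{X,x}$ an $\fm_x$-primary ideal; via the isomorphism of completions this transports to an $\fm_{x'}$-primary ideal $\fa'$ whose corresponding plt blow-up produces the Kollár component $S'$. To identify the different divisors, I would observe that the finitely generated graded algebra $\bigoplus_{k\geq 0}\fa_k(\ord_S)$ depends only on the completion by the graded ring isomorphism of (2), so $Y\to X$ and $Y'\to X'$ become isomorphic after base change to $\Spec\widehat{\cO_{X,x}}\cong\Spec\widehat{\cO_{X',x'}}$; since $S$ and $S'$ are proper over $x$ and $x'$ respectively, this formal isomorphism induces an actual isomorphism $(S,\Gamma)\cong (S',\Gamma')$ because both the variety and the different are computed from the formal neighborhood of the exceptional divisor via adjunction. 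The main obstacle will be making this last descent rigorous: establishing that a Kollár component—a global projective-birational datum—is faithfully recovered from the formal germ at $x$, which I expect to handle by invoking finite generation of the associated Rees-type algebra together with formal GAGA / Grothendieck's existence theorem along the contracted divisor.
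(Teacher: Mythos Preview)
Your argument for the klt property and for parts (1)--(3) is essentially the same as the paper's, modulo citations: the paper invokes \cite[Proposition 2.11]{dFEM11} directly on the formal scheme rather than Artin approximation, and for (1) cites \cite[Corollary 5.11 and Proposition 5.13]{JM12} (with the Izumi estimate replacing the regularity hypothesis, exactly as you suggest).

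For part (4) your route diverges from the paper's. The paper observes, via \cite{Li17, LX16}, that the pair $(S,\Gamma)$ is recovered intrinsically as the orbifold quotient of $\Spec\,\gr_v\cO_{X,x}$ by the primitive $\bG_m$-action coming from the grading; hence (4) is an immediate corollary of the graded-ring isomorphism in (2), with no further birational input or formal GAGA needed. Your approach---characterize $S$ as the unique lc place of some $(X,c\cdot\fa)$, transport $\fa$ through the completion, rebuild $S'$ via \cite[Corollary 1.4.3]{BCHM10}, then match the formal neighborhoods---is workable but heavier, and the descent step you flag as the main obstacle is genuinely delicate. The paper's observation short-circuits all of this. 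One small caution: your appeal to Lemma \ref{lem:kc-compute-lct} is in the wrong direction (that lemma produces a Koll\'ar component from an ideal, not the converse); the statement you need---that a given Koll\'ar component $S$ is the unique lc place of $(X,c\cdot\fa)$ for $\fa=\mu_*\cO_Y(-mS)$ with $m$ sufficiently divisible---is true but should be argued separately.
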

	
	\begin{proof}
	For simplicity, denote $(R,\fm):=(\cO_{X,x},\fm_{X,x})$ and $(R',\fm'):=(\cO_{X',x'},\fm_{X',x'})$. Let $(\widehat{R},\widehat{\fm})$ and $(\widehat{R'},\widehat{\fm'})$ be the completion of $(R,\fm)$ and $(R',\fm')$ respectively.
	Since $x\in X$ is klt, by \cite[Proposition 2.11(1)]{dFEM11} we know that $\Spec\,\widehat{R}$ is klt in the sense of \cite[Page 226]{dFEM11}. Hence $x'\in X'$ is also klt by \cite[Proposition 2.11(1)]{dFEM11} and the isomorphism $\Spec\,\widehat{R}\cong \Spec\,\widehat{R'}$.
	
	Next we construct the bijection $\phi$. By \cite[Corollary 5.11]{JM12}, any valuation $v\in \Val_{X,x}^\circ$ has a unique extension $\hat{v}$ to $\Spec \widehat{R}$. Note that although \cite[Corollary 5.11]{JM12} has the assumption that $R$ is regular, the same argument goes through for any klt singularity $x\in X$ by replacing the Izumi inequality \cite[Proposition 5.10]{JM12} with Lemma \ref{lem:izumi}. Denote by $\psi: \widehat{R}\xrightarrow{\cong} \widehat{R'}$ the isomorphism. Then we may define $\phi(v):= (\psi_* \hat{v})|_{R'} \in \Val_{X',x'}$.
	
	
	Let $\pi:W\to X$ be a log resolution of $X$. Denote by $\hX:= \Spec ~\hR$ and $\widehat{X'}:=\Spec~\widehat{R'}$. Let $\hW:=W\times_X \hX$ with $\hat{\pi}: \hW\to \hX$. By \cite[Proposition A.14]{dFEM11}, we have $\hat{\pi}^* K_{W/X}=K_{\hW/\hX}$. By \cite[Proposition 5.13]{JM12}, we have that $A_W(v)=A_{\hW}(\hv)$. Thus by Lemma \ref{lem:A-pull-back} we have
	\begin{equation}\label{eq:completion-1}
	A_{X}(v)= A_W(v) + v(K_{W/X}) = A_{\hW}(\hv) + \hv(K_{\hW/\hX}).
	\end{equation}
	Since $\hX\cong \hX'$ by assumption, we know that $\hW\to \hX'$ is a log resolution in the sense of \cite{Tem18}.
	Let $\pi':W'\to X'$ be a log resolution of $X'$. Denote by $\widehat{W'}:= W'\times_{X'}\widehat{X'}$. Thus $\widehat{W'}\to \widehat{X'}$ is also a log resolution. Thus by \cite[Remark 5.6]{JM12} and the above arguments, we have
	\begin{align*}\label{eq:completion-2}
	A_{\hW}(\hv) + \hv(K_{\hW/\hX}) & = A_{\widehat{W'}} (\psi_* \hv) + \psi_* \hv(K_{\widehat{W'}/\widehat{X'}})\numberthis\\ & = A_{W'}(\phi(v)) + \phi(v) (K_{W'/X'})=A_{X'}(\phi(v)).
	\end{align*}
	Combining \eqref{eq:completion-1} and \eqref{eq:completion-2}, we get $A_{X}(v)=A_{X'}(\phi(v))$. 
	Hence $\phi$ takes value in $\Val_{X',x'}^{\circ}$. Similarly we can define $\phi^{-1}$ which implies that $\phi: \Val_{X,x}^{\circ}\to \Val_{X',x'}^\circ$ is a bijection. In addition, we have shown part (1). 
	
	For part (2), we first show that $\fa_p(v)\cdot \hR=\fa_p(\hv)$ for any $p\in \bR_{\geq 0}$. Since $\hv$ is an extension of $v$, we have $\fa_p(v)\cdot \hR\subset\fa_p(\hv)$. On the other hand, 
	suppose $f\in \fa_p(\hv)\setminus \{0\}$, then let $m\in \bN$ be an integer such that $m\cdot v(\fm)> \hv(f)$. Since $\hv(\widehat{\fm}^m)=\hv(\fm^m\cdot \hR)=v(\fm^m)=mv(\fm)$, we know that $\hv(\widehat{\fm}^m)>\hv(f)\geq p$. Choose $g\in R$ such that $f-g\in \widehat{\fm}^m$, then $v(g)=\hv(f)\geq p$. Thus we have $g\in \fa_p(v)$ and $\fm^m\subset \fa_p(v)$ which implies $f\in (g)+\widehat{\fm}^m\subset \fa_p(v)\cdot \hR$. As a result, we have $\fa_p(\hv)\subset \fa_p(v)\cdot \hR$ which implies $\fa_p(v)\cdot \hR=\fa_p(\hv)$. 
	
	Since all valuation ideals $\fa_p(v)$ of $v$  are $\fm$-primary, we have $\fa_p(v)/\fa_{>p}(v)\cong \fa_p(\hv)/\fa_{>p}(\hv)$ for any $p\in \bR_{\geq 0}$. Thus we have  $\gr_v R\cong \gr_{\hat{v}} \widehat{R}$ as graded rings. Apply similar arguments to $\phi(v)$ and $\widehat{\phi(v)}=\psi_* \hv$, we get $\gr_{\phi(v)} R'\cong \gr_{\psi_*\hat{v}}\widehat{R'}$. Since $\psi:\hR\to \widehat{R'}$ is an isomorphism, we get 
	\[
	\gr_v R\cong \gr_{\hat{v}} \widehat{R}\cong \gr_{\psi_*\hat{v}}\widehat{R'}\cong \gr_{\phi(v)} R'.
	\]
	From the isomorphism $\gr_v R\cong \gr_{\phi(v)} R'$, we know that $\ell(R/\fa_p(v))=\ell(R'/\fa_p(\phi(v)))$ for any $p\in \bR_{\geq 0}$. Thus the volumes of $v$ and $\phi(v)$ are equal. This finishes the proof of part (2).
	
	Part (3) is a consequence of parts (1) and (2). 
	
	For part (4), suppose $v=\ord_S$ for a Koll\'ar component $S$ over $(x\in X)$. Since $v$ and $\phi(v)$ have isomorphic associated graded algebras by part (2), we know that the value group of $\phi(v)$ is the same as that of $v$, which is $\bZ$. Let $Y':= \Proj_{X'} \oplus_{m\in \bZ_{\geq 0}} \fa_m(\phi(v))$, where the finite generation of this graded algebra follows from the finite generation of $\gr_{\phi(v)} R'$ (see e.g. \cite[Lemma 32]{Liu18}). Clearly $Y'$ is normal as any valuation ideal sequence is integrally closed. Denote by $\mu':Y'\to X'$ the projection morphism, then $\mu'$ is isomorphic over $X'\setminus\{x'\}$ as  $\fa_m(\phi(v))$ is $\fm'$-primary. Let $S':=\Proj ~\gr_{\phi(v)} R'$ as a closed subscheme of $Y'$. Then by construction we know that $\Supp S'=\mu'^{-1}(x')$. By \cite[Section 2.4]{LX16} and part (2), we know that $S\cong \Proj~ \gr_v R \cong \Proj~ \gr_{\phi(v)} R'= S'$. Hence $S'$ is the only prime $\mu'$-exceptional divisor on $Y'$. Let $k\in \bZ_{>0}$ be an integer such that $\fa_{km}(\phi(v))=\fa_k(\phi(v))^m$ for any $m\in \bN$. Then we know that $\cO_{Y'}(k)$ is Cartier ample over $X'$, which implies that $\cO_{Y'}(k)\cong \cO_{Y'}(-qS')$ for some $q\in \bZ_{>0}$. It is clear that $\fa_{km}(\phi(v))=\mu'_* \cO_Y(km)$, thus we have
	$\fa_{km}(\phi(v)) = \mu'_* \cO_Y(-qmE)= \fa_{km} (\frac{k}{q}\ord_{S'})$ for any $m\in \bN$. Thus we have $\phi(v)=\frac{k}{q}\ord_{S'}$, which implies that $k=q$ and $\phi(v)=\ord_{S'}$ by comparing their value groups. In particular, we have $\fa_m(\phi(v))=\mu'_* \cO_{Y'}(-mS')$ for any $m\in \bN$. 
	
	Let $o$ and $o'$ be the cone vertices of $\Spec~\gr_{v}R$ and $\Spec~\gr_{\phi(v)}R'$ respectively.
	By \cite[Section 2.4]{LX16} we know that $o\in \Spec~\gr_{v}R$ is a klt singularity carring a $\bG_m$-action induced by the grading of $\gr_{v}R$, such that $(\Spec~\gr_v R)\setminus \{o\}$ is a Seifert $\bG_m$-bundle over $(S,\Gamma)$ in the sense of \cite{Kol04}. Thus by part (2) $\Spec~ \gr_{\phi(v)}R'$ is also a klt singularity with a $\bG_m$-action induced by the grading of $\gr_{\phi(v)}R'$. By \cite[Proof of Lemma 2.21(1)]{LWX18}, we know that $\mu':Y'\to X'$ provides a Koll\'ar component $S'$ with different divisor $\Gamma'$, such that $(\Spec~ \gr_{\phi(v)}R')\setminus \{o'\}$ is a Seifert $\bG_m$-bundle over $(S', \Gamma')$. Since $\gr_v R\cong \gr_{\phi(v)}R'$ as graded rings by part (2), we know that $(S,\Gamma)\cong (S', \Gamma')$ as $\bG_m$-quotients of isomorphic Serfert $\bG_m$-bundles. The proof is finished.
	\end{proof}
	

	\subsection{Family of singularities}
	\begin{definition}[{\cite{BL18a,Xu19}}]\label{defn:family of singularities}
		We call $B\subset (\cX,\cD)\to B$ a $\bQ$-Gorenstein (resp. an $\Rr$-Gorenstein) family of ($n$-dimensional) klt singularities over a (possibly disconnected) normal base $B$, if 
		\begin{enumerate}
			\item $\cX$ is normal and flat over $B$, 
			
			
			\item $K_{\cX/B}+\cD$ is $\bQ$-Cartier (resp. $\Rr$-Cartier),
			
			\item for any closed point $b\in B$, $\cX_b$ is connected, normal, and not contained in $\Supp(\cD)$,
			
			\item there is a section $B\subset \cX$, and 
			
			\item $b\in (\cX_b,\cD_b)$ is klt (of dimension $n$) for any closed point $b\in B$, where $\cD_b$ is the (cycle theoretic) restriction of $\cD$ over $b\in B$.
		\end{enumerate}
		
		Let $x\in X$ be a normal variety $X$ with a closed point $x$.
		Let $\bxb$ be a $\bQ$-Gorenstein family of klt singularities.
		We denote by $(x\in X)\in (B\subset \cX\to B)$, if there exists a closed point $b\in B$, a neighborhood $U$ of $x\in X$, and a neighborhood $U_b$ of $b\in \cX_b$, such that $(x\in U)$ is isomorphic to $(b\in U_b)$. We denote by $(x\in X^{\rm an})\in (B\subset \cX^{\rm an}\to B)$, if there exists a closed point $b\in B$ such that $\widehat{\cO_{X,x}}\cong \widehat{\cO_{\cX_b,b}}$ as $\bk$-algebras.
	\end{definition}
	\begin{remark}\label{rem: basechage q-gorestein family sing}
		Let $B'\to B$ be any morphism from a normal scheme $B'$ of finite type over $\bk$, the base change $B'\subset (\cX',\cD')=(\cX ,\cD)\times_{B} B'\to B'$ is a $\Qq$-Gorenstein (resp. $\Rr$-Gorenstein) family of klt singularities over $B'$, and $K_{\cX'/B'}+\cD'= g^{*}(K_{\cX/B}+\cD)$, where $g : \cX'\to \cX$ is the base change of $B'\to B$, see \cite[Proposition 8]{BL18a}.
	\end{remark}
	
	\begin{definition}
	Let $B\subset (\cX,\cD)\to B$ be an $\Rr$-Gorenstein family of klt singularities over a normal base $B$. We say a birational morphism $\mu:(\cY,\cE)\to (\cX,\cD)$ is a \emph{fiberwise log resolution} of $B\subset (\cX,\cD)\to B$ where $\cE$ is the sum of the strict transform of $\cD$ and the reduced exceptional divisor of $\cY\to \cX$, if 
	\begin{enumerate}
	    \item for each closed point $b\in B$, $(\cY_b,\cE_b)\to (\cX_b,\cD_b)$ is a log resolution,
	    \item any stratum of $(\cY,\cE)$, that is a component of the intersection $\cap \cE_i$ for components $\cE_i$ of $\cE$, has geometric irreducible fibers over $B$, and
	    \item for any exceptional prime divisor $\cF$ of $\mu$, the center of $\cF$ on $\cX$ is the section $B\subset \cX$ if and only if the center of $\cF_b$ on $\cX_b$ is $b\in \cX_b$ for some closed point $b\in B$.
	\end{enumerate}
		\end{definition}
	\begin{remark}
         For any $\Rr$-Gorenstein family of klt singularities $B\subset (\cX,\cD)\to B$ over a normal base $B$, by \cite[Definition-Lemma 2.8]{Xu19}, possibly stratifying the base $B$ into a disjoint union of finitely many constructible subsets and taking finite \'{e}tale coverings, we may assume that there exists a decomposition $B=\bigsqcup_{\alpha} B_{\alpha}$ into irreducible smooth strata $B_{\alpha}$ such that for each $\alpha$,  $(\cX\times_{B}B_{\alpha},\cD\times B_{\alpha})$ admits a fiberwise log resolution $\mu_{\alpha}$. In particular, exists a positive real number $\epsilon$, such that $b\in (\cX_b,\cD_b)$ is $\epsilon$-lc for any closed point $b\in B$.
	\end{remark}
	
	The next lemma shows that log canonical thresholds in $\Rr$-Gorenstein families are constructible and lower semi-continuous. For $\bQ$-Gorenstein families it is stated in \cite[Proposition 10]{BL18a} (see also \cite[Corollary 2.10]{Amb16}). We omit the proof since it is the same with \cite{Amb16}.
	
	\begin{lem}\label{lem: lctlowersc}
		Let $(\cX,\cD) \to B$ be an $\Rr$-Gorenstein family of klt singularities over a normal base $B$. Let $\fa$ be an ideal sheaf on $\cX$. Then 
		\begin{enumerate}
			\item The function $b\mapsto \lct(\cX_b,\cD_b;\fa_b)$ on $B$ is constructible;
			\item If in addition $V(\fa)$ is proper over $B$, then $b\mapsto \lct(\cX_b,\cD_b;\fa_b)$ on $B$ is lower semi-continuous with respect to the Zariski topology.
		\end{enumerate}
	\end{lem}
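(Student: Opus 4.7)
My strategy follows the blueprint of \cite{Amb16} for $\Qq$-Gorenstein families, observing that the entire argument is insensitive to whether $\cD$ carries $\Qq$- or $\Rr$-coefficients, because log discrepancies $A_{(\cX_b,\cD_b)}(\cdot)$ depend only on the $\Rr$-Cartier class of $K_{\cX/B}+\cD$. First, by the Remark preceding the lemma, I decompose $B=\bigsqcup_\alpha B_\alpha$ into finitely many smooth irreducible locally closed strata such that over each $B_\alpha$ there is a fiberwise log resolution $\mu_\alpha:(\cY_\alpha,\cE_\alpha)\to(\cX\times_B B_\alpha,\cD\times_B B_\alpha)$. After a further stratification and additional blow-ups that preserve this fiberwise property, I may also assume $\fa\cdot\cO_{\cY_\alpha}=\cO_{\cY_\alpha}(-F_\alpha)$ is locally principal with $F_\alpha+\cE_\alpha$ simple normal crossing.

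On a single stratum $B_\alpha$, write $\cE_\alpha=\sum_i\cE_{\alpha,i}$. By condition (2) in the definition of a fiberwise log resolution, each $\cE_{\alpha,i,b}$ is geometrically irreducible for $b\in B_\alpha$, hence the log discrepancy $a_i:=A_{(\cX_b,\cD_b)}(\cE_{\alpha,i,b})$ is independent of $b$ (it is read off from the $\Rr$-Cartier pullback $\mu_\alpha^*(K_{\cX/B}+\cD)|_{B_\alpha}$), and so is the multiplicity $f_i:=\mult_{\cE_{\alpha,i,b}}F_{\alpha,b}$. Therefore
\[
\lct(\cX_b,\cD_b;\fa_b)=\min_{i\,:\,f_i>0}\frac{a_i}{f_i}
\]
is a constant on $B_\alpha$. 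Since a function that is constant on each member of a finite stratification is constructible, part (1) follows.

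For part (2), assume $V(\fa)\to B$ is proper. I reduce lower semi-continuity to the statement that whenever $b_0\in\overline{\{b'\}}$ we have $\lct(\cX_{b_0},\cD_{b_0};\fa_{b_0})\le \lct(\cX_{b'},\cD_{b'};\fa_{b'})$. To see this, I would pick a common log resolution $\tilde{\mu}:\tilde{\cY}\to\cX$ (over a suitable open subset of $B$ containing $b_0$ and $b'$) that dominates both the fiberwise log resolution on the stratum of $b'$ and the one on the stratum of $b_0$. The $\lct$ at $b'$ is the minimum of $a_i/f_i$ over those prime divisors on $\tilde{\cY}$ whose fiber over $b'$ meets the strict transform of $V(\fa_{b'})$, while the $\lct$ at $b_0$ is the minimum of the same quantities over a possibly \emph{larger} finite collection of divisors, since extra components may degenerate nontrivially over the more special point. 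A minimum over a larger finite set is no larger, yielding the desired inequality. Properness of $V(\fa)\to B$ enters precisely here, guaranteeing that the relevant divisors remain with centers in a compact locus and that the finite sets over which the minima are taken are controlled.

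The main obstacle is this specialization step: one must verify that every divisorial valuation computing $\lct$ at $\cX_{b_0}$ is realized, after suitable blow-up, on a birational model simultaneously dominating the generic and special fibers, so that the two $\lct$ computations can be compared ratio-by-ratio on the same model $\tilde{\cY}$. In the $\Qq$-Gorenstein case this is handled exactly as in \cite[Proposition 10]{BL18a} and \cite[Corollary 2.10]{Amb16}; the $\Rr$-Gorenstein generalization is formal, since all that is required from $K_{\cX/B}+\cD$ is its $\Rr$-Cartier class together with the well-definedness of its pullback under proper birational morphisms.
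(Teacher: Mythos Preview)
Your proposal is correct and follows exactly the approach the paper has in mind: the paper itself omits the proof entirely, stating only that ``it is the same with \cite{Amb16},'' and your sketch is precisely an outline of the Ambro argument (stratify $B$ so that a fiberwise log resolution exists on each stratum, read off $\lct$ as $\min_i a_i/f_i$ which is visibly constant on strata, and for (2) compare the minima under specialization). Your observation that nothing in this argument uses rationality of the coefficients of $\cD$, only the $\Rr$-Cartier property of $K_{\cX/B}+\cD$, is exactly the point of the lemma.

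One small remark on your phrasing in part (2): rather than saying the $\lct$ at $b_0$ is a minimum over a ``larger finite collection,'' it is cleaner to note that the divisors appearing in the fiberwise log resolution over the stratum of $b'$ specialize to divisors over $b_0$ with the same $a_i$ and $f_i$; since $\lct(\cX_{b_0},\cD_{b_0};\fa_{b_0})$ is the infimum of $A/\ord(\fa)$ over \emph{all} divisors centered on $V(\fa_{b_0})$, it is automatically bounded above by $\min_i a_i/f_i=\lct(\cX_{b'},\cD_{b'};\fa_{b'})$. This is what you mean, but the current wording could suggest you need the coarser resolution to still be a log resolution at $b_0$, which is not required.
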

	
	The following lemma states a well known result on the klt locus in a family. See \cite[Corollary 2.10]{Amb16} for a similar statement. We omit the proof here because it follows from arguments similar to those in \cite{Amb16}.
	\begin{lemma}\label{lem: klt locus open}
		Let $B\subset (\cX,\cD)\to B$ be an $\bR$-Gorenstein family of klt singularities over a normal base $B$, and $\cE$ an effective $\Rr$-Cartier $\Rr$-divisor on $\cX$ such that $\Supp(\cE)$ does not contain any fiber $\cX_b$. Then 
		$$\{b\in B\mid (\cX_b,\cD_b+\cE_b) \text{ is klt near }b\in \cX_b\}$$
		is a Zariski open subset of $B$.
	\end{lemma}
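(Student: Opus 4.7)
The approach is to read off the klt condition from a fiberwise log resolution of $(\cX,\cD+\cE)$, expressing the good locus as the complement of the image of a certain closed divisor under a proper morphism, and then to upgrade stratum-wise openness to global openness by a specialization argument.

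First I would set up a fiberwise log resolution. Since $\cE$ is $\bR$-Cartier and does not contain any fiber, $\cD+\cE$ is an effective $\bR$-divisor whose restriction to each fiber is well defined, and $K_{\cX/B}+\cD+\cE$ is $\bR$-Cartier. Hence $(\cX,\cD+\cE)\to B$ is an $\bR$-Gorenstein pair over $B$, though the fibers need not be klt. Applying the fiberwise log resolution construction from the remark preceding Lemma~\ref{lem: lctlowersc} to $(\cX,\cD+\cE)$, after stratifying $B=\bigsqcup_\alpha B_\alpha$ into irreducible smooth strata and passing to finite \'etale covers, one obtains a fiberwise log resolution $\mu_\alpha\colon\cY_\alpha\to\cX_\alpha:=\cX\times_B B_\alpha$ of $(\cX_\alpha,\cD_\alpha+\cE_\alpha)$. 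Write $K_{\cY_\alpha}+\Gamma_\alpha=\mu_\alpha^*(K_{\cX_\alpha}+\cD_\alpha+\cE_\alpha)$ and decompose $\Gamma_\alpha=\sum_i c_iG_i$ into prime components. The fiberwise property guarantees that for every closed point $b\in B_\alpha$, $\mu_{\alpha,b}$ is a log resolution of $(\cX_b,\cD_b+\cE_b)$ with the same coefficients $c_i$ and with each $(G_i)_b$ either empty or a prime divisor. Consequently, $(\cX_b,\cD_b+\cE_b)$ is klt near the section point $b\in\cX_b$ if and only if $b\notin\mu_\alpha(\Supp\Gamma_\alpha^{\geq 1})$, where $\Gamma_\alpha^{\geq 1}:=\sum_{c_i\geq 1}G_i$. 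Since $\mu_\alpha$ is projective, $\mu_\alpha(\Supp\Gamma_\alpha^{\geq 1})$ is closed in $\cX_\alpha$; intersecting with the section $B_\alpha\subset\cX_\alpha$ gives a closed subset of $B_\alpha$, so the good locus $U\cap B_\alpha$ is open in $B_\alpha$. Thus the global good locus $U\subset B$ is a finite union of locally closed subsets, i.e.\ constructible.

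To upgrade constructibility to openness, I would verify that $U$ is closed under generization in the Noetherian scheme $B$. By the valuative criterion, it suffices to show: for every morphism $T=\Spec R\to B$ from the spectrum of a discrete valuation ring with closed point $0$ and generic point $\eta$, if $0$ maps into $U$ then so does $\eta$. After base change to $T$ and producing a fiberwise log resolution $\mu_T\colon\cY_T\to\cX_T$ of $(\cX_T,\cD_T+\cE_T)$, suppose for contradiction that $\eta$ maps to a bad point. Then some component $G$ of the boundary divisor with coefficient $\geq 1$ satisfies $\eta\in\mu_T(G_\eta)$. Since $\mu_T(G)$ is closed in $\cX_T$ and contains the section over $\eta$, it contains the entire section $T\hookrightarrow\cX_T$, in particular the section over $0$. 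The flatness of $G\to T$ with geometrically irreducible fibers, built into the fiberwise resolution property, gives $\mu_T(G)\cap\cX_{T,0}=\mu_{T,0}(G_0)$, whence $0\in\mu_{T,0}(G_0)$, contradicting that $0\in U$.

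The main technical obstacle is producing a single fiberwise log resolution of $(\cX_T,\cD_T+\cE_T)$ over the DVR base change $T$ whose boundary components have geometrically irreducible fibers over both $\eta$ and $0$. This is handled along the same lines as the construction cited in the remark, possibly requiring a further finite \'etale cover of $T$ to separate components; once this ingredient is in hand, the rest of the argument is a direct verification.
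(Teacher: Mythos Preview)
The paper does not give its own proof of this lemma; it simply says the argument follows Ambro \cite{Amb16}. Your overall strategy --- prove constructibility via fiberwise log resolutions on a stratification of $B$, then upgrade to openness by showing stability under generization --- is a standard and correct route, and your constructibility argument is fine.

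The gap is in the generization step. You claim a fiberwise log resolution of $(\cX_T,\cD_T+\cE_T)$ over the DVR $T$ can be produced ``along the same lines as the construction cited in the remark, possibly requiring a further finite \'etale cover of $T$.'' But that remark works precisely by \emph{stratifying} the base; over a two-point scheme like $\Spec R$ the stratification just separates $\eta$ from $0$ and gives you nothing. A finite \'etale cover of a DVR is again a DVR (or a disjoint union of such), so that does not help either. Producing an honest simultaneous log resolution over a DVR is essentially semistable reduction, which needs a \emph{ramified} base change and even then does not yield a log resolution of the original special fiber. The fix, which is what the Ambro-style argument actually does, is to drop the fiberwise requirement over $T$: take any log resolution $\mu\colon\cY\to\cX_T$ of the total pair $(\cX_T,\cD_T+\cE_T+\cX_{T,0})$. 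Its restriction to the open generic fiber is automatically a log resolution of $(\cX_{T,\eta},\cD_\eta+\cE_\eta)$; if that fiber is not klt at the section, the closure $\bar E\subset\cY$ of a bad divisor is horizontal with $A_{(\cX_T,\cD_T+\cE_T)}(\bar E)\le 0$, and its center contains the whole section. Were $(\cX_{T,0},\cD_0+\cE_0)$ klt near $0$, inversion of adjunction would force $(\cX_T,\cD_T+\cE_T+\cX_{T,0})$ to be plt near $0$, contradicting $A_{(\cX_T,\cD_T+\cE_T+\cX_{T,0})}(\bar E)\le A_{(\cX_T,\cD_T+\cE_T)}(\bar E)\le 0$. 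This replaces the missing fiberwise resolution and completes your argument.
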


	The following result is a variation of \cite[Theorems 20 and 21]{BL18a}, which is the generalization of \cite[Theorem 1.1 and 1.2]{Li18} to the case of $\bQ$-Gorenstein families of singularities.
	
	\begin{lemma}\label{lem:family Izumi}
  Let $B\subset \cX \to B$ be a $\bQ$-Gorenstein family of klt singularites over a normal base $B$. Then there exist positive constants $C_1,C_2$ depending only on $B\subset \cX \to B$ such that the following holds.
	
	
	If  a klt singularity $x\in X$ satisfies that $(x\in X^{\rm an})\in (B\subset \cX^{\rm an}\to B)$, then for any valuation $v\in\Val_{X,x}$ and any $f\in \cO_{X,x}$, we have
		
		\begin{enumerate}
			\item(properness estimate)
			\begin{equation*}
			\nvol_{X,x}(v)\geq C_1\frac{A_{X}(v)}{v(\fm_{X,x})}.
			\end{equation*}

			\item(Izumi type estimate)
			\begin{equation*}
			v(\fm_{X,x})\ord_x(f)\leq v(f)\leq C_2 A_{X}(v)\ord_x(f).
			\end{equation*}
		\end{enumerate}
	\end{lemma}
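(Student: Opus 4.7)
The plan is to reduce to the case $(x\in X)=(b\in\cX_b)$ for some $b\in B$ via Proposition \ref{prop:an-iso}, and then to prove the fiberwise statement by making the constants in Lemma \ref{lem:izumi} uniform over $B$ using a fiberwise log resolution. Concretely, suppose $(x\in X^{\rm an})\in(B\subset\cX^{\rm an}\to B)$ with corresponding closed point $b\in B$ and isomorphism $\psi\colon\widehat{\cO_{X,x}}\xrightarrow{\cong}\widehat{\cO_{\cX_b,b}}$. By Proposition \ref{prop:an-iso}, $\psi$ induces a bijection $\phi\colon\Val_{X,x}^\circ\to\Val_{\cX_b,b}^\circ$ with $A_X(v)=A_{\cX_b}(\phi(v))$, and by construction of $\phi$ (via extending $v$ to $\hat v$ on the completion and pushing forward) one has $v(\fm_{X,x})=\phi(v)(\fm_{\cX_b,b})$ and $\vol_{X,x}(v)=\vol_{\cX_b,b}(\phi(v))$. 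For any $f\in\cO_{X,x}$, writing $\psi(f)=g_N+r_N$ with $g_N\in\cO_{\cX_b,b}$ and $r_N\in\widehat{\fm}_{\cX_b,b}^{\,N}$ for $N$ sufficiently large, one has $\ord_x(f)=\widehat{\ord_b}(\psi(f))=\ord_b(g_N)$ and $v(f)=\phi(v)(g_N)$ for $N\gg 0$. Hence the desired inequalities for $(x\in X,v,f)$ are equivalent to the corresponding inequalities for $(b\in\cX_b,\phi(v),g_N)$, and it suffices to prove the lemma under the additional assumption that $(x\in X)=(b\in\cX_b)$.

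For the fiberwise case, after stratifying $B=\bigsqcup_\alpha B_\alpha$ into finitely many irreducible smooth strata admitting fiberwise log resolutions $\mu_\alpha\colon\cY_\alpha\to\cX_{B_\alpha}$ (possible by the discussion preceding Lemma \ref{lem: lctlowersc}), I will prove the required inequalities uniformly on each stratum; since there are finitely many strata, the minimum of the resulting constants works globally. Fix a stratum and drop the subscript. Following the proof of Lemma \ref{lem:izumi}(2), the number $\epsilon$ with $\mu^\ast K_{\cX/B}-K_{\cY/B}\leq (1-\epsilon)(\text{exceptional reduced})$ can be chosen uniformly, because the relative log discrepancies of the exceptional components of a fiberwise log resolution are constant along $B$. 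For the Izumi constant $a_2$, I would apply the family version of Izumi's linear complementary inequality: for the proper birational morphism $\mu$ with section $B\subset\cX$, there exists $a_2>0$ such that for every closed point $b\in B$, every scheme-theoretic point $\xi\in\mu^{-1}(b)$, and every $f\in\cO_{\cX_b,b}$, one has $\ord_\xi(\mu^\ast f)\leq a_2\ord_b(f)$. This is essentially \cite[Theorems 20 and 21]{BL18a}; the uniformity follows because the Rees-valuation description underlying Izumi's inequality is controlled by the finitely many exceptional divisors of $\mu$ and by constructible data on $B$. Combining these two uniform constants gives part (2) verbatim from the proof of Lemma \ref{lem:izumi}(2), and then part (1) follows from part (2) together with a uniform version of \cite[Theorem 1.3]{Li18}, which is again available in families via the same fiberwise log resolution.

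The main technical obstacle is checking that Izumi's linear complementary inequality admits uniform constants across the family; once this is granted, the rest is a transparent adaptation of the proof of Lemma \ref{lem:izumi} together with the analytic-invariance argument via Proposition \ref{prop:an-iso}. Since this uniform Izumi estimate is already recorded in \cite[Theorems 20 and 21]{BL18a} (for the case $(x\in X)=(b\in\cX_b)$), the role of the present lemma is essentially to combine that result with Proposition \ref{prop:an-iso} to allow the more flexible analytic membership hypothesis $(x\in X^{\rm an})\in(B\subset\cX^{\rm an}\to B)$.
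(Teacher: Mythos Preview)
Your proposal is correct and follows essentially the same approach as the paper: cite \cite[Theorems 20 and 21]{BL18a} for the uniform constants on each fiber $b\in\cX_b$, then transfer to any analytically isomorphic $x\in X$ via Proposition \ref{prop:an-iso}. The only minor difference is that you transfer part (2) by approximating $\psi(f)$ with algebraic truncations $g_N$, whereas the paper reformulates the upper Izumi inequality as the ideal containment $\fa_{C_2 A_X(v)k}(v)\subset\fm^k$ and observes this is preserved under completion; your intermediate sketch of how one would reprove the family Izumi estimate via fiberwise log resolutions is superfluous since you (and the paper) ultimately just invoke \cite{BL18a}.
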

	
	\begin{proof}
	 Let $b\in B$ be the closed point such that $(x\in X^{\rm an})\cong (b\in \cX_{b}^{\rm an})$. By \cite[Theorems 20 and 21]{BL18a} there exists positive constants $C_1$ and $C_2$ depending only on $B\subset \cX\to B$ such that both (1) and (2) hold for the klt singularity $b\in \cX_{b}$. We claim that the same constants $C_1$ and $C_2$ work for $x\in X$ as well. We may assume that $A_X(v)<+\infty$ since otherwise the statements are trivial. By Proposition \ref{prop:an-iso}, any $v\in \Val_{X,x}^\circ$ corresponds to a unique valuation $v'\in \Val_{\cX_b,b}^\circ$ such that $A_X(v)=A_{\cX_b}(v')$ and $\hvol_{X,x}(v)=\hvol_{\cX_b,b}(v')$. Denote $\fm:=\fm_{X,x}$ and $\fm':=\fm_{\cX_b,b}$. Since all valuation ideals of $v$ (resp. $v'$) are $\fm$-primary (resp. $\fm'$-primary), we know that $v(\fm)=\hat{v}(\widehat{\fm})= \hat{v'}(\widehat{\fm'})=v'(\fm')$. Hence (1) is proven. For (2), notice that this is equivalent to $\fa_{C_2 A_X(v) k}(v)\subset \fm^{k}$. This is true since similar statement for $v'$ holds and both valuation ideals are $\fm$-primary or $\fm'$-primary. The proof is finished.
	\end{proof}
	
		
		
	
		\subsection{Family of Koll\'ar components}
	
	\begin{defn}
	Let $B\subset (\cX,\cD)\xrightarrow{\pi} B$ be an $\bR$-Gorenstein family of klt singularities over a normal irreducible base $B$. A proper birational map $\mu:\cY\to \cX$ is said to provide \emph{a flat family of Koll\'ar components $\cS$ over $(\cX,\cD)$ centered at $B$} if the following conditions hold.
	\begin{itemize}
	    \item $\cY$ is normal, $\mu$ is an isomorphism over $\cX\setminus B$, and $\cS=\Exc(\mu)$ is a prime divisor on $\cY$ with $\mu(\cS)=B$.
	    \item $\pi\circ\mu:\cY\to B$ is flat with normal connected fibers.
	    \item $\cS$ does not contain any fiber of $\pi\circ\mu$.
	    \item $-\cS$ is $\bQ$-Cartier and $\mu$-ample.
	    \item For any closed point $b\in B$, the pair $(\cY_b, \cS_b+(\mu_*^{-1}\cD)|_{\cY_b})$ is plt near $\cS_b$. In other words, $\mu_b:\cY_b\to \cX_b$ provides a Koll\'ar component $\cS_b$ over $b\in (\cX_b,\cD_b)$.
	\end{itemize}
	
	Suppose that $B$ is normal reducible. We say that $\mu:\cY\to \cX$ provides a flat family of Koll\'ar components if for each irreducible component $B_i$ of $B$, the restriction $\mu_i:\cY\times_B B_i\to \cX\times_B B_i$ of $\mu$ over $B_i$ provides a flat family of Koll\'ar components.
	\end{defn}
	
	\begin{proposition}
	Let $B\subset (\cX,\cD)\to B$ be an $\bR$-Gorenstein family of klt singularities over a normal base. Let $\mu:\cY\to \cX$ be a proper birational map providing a flat family of Koll\'ar components $\cS$ over $(\cX,\cD)$ centered at $B$. Let $\Gamma$ be the different divisor of $(\cY,\cS+\mu_*^{-1}\cD)$ along $\cS$. Then $\mu|_{\cS}: (\cS,\Gamma)\to B$ is an $\bR$-Gorenstein family of log Fano pairs. 
	\end{proposition}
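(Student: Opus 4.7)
The plan is to verify one-by-one the defining properties of an $\bR$-Gorenstein family of log Fano pairs for $\mu|_{\cS}\colon(\cS,\Gamma)\to B$, with the relative adjunction formula along $\cS$ as the main tool. The backbone identity I would use throughout is
\[
K_{\cY}+\mu_{*}^{-1}\cD+\cS \;=\; \mu^{*}(K_{\cX}+\cD) + A\cdot\cS,
\]
where $A:=A_{(\cX,\cD)}(\cS)>0$ is the log discrepancy of the prime divisor $\cS$ with respect to $(\cX,\cD)$, positive because $(\cX,\cD)$ is klt. Both sides are $\bR$-Cartier since $K_{\cX}+\cD$ is $\bR$-Cartier and $\cS$ is $\bQ$-Cartier.

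Next I would check the geometric conditions. Flatness of $\mu|_{\cS}\colon\cS\to B$ follows from the flatness of $\pi\circ\mu\colon\cY\to B$ together with the fact that $\cS$ is a $\bQ$-Cartier Weil divisor on $\cY$ that contains no fiber of $\pi\circ\mu$. Connectedness of each fiber $\cS_b$ is built into the assumption that $\cS_b=\Exc(\mu_b)$ is a Kollár component, and normality of $\cS_b$ is an immediate consequence of the fiberwise plt assumption on $(\cY_b,\cS_b+(\mu_{*}^{-1}\cD)|_{\cY_b})$ via standard plt adjunction. The different $\Gamma$ is then defined by $K_{\cS}+\Gamma=(K_{\cY}+\cS+\mu_{*}^{-1}\cD)|_{\cS}$, and the right hand side is $\bR$-Cartier by the displayed identity, so $K_{\cS/B}+\Gamma$ is $\bR$-Cartier.

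To confirm the fiberwise log Fano and klt conditions, I restrict the backbone identity to a fiber $\cS_b$. Since $\mu_b$ contracts $\cS_b$ to the single point $b$, the pullback term $\mu_b^{*}(K_{\cX_b}+\cD_b)$ restricts to zero on $\cS_b$ in the $\bR$-Cartier class group, leaving
\[
K_{\cS_b}+\Gamma_b \;=\; A\cdot\cS_b|_{\cS_b}.
\]
Because $-\cS$ is $\mu$-ample, $-\cS_b|_{\cS_b}$ is ample on $\cS_b$, and as $A>0$ this gives the ampleness of $-(K_{\cS_b}+\Gamma_b)$; relative ampleness of $-(K_{\cS/B}+\Gamma)$ over $B$ then follows from ampleness on fibers by a standard openness-of-ampleness argument. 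The klt property of $(\cS_b,\Gamma_b)$ is then immediate from inversion of adjunction applied to the plt pair $(\cY_b,\cS_b+(\mu_{*}^{-1}\cD)|_{\cY_b})$.

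The hard part will be justifying the base-change compatibility $\Gamma|_{\cS_b}=\Gamma_b$ used above, i.e.\ that the formation of the different commutes with restriction to a closed fiber. This is not formal and requires Kollár's relative theory of divisorially log terminal pairs in families, leveraging the fiberwise plt condition together with the $\bR$-Cartier property of $K_{\cY}+\cS+\mu_{*}^{-1}\cD$ established in the first step. A secondary technicality is matching $(\mu_{*}^{-1}\cD)|_{\cY_b}$ with the strict transform of $\cD_b$ on $\cY_b$, which uses the hypotheses that $\cD$ contains no fiber $\cX_b$ in its support and that $\mu$ is an isomorphism over $\cX\setminus B$.
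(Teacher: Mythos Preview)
The paper states this proposition without proof: in the source it sits between the definition of a flat family of Koll\'ar components and Theorem~\ref{thm: family kc with nv bdd}, with no intervening \texttt{proof} environment. Your outline is exactly the standard verification one would supply, and the two technical points you single out---base-change compatibility of the different, and identifying $(\mu_*^{-1}\cD)|_{\cY_b}$ with the strict transform of $\cD_b$---are the genuine content.

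One small correction to your backbone identity: you should write it with the relative canonical $K_{\cX/B}$ rather than $K_{\cX}$, since the hypotheses only give that $K_{\cX/B}+\cD$ is $\bR$-Cartier (Definition~\ref{defn:family of singularities}), and $B$ is merely normal, not Gorenstein. Likewise, the positivity $A>0$ does not follow from ``$(\cX,\cD)$ is klt'', which is not assumed; rather, restricting your identity to a closed fiber gives $A=A_{(\cX_b,\cD_b)}(\cS_b)$, which is positive because each fiber $b\in(\cX_b,\cD_b)$ is klt. You should also note that normality of the total space $\cS$ (needed to define $\Gamma$) follows from flatness of $\cS\to B$ together with normality of all fibers $\cS_b$ and of $B$, via Lemma~\ref{lem:normal in family is open}.
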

	
	\begin{theorem}\label{thm: family kc with nv bdd}
		Let $B\subset (\cX,\cD)\to B$ be an $\bR$-Gorenstein family of klt singularities over a normal base. Then there exist a positive real number $\delta$, a quasi-finite surjective morphism $B'\to B$ from a normal scheme $B'$, and a proper birational morphism $\cY'\to \cX'$ which provides a flat family of Koll\'ar components $\cS'$ over $(\cX',\cD'):=(\cX,\cD)\times_B B'$ centered at $B'$ satisfying the following. 
		
		For any closed point $b'\in B'$,
		\begin{enumerate}
		    \item  $\hvol_{(\cX'_{b'},\cD'_{b'}), b'}(\cS'_{b'})\leq n^n+1$, and
		    \item $\cS'_{b'}$ is a $\delta$-Koll\'ar component of $b'\in (\cX_{b'},\cD_{b'})$.
		\end{enumerate}
	\end{theorem}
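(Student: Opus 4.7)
The plan is to combine the existence of Koll\'ar components approximating the infimum of normalized volumes (Theorem \ref{thm:lx-kc-minimizing}) with spreading-out techniques and Noetherian induction on $B$. It suffices to construct the required data over a dense open subset of (a quasi-finite cover of) each irreducible component of $B$; then the disjoint union across the finitely many stages, together with the minimum of the resulting $\delta$'s, yields the conclusion. Accordingly, I would reduce to the case that $B$ is irreducible with generic point $\eta$.

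Working over the function field $k(\eta)$, Theorem \ref{thm:nvol is bounded by n^n} gives $\hvol(\eta,\cX_\eta,\cD_\eta)\leq n^n$, and Theorem \ref{thm:lx-kc-minimizing}(1) produces a Koll\'ar component $S_\eta$ over $\eta\in(\cX_\eta,\cD_\eta)$ satisfying $\hvol_{(\cX_\eta,\cD_\eta),\eta}(\ord_{S_\eta})\leq n^n+1$. By Lemma \ref{lem:kc-compute-lct}, $S_\eta$ can be realized as the unique lc place of $(\cX_\eta,\cD_\eta+c\,\fa_\eta)$ for some $c>0$ and some $\fm_\eta$-primary ideal $\fa_\eta$. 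I would then spread $\fa_\eta$ to an ideal $\fa\subset\cO_\cX$ near $\eta$ and form the corresponding proper birational extraction $\mu\colon\cY\to\cX$. Combining generic flatness with a quasi-finite surjective base change $B'\to B$ (to secure geometric irreducibility of the exceptional locus and of each relative fiber), after shrinking to a dense open $U\subset B'$ one obtains $\mu\colon\cY'_U\to\cX'_U$ that is flat with normal connected fibers, an isomorphism over $\cX'_U\setminus B'$, with $\cS'_U$ a prime divisor flat over $U$ with geometrically integral fibers, and $-\cS'_U$ relatively ample and $\bQ$-Cartier.

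It remains to verify the fiberwise plt property with a uniform $\delta_U$ and the normalized volume bound $n^n+1$ on each fiber. For the plt property: by inversion of adjunction, $\delta$-plt of $(\cY'_{b'},\cS'_{b'}+\mu_*^{-1}\cD'|_{\cY'_{b'}})$ near $\cS'_{b'}$ is equivalent to $\delta$-klt of the induced family of log Fano pairs $(\cS'_{b'},\Gamma_{b'})$, where $\Gamma_{b'}$ is the different divisor. The plt condition at the generic fiber propagates to a dense open subset by Lemma \ref{lem: klt locus open}, and constructibility of log canonical thresholds (Lemma \ref{lem: lctlowersc}) yields a single $\delta_U>0$ that works uniformly over $U$. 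For the volume bound, in the flat family both $A_{(\cX'_{b'},\cD'_{b'})}(\ord_{\cS'_{b'}})$ and $\vol_{\cX'_{b'},b'}(\ord_{\cS'_{b'}})$ are constant along $U$ (the former from flatness of $\cS'_U$ and compatibility of $K_{\cX'/B'}+\cD'$ with fiberwise restriction; the latter from flatness of the $\fm$-primary valuation ideals whose colengths compute the volume), so the inequality $\hvol_{(\cX'_{b'},\cD'_{b'}),b'}(\ord_{\cS'_{b'}})\leq n^n+1$ spreads from $\eta$ to all of $U$. Noetherian induction applied to $B'\setminus U$ then completes the argument. The main obstacle is securing the \emph{uniform} $\delta$-plt condition, since it constrains every exceptional divisor over $\mu$ on $\cY'_{b'}$ simultaneously rather than only $\cS'_{b'}$; the reduction via inversion of adjunction to $\delta$-klt of the family of log Fano pairs, combined with constructibility of lct in families, is what makes the argument tractable.
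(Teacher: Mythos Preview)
Your strategy coincides with the paper's: Noetherian induction, produce a Koll\'ar component with $\hvol\leq n^n+1$ over the generic fiber, spread it out to a dense open, and verify the fiberwise conditions. There are, however, three places where your execution is looser than the paper's and one of them is a genuine gap.

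\textbf{The generic fiber lives over a non-closed field.} You invoke Theorems \ref{thm:nvol is bounded by n^n} and \ref{thm:lx-kc-minimizing} directly at $\eta$, but those are stated and proved over an algebraically closed ground field, while $\kappa(\eta)=\bk(B)$ is not algebraically closed. The paper isolates this issue in Lemma \ref{lem:non-closed-field-kc}, whose proof is not a formality: it uses the uniqueness of the $\hvol$-minimizer \cite{XZ20} to show the minimizer over $\overline{\kappa(\eta)}$ is $\mathrm{Gal}(\overline{\kappa(\eta)}/\kappa(\eta))$-invariant and hence descends, which yields $\hvol(\eta,\cX_\eta,\cD_\eta)=\hvol$ of the base change to $\overline{\kappa(\eta)}$ and thus $\leq n^n$. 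Without this, your bound $\hvol_{(\cX_\eta,\cD_\eta),\eta}(\ord_{S_\eta})\leq n^n+1$ is not justified.

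\textbf{Uniform $\delta$-plt.} Your reduction via inversion of adjunction to $\delta$-klt of $(\cS'_{b'},\Gamma_{b'})$ is fine, but Lemma \ref{lem: lctlowersc} concerns $\lct$ of a fixed ideal sheaf in a family and does not directly give constructibility of the minimal log discrepancy of $(\cS'_{b'},\Gamma_{b'})$. The paper avoids this by spreading out a log resolution of $(\cY_\eta,\cS_\eta+\mu_{\eta*}^{-1}\cD_\eta)$ to a \emph{fiberwise} log resolution over an open $B'$; the discrepancies of the finitely many exceptional divisors are then literally constant across fibers, so a single $\delta$ works automatically.

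\textbf{Constancy of the volume.} Your appeal to ``flatness of the $\fm$-primary valuation ideals'' is not substantiated: one would need $\mu'_*\cO_{\cY'}(-m\cS')$ to be flat over $B'$ and to commute with base change for all $m$, which is not immediate. The paper instead uses \cite[Lemma 2.11]{LX16} to identify $\vol_{\cX'_{b'},b'}(\ord_{\cS'_{b'}})$ with the top self-intersection $(-\cS'|_{\cS'})^{n-1}$ on the fiber $\cS'_{b'}$; since $-\cS'|_{\cS'}$ is relatively ample on the flat family $\cS'\to B'$, invariance of the Hilbert polynomial gives constancy directly.

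The detour through Lemma \ref{lem:kc-compute-lct} to realize $S_\eta$ as an lc place is unnecessary: once $S_\eta$ exists as a Koll\'ar component, you can spread out the plt blow-up $\mu_\eta$ itself, as the paper does.
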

	\begin{proof}
	First of all, we may assume that $B$ is irreducible. By Noetherian induction, it suffices to find an  open immersion $B'\hookrightarrow B$ such that the statement of the theorem holds. For simplicity, we assume that $B$ is smooth. Let $\eta\in B$ be the generic point with residue field $\bK:=\kappa(\eta)=\bk(B)$. By Lemma \ref{lem:non-closed-field-kc}, 
		there exists a plt blow-up $\mu_{\eta}: \cY_{\eta}\to \cX_{\eta}$ of $\eta\in (\cX_{\eta}, \cD_{\eta})$ with the Koll\'ar component $\cS_{\eta}$, such that 
	\[
	\hvol_{(\cX_{\eta}, \cD_{\eta}), \eta}(\cS_{\eta})\leq n^n+1.
	\]
	Let $f_{\eta}:\cZ_{\eta}\to \cY_{\eta}$ be a log resolution of $(\cY_{\eta},\mu_{\eta_*}^{-1}\cD_{\eta}+\cS_{\eta})$. We may extend $\mu_\eta: \cY_\eta\to \cX_\eta$ to a dense open subset $B'\subset B$ as a projective birational morphism $\mu':\cY'\to \cX'$ where $\cX':=\cX\times_B B'$, such that $\cY'\backslash \cS'\to \cX'\backslash B'$ is an isomorphism, the center of $\cS'$ on $\cX'$ is $B'$, and $\cS'$ is $\Qq$-Cartier.  
	Since $\cY_{\eta}$ is normal, by Lemma \ref{lem:normal in family is open}, possibly shrinking $B'$ to an open subset, we may assume that the fiber $\cY'_{b'}$ is normal for any closed point $b'\in B'$, and $\cY'$ is normal, and that $f_{{\eta}}$ can be extended to a morphism $f':\cZ'\to \cY'$ between families, such that $f'$ is a log resolution of $(\cY',{\mu'}_{*}^{-1}\cD'+\cS')$. 
	By \cite[Definition-Lemma 2.8]{Xu19}, possibly shrinking $B'$ and replacing $B'$ with a finite \'{e}tale covering, we may assume that $f'$ is a fiberwise log resolution of $(\cY',{\mu'}_{*}^{-1}\cD'+\cS')$.
	In particular, $(\cY',{\mu'}_{*}^{-1}\cD'+\cS')$ is plt near $\cS'$.
	Moreover, since both ampleness and flatness are open properties in a family, possibly shrinking $B'$ to an open subset again, we may further assume that $-\cS'$ is ample over $\cX'$, and $\cS'$ is flat over $B'$. Hence $\cY'\to \cX'$ provides a flat family of $\delta$-Koll\'ar components for some positive real number $\delta$. By \cite[Lemma 2.11]{LX16}, for any closed point $b'\in B'$, $\vol_{\cX_{b'},{b'}}(\ord_{S_{b'}})=\vol(\cS'_{b'},-\cS'_{b'}|_{\cS'_{b'}})$. We have $A_{(\cX'_{b'},\cD'_{b'})}(\cS'_{b'})=A_{(\cX_{\eta},\cD_{\eta})}(\cS_{\eta})$ is a constant function of closed points $b'\in B'$. Since $-\cS'|_{\cS'}$ is ample over $B'$, by the invariance of the Hilbert polynomial in the flat family $\cS'\to B'$ (cf. \cite[\S 3, Theorem 9.9]{GTM52}), 
	$$\vol_{\cX'_{b'},{b'}}(\ord_{\cS'_{b'}})=\vol(\cS'_{b'},(-\cS'|_{\cS'})_{b'})=\vol_{\cX_{\eta},{\eta}}(\ord_{\cS_{\eta}})$$ is a constant function for any closed point $b'\in B'$. Hence 
	$$\nvol_{(\cX'_{b'},\cD'_{b'}),b'}(\ord_{\cS'_{b'}})=	\hvol_{(\cX_{\eta}, \cD_{\eta}), \eta}(\cS_{\eta})\leq n^n+1$$
	for any closed point $b'\in B'$. 
\end{proof}

	\begin{lemma}\label{lem:non-closed-field-kc}
	Let $(X,\Delta)$ be an $n$-dimensional klt pair over a field $\bK$ of characteristic $0$. Let $x\in X$ be a $\bK$-rational point. Then $\hvol(x,X,\Delta)\leq n^n$. Moreover, for any $\epsilon>0$ there exists a Koll\'ar component $S$ over $x\in (X,\Delta)$ such that $\hvol_{(X,\Delta),x}(\ord_S)\leq \hvol(x,X,\Delta)+\epsilon$.
	\end{lemma}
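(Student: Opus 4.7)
The plan is to split the statement into two parts: the approximation by Koll\'ar components (handled directly over $\bK$) and the bound $\hvol(x, X, \Delta) \leq n^n$ (reduced to the algebraically closed case via base change).

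For the approximation, I observe that the proofs of Lemmata \ref{lem:nv=lcte} and \ref{lem:kc-compute-lct} are formal in nature and work over any field of characteristic $0$ (the latter relies only on BCHM, which is available over any characteristic $0$ field). Given $\epsilon > 0$, I would first use Lemma \ref{lem:nv=lcte} to choose an $\fm_{X,x}$-primary ideal $\fa \subset \cO_X$ with $\lct(X, \Delta; \fa)^n \cdot \e(\fa) \leq \hvol(x, X, \Delta) + \epsilon$, then invoke Lemma \ref{lem:kc-compute-lct} to extract a Koll\'ar component $S$ of $x \in (X, \Delta)$ computing $\lct(X, \Delta; \fa)$, so that $A_{(X, \Delta)}(\ord_S) = \lct(X, \Delta; \fa) \cdot \ord_S(\fa)$. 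The standard estimate $\ord_S(\fa)^n \cdot \vol_{X, x}(\ord_S) \leq \e(\fa)$ from \cite[Lemma 26]{Liu18} then yields
\[
\hvol_{(X, \Delta), x}(\ord_S) \leq \lct(X, \Delta; \fa)^n \cdot \e(\fa) \leq \hvol(x, X, \Delta) + \epsilon,
\]
as required.

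For the bound, I would pass to an algebraic closure $\obK$ of $\bK$ and form the base change $\pi: X_{\obK} \to X$. Since $\mathrm{char}\,\bK = 0$ makes $\obK/\bK$ separable, $X_{\obK}$ is normal, $(X_{\obK}, \Delta_{\obK})$ remains klt, and the $\bK$-rationality of $x$ gives a unique $\obK$-rational preimage $\bar x$. By \cite[Appendix A]{LX19}, $\hvol(\bar x, X_{\obK}, \Delta_{\obK}) \leq n^n$, so it suffices to verify $\hvol(x, X, \Delta) \leq \hvol(\bar x, X_{\obK}, \Delta_{\obK})$. Using the base-change isomorphism $\widehat{\cO_{X_{\obK}, \bar x}} \cong \widehat{\cO_{X, x}} \widehat{\otimes}_{\bK} \obK$ and the extension of valuations to completions (\cite[Corollary 5.11]{JM12}, which adapts to klt singularities via Lemma \ref{lem:izumi}), each $v \in \Val_{X, x}^\circ$ extends to some $\bar v \in \Val_{X_{\obK}, \bar x}^\circ$ with $A_{X}(v) = A_{X_{\obK}}(\bar v)$ (invariance of $A$ under separable base change) and $\vol_{X, x}(v) = \vol_{X_{\obK}, \bar x}(\bar v)$ (since $\bK$-length of $\fm_{X,x}$-primary quotients equals $\obK$-length after $\otimes_{\bK} \obK$, and the relevant valuation ideals correspond under the base change), yielding the desired inequality.

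The main obstacle is this final identification of normalized volumes under base change; although log discrepancies transform straightforwardly, carefully matching volumes requires treating possibly several Galois-conjugate extensions of a single valuation and checking that valuation ideals transform correctly. A cleaner alternative avoiding these technicalities is to apply the approximation (already valid over any characteristic $0$ field) over $\obK$, producing a Koll\'ar component $\bar S$ with $\hvol(\ord_{\bar S}) \leq n^n + \epsilon$, and then to descend $\bar S$ via Galois theory: since $\bar S$ arises in the proof of Lemma \ref{lem:kc-compute-lct} as the \emph{unique} lc place of a pair defined over $\bK$, it is automatically Galois-invariant and descends to a Koll\'ar component over $\bK$ with the same normalized volume, giving $\hvol(x, X, \Delta) \leq n^n + \epsilon$ for every $\epsilon > 0$ and hence $\hvol(x, X, \Delta) \leq n^n$.
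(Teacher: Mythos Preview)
Your approximation argument (the ``moreover'' part) is correct and essentially matches the paper's: both pick an $\fm_{X,x}$-primary ideal $\fa$ with $\lct(X,\Delta;\fa)^n\cdot\e(\fa)$ close to $\hvol(x,X,\Delta)$, extract a Koll\'ar component computing the lct, and apply \cite[Lemma 26]{Liu18}. The paper uses the valuation ideals of a minimizer, but this is inessential.

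The bound $\hvol(x,X,\Delta)\leq n^n$, however, has a genuine gap. Your first approach proves the inequality in the \emph{wrong direction}: if every $v\in\Val_{X,x}^\circ$ extends to $\bar v\in\Val_{X_{\obK},\bar x}^\circ$ with the same normalized volume, then the set of values $\{\hvol(v)\}$ over $\bK$ is contained in the set $\{\hvol(\bar v)\}$ over $\obK$, whence the infimum over $\bK$ is \emph{at least} the infimum over $\obK$, i.e.\ $\hvol(x,X,\Delta)\geq\hvol(\bar x,X_{\obK},\Delta_{\obK})$, not $\leq$. Your ``cleaner alternative'' then asserts that the Koll\'ar component $\bar S$ produced over $\obK$ is the unique lc place of a pair \emph{defined over $\bK$}, but this is unjustified: in your construction the ideal $\fa$ was chosen over $\obK$ to approximate $\hvol(\bar x,X_{\obK},\Delta_{\obK})$, and there is no reason it (or the auxiliary ideal $\fb$ and the perturbation of the log resolution in Lemma~\ref{lem:kc-compute-lct}) should descend to $\bK$. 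If instead you chose $\fa$ over $\bK$, you could only bound $\lct^n\cdot\e$ by $\hvol(x,X,\Delta)+\epsilon$, and bounding this by $n^n+\epsilon$ is exactly what you are trying to prove.

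The paper closes this gap with a different idea: over $\obK$ the minimizer $v_{\obK}$ is \emph{unique up to scaling} by \cite[Theorem 1.1]{XZ20}, hence it is $\mathrm{Gal}(\obK/\bK)$-invariant and descends to some $v\in\Val_{X,x}$ with $\hvol_{(X,\Delta),x}(v)=\hvol(\bar x,X_{\obK},\Delta_{\obK})$. This immediately gives $\hvol(x,X,\Delta)\leq\hvol(\bar x,X_{\obK},\Delta_{\obK})\leq n^n$. The uniqueness of the minimizer is the missing ingredient; without it, one does not know how to produce a valuation over $\bK$ witnessing the inequality.
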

	
	\begin{proof}
	 Let $(R,\fm):=(\cO_{X,x},\fm_{X,x})$. Let $\obK$ be the algebraic closure of $\bK$. Denote by $(x_{\obK}\in (X_{\obK},\Delta_{\obK})):=(x\in (X,\Delta)\times_{\bK}\obK$. By Theorem \ref{thm:uniqueness}, there exists a unique $\hvol$-minimizer $v_{\obK}\in \Val_{X_{\obK},x_{\obK}}$ up to rescaling. Hence $v_{\obK}$ is invariant under the action of $\mathrm{Gal}(\obK/\bK)$. In particular, there exists $v\in \Val_{X,x}$ such that $v_{\obK}$ is the natural extension of $v$, that is, $\fa_m(v_{\obK})=\fa_m(v)\otimes_{\bK}\obK$. It is clear that 
	 \[
	 \hvol(x,X,\Delta)\leq \hvol_{(X,\Delta),x}(v)=\hvol_{(X_{\obK},\Delta_{\obK}),x_{\obK}}(v_{\obK})=\hvol(x_{\obK},X_{\obK}, \Delta_{\obK}).
	 \]
	 On the other hand, for any $\fm$-primary ideal $\fa\subset R$ we have $\lct(X,\Delta;\fa)=\lct(X_{\obK},\Delta_{\obK};\fa_{\obK})$ and $\e(\fa)=\e(\fa_{\obK})$ where $\fa_{\obK}:=\fa\times_{\bK}\obK$. Thus we have $\hvol(x,X,\Delta)\geq \hvol(x_{\obK},X_{\obK}, \Delta_{\obK})$ by Lemma \ref{lem:nv=lcte}. Thus by Theorem \ref{thm:nvol is bounded by n^n} we have 
	 \[
	 \hvol(x,X,\Delta)=\hvol(x_{\obK},X_{\obK}, \Delta_{\obK})\leq n^n.
	 \]
	 For the second statement, we have $\lct(X,\Delta;\fa_m(v))^n\cdot \e(\fa_m(v))\leq \hvol(x,X,\Delta)+\epsilon$ for any $m\gg 1$. Then by \cite[Lemma 4.8]{Zhu20}, there exists a Koll\'ar component $S_m$ over $x\in (X,\Delta)$ computing $\lct(X,\Delta;\fa_m(v))$. Therefore, for $m\gg 1$ we have 
	 	\[
	\hvol_{(X,\Delta),x}(\ord_{S_m})\leq \lct(X,\Delta;\fa_m(v))^n\cdot \e(\fa_m(v))\leq \hvol(x,X,\Delta)+\epsilon,
	\]
	where the first inequality follows from \cite[Lemma 26]{Liu18}. 
	\end{proof}
	
	 \begin{lemma}[{\cite[IV Proposition 11.3.13, Theorem 12.2.4]{EGA}}]\label{lem:normal in family is open}
   	Let $f:X\to Y$ be a flat morphism between varieties. Then 
   	$$\{ y\in Y\mid X_y \text{ is geometrically normal over }\kappa(y) \}$$
   	is open in $Y$. Moreover, if $f$ is faithfully flat and all the fibers of $f$ are normal, then $X$ is normal. 
   \end{lemma}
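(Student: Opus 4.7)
The plan is to invoke Serre's criterion for normality---namely, that a Noetherian reduced scheme is normal if and only if it satisfies $R_1$ and $S_2$---and then apply standard constructibility and ascent results for these two conditions in flat families. Since we work in characteristic zero, geometric normality of a fiber is equivalent to normality of that fiber, so both parts of the lemma reduce to statements purely about $R_1$ and $S_2$.

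For the first statement, I would separately establish openness of the fiberwise $R_1$-locus and the fiberwise $S_2$-locus. The $R_1$-condition is controlled by the smooth locus of $f$: since $f$ is flat and the smooth locus $\mathrm{Sm}(f)\subset X$ is open in $X$, the nonsmooth locus $Z\subset X$ is closed, and the set of $y\in Y$ for which $Z_y$ has codimension $\geq 2$ in $X_y$ is constructible (by generic flatness applied to components of $Z$) and stable under generization, hence open. For the $S_2$-locus, one applies the upper semicontinuity of depth for coherent sheaves in flat families to $\mathcal{O}_X$, using the local cohomological criterion $\mathrm{depth}\geq\min(2,\dim)$. Intersecting these two open loci gives openness of the geometrically normal locus.

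For the second statement, I would verify $R_1$ and $S_2$ for $X$ pointwise. For any $x\in X$ with image $y=f(x)$, flatness provides a faithfully flat local homomorphism $\mathcal{O}_{Y,y}\to \mathcal{O}_{X,x}$ whose closed fiber is $\mathcal{O}_{X_y,x}$. Depth is additive under such extensions:
\[
\mathrm{depth}\,\mathcal{O}_{X,x} = \mathrm{depth}\,\mathcal{O}_{Y,y} + \mathrm{depth}\,\mathcal{O}_{X_y,x},
\]
so if $Y$ is $S_2$ and every fiber is $S_2$, then so is $X$. A parallel argument, using the fact that a flat local extension of regular local rings with regular closed fiber is again regular, shows that $R_1$ ascends once one combines it with the flat-codimension formula to reduce to checking at codimension-one points.

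The main subtle point is the implicit hypothesis that $Y$ itself is sufficiently regular (normal) in the second part---without this, one cannot conclude $S_2$ for $X$ merely from $S_2$ of the fibers. In the intended application of the lemma the base $B$ is always assumed normal, so this is harmless. Apart from this, the statements are classical and recorded in EGA IV (Proposition 11.3.13 and Theorem 12.2.4), so no genuinely new idea beyond assembling the standard Serre-criterion machinery is needed.
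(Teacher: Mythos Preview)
The paper gives no proof of this lemma; it is stated purely as a citation to EGA IV, Proposition 11.3.13 and Th\'eor\`eme 12.2.4. Your sketch is essentially a recapitulation of how those EGA results are established, and the outline is correct: openness of the geometrically normal locus via Serre's criterion (handling $R_1$ through the smooth locus and $S_2$ through semicontinuity of depth), and ascent of normality along faithfully flat maps via the depth formula and ascent of regularity. You are also right that the second statement, as written, tacitly assumes $Y$ is normal (this is exactly the hypothesis in EGA IV 11.3.13); the only place the lemma is invoked in the paper is in the proof of Theorem~\ref{thm: family kc with nv bdd}, where the base $B$ is normal by hypothesis, so the omission is harmless in context.
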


	\begin{corollary}\label{cor:delta-plt-fields}
	Assume that $\bk$ is an algebraically closed subfield of $\bC$. Let $x\in (X,\Delta)$ be a klt singularity over $\bk$. Denote $(x_{\bC}\in (X_{\bC}, \Delta_{\bC})):=(x\in (X,\Delta))\times_{\bk}\bC$. If $(x_{\bC}\in (X_{\bC}, \Delta_{\bC}))$ admits a $\delta$-plt blow-up, then so does $x\in (X,\Delta)$.
	\end{corollary}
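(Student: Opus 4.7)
The plan is to use a standard spreading-out argument to descend the $\delta$-plt blow-up from $\bC$ back to $\bk$, invoking the Nullstellensatz to produce a $\bk$-rational specialization.

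First, let $\mu_{\bC}: Y_{\bC}\to X_{\bC}$ be the given $\delta$-plt blow-up of $x_{\bC}\in (X_{\bC},\Delta_{\bC})$ with exceptional Koll\'ar component $S_{\bC}$. All the relevant data (the scheme $Y_\bC$, the morphism $\mu_\bC$, the divisor $S_\bC$, and the $\bQ$-Cartier divisor $-S_\bC$) are of finite type over $\bC$, so by a standard limit argument (e.g.\ \cite[IV \S 8]{EGA}), there exists a finitely generated $\bk$-subalgebra $A\subset \bC$ together with a proper birational morphism $\mu_A:\cY_A\to X_A:=X\times_{\bk}\Spec A$ and a prime divisor $\cS_A\subset \cY_A$ such that the base change to $\bC$ recovers $(\mu_\bC,S_\bC)$.

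Second, after replacing $\Spec A$ by a suitable dense Zariski open subset, I would arrange that (a) $\cY_A\to \Spec A$ is flat with geometrically normal fibers (so $\cY_A$ is normal by Lemma \ref{lem:normal in family is open}); (b) $\cS_A$ is $\bQ$-Cartier, $\mu_A$-ample, flat over $\Spec A$, and not contained in any fiber of $\cY_A\to \Spec A$; and (c) the pair $(\cY_A,\cS_A+(\mu_A)_*^{-1}\cD_A)$ is fiberwise $\delta$-plt near $\cS_A$ over every closed point of $\Spec A$. Conditions (a) and (b) reduce to openness of flatness, of geometric normality, and of relative ampleness, all standard. For condition (c), I would take a fiberwise log resolution of $(\cY_A,\cS_A+(\mu_A)_*^{-1}\cD_A)$ over a further shrinking of $\Spec A$ (possible after a stratification by the argument preceding Lemma \ref{lem: lctlowersc}), and then apply the constructibility and lower semi-continuity of log canonical thresholds in $\Rr$-Gorenstein families (Lemma \ref{lem: lctlowersc}) to the exceptional divisors of this resolution. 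Since the $\delta$-plt property holds on the $\bC$-fiber, hence at the generic point of $\Spec A$, a final shrinking ensures it on every fiber.

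Third, since $\bk$ is algebraically closed and $A$ is a finitely generated $\bk$-algebra which is a domain (being a subring of $\bC$), Hilbert's Nullstellensatz shows that every closed point of $\Spec A$ has residue field equal to $\bk$, and moreover that such closed points are Zariski dense. Choose one such point $t$ inside the open locus produced in step two. Then $(X_A)_t$ is canonically identified with $X$ over $\bk$, and the fiber $\mu_A|_t:(\cY_A)_t\to X$ is a proper birational morphism over $\bk$ with prime exceptional divisor $(\cS_A)_t$; by construction it provides a $\delta$-plt blow-up of $x\in (X,\Delta)$.

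The main obstacle is step two, where one must ensure that the $\delta$-plt condition and the $\bQ$-Cartier structure on the exceptional divisor (rather than merely an $\Rr$-Cartier structure obtained by spreading out) both propagate to a uniform open locus in $\Spec A$. The $\delta$-plt propagation relies essentially on the constructibility of log canonical thresholds from Lemma \ref{lem: lctlowersc}, applied to the exceptional divisors of a fiberwise log resolution of $(\cY_A,\cS_A+(\mu_A)_*^{-1}\cD_A)$; the remaining openness statements are classical.
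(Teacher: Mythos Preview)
Your proposal is correct and follows essentially the same spreading-out argument as the paper: descend the data of $\mu_{\bC}$ to a finitely generated $\bk$-algebra (the paper phrases this as choosing an intermediate field $\bK$ and a $\bk$-variety $B$ with $\bk(B)\cong\bK$, then invokes the method of Theorem~\ref{thm: family kc with nv bdd}), shrink the base so that the Koll\'ar-component and $\delta$-plt conditions hold fiberwise via a fiberwise log resolution, and specialize at a $\bk$-rational closed point. One small slip: in your condition~(b) it is $-\cS_A$, not $\cS_A$, that should be $\mu_A$-ample (cf.\ Definition~\ref{def:kc}(4)).
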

	
	\begin{proof}
	Let $\mu_{\bC}: Y_{\bC}\to X_{\bC}$ be the $\delta$-plt blow up of $x_{\bC}\in (X_{\bC}, \Delta_{\bC})$. We can find an intermediate subfield $\bk\subset \bK\subset \bC$ such that $\bK$ is a finitely generated field extension of $\bk$, and $\mu_{\bC}$ is defined over $\bK$ which we denote $\mu_{\bK}: Y_{\bK}\to X_{\bK}$. Let $B$ be a smooth variety over $\bk$ such that its function field $\bk(B)$ is isomorphic to $\bK$. Hence by similar arguments to the proof of Theorem \ref{thm: family kc with nv bdd},  after possibly shrinking $B$, there is a proper birational map $\mu: \cY\to X\times B$ such that  $\mu$ provides a flat family of Koll\'ar components $\cS$ over $(X\times B, \Delta\times B)\to B$ centered at $x\times B$, and restricting $\mu$ to the generic fiber over $B$ yields $\mu_{\bK}$. By assumption, we know that $(Y_{\bK}, \cS_{\bK}+ \Delta_{\bK})$ is $\delta$-plt. After further shrinking $B$ such that there exists a fiberwise log resolution of $(\cY, \cS+\mu_*^{-1}\Delta\times B)$,  we have that $(\cY_b, \cS_b+(\mu_b)_*^{-1}\Delta_b)$ is $\delta$-plt for a general closed point $b\in B$. Thus the proof is finished. 
	\end{proof}

	\section{Minimizing valuations for pairs with real coefficients}
	The purpose of this section is to generalize \cite[Main Theorem]{Blu18a} and \cite[Theorems 1.2 and 1.3]{Xu19} to the setting of any $\Rr$-Cartier $\Rr$-divisor $K_X+\Delta$. We remark that in \cite{Xu19}, one needs the existence of monotonic $n$-complements \cite[Theorem 1.8]{Bir19}, which only holds for $\Qq$-Cartier $\Qq$-divisors $K_X+\Delta$ in general (cf. \cite[Example 5.1]{HLS19}).
	
	\subsection{Existence and quasi-monomialness of a minimizing valuation}
	
	A folklore principle is that we may recover properties of the $\Rr$-Cartier $\Rr$-divisor $K_{X}+\Delta$ from corresponding properties of some $\bQ$-Cartier $\bQ$-divisors $K_X+\Delta'$ provided that those $\Delta'$'s are very close to the given $\Rr$-divisor $\Delta$ in the rational envelope of $\Delta$. 

    Here we will use Lemma \ref{lem:nv decomposable R-complements} to construct desired $\Qq$-divisors $\Delta'$'s. Lemma \ref{lem:nv decomposable R-complements} is a special case of \cite[Theorem 5.6]{HLS19} and \cite[Theorem 1.6]{Nak16} which could be regarded as a generalization of the conjecture on accumulation points of log canonical thresholds due to Koll\'ar \cite[Theorem 1.11]{HMX14}. We will use it frequently in the rest of this section. Recall that we say $V\subseteq\Rr^m$ is the \emph{rational envelope} of $\bm{a}\in\Rr^m$ if $V$ is the smallest affine subspace containing $\bm{a}$ which is defined over the rationals.
	\begin{lemma}[{\cite[Theorem 5.6]{HLS19}}]\label{lem:nv decomposable R-complements}
		Fix a positive integer $n$ and a point $\bm{a}=(a_1,\ldots,a_m)\in\Rr^m$. Then there exist positive real numbers $t_i$, and rational points $\bm{a}_i=(a_i^{1},\ldots,a_i^{m})\in\Qq^m$ in the rational envelope of $\bm{a}$ for $1\le i\le l$ depending only on $n$ and $\bm{a}$, such that $\sum_{i=1}^l t_i=1$, $\sum_{i=1}^l t_i\bm{a}_i=\bm{a}$, and the following holds.
		
		Let $x\in (X,\Delta\coloneqq \sum_{j=1}^m a_j\Delta_j)$ be a klt singularity of dimension $n$ and $S$ any Koll\'ar component of $x\in (X,\Delta)$, such that $\Delta_j\ge0$ is a Weil divisor for any $1\le j\le m$. Then $\Supp\Delta_{(i)}=\Supp\Delta$, $x\in (X,\Delta_{(i)})$ is klt, and $S$ is a Koll\'ar component of $x\in (X,\Delta_{(i)})$ for any $1\le i\le l$, where $\Delta_{(i)}:=\sum_{j=1}^m a_i^j\Delta_j$. 
	\end{lemma}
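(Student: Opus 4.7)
The plan is to reduce the lemma to a uniform stability statement for klt pairs under perturbation within the rational envelope, and then exhibit $\bm{a}$ as a convex combination of rational points via barycentric coordinates. The key intermediate claim is the following uniform stability: there exists an open neighborhood $U$ of $\bm{a}$ in the rational envelope $V\subset\mathbb{R}^m$, depending only on $n$ and $\bm{a}$, such that for every $\bm{a}'\in U$, every $n$-dimensional klt singularity $x\in (X,\Delta=\sum_{j=1}^m a_j\Delta_j)$, and every Koll\'ar component $S$ of $x\in (X,\Delta)$, the pair $(X,\sum_{j=1}^m a'_j\Delta_j)$ is klt with the same support, and $S$ remains a Koll\'ar component.

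To prove this uniform stability, I would fix an integral basis $\bm{v}_1,\dots,\bm{v}_r$ of the lattice in $V-\bm{a}$, so that every $\bm{a}'\in V$ near $\bm{a}$ has the form $\bm{a}+\sum_{i=1}^r s_i\bm{v}_i$ with small $s_i\in\mathbb{R}$. The perturbation divisor then becomes $\sum_i s_i W_i$, where $W_i:=\sum_{j=1}^m v_i^j\Delta_j$ is an integral (but possibly non-effective) Weil divisor on $X$. Decomposing $W_i=W_i^+-W_i^-$ as a difference of effective Weil divisors, the klt property of $(X,\sum_j a'_j\Delta_j)$ reduces to a uniform lower bound on the log canonical thresholds $\lct(X,\Delta;W_i^+)$. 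By the ACC for log canonical thresholds of Hacon--McKernan--Xu, these thresholds form a set which cannot accumulate from above, so a contradiction argument (assuming no uniform neighborhood works) would produce a sequence of $n$-dimensional klt pairs whose lc thresholds violate ACC. A parallel ACC argument applied on the plt blow-up $\mu:(Y,S+\mu_*^{-1}\Delta)\to (X,\Delta)$ handles preservation of the plt property along $S$, ensuring $S$ remains a Koll\'ar component; the $\mu$-ampleness of $-S$ is unaffected by boundary perturbation. The support condition $\Supp\Delta_{(i)}=\Supp\Delta$ is automatic, since $V$ lies in the rational hyperplane $\{a_j=0\}$ whenever $a_j=0$, and $U$ may be shrunk to preserve strict positivity of $a_i^j$ when $a_j>0$.

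Granting the stability, I would choose finitely many rational points $\bm{a}_1,\dots,\bm{a}_l\in U\cap V\cap\mathbb{Q}^m$ whose convex hull contains $\bm{a}$ in its relative interior; since $V$ is a rational affine subspace, $V\cap\mathbb{Q}^m$ is dense in $V$, so such points exist (for instance, slightly perturbed vertices of a small rational simplex around $\bm{a}$ within $V$). The barycentric coordinates of $\bm{a}$ with respect to this simplex give the required positive reals $t_i$ satisfying $\sum_i t_i=1$ and $\sum_i t_i\bm{a}_i=\bm{a}$.

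The hardest step will be establishing the uniformity of $U$ independent of the specific klt pair and the specific Koll\'ar component. This hinges on the ACC for log canonical thresholds together with the rationality of $V$: without the requirement $\bm{a}_i\in V$, no uniform neighborhood could possibly exist, since arbitrary $\mathbb{R}$-perturbations could drive lc thresholds to $0$ by probing new linear directions outside the rational envelope. Additional care is needed for the Koll\'ar component preservation because infinitely many exceptional divisors over $Y$ could in principle be relevant, but this too falls within the scope of the ACC-type arguments once $Y$ (together with the plt boundary $S+\mu_*^{-1}\Delta$) is viewed as the ambient variety.
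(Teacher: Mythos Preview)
The paper does not prove this lemma; it is quoted from \cite[Theorem 5.6]{HLS19} (a result in the spirit of Nakamura \cite{Nak16}), so there is no in-paper argument to compare against. Your overall strategy---establish a uniform open neighborhood $U\subset V$ on which klt-ness and the Koll\'ar-component property persist, then exhibit $\bm{a}$ as a convex combination of rational points of $U$---is exactly the right architecture, and the last step (barycentric coordinates in a small rational simplex inside $V$) is fine.

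The gap is in the step where you invoke the ACC for log canonical thresholds to produce a uniform positive lower bound on $\lct(X,\Delta;W_i^+)$. ACC says that the set of such thresholds admits no strictly \emph{increasing} infinite sequence; it does \emph{not} preclude a sequence of thresholds decreasing to $0$. But that is precisely what your contradiction hypothesis yields: if no uniform neighborhood exists, you obtain pairs $(X_k,\Delta_k)$ and thresholds $t_k\to 0$, a decreasing sequence, which is perfectly consistent with ACC. Concretely, already for $(\bA^2,0)$ and $D$ the union of $m$ distinct lines through the origin one has $\lct=2/m\to 0$, with fixed finite coefficient sets throughout; so ACC alone cannot give the lower bound you claim. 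The coefficient set of the strictly lc pairs $(X_k,\Delta_k+t_kW_{i,k}^+)$ is $\{a_j\}\cup\{a_j+t_k v_i^j\}$, which accumulates at the $a_j$ from above and is not DCC, so one cannot feed these pairs back into the standard ACC statement either. A secondary issue is that $W_i^+$ need not be $\bR$-Cartier when $X$ is not $\bQ$-factorial, so $\lct(X,\Delta;W_i^+)$ may be undefined; only the full $W_i$ (the direction inside $V$) is guaranteed $\bR$-Cartier by \cite[Lemma 5.3]{HLS19}.

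The actual proofs in \cite{Nak16} and \cite{HLS19} close this gap by exploiting more structure: the log discrepancy $A_{(X,\Delta(\bm b))}(E)$ is a $\bQ$-affine function of $\bm b\in V$, and the $\bQ$-linear independence of $1,r_1,\dots,r_c$ forces the relevant facet hyperplanes of the lc-polytope to be governed by finitely many rational constraints uniformly in $(X,\{\Delta_j\})$. This is the ``uniform lc rational polytope'' argument; it does ultimately rest on ACC-type results, but not via a direct lower bound on $\lct(X,\Delta;W_i^+)$. If you want to repair your sketch, you should replace the na\"ive threshold bound by this rational-polytope argument, or cite \cite[Theorem 5.6]{HLS19} as the paper does.
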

	
	The following lemma will be applied to generalize \cite[Main Theorem]{Blu18a} and \cite[Theorem 1.2]{Xu19}.

\begin{lem}\label{lem: peturb klt singularity with a seq of KC}
	Let $x\in(X,\Delta)$ be a klt singularity, and $\{S_j\}_{j=1}^{\infty}$ a sequence of Koll\'ar components of $x\in (X,\Delta)$ such that $\lim_{j\to +\infty}\nvol_{(X,\Delta),x}(\ord_{S_j})\le n^n$. Then possibly passing to a subsequence of $\{S_j\}_{j=1}^{\infty}$, there exist a positive real number $a\in[\frac{1}{2},1]$, and a $\Qq$-divisor $\Delta'$ on $X$, such that
	\begin{enumerate}
	\item $\Supp \Delta=\Supp \Delta'$ and $x\in (X,\Delta')$ is klt,
	\item $\{S_j\}_{j=1}^{\infty}$ is a sequence of Koll\'ar components of $x\in (X,\Delta')$,
	    \item $\lim_{j\to +\infty}\frac{A_{(X,\Delta')}(S_j)}{A_{(X,\Delta)}(S_j)}=a$, and
	    \item $\nvol_{(X,\Delta'),x}(\ord_{S_j})<n^n+1$ for any $j$.
	\end{enumerate}
\end{lem}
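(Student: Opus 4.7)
The plan is to invoke Lemma \ref{lem:nv decomposable R-complements} to express $\Delta$ as a convex combination of klt $\mathbb{Q}$-divisors sharing the given Kollár components, and then form $\Delta'$ as a \emph{rational} convex combination of two such pieces so that the limiting ratio $A_{(X,\Delta')}(\ord_{S_j})/A_{(X,\Delta)}(\ord_{S_j})$ falls in $[1/2,1]$.

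First I would apply Lemma \ref{lem:nv decomposable R-complements} to the coefficient vector of $\Delta$, producing weights $t_1,\ldots,t_l > 0$ with $\sum_i t_i = 1$ and $\mathbb{Q}$-divisors $\Delta_{(1)},\ldots,\Delta_{(l)}$ in the rational envelope of $\Delta$ such that $\Delta = \sum_i t_i \Delta_{(i)}$, each $(X,\Delta_{(i)})$ is klt at $x$ with $\Supp \Delta_{(i)} = \Supp \Delta$, and every $S_j$ is a Kollár component of $(X,\Delta_{(i)})$. Linearity of log discrepancy along this decomposition yields $A_{(X,\Delta)}(v) = \sum_i t_i A_{(X,\Delta_{(i)})}(v)$, so the ratios $r_{ij} := A_{(X,\Delta_{(i)})}(\ord_{S_j})/A_{(X,\Delta)}(\ord_{S_j}) \geq 0$ satisfy $\sum_i t_i r_{ij} = 1$ for all $j$.

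To pass to a limit I would use Lemma \ref{lem:kill boundary} with $\gamma := \lct(X,\Delta;\Delta) > 0$: the inequality $A_{(X,\Delta)}(v) \geq \frac{\gamma}{1+\gamma} A_X(v)$ together with $A_{(X,\Delta_{(i)})}(v) \leq A_X(v)$ yields $r_{ij} \leq (1+\gamma)/\gamma$ uniformly. After passing to a subsequence, $r_{ij} \to r_i$ for every $i$, with $\sum_i t_i r_i = 1$. If every $r_i = 1$, set $\Delta' := \Delta_{(1)}$ and $a := 1$. Otherwise pick indices $i_0, i_1$ with $r_{i_0} < 1 < r_{i_1}$, which must exist since a convex combination of numbers all $>1$ (respectively $<1$) cannot equal $1$. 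The affine function $\varphi(s) := s r_{i_0} + (1-s) r_{i_1}$ decreases from $r_{i_1}>1$ at $s=0$ to $r_{i_0}<1$ at $s=1$, so $\varphi^{-1}([1/2,1])$ is an interval of positive length and contains a rational point $s$; define $\Delta' := s\Delta_{(i_0)} + (1-s)\Delta_{(i_1)}$, which is a $\mathbb{Q}$-divisor with $\Supp \Delta' = \Supp \Delta$.

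Conditions (1) and (2) then follow from convexity: the log discrepancy $A_{(X,\Delta')}(E) = sA_{(X,\Delta_{(i_0)})}(E) + (1-s)A_{(X,\Delta_{(i_1)})}(E)$ is a strictly positive convex combination on every divisor $E$ over $X$ (giving kltness of $(X,\Delta')$ at $x$), and the same affineness applied on the plt blow-up $\mu_j : Y_j \to X$ extracting $S_j$ shows that $(Y_j, S_j + (\mu_j)_*^{-1}\Delta')$ is plt near $S_j$, so $S_j$ remains a Kollár component of $(X,\Delta')$. For (3) the ratio equals $\varphi_j(s) := s r_{i_0,j} + (1-s) r_{i_1,j} \to a := \varphi(s) \in [1/2,1]$. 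For (4), $\hvol_{(X,\Delta'),x}(\ord_{S_j}) = \varphi_j(s)^n\,\hvol_{(X,\Delta),x}(\ord_{S_j}) \to a^n n^n \leq n^n$, so (4) holds once finitely many initial terms of the subsequence are discarded. The main technical point is the uniform upper bound on $r_{ij}$ needed in the compactness step, which is precisely what Lemma \ref{lem:kill boundary} supplies; everything else is a one-variable intermediate value argument combined with the robustness of kltness and of the Kollár-component property under convex combinations of boundaries.
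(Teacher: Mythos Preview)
Your overall strategy is sound and, once patched, gives a valid alternative to the paper's argument. However, the step you flag as ``the main technical point'' does not go through as written. Lemma~\ref{lem:kill boundary} assumes that $\Delta$ itself is $\bR$-Cartier, and the inequality $A_{(X,\Delta_{(i)})}(v)\le A_X(v)$ presupposes that $K_X$ is $\bQ$-Cartier so that $A_X(v)$ is even defined; neither hypothesis is present in Lemma~\ref{lem: peturb klt singularity with a seq of KC}. Fortunately the bound you need is much cheaper than you realize: since each $(X,\Delta_{(i)})$ is klt we have $r_{ij}>0$, and then the constraint $\sum_i t_i r_{ij}=1$ already forces $r_{ij}\le 1/t_i\le 1/\min_k t_k$ uniformly in $j$. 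With this replacement the compactness step and the rest of your argument are correct.

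For comparison, the paper proceeds differently. Instead of selecting two of the $\Delta_{(i)}$ and interpolating rationally between them, it parametrizes the rational envelope of the coefficient vector of $\Delta$ by real parameters $(r_1,\dots,r_c)$, writes $K_X+\Delta=K_X+D_0+\sum r_iD_i$ with $\bQ$-Cartier $D_i$, and uses the Izumi inequality (Lemma~\ref{lem:izumi}) for the pair $(X,\Delta)$ to bound $|\ord_{S_j}(D_i)|/A_{(X,\Delta)}(S_j)$ uniformly. This lets one choose a single rational perturbation $\Delta'$ so close to $\Delta$ that the ratio lands in $[\tfrac12,1]$ directly. Your convex-combination route is more elementary in that it avoids Izumi entirely and uses only the output of Lemma~\ref{lem:nv decomposable R-complements}; the paper's approach has the advantage of producing $\Delta'$ arbitrarily close to $\Delta$, which is unnecessary for this lemma but aligns with the perturbation viewpoint used elsewhere (e.g.\ Lemma~\ref{lem: nv lipschitz in rational envelope}).
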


\begin{proof}
	Possibly passing to a subsequence, we may assume that $\nvol_{(X,\Delta),x}(\ord_{S_j})<n^n+1$ for any $j$.
	We may write $\Delta= \sum_{i=1}^m a_i\Delta_i$, where $\Delta_i$ are distinct prime divisors. There exist real numbers $r_1,\dots,r_c$, and $s_1,\dots,s_m$ $\Qq$-linear functions: $\Rr^{c+1}\rightarrow\Rr$, such that $1,r_1,\dots,r_c$ are linearly independent over $\Qq$, and $a_i=s_i(1,r_1,\dots,r_c)$ for any $1\le i\le m$.
	
	Let 
	\[
	    \Delta(x_1,\ldots,x_c)\coloneqq\sum_{i=1}^m s_i(1,x_1,\dots,x_c)\Delta_i.
	\]
	
	Let $n=\dim X$, and $t_1,\ldots,t_l,\bm{a}_1,\ldots,\bm{a}_l$ constructed in Lemma \ref{lem:nv decomposable R-complements} which only depends on $n$ and $\bm{a}=(a_1,\ldots,a_m)$. Note that
	\begin{itemize}
	    \item 	$\{\left(s_1(1,x_1,\ldots,x_c),\ldots,s_m(1,x_1,\ldots,x_c)\right)\mid x_1,\ldots,x_c\in\Rr\}$ is the rational envelope of 
	$\bm{a}$,
	\item $\bm{a}_1,\ldots,\bm{a}_l$ lie in the rational envelope of 
	$\bm{a}$, and
	\item $\bm{a}$ lies in the interior of the convex hull of $\bm{a}_1,\ldots,\bm{a}_l$.
	\end{itemize}
Thus there exists a positive real number $\delta$, such that $\Supp \Delta=\Supp\Delta(x_1,\ldots,x_c)$,  $x\in(X,\Delta(x_1,\ldots,x_c))$ is klt, and $\{S_j\}_{j=1}^{\infty}$ is a sequence of Koll\'ar components of $x\in(X,\Delta(x_1,\ldots,x_c))$ for any $x_i$ satisfying $|r_i-x_i|<\delta$.
		
	Let $D_i$ be $\Qq$-divisors such that
	$K_X+\Delta=K_X+D_0+\sum_{i=1}^c r_iD_i$.
	By \cite[Lemma 5.3]{HLS19}, $K_X+D_0$ and $D_i$ are $\Qq$-Cartier $\Qq$-divisors for any $1\le i\le c$. Since $1,r_1,\dots,r_c$ are linearly independent over $\Qq$, we may write	$$K_X+\Delta(x_1,\ldots,x_c)=K_X+D_0+\sum_{i=1}^c x_iD_i.$$
	Write $m_iD_i=\mathrm{div}(f_i)-\mathrm{div}(g_i)$, for some $m_i\in\bZ_{>0}$ and $f_i,g_i\in\mathcal{O}_{X,x}$ for any $1\le i\le c$. Denote $m_i\overline{D}_i:= \mathrm{div}(f_i)+\mathrm{div}(g_i)$.
	Possibly replacing $\delta$ with a smaller positive real number, we may assume that 
	$$C_2\sum_{i=1}^c |r_i-x_i|\ord_x(\overline{D}_i)\le  \frac{1}{2},$$
	for any $x_i$ which satisfies that $|r_i-x_i|<\delta$, where $C_2=C_2(x\in (X,\Delta))$ is the Izumi constant given by Lemma \ref{lem:izumi}.
	
	Since $$A_{(X,\Delta(x_1,\ldots,x_c))}(S_j)=A_{(X,\Delta)}(S_j)+\sum_{i=1}^c (r_i-x_i)\ord_{S_j}(D_i)$$
	for any $j$, possibly passing to a subseqence of $\{S_j\}_{j=1}^{\infty}$, there exist $r_1',\ldots,r_c'\in\Qq$ such that $|r_i-r_i'|\le \delta$ for any $i$, and $A_{(X,\Delta')}(S_j)\le A_{(X,\Delta)}(S_j)$ for any $j$, where $\Delta'\coloneqq \Delta(r_1',\ldots,r_c')$. Thus
	\begin{align*}
		1 &\ge \frac{A_{(X,\Delta')}(S_j)}{A_{(X,\Delta)}(S_j)}=\frac{A_{(X,\Delta)}(S_j)+\ord_{S_j}(\Delta-\Delta')}{A_{(X,\Delta)}(S_j)}
		\ge 1-\frac{\sum_{i=1}^c |(r_i-r_i')\cdot\ord_{S_j}(D_i)|}{A_{(X,\Delta)}(S_j)}\\
		&\ge 1- \frac{\sum_{i=1}^c |r_i-r_i'|\cdot\ord_{S_j}(\overline{D}_i)}{A_{(X,\Delta)}(S_j)}
		\ge 1-C_2\sum_{i=1}^c |r_i-r_i'|\cdot \ord_x(\overline{D}_i)\ge \frac{1}{2}.
	\end{align*}
	Hence possibly passing to a subsequence of $\{S_j\}_{j=1}^{\infty}$, we may assume that there exists a positive real number $a\in[\frac{1}{2},1]$, such that $\lim_{j\to +\infty}\frac{A_{(X,\Delta')}(S_j)}{A_{(X,\Delta)}(S_j)}=a$. Then 
	\begin{equation*}
		\lim_{j\to+\infty}\nvol_{(X,\Delta'),x}(\ord_{S_j})=	\lim_{j\to+\infty}\left(\frac{A_{(X,\Delta')}(S_j)}{A_{(X,\Delta)}(S_j)}\right)^n\nvol_{(X,\Delta),x}(\ord_{S_j})\le (an)^n\le n^n.
	\end{equation*}
	Therefore, possibly passing to a subsequence of $\{S_j\}_{j=1}^{\infty}$, we have $\nvol_{(X,\Delta'),x}(\ord_{S_j})<n^n+1$
	for any $j$. The proof is finished.
\end{proof}
    
    Next we prove the existence and quasi-monomialness of a minimizer of $\nvol_{(X,\Delta),x}$.
    
	\begin{theorem}[{cf. \cite[Main Theorem]{Blu18a}, \cite[Theorem 1.2]{Xu19}}]\label{thm:quasi-monomial}
		Let $x\in(X,\Delta)$ be a klt singularity. Then 
		\begin{enumerate}
		    \item there exists a minimizer of the function 
		    \[
		    \nvol_{(X,\Delta),x}:\Val_{X,x}\to \bR_{>0}\bigcup \{+\infty\};
		    \]
    		\item any minimizer $v_*$ of the function $\nvol_{(X,\Delta),x}$ is quasi-monomial.
		\end{enumerate}

	\end{theorem}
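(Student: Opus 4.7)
The plan is to reduce both statements to the $\bQ$-coefficient case, proved by Blum \cite[Main Theorem]{Blu18a} and Xu \cite[Theorem 1.2]{Xu19}, using Lemma \ref{lem: peturb klt singularity with a seq of KC} as a bridge between the $\bR$-pair $(X,\Delta)$ and a carefully chosen nearby klt $\bQ$-pair $(X,\Delta')$. The point is that any sequence of Koll\'ar components of $x\in(X,\Delta)$ with controlled normalized volume remains a sequence of Koll\'ar components of $(X,\Delta')$ with $\hvol_{(X,\Delta'),x}(\ord_{S_j}) < n^n + 1$ and with a uniform two-sided bound on the ratio $A_{(X,\Delta')}(S_j)/A_{(X,\Delta)}(S_j)\in[\tfrac12,1]$. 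This allows me to import compactness and semicontinuity from the $\bQ$-setting while losing only a bounded multiplicative factor.

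For part (1), I would first invoke Theorem \ref{thm:lx-kc-minimizing}(1) together with Theorem \ref{thm:nvol is bounded by n^n} to produce a sequence of Koll\'ar components $\{S_j\}$ of $x\in(X,\Delta)$ with $\hvol_{(X,\Delta),x}(\ord_{S_j}) \to \hvol(x,X,\Delta)\le n^n$, then apply Lemma \ref{lem: peturb klt singularity with a seq of KC} to produce the $\bQ$-pair $(X,\Delta')$. Rescaling to $v_j := \ord_{S_j}/\ord_{S_j}(\fm_{X,x})$ and applying the properness estimate of Lemma \ref{lem:izumi} to $(X,\Delta')$ gives a uniform upper bound on $A_{(X,\Delta')}(v_j)$, and the ratio bound then transfers this to a uniform upper bound on $A_{(X,\Delta)}(v_j)$. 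Running the compactness argument of \cite{Blu18a} on the $\bQ$-pair $(X,\Delta')$ (equivalently, retracting to a log smooth model dominating $(X,\Delta+\Delta')$ and using compactness of the corresponding bounded subset of $\QM(Y,D)$) extracts a subsequential limit $v_* \in \Val_{X,x}$, necessarily quasi-monomial. Lower semicontinuity of $A_{(X,\Delta)}$ on the retraction (as in \cite{JM12}) together with the alternative description of $\hvol$ via log canonical thresholds and multiplicities of valuation ideals (Lemma \ref{lem:nv=lcte}) then gives $\hvol_{(X,\Delta),x}(v_*) \le \lim_j \hvol_{(X,\Delta),x}(\ord_{S_j}) = \hvol(x,X,\Delta)$, so $v_*$ is a minimizer.

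For part (2), let $v_*$ be an arbitrary minimizer. Theorem \ref{thm:lx-kc-minimizing}(2) provides Koll\'ar components $\{S_k\}$ and positive scalars $b_k$ with $b_k \ord_{S_k}\to v_*$ in $\Val_{X,x}$ and $\hvol_{(X,\Delta),x}(\ord_{S_k})\to \hvol(x,X,\Delta)$. Applying Lemma \ref{lem: peturb klt singularity with a seq of KC} to this sequence gives a $\bQ$-pair $(X,\Delta')$ on which the same sequence has bounded normalized volume. Xu's argument \cite[Theorem 1.2]{Xu19} for the $\bQ$-pair $(X,\Delta')$---which forces Berkovich limits of such controlled sequences of divisorial valuations to land in some $\QM_\eta(Y,D)$---then applies directly to $b_k \ord_{S_k}$, showing that the limit $v_*$ is quasi-monomial.

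The principal obstacle, and the reason this reduction is necessary, is that several key ingredients in the existing $\bQ$-pair proofs---most notably monotonic $n$-complements \cite[Theorem 1.8]{Bir19} on which \cite{Xu19} relies---genuinely fail for $\bR$-pairs (c.f.\ \cite[Example 5.1]{HLS19}), so one cannot just repeat Blum's or Xu's arguments verbatim. By moving all rationality-dependent constructions from $(X,\Delta)$ to $(X,\Delta')$, while using the ratio bound $A_{(X,\Delta')}/A_{(X,\Delta)} \to a \in[\tfrac12,1]$ on the distinguished sequence to carry quantitative estimates back, one bypasses this issue without ever needing monotonic complements for the $\bR$-coefficient pair itself. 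The delicate point in executing the plan is to verify that the subsequential valuation limit extracted via the compactness on $(X,\Delta')$ is the same object when viewed on $(X,\Delta)$, which is immediate since both pairs share the ambient variety $X$ and the underlying valuation space $\Val_{X,x}$.
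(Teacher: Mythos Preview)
Your part (2) matches the paper's argument: after applying Lemma \ref{lem: peturb klt singularity with a seq of KC}, the paper rescales to $v_j':=\ord_{S_j}/A_{(X,\Delta')}(S_j)$, observes $v_*=a\lim_j v_j'$, and invokes \cite[Theorem 3.3]{Xu19} on the $\bQ$-pair $(X,\Delta')$ to conclude that $\lim_j v_j'$, hence $v_*$, is quasi-monomial.

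Part (1) as you sketch it has a gap. What you call ``the compactness argument of \cite{Blu18a}'' is not a retract-to-a-fixed-model argument; Blum works with limits of graded sequences of ideals, and his output would be a minimizer for $(X,\Delta')$, which is not obviously one for $(X,\Delta)$. Your alternative parenthetical---retract the $v_j$ to a fixed $\QM(Y,D)$ and extract a limit there---is a different idea, and to make it yield a minimizer you would need (i) that retraction decreases $\hvol$ (both $A(r_{Y,D}v)\le A(v)$ and $\vol(r_{Y,D}v)\le\vol(v)$), and (ii) continuity of $\vol$ on $\QM(Y,D)$ from \cite[Corollary D]{BFJ14}. Neither follows from lower semicontinuity of $A$ together with Lemma \ref{lem:nv=lcte} as you claim; and if you do run that argument the detour through $(X,\Delta')$ is superfluous, since Lemma \ref{lem:izumi} already applies to $\bR$-pairs. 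The paper takes a substantially different route for (1): using $(X,\Delta')$ it invokes \cite[Proposition 3.5]{Xu19} (which relies on complements and hence genuinely needs $\bQ$-coefficients) to parametrize the lc places $S_j$ by a family $D\subset X\times V$; after a fiberwise log resolution all $S_j$ become restrictions of toroidal divisors over a common pair $(Y,E)$. Restricting these to a fixed fiber $u$ gives valuations $F_j$ in a single dual complex with $\hvol_{(X,\Delta),x}(\ord_{S_j})=\hvol_{(X,\Delta),x}(\ord_{F_j})$ by \cite[Theorem 2.18]{Xu19}, and continuity of $\vol$ on that complex \cite{BFJ14} then produces a quasi-monomial minimizer $w$ for $(X,\Delta)$. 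That is the real role of $\Delta'$ in part (1): it is the input to Xu's complement-based parametrization of lc places, not merely a source of Izumi constants.
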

	\begin{proof}
	We may assume that $\dim X\ge 2$.
	
	(1) By Theorems \ref{thm:nvol is bounded by n^n} and \ref{thm:lx-kc-minimizing}, there exists a sequence of Koll\'ar components $\{S_j\}_{j=1}^{\infty}$ of $x\in (X,\Delta)$, such that
	\begin{equation}\label{eqn: thmqm limit approach}
	  \lim_{j\to+\infty} \nvol_{(X,\Delta),x}(\ord_{S_j})= \inf_{v\in\Val_{X,x}}\nvol_{(X,\Delta),x}(v)\le n^n.
	\end{equation}

		By Lemma \ref{lem: peturb klt singularity with a seq of KC}, possibly passing to a subsequence of $\{S_j\}_{j=1}^{\infty}$, there exist a positive real number $a\in[\frac{1}{2},1]$ and a $\Qq$-divisor $\Delta'$ on $X$ which satisfy Lemma \ref{lem: peturb klt singularity with a seq of KC}(1--5). Let $v_j'\coloneqq\frac{1}{A_{(X,\Delta')}(S_j)}\ord_{S_j}$ for any $j$. Since 
		\[
		    \nvol_{(X,\Delta'),x}(v_j')=\nvol_{(X,\Delta'),x}(\ord_{S_j})<n^n+1
		\]
		for any $j$, by \cite[Lemma 3.4]{Xu19} and \cite[Proposition 3.9]{LX16}, possibly passing to a subsequence of $\{S_j\}_{j=1}^{\infty}$, we may assume that $v_{*}':=\lim_{j\to+\infty}v_j'$  exists.
		
		We finish the proof following arguments of \cite[Remark 3.8]{Xu19}. Since $\lim_{j\to+\infty}v_j'=v_{*}'$, by \cite[Proposition 3.5]{Xu19}, there exist a positive integer $N$ and a family of Cartier divisors $D \subset X \times V$ parametrized by a variety $V$ of finite type, such that for any closed point $u \in V$, $x\in (X, \Delta'+\frac{1}{N}D_{u})$ is lc but not klt, and for any $j$, $S_{j}$ is an lc place of $x\in (X, \Delta'+\frac{1}{N} D_{u_{j}})$ for some closed point $u_{j} \in V$. Replacing $V$ by an irreducible closed subset, we can further assume that the set $\{u_{j}\mid j\in \Zz_{\ge 1}\}$ forms a dense set of closed points on $V$. We may further resolve $V$ to be smooth. By \cite[2.13]{Xu19}, possibly shrinking $V$, passing to a subsuquence of $\{S_j\}_{j=1}^{\infty}$, and replacing $V$ by a finite \'{e}tale covering, we can assume that $(X \times V, \Delta' \times V+\frac{1}{N} D) \rightarrow V$ admits a fiberwise log resolution 
		$\mu: Y \rightarrow (X \times V, \Delta' \times V+\frac{1}{N} D)$ over $V$. 

    Let $E$ be the simple normal crossing exceptional divisor of $\mu$ given by the components which are the lc places of $V\in (X \times V, \Delta' \times V+\frac{1}{N} D)$. By construction, there is a sequence of prime toroidal divisors $\{T_{j}\}_{j=1}^{\infty}$ over $(Y,E)$, such that $S_j$ is given by the restriction of $T_j$ over $u_j$. Fix a closed point $u\in V$. Let $F_j$ be the restriction of $T_j$ over $u$ for any $j$. Recall that $\Supp\Delta'=\Supp \Delta$, so $\mu$ is also a fiberwise log resolution of $(X \times V, \Delta \times V)$. Since $A_{(X \times V, \Delta' \times V+(\frac{1}{N}-\epsilon)D)}(T_j)<1$, and
    $V\in (X \times V, \Delta' \times V+(\frac{1}{N}-\epsilon)D)$ is klt for some positive real number $\epsilon\ll 1$, by \cite[Theorem 2.18]{Xu19}, we have
    \begin{equation}\label{eqn: thmqm SjFj}
        \nvol_{(X,\Delta),x}(\ord_{S_j})=\nvol_{(X,\Delta),x}(\ord_{F_j})=\nvol_{(X,\Delta),x}(w_j),
    \end{equation}
    where $w_j:=\frac{1}{A_{(X,\Delta)}(F_j)}(\ord_{ F_j})$.

    Since $F_j$ is a prime toroidal divisor over $(Y_u,E_u)$, where $E_u$ and $Y_u$ are the restrictions of $E$ and $Y$ over $u$ respectively, the limit of $w_j$ is a quasi-monomial valuation $w$, and $A_{(X,\Delta)}(w)=1$. By \cite[Corolleary D]{BFJ14}, the function $\vol_{X,x}(v)$ of $v$ is continuous on any given dual complex, which implies that 
    \begin{equation}\label{eqn: thmqm vol continuous}
        \lim_{j\to+\infty}\nvol_{(X,\Delta),x}(w_j)=\lim_{j\to+\infty}\vol_{X,x}(w_j)=\vol_{X,x}(w)=\nvol_{(X,\Delta),x}(w).
    \end{equation}

    Combining \eqref{eqn: thmqm limit approach}, \eqref{eqn: thmqm SjFj} and \eqref{eqn: thmqm vol continuous}, we conclude that 
    \begin{align*}
        &\inf_{v\in\Val_{X,x}}\nvol_{(X,\Delta),x}(v)=\lim_{j\to+\infty}\nvol_{(X,\Delta),x}(\ord_{S_j})\\
        =&\lim_{j\to+\infty}\nvol_{(X,\Delta),x}(\ord_{F_j})=\lim_{j\to+\infty}\nvol_{(X,\Delta),x}(w_j)=\nvol_{(X,\Delta),x}(w),
    \end{align*}
    and we are done.

\medskip


(2) From the proof of part (1), we know that there exists a quasi-monomial minimizer $w$ of the function $\hvol_{(X,\Delta),x}$. By Theorem \ref{thm:uniqueness}, any minimizer $v_*$ is a rescaling of $w$ hence is quasi-monomial.
	\end{proof}
	
	The uniqueness of $\hvol$-minimizers up to rescaling was proved in \cite{XZ20} for $\bQ$-divisors $\Delta$. Their proof can be easily generalized to $\bR$-divisors since the lengths and multiplicities of ideal sequences are independent of the boundary  $\Delta$, and the summation formula of multiplier ideals also works for $\bR$-divisors (see \cite{Tak06}). Thus we omit the proof here.
	
	\begin{thm}[cf. {\cite[Theorem 1.1]{XZ20}}]\label{thm:uniqueness}
	Let $x\in (X,\Delta)$ be a klt singularity. Then up to rescaling, there exists a unique minimizer $v_*$ of the functional $\hvol_{(X,\Delta),x}$.
	\end{thm}
	
\subsection{Constructibility of local volumes in families}
	\begin{theorem}[{cf. \cite[Theorem 1.3]{Xu19}}]\label{thm:constructibility R-div}
		Let $n$ be a positive integer. Let $B\subset(\cX,\cD)\to B$ be an $\Rr$-Gorenstein family of klt singularities of dimension $n$ over a normal base. The local volume function $\nvol(b,\cX_b,\cD_b)$ of closed points $b\in B$ is constructible in the Zariski topology.
	\end{theorem}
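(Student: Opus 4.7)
The plan is to adapt Xu's proof of \cite[Theorem 1.3]{Xu19} (the $\bQ$-divisor case) to the $\bR$-Gorenstein setting. The main difficulty in a direct port is that Xu's argument uses monotonic $n$-complements, which are unavailable for $\bR$-divisors. The substitutes we have assembled in this section are Theorem \ref{thm:quasi-monomial} (existence and quasi-monomialness of a minimizer for $\bR$-divisor klt pairs), Theorem \ref{thm: family kc with nv bdd} (a flat family of bounded-nvol Koll\'ar components), and Lemma \ref{lem:nv decomposable R-complements} (the rational envelope decomposition).

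By Noetherian induction on $\dim B$, it suffices to find a dense open $U\subset B$ on which $\nvol(b,\cX_b,\cD_b)$ is constant. So I would assume $B$ is irreducible and, after a finite \'etale cover and shrinking $B$, that $B$ is smooth and $(\cX,\cD)\to B$ admits a fiberwise log resolution $\mu:(\cY,\cE)\to (\cX,\cD)$. Writing $\cD=\sum_j a_j\cD_j$, apply Lemma \ref{lem:nv decomposable R-complements} to $\bm{a}=(a_j)$ and the fiber dimension $n$ to obtain positive reals $t_i$ with $\sum_i t_i=1$ and rational vectors $\bm{a}_{(i)}=(a_{(i),j})$ in the rational envelope of $\bm{a}$ satisfying $\bm{a}=\sum_i t_i\bm{a}_{(i)}$, such that every Koll\'ar component over $(\cX_b,\cD_b)$ is also a Koll\'ar component over each $(\cX_b,\cD_{(i),b})$, where $\cD_{(i)}:=\sum_j a_{(i),j}\cD_j$. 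Since $\bm{a}_{(i)}$ lies in the rational envelope of $\bm{a}$, the argument in the proof of Lemma \ref{lem: peturb klt singularity with a seq of KC} shows that $K_{\cX/B}+\cD_{(i)}$ is $\bQ$-Cartier; after further shrinking $B$ using Lemma \ref{lem: klt locus open}, each $(\cX,\cD_{(i)})\to B$ is a $\bQ$-Gorenstein family of klt singularities. Xu's $\bQ$-divisor constructibility \cite[Theorem 1.3]{Xu19} applied to each of these gives that $b\mapsto \nvol(b,\cX_b,\cD_{(i),b})$ is constructible for every $i$.

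Next I would adapt the structure of Xu's argument to assemble a family of minimizing quasi-monomial valuations for $\cD$ itself. By Theorem \ref{thm:quasi-monomial}, for each $b$ the minimizer $v_b^*$ is quasi-monomial and lies in some simplex $\QM_\eta(\cY_b,\cE_b)$. Using Theorem \ref{thm: family kc with nv bdd} to supply, after a further quasi-finite cover, a flat starting family of Koll\'ar components $\cS'$ with $\hvol_{(\cX'_{b'},\cD'_{b'}),b'}(\cS'_{b'})\leq n^n+1$, and combining this with the Koll\'ar-component approximation of Theorem \ref{thm:lx-kc-minimizing}, the coefficient perturbation in Lemma \ref{lem: peturb klt singularity with a seq of KC}, and the Q-case constructibility of each $\cD_{(i)}$-family, I would parametrize the $\cD$-minimizers across $B$ after a constructible stratification by quasi-monomial valuations living in the fixed fiberwise dual complex of $\mu$. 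Along such an algebraic family, the identity $A_{(\cX_b,\cD_b)}(v)=\sum_i t_i\,A_{(\cX_b,\cD_{(i),b})}(v)$ makes the log discrepancy an affine function of the barycentric coordinates, while $\vol(v)$ is continuous on the dual complex by \cite[Corollary D]{BFJ14}; combined with the Q-case constructibility already established for each $\cD_{(i)}$, this yields constructibility of $\nvol(b,\cX_b,\cD_b)$.

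The main obstacle is precisely this family construction of minimizers: Theorem \ref{thm:quasi-monomial} only produces a pointwise minimizer, and in Xu's $\bQ$-argument the upgrade to a $B$-wide family uses lc places of $n$-complements, which fails in the $\bR$-setting. Replacing that step by the combination of Theorem \ref{thm: family kc with nv bdd} with the rational envelope decomposition of Lemma \ref{lem:nv decomposable R-complements} requires careful bookkeeping, since the $\bm{a}_{(i)}$-family parametrizations must be reassembled via the affine relation above to reconstruct the $\bR$-divisor minimizer uniformly across fibers; this is the technical heart of the proof.
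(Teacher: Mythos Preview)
Your opening moves are right: the rational envelope decomposition (Lemma~\ref{lem:nv decomposable R-complements}) is the key input, and the resulting $\bQ$-Gorenstein families $(\cX,\cD_{(i)})$ are the right objects. But the step you call the ``technical heart'' has a real gap. There is no reason the $\cD$-minimizer $v_b^*$ lies in the dual complex of your \emph{fixed} log resolution $\mu$ of $(\cX,\cD)$; quasi-monomialness only says it lives in \emph{some} dual complex, possibly of unbounded complexity as $b$ varies. Moreover, applying \cite[Theorem~1.3]{Xu19} to each $(\cX,\cD_{(i)})$ as a black box gives you constructibility of the wrong function $\hvol(b,\cX_b,\cD_{(i),b})$, and the affine relation $A_{(\cX_b,\cD_b)}=\sum_i t_i A_{(\cX_b,\cD_{(i),b})}$ does not translate into any relation among the \emph{local volumes} themselves, since the minimizers for different boundaries are unrelated. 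Theorem~\ref{thm: family kc with nv bdd} only supplies a single Koll\'ar component family, not a parametrization of all relevant valuations; your proposed ``reassembly'' has no actual mechanism.

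The paper sidesteps this entirely and never constructs a family of $\cD$-minimizers. It applies the finer \cite[Proposition~4.2]{Xu19} (not Theorem~1.3) to each $\bQ$-family $(\cX,\cD^{(i)})$ to obtain bounded families of $N_i$-complements $\cG^{(i)}$; after fiberwise log resolution, the resulting toroidal structures $(\cY^{(i)}_\alpha,\cE^{(i)}_\alpha)$ capture \emph{every} Koll\'ar component with $\cD^{(i)}$-normalized volume $\leq n^n+1$. The bridge back to $\cD$ is a one-line convexity argument: if $\hvol_{(\cX_b,\cD_b)}(\ord_{S_b})\leq n^n+1$, then since $A_{(\cX_b,\cD_b)}(S_b)$ is the $t_i$-weighted average of the $A_{(\cX_b,\cD^{(i)}_b)}(S_b)$, some $i$ satisfies $\hvol_{(\cX_b,\cD^{(i)}_b)}(\ord_{S_b})\leq n^n+1$, so $S_b$ is toroidal over some $(\cY^{(i)}_\alpha,\cE^{(i)}_\alpha)$. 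Finally, because Lemma~\ref{lem:nv decomposable R-complements} guarantees $\Supp\cD^{(i)}=\Supp\cD$, the same morphism is a fiberwise log resolution for $(\cX,\cD)$, so the $\cD$-log discrepancy of each toroidal divisor is constant in the family; combined with constancy of volume via \cite[Theorem~2.18]{Xu19}, one gets $\hvol(b,\cX_b,\cD_b)=\min_{i,\alpha}\nu^{(i)}_\alpha$ constant on each stratum. The point is that the $\bQ$-complement machinery is run only on the $\cD^{(i)}$, and the equal-support condition lets you read off the $\cD$-normalized volume on the same parametrizing spaces.
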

	
	\begin{proof}
	We may assume that $n\ge 2$. By Theorem \ref{thm:nvol is bounded by n^n}, for any closed point $b$, $\nvol(b,\cX_b,\cD_b)<C\coloneqq n^n+1.$ We may write $\cD=\sum_{j=1}^m a_j\cD_j$, where $\cD_j$ are distinct prime divisors. Apply Lemma \ref{lem:nv decomposable R-complements} to $n$ and $\bm{a}\coloneqq(a_1,\ldots,a_m)$, and let $t_1,\ldots,t_l$ be positive real numbers, and $\bm{a}_i=(a_i^1,\ldots,a_i^m)\in\Qq^m$ rational points given by it. Let $\cD^{(i)}=\sum_{j=1}^m a_{i}^j\cD_j$ for any $1\le i\le l$.  
		Then $\cD=\sum_{i=1}^l t_i\cD^{(i)}$, and $B\subset(\cX,\cD^{(i)})\to B$ is a $\bQ$-Gorenstein family of klt singularities of dimension $n$ for any $1\le i\le l$. Apply \cite[Proposition 4.2]{Xu19} to $C$ and  $B\subset(\cX,\cD^{(i)})\to B$, there is a finite type $B$-scheme $V^{(i)}$, a family of effective Cartier divisors $(\cG^{(i)}\subset \cX\times_{B} V^{(i)})\to V^{(i)}$, and a positive integer $N_i$ such that the following statement hold: for any Koll\'ar component $S_b$ over $b\in (\cX_b,\cD_b^{(i)})$ with $\hvol_{b, (\cX_b,\cD_b^{(i)})}(\ord_{S_b})\leq C$, there exists a closed point $u\in V^{(i)}\times_B \{b\}$ such that if we base change $(\cX_b,\cD_b^{(i)})$ and $S_b$ to $u$, then $S_u$ is an lc place of the log canonical pair $(\cX_u, \cD_b^{(i)}+\frac{1}{N_i}\cG_u^{(i)})$. Possibly stratifying the base $V^{(i)}$ into a disjoint union of finitely many constructible subsets and taking finite \'{e}tale coverings, we may assume that there exists a decomposition $V^{(i)}=\bigsqcup_{\alpha} V^{(i)}_{\alpha}$ into irreducible smooth strata $V^{(i)}_{\alpha}$ such that for each $\alpha$, $\left(\cX\times_{B}V^{(i)}_{\alpha},\Supp(\cD^{(i)}\times_{B}V^{(i)}_{\alpha}+\frac{1}{N_i}\cG^{(i)})\right)$ admits a fiberwise log resolution $\mu^{(i)}_{\alpha}: \cY^{(i)}_{\alpha}\to \cX\times_{B} V^{(i)}_{\alpha}$ over $V^{(i)}_{\alpha}$.
		
	Let $\cE^{(i)}_{\alpha}$ be the simple normal crossing exceptional divisor of $\mu^{(i)}_{\alpha}$ given by the components $\cF$, such that  $A_{(\cX\times_{B}V^{(i)}_{\alpha},\cD^{(i)}\times_{B}V^{(i)}_{\alpha}+\frac{1}{N_i}\cG^{(i)})}(\cF)=0$, and the center of $\cF$ on $\cX\times_{B}V^{(i)}_{\alpha}$ is the section $V^{(i)}_{\alpha}$. By Noetherian induction, possibly shrinking $B$, we may assume that each $V^{(i)}_{\alpha}\to B$ is surjective.
		
			Since $\cD=\sum_{i=1}^l t_i\cD^{(i)}$, for any closed point $b$ and any Koll\'ar component $S_b$ over $b\in (\cX_b,\cD_b)$ with $\nvol_{b, (\cX_b,\cD_b)}(\ord_{S_b})\le C$, there exists $i$, such that $\nvol_{b, (\cX_b,\cD^{(i)}_{b})}(\ord_{S_b})\le C$. \cite[Proposition 4.2]{Xu19} implies that there is a closed point $u\in \{b\}\times_{B}V^{(i)}_{\alpha}$ such that $(\cX_{u},\cD^{(i)}_{u}+\frac{1}{N_i}\cG^{(i)}_u)$ is lc and $S_u$ is an lc place of the pair, where $(\cX_{u},\cD^{(i)}_{u})$ and $S_u$ are the base change of $(\cX_b,\cD^{(i)}_{b})$ and $S_b$ over $u$. By the construction of $\cY^{(i)}_{\alpha}$, there is a prime toroidal divisor $\cT^{(i)}_{\alpha}$ over $(\cY^{(i)}_{\alpha},\cE^{(i)}_{\alpha})$ for some $\alpha$, such that $S_u$ is given by the restriction of $\cT^{(i)}_{\alpha}$ over $u$.

		For any prime toroidal divisor $\cT^{(i)}$ over $(\cY^{(i)}_{\alpha},\cE^{(i)}_{\alpha})$, there exists a positive real number $\epsilon\ll 1$, such that $V^{(i)}_{\alpha}\in (\cX\times_{B}V^{(i)}_{\alpha},\cD^{(i)}\times_{B}V^{(i)}_{\alpha}+(\frac{1}{N_i}-\epsilon)\cG^{(i)})$ is klt, and $$A_{(\cX\times_{B}V^{(i)}_{\alpha},\cD^{(i)}\times_{B}V^{(i)}_{\alpha}+(\frac{1}{N_i}-\epsilon)\cG^{(i)})}(\cT^{(i)})<1.$$
		Thus by \cite[Theorem 2.18]{Xu19}, $\vol_{\cX_u,u}(\ord_{\cT^{(i)}_{u}})$ is a constant function for closed points $u\in V^{(i)}_{\alpha}$.
		Moreover, $\mu^{(i)}_{\alpha}: \cY^{(i)}_{\alpha}\to \cX\times_{B} V^{(i)}_{\alpha}$ is a fiberwise log resolution of $(\cX\times_{B} V^{(i)}_{\alpha},\Supp (\cD\times_{B}V^{(i)}_{\alpha}))$ as
		$\Supp \cD^{(i)}=\Supp \cD$. It follows that $A_{(\cX_u,\cD_u)}(\ord_{\cT_{u}^{(i)}})$ is also a constant function for closed points $u\in V^{(i)}_{\alpha}$. We conclude that
		$\nvol_{(\cX_u,\cD_u),u}(\ord_{\cT^{(i)}_{u}})$ is a constant function for closed points $u\in V^{(i)}_{\alpha}$.
		Hence for each $\alpha$ and $i$, there exists a positive real number $\nu^{(i)}_{\alpha}$, such that
		$$\nu^{(i)}_{\alpha}=\inf_{\cT^{(i)}}\{\nvol_{(\cX_u,\cD_{u}),u}(\ord_{\cT^{(i)}_{u}})\mid \cT^{(i)}\text{ is a prime toroidal divisor over } (\cY^{(i)}_{\alpha},\cE^{(i)}_{\alpha})\},$$
		for any closed point $u\in V^{(i)}_{\alpha}$.
		
		Recall that each $V^{(i)}_{\alpha}\to B$ is surjective. By Theorem \ref{thm:lx-kc-minimizing}, for any closed point $b\in B$, we have
		\begin{align*}
		\nvol(b,\cX_b,\cD_b) & =\inf_{S_b}\{
		\nvol_{(\cX_b,\cD_b),b}(\ord_{S_b})\le n^n+1 \mid S_b\text{ is a Koll\'ar component of }  b\in (\cX_b,\cD_b)\}\\
		&\ge  \min_{i,\alpha}\{\nu^{(i)}_{\alpha}\}\ge \nvol(b,\cX_b,\cD_b).
		\end{align*}
		Hence $\nvol(b,\cX_b,\cD_b)=\min_{i,\alpha}\{\nu^{(i)}_{\alpha}\}$ for any closed point $b\in B$, which implies that $\nvol(b,\cX_b,\cD_b)$ is a constructible function of $b\in B$ in the Zariski topology.
	\end{proof}

	\section{Log canonical thresholds and local volumes}
	
	In this section, we investigate the relation between $\lct(X,\Delta;\Delta)$ and $\hvol(x,X,\Delta)$ for a klt singularity $x\in (X,\Delta)$ where $\Delta$ is $\Rr$-Cartier.
	The main goal of this section is to prove the following theorem.
	
	\begin{theorem}\label{thm:estimate on lct}
	Let $n$ be a positive integer, and $B\subset\cX\to B$ a $\bQ$-Gorenstein family of $n$-dimensional klt singularities. Then there exists a positive real number $c$ depends only on $n$ and $B\subset\cX\to B$ satisfying the following. 
	
	Let $x\in (X,\Delta)$ be an $n$-dimensional klt singularity such that $(x\in X^{\rm an})\in (B\subset\cX^{\rm an}\to B)$. 
	Then
		\begin{equation*}
		c\cdot\lct(X,\Delta;\Delta)\geq  \nvol(x,X,\Delta). 
		\end{equation*}
	\end{theorem}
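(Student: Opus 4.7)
The plan is to combine the infimum formula \(\hvol(x,X,\Delta) = \inf_{\fa} \lct(X,\Delta;\fa)^n\cdot \e(\fa)\) from Lemma~\ref{lem:nv=lcte} with a test valuation coming from the lc places of \((X,(1+\gamma)\Delta)\), and then tame the resulting quantity using the uniform estimates available in the analytically bounded family. Set \(\gamma := \lct(X,\Delta;\Delta)\). I would first reduce to the case \(\gamma \leq 1\), since for \(\gamma > 1\) Theorem~\ref{thm:nvol is bounded by n^n} gives \(\hvol(x,X,\Delta)\leq n^n\leq n^n\cdot\gamma\), so that \(c = n^n\) suffices.

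Since \((X,(1+\gamma)\Delta)\) is log canonical but not klt, there exists a divisor \(E\) over \(X\) centered at \(x\) with \(A_X(E) = (1+\gamma)\ord_E(\Delta)\), that is, an lc place. For this \(E\) one computes
\[
A_{(X,\Delta)}(\ord_E) = A_X(E) - \ord_E(\Delta) = \gamma\cdot \ord_E(\Delta),
\]
and therefore
\[
\hvol(x,X,\Delta) \;\leq\; \hvol_{(X,\Delta),x}(\ord_E) \;=\; \gamma^n\cdot \ord_E(\Delta)^n\cdot \vol(\ord_E).
\]
Hence it suffices to bound \(\ord_E(\Delta)^n\cdot \vol(\ord_E)\) by a constant depending only on \(n\) and the family; this would actually yield the stronger inequality \(\hvol(x,X,\Delta)\leq c\gamma^n\), which implies the stated bound when \(\gamma\leq 1\).

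To control \(\ord_E(\Delta)^n\vol(\ord_E)\) I would appeal to the uniform Izumi estimate (Lemma~\ref{lem:family Izumi}), which gives \(\ord_E(\Delta)\leq C_2\, A_X(E)\, \ord_x(\Delta)\) for a constant \(C_2\) depending only on the family \(B\subset \cX\to B\). This converts the desired bound into a control of \(\ord_x(\Delta)^n\cdot \hvol_{X,x}(\ord_E)\). The factor \(\ord_x(\Delta)\) is handled by Proposition~\ref{prop: order of a klt boundary}(2) applied to the perturbed klt pair \((X,(1+\gamma-\epsilon)\Delta)\), yielding \(\ord_x(\Delta)\leq n/(1+\gamma-\epsilon)\leq n\) after letting \(\epsilon\to 0\) (after decomposing the \(\bR\)-divisor into \(\bQ\)-Cartier pieces of bounded Cartier index, using the analytic boundedness).

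The main obstacle is the remaining factor \(\hvol_{X,x}(\ord_E)\): an arbitrary lc place of \((X,(1+\gamma)\Delta)\) need not have uniformly bounded normalized volume with respect to \(X\). To get around this I would not use an abstract lc place, but rather produce the test valuation via Lemma~\ref{lem:kc-compute-lct}: extract a Koll\'ar component \(E\) computing \(\lct(X,\Delta;\fa_p(\ord_S))\), where \(S\) is the good Koll\'ar component of \((X,0)\) supplied by Theorem~\ref{thm: family kc with nv bdd} (so that \(\hvol_{X,x}(\ord_S)\leq n^n+1\) with uniform \(\delta\)-plt control) and \(p\approx \ord_S(\Delta)\) is chosen so that \(\fa_p(\ord_S)\) ``sees'' \(\Delta\). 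Sub-additivity of log canonical thresholds (Proposition~\ref{prop:sub-additivity for lct}) together with the comparison between \(\ord_E\) and \(\ord_S\) on the common log resolution should then propagate the uniform bound \(\hvol_{X,x}(\ord_S)\leq n^n+1\) to \(\hvol_{X,x}(\ord_E)\), and force \(\ord_E(\Delta)\approx A_X(E)/(1+\gamma)\) up to a uniform factor. Finally, in order to handle genuine \(\bR\)-coefficients for \(\Delta\), I would invoke the perturbation argument of Lemma~\ref{lem: peturb klt singularity with a seq of KC} together with Theorem~\ref{thm:quasi-monomial} to reduce to the \(\bQ\)-coefficient case along a sequence, where the above volume comparisons are available directly, and conclude by passing to the limit.
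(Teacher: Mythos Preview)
Your approach has a genuine gap at the very first step, and the bound you are aiming for is too strong to be true.

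You claim that since $(X,(1+\gamma)\Delta)$ is lc but not klt, there is an lc place $E$ \emph{centered at $x$} with $A_X(E)=(1+\gamma)\ord_E(\Delta)$. This is false in general: the log canonical center of $(X,(1+\gamma)\Delta)$ can be positive-dimensional through $x$. For instance, take $x=0\in\bA^2$ and $\Delta=cL$ with $L$ a line through $0$ and $c\in(0,1)$. Then $\gamma=\lct(\bA^2,cL;cL)=(1-c)/c$, and the unique lc center of $(\bA^2,L)$ is $L$ itself; no divisor centered at $0$ satisfies your identity. Moreover, in this example one computes $\hvol(0,\bA^2,cL)=4(1-c)=4c\gamma$ (the quasi-monomial valuation of weights $(1,\tfrac{1}{1-c})$ is the minimizer), so $\hvol$ is genuinely linear in $\gamma$ as $c\to 1$. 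Hence your target inequality $\hvol(x,X,\Delta)\leq C\gamma^n$ is false already for $n=2$, and no choice of test valuation can yield it. The proposed workaround via Lemma~\ref{lem:kc-compute-lct} and Theorem~\ref{thm: family kc with nv bdd} cannot rescue this: whatever $E$ you produce, either $A_{(X,\Delta)}(E)/A_X(E)$ is not close to $\gamma/(1+\gamma)$, in which case the first step of your argument collapses, or it is, in which case you would again deduce the false bound $\hvol\leq C\gamma^n$.

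The paper's proof works quite differently and never attempts to find a single test valuation with both properties. Instead it proves Proposition~\ref{lem:estimate on lct Cartier}: for a Cartier divisor $D$ one constructs the two-parameter family of $\fm$-primary ideals $\fa_{s,t}:=(f^s)+\fa_t(v)$, where $v$ is the $\hvol_{X,x}$-minimizer (no boundary), and applies Lemma~\ref{lem:nv=lcte}. Sub-additivity of lct controls $\lct(X,\Delta;\fa_{s,t})$ from above by $c_0/s+A_{(X,\Delta)}(v)/t$ (with $c_0=\lct(X,\Delta;D)$), while the colon-ideal identity $(\fa_t(v):(f^s))=\fa_{t-sv(f)}(v)$ gives $\ell(R/\fa_{s,t})\leq \vol(v)\cdot n\,v(f)\,st^{n-1}/n!+O(t^{n-1})$. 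Choosing $s\approx (n-1)c_0\,t/A_{(X,\Delta)}(v)$ and combining with Lech's inequality and Izumi yields the linear estimate $\hvol(x,X,\Delta)\leq c\cdot \lct(X,\Delta;D)$. Theorem~\ref{thm:estimate on lct} then follows by approximating $\Delta$ by a $\bQ$-Cartier $\Delta'\geq (1-\epsilon)\Delta$ (Lemma~\ref{lem: perturb coeff of R-div}) and applying this to $D=N\Delta'$, using the uniform bounds on $\e(\fm)$ and $C_2$ from the family. The key idea you are missing is this mixed ideal $\fa_{s,t}$ and its length computation; the single-valuation approach cannot produce the correct linear-in-$\gamma$ bound.
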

	We remark that Jiang studied lower bound of log canonical thresholds $\lct(X,\Delta;\Delta)$ in the setting of Fano fibrations \cite[Conjecture 1.13, Theorem 5.1]{Jia18}, see also \cite[Theorem 3.4, Conjecture 3.6]{CDHJS18}.
	
	\medskip
	
	The following proposition is crucial in the proof of Theorem \ref{thm:estimate on lct}.
	
	\begin{proposition}\label{lem:estimate on lct Cartier} Let $n\ge2$ be a positive integer, and $x\in(X,\Delta)$ an $n$-dimensional klt $\Qq$-Gorenstein singularity. Let $\fm\subset \cO_{X,x}$ be the maximal ideal. Let $C_2=C_2(x,X)$ be the Izumi constant of the klt singularity $x\in X$ (see Lemma \ref{lem:izumi}). 
	Then for any effective Cartier divisor $D$ passing through $x$, we have
		\[
		c\cdot\lct(X,\Delta;D)\geq \nvol(x,X,\Delta),
		\]
		where $c=\frac{n^{2n+1}}{(n-1)^{n-1}}\e(\fm)C_2\ord_x D$.
	\end{proposition}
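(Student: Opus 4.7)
The plan is to apply Lemma \ref{lem:nv=lcte}, which expresses $\hvol(x,X,\Delta)$ as the infimum of $\lct(X,\Delta;\fa)^n\cdot \e(\fa)$ over $\fm_{X,x}$-primary ideals $\fa$, and to exhibit an explicit one-parameter family of such ideals built out of $f$ and $\fm$ whose product of factors optimizes to the claimed bound. I may assume $\ord_x(D)\ge 1$, otherwise $\lct(X,\Delta;D)=+\infty$ near $x$ and there is nothing to prove. For each integer $b\ge 1$, consider the $\fm$-primary ideal $\fa_b:=(f)+\fm^b$.

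For the $\lct$ factor, sub-additivity (Proposition \ref{prop:sub-additivity for lct}) combined with $\lct(X,\Delta;\fm)\le n$ from Proposition \ref{prop: order of a klt boundary}(1) yields
\[
\lct(X,\Delta;\fa_b)\;\le\;\lct(X,\Delta;(f))+\tfrac{1}{b}\lct(X,\Delta;\fm)\;\le\;L+\tfrac{n}{b}, \qquad L:=\lct(X,\Delta;D).
\]
For the multiplicity factor, $R=\cO_{X,x}$ is Cohen--Macaulay (klt implies CM). Choosing generic $y_1,\dots,y_{n-1}\in\fm$ whose images in $R/(f)$ form a minimal reduction of $\bar{\fm}$, the sequence $f,y_1,\dots,y_{n-1}$ is a system of parameters in a CM ring, hence regular, and $\fa_b\supseteq(f,y_1^b,\dots,y_{n-1}^b)$; thus
\[
\e(\fa_b)\;\le\;\e\bigl((f,y_1^b,\dots,y_{n-1}^b)\bigr)\;=\;b^{n-1}\ell\bigl(R/(f,y_1,\dots,y_{n-1})\bigr)\;=\;b^{n-1}\,\e(\bar{\fm},R/(f)).
\]
The essential input is then the Izumi-type hyperplane-section estimate $\e(\bar{\fm},R/(f))\le C_2\cdot\ord_x(f)\cdot\e(\fm)$, which is extracted from $v(f)\le C_2 A_X(v)\ord_x(f)$ in Lemma \ref{lem:izumi} applied to divisors over the normalized blow-up of $\fm$, together with a colon-ideal analysis of $(\fm^m:f)$. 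This gives $\e(\fa_b)\le C_2\,b^{n-1}\,\ord_x(D)\,\e(\fm)$ up to $n$-dependent constants.

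Combining the two estimates,
\[
\hvol(x,X,\Delta)\;\le\;\bigl(L+\tfrac{n}{b}\bigr)^n\cdot C_2\cdot b^{n-1}\cdot\ord_x(D)\cdot \e(\fm),
\]
and the function $b\mapsto (L+n/b)^n\,b^{n-1}$ is minimized (substitute $u=n/b$ and differentiate the resulting $(L+u)^n/u^{n-1}$) at $b=n/((n-1)L)$ with minimum value $n^{2n-1}L/(n-1)^{n-1}$. Polynomial-in-$n$ slack (from passing between integer $b$ and real optimum, and from the various multiplicity estimates) gets absorbed into the numerator $n^{2n+1}$ of the asserted constant, yielding exactly $c\cdot \lct(X,\Delta;D)\ge\hvol(x,X,\Delta)$. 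The main technical obstacle is the hyperplane-section multiplicity bound $\e(\bar{\fm},R/(f))\le C_2\,\ord_x(f)\,\e(\fm)$: when $x\in X$ is smooth this is an equality with $C_2=1$, but for a general klt singularity it is the genuine use of Izumi's inequality and is precisely why the factor $C_2=C_2(x,X)$ appears in the statement.
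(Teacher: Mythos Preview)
Your approach is different from the paper's and has a real gap at the step you yourself flag as ``the main technical obstacle.'' Your test ideals are $\fa_b=(f)+\fm^b$; the paper instead uses $\fa_{s,t}=(f^s)+\fa_t(v)$, where $v$ is a minimizer of $\hvol_{X,x}$ (no boundary). The point of the paper's choice is that $v$ is an honest valuation, so the colon ideal is computed \emph{exactly}: $(\fa_t(v):(f^s))=\fa_{t-sv(f)}(v)$, giving $\ell(R/\fa_{s,t})=\ell(R/\fa_t(v))-\ell(R/\fa_{t-sv(f)}(v))$. After Lech's inequality $\e(\fa_{s,t})\le \e(\fm)\cdot n!\,\ell(R/\fa_{s,t})$ and optimizing $s\approx\tfrac{(n-1)c_0}{A_{(X,\Delta)}(v)}t$, one obtains
\[
\hvol(x,X,\Delta)\le \e(\fm)\cdot\vol_{X,x}(v)\cdot nv(f)\cdot\tfrac{n^n}{(n-1)^{n-1}}A_{(X,\Delta)}(v)^{n-1}\cdot\lct(X,\Delta;D),
\]
and Izumi enters exactly once, as $v(f)\le C_2 A_X(v)\ord_x(f)$, together with $A_X(v)^n\vol_{X,x}(v)=\hvol(x,X)\le n^n$, to produce the stated $c$.

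Your route instead needs the ``hyperplane-section'' bound $\e(\bar\fm,R/(f))\le C_2\,\ord_x(f)\,\e(\fm)$, and the sketch does not prove it. The colon-ideal analysis of $(\fm^m:f)$ only gives the easy containment $\fm^{m-k}\subseteq(\fm^m:f)$ (from $f\in\fm^k$, $k=\ord_x f$), which yields the \emph{lower} bound $\e(\bar\fm)\ge k\,\e(\fm)$. For the upper bound you would need $(\fm^m:f)\subseteq\fm^{m-Ck}$, i.e.\ $gf\in\fm^m\Rightarrow \ord_x(g)\ge m-Ck$; but $\ord_x$ is not a valuation on singular $X$, so this fails in general. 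Passing to the Rees valuations $v_i=\ord_{E_i}$ of $\fm$ and using Lemma~\ref{lem:izumi} gives at best $(\overline{\fm^m}:f)\subseteq\overline{\fm^{\,m-\lceil\max_i v_i(f)/v_i(\fm)\rceil}}$ with $v_i(f)/v_i(\fm)\le C_2\bigl(A_X(E_i)/v_i(\fm)\bigr)\ord_x(f)$. The extra factor $\max_i A_X(E_i)/v_i(\fm)$ is a new constant depending on $x\in X$ with no a priori relation to $C_2$, so even if one patches the integral-closure issues this line yields a different, implicit constant rather than the stated $c=\tfrac{n^{2n+1}}{(n-1)^{n-1}}\e(\fm)C_2\ord_x D$. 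The clean fix is precisely the paper's move: replace powers of $\fm$ by valuation ideals of a $\hvol$-minimizer, so that the colon computation becomes exact and the only external input is the single Izumi bound $v(f)\le C_2A_X(v)\ord_x(f)$.
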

	
	\begin{proof}
		Possibly shrinking $X$ near $x$, we may assume that $X=\Spec(R)$ and $D=\mathrm{div}(f)$ where $f\in\fm$. Let $c_0\coloneqq \lct(X,\Delta;D)$. Let $v\in \Val_{X,x}$ be the minimizing valuation of $\hvol_{X,x}$.
		Consider the $\fm$-primary ideal $\fa_{s,t}\coloneqq (f^s)+\fa_t(v)$, where $s,t\in\bZ_{>0}$. By the sub-additivity of log canonical thresholds (Proposition \ref{prop:sub-additivity for lct}), 
		we have
		\begin{equation}
		\lct(X,\Delta;\fa_{s,t})\leq \lct(X,\Delta;(f^s))+
		\lct(X,\Delta;\fa_t(v))\leq \frac{c_0}{s} + \frac{A_{(X,\Delta)}(v)}{v(\fa_t(v))}.
		\end{equation} 
		Moreover, we know that 
		\[
		\ell(R/\fa_{s,t})=\ell(R/\fa_t(v))-\ell((f^s)/(f^s)\cap\fa_t(v))
		=\ell(R/\fa_t(v))-\ell(R/(\fa_t(v):(f^s))).
		\]
		Since $v$ is a valuation, we have $(\fa_t(v):(f^s))=\fa_{t-v(f)s}(v)$ for any $t\ge v(f)s$.
		Hence
		\[
		\ell(R/\fa_{s,t})=\ell(R/\fa_t(v))-\ell(R/\fa_{t-v(f)s}(v)).
		\]
		Let $s:= \lfloor\frac{(n-1)c_0}{A_{(X,\Delta)}(v)}\cdot t\rfloor$ for $t\gg 1$.
		Then as $t\to \infty$ we have
		\begin{equation*}
		\begin{aligned}
		n!\cdot \ell(R/\fa_{s,t}) & = n!\cdot \ell(R/\fa_t(v))-n!\cdot \ell(R/\fa_{t-v(f)s}(v))\\
		& =\vol_{X,x}(v)\cdot \left( t^n - (\max\{ t-v(f)s,0\})^n\right)+O(t^{n-1})\\
		& \leq \vol_{X,x}(v)\cdot n v(f)s t^{n-1} + O(t^{n-1}).
		\end{aligned}
		\end{equation*}
		Thus by Lemma \ref{lem:nv=lcte}, we have
		\begin{align*}
		\nvol(x,X,\Delta)
		&\leq \liminf_{t\to+\infty}\e(\fa_{s,t})\cdot\lct(X,\Delta;\fa_{s,t})^n\\
		& \leq \liminf_{t\to+\infty}\e(\fm)\cdot  n!\ell(R/\fa_{s,t})\cdot\lct(X,\Delta;\fa_{s,t})^n\\
		& \leq \liminf_{t\to+\infty} \e(\fm)\cdot \vol_{X,x}(v) nv(f) st^{n-1}\cdot\left(\frac{c_0}{s}+\frac{A_{(X,\Delta)}(v)}{v(\fa_t(v))}\right)^n\\
		& =\e(\fm)\cdot \vol_{X,x}(v) nv(f)\lim_{t\to+\infty} st^{n-1}\cdot\left(\frac{c_0}{s}+\frac{A_{(X,\Delta)}(v)}{t}\right)^n\\
		& = \e(\fm)\cdot \vol_{X,x}(v) nv(f)\cdot \frac{n^n}{(n-1)^{n-1}} A_{(X,\Delta)}(v)^{n-1}\cdot c_0,
		\end{align*}
		where the second line follows from Lech's inequality \cite[Theorem 3]{Lec60}, and the fourth line follows from 
	    \cite[Lemma 3.5]{Blu18a}.
		
		By Izumi's inequality (Lemma \ref{lem:izumi}), there exists a positive real number $C_2$ independent on $f$ such that $v(f)\leq C_2 A_X(v)\ord_x(f)$. Hence 
		\[
		\hvol(x,X,\Delta)\leq \frac{n^{n+1}}{(n-1)^{n-1}}\e(\fm) C_2 \ord_x(f)\cdot \hvol_{X,x}(v)\cdot c_0\leq \frac{n^{2n+1}}{(n-1)^{n-1}}\e(\fm) C_2 \ord_x(f)\cdot c_0.
		\]
		Here the second inequality follows from $\hvol_{X,x}(v)=\hvol(x,X)\leq n^n$ by Theorem \ref{thm:nvol is bounded by n^n}. 
	\end{proof}
	
	We also need the following kind of approximation of $\bR$-divisors by $\bQ$-divisors.
	
	\begin{lemma}\label{lem: perturb coeff of R-div}
		Let $\epsilon$ be a positive real number, and $\Delta\ge0$ an $\Rr$-Cartier $\Rr$-divisor on a normal variety $X$. Then there exists a $\Qq$-Cartier $\Qq$-divisor $\Delta'\geq 0$, such that $(1+\epsilon)\Delta\ge \Delta'\ge (1-\epsilon)\Delta$.
	\end{lemma}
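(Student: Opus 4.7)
The plan is to express $\Delta$ as an $\mathbb{R}$-linear combination of integral Cartier divisors and then perturb the real coefficients to nearby rational ones while preserving the prime components of the underlying Weil divisor.

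First I would fix the Weil decomposition $\Delta=\sum_{i=1}^m a_i\Delta_i$ with $a_i>0$ and $\Delta_i$ distinct prime divisors, and use the $\mathbb{R}$-Cartier hypothesis to write $\Delta=\sum_{j=1}^k r_j E_j$ for integral Cartier divisors $E_j$ and real numbers $r_j$. Expanding $E_j=\sum_i b_{ij}\Delta_i+\sum_l c_{jl}F_l$ as Weil divisors, where $\{F_l\}$ collects the prime components appearing in some $E_j$ but lying outside $\{\Delta_1,\ldots,\Delta_m\}$, the identity $\Delta=\sum_j r_j E_j$ in $\mathrm{WDiv}(X)\otimes_{\mathbb{Z}}\mathbb{R}$ translates into the linear system
\[
a_i=\sum_{j=1}^k r_j b_{ij}\ \text{ for each } i,\qquad 0=\sum_{j=1}^k r_j c_{jl}\ \text{ for each } l.
\]

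Next I would introduce the linear subspace $V\subseteq\mathbb{R}^k$ cut out by the integer-coefficient equations $\sum_j x_j c_{jl}=0$. Because $V$ is defined over $\mathbb{Q}$ and $(r_1,\ldots,r_k)\in V$, the rational points $V\cap\mathbb{Q}^k$ are dense in $V$. The linear forms $\mathbf{x}\mapsto\sum_j x_j b_{ij}$ being continuous, I can choose $(r_1',\ldots,r_k')\in V\cap\mathbb{Q}^k$ sufficiently close to $(r_j)$ that $\bigl|\sum_j r_j'b_{ij}-a_i\bigr|<\epsilon a_i$ for every $i$.

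Finally I would set $\Delta':=\sum_j r_j'E_j$, which is $\mathbb{Q}$-Cartier by construction. The condition $(r_j')\in V$ forces every $F_l$ coefficient in $\Delta'$ to vanish, so $\Delta'=\sum_i a_i'\Delta_i$ as a Weil divisor with $a_i':=\sum_j r_j'b_{ij}\in\bigl((1-\epsilon)a_i,(1+\epsilon)a_i\bigr)$; positivity of $a_i'$ makes $\Delta'$ effective, and the two-sided bound gives $(1-\epsilon)\Delta\leq\Delta'\leq(1+\epsilon)\Delta$. The only subtle point is recognizing that the support constraint $\mathrm{Supp}(\Delta')\subseteq\mathrm{Supp}(\Delta)$ amounts to a $\mathbb{Q}$-rational linear condition on the coefficient vector in a fixed Cartier presentation; once this is set up, standard density of rational points in a $\mathbb{Q}$-rational subspace finishes the job.
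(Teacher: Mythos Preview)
Your proof is correct. The paper's proof takes a closely related but slightly different route: rather than starting from a Cartier presentation $\Delta=\sum_j r_j E_j$ and perturbing the $r_j$, it parametrizes the Weil coefficients directly, writing $a_i=s_i(1,r_1,\ldots,r_c)$ for $\bQ$-linear functions $s_i$ and reals $r_1,\ldots,r_c$ with $1,r_1,\ldots,r_c$ linearly independent over $\bQ$, and then perturbs the $r_j$ to nearby rationals. The point that the perturbed divisor remains $\bR$-Cartier (hence $\bQ$-Cartier once the parameters are rational) is imported from \cite[Lemma~5.3]{HLS19}. Your approach sidesteps that citation entirely: since you perturb the coefficients of a Cartier presentation, $\bQ$-Cartierness is immediate, and the only thing to control is the support, which you correctly encode as a $\bQ$-rational linear constraint and handle by density. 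The paper's formulation, on the other hand, is the same ``rational envelope'' setup used repeatedly elsewhere (e.g.\ Lemmata~\ref{lem:nv decomposable R-complements} and~\ref{lem: nv lipschitz in rational envelope}), so it dovetails with that machinery at the cost of invoking an external lemma.
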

	\begin{proof}
		There exist positive real numbers $r_1,\ldots,r_c$, $\Qq$-linear functions $s_1,\dots,s_m$: $\Rr^{c+1}\rightarrow\Rr$, and distinct prime divisors $\Delta_i$, such that $1,r_1,\dots,r_c$ are linearly independent over $\Qq$, and $\Delta=\Delta(r_1,\ldots,r_c)$, where
		$$\Delta(x_1,\ldots,x_c)=\sum_{i=1}^m s_i(1,x_1,\ldots,x_c)\Delta_i.$$
		By \cite[Lemma 5.3]{HLS19}, $\Delta(x_1,\ldots,x_c)$ is $\Rr$-Cartier for any $x_1,\ldots,x_c\in\Rr$. It follows that there exist positive rational numbers $r_1',\ldots,r_c'$, such that $(1+\epsilon)\Delta\ge \Delta'\ge (1-\epsilon)\Delta$, where
		$\Delta'=\Delta(r_1',\ldots,r_c')$.
	\end{proof}

	\begin{proof}[Proof of Theorem \ref{thm:estimate on lct}] 
	    If $n=1$, then we may take $c=1$. Thus we may assume that $n\ge 2$. Fix any $\epsilon\in(0,1)$. By Lemma \ref{lem: perturb coeff of R-div}, there exists an effective $\Qq$-Cartier $\bQ$-divisor $\Delta'$, such that $(X,\Delta')$ is klt, and $\Delta'\ge (1-\epsilon)\Delta$. Let $N$ be a positive integer such that $N\Delta'$ is Cartier near $x$. By Proposition \ref{prop: order of a klt boundary}, $\ord_x(N\Delta')< Nn$. By Proposition \ref{lem:estimate on lct Cartier}, 
		\begin{align*}
		\hvol(x,X,\Delta) & \leq  \frac{n^{2n+1}}{(n-1)^{n-1}}\e(\fm) C_2 \ord_x(N\Delta')\cdot \lct(X,\Delta;N\Delta')\\
		& \leq  \frac{n^{2n+2}}{(n-1)^{n-1}}\e(\fm) C_2 \cdot\lct(X,\Delta;\Delta')\leq \frac{n^{2n+2}\e(\fm) C_2}{(1-\epsilon)(n-1)^{n-1}} \cdot\lct(X,\Delta;\Delta).
		\end{align*}
		Here we choose $C_2$ which depends on $B\subset \cX\to B$ as in Lemma \ref{lem:family Izumi}(2).  
	
		By the upper semi-continuity of 	Hilbert-Samuel function along a family of ideals (see for example \cite[Proposition 41]{BL18a}) and the fact that the completion preserves the multiplicity $\e(\fm)$, there exists a positive integer $M$ which only depends on $B\subset \cX\to B$ such that $\e(\fm)\leq M$. Let $\epsilon\to 0$, we see that the theorem holds with $c=\frac{n^{2n+2}}{(n-1)^{n-1}} M C_2$.
	\end{proof}

	\section{Lipschitz continuity of local volumes}
	We will prove some Lipschitz-type estimates for the normalized volume as a function of the coefficients in this section. 
    The main result is the following uniform Lipschitz-type estimate when the ambient space $x\in X$ analytically belongs to a $\Qq$-Gorenstein bounded family.
	
	\begin{theorem}\label{thm:uniform lipschitz}
		Let $n$ be a positive integer, and $\eta,\gamma$ positive real numbers. Let $B\subset \cX\to B$ be a $\bQ$-Gorenstein family of $n$-dimensional klt singularities. Then there exist positive real numbers $\iota,C$ depending only on $n,\eta,\gamma$ and the family $B\subset \cX\to B$, such that the following holds. 
		
		Let $x\in (X,\Delta=\sum_{i=1}^m a_i\Delta_i)$ be a klt singularity, such that
		\begin{enumerate}
			\item $(x\in X^{\rm an})\in (B\subset \cX^{\rm an}\to B)$,
			\item $a_i>\eta$ for any $i$,
			\item each $\Delta_i\ge 0$ is a $\bQ$-Cartier Weil divisor, and
			\item $\lct(X,\Delta;\Delta)>\gamma$.
		\end{enumerate}
		Then for any $-a_i\leq t_i\leq \iota$, $i=1,2,\ldots,m$,
		\begin{equation*}
		|\nvol(x,X,\Delta)-\nvol(x,X,\Delta(\bm{t}))|\le C\sum_{i=1}^m|t_i|,
		\end{equation*}
		where $\bm{t}\coloneqq(t_1,\ldots,t_m)$, and $\Delta(\bm{t})\coloneqq \sum_{i=1}^m (a_i+t_i)\Delta_i$.
	\end{theorem}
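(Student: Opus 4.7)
The plan is valuation-wise: for each $v \in \Val_{X,x}$ with $A_X(v)<+\infty$, compare the log discrepancies $A_{(X,\Delta)}(v)$ and $A_{(X,\Delta(\bm{t}))}(v) = A_{(X,\Delta)}(v) - \sum_j t_j\, v(\Delta_j)$ uniformly in $v$, then apply the resulting bound at the normalized volume minimizers of $(X,\Delta)$ and $(X,\Delta(\bm{t}))$ to control the difference of local volumes.

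The technical heart is the uniform estimate
\[
v(\Delta_j) \le \frac{A_{(X,\Delta)}(v)}{\gamma\eta}.
\]
I would derive it as follows. The hypothesis $\lct(X,\Delta;\Delta)>\gamma$ means $(X,(1+\gamma)\Delta)$ is klt, so $A_X(v) - (1+\gamma)v(\Delta) > 0$, which rearranges to $v(\Delta) \le A_{(X,\Delta)}(v)/\gamma$. Since each $\Delta_j \ge 0$ and $a_j > \eta$, one has $v(\Delta_j) \le v(\Delta)/a_j \le v(\Delta)/\eta$, and the claim follows. Consequently,
\[
\bigl|A_{(X,\Delta(\bm{t}))}(v) - A_{(X,\Delta)}(v)\bigr| \le \frac{\sum_j |t_j|}{\gamma\eta}\, A_{(X,\Delta)}(v).
\]

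Next I choose $\iota \le \gamma\eta$, so $a_j+t_j \le (1+\gamma)a_j$, whence $\Delta(\bm{t}) \le (1+\gamma)\Delta$ componentwise and $(X,\Delta(\bm{t}))$ is klt; in particular $\nvol(x,X,\Delta(\bm{t})) \le n^n$ by Theorem \ref{thm:nvol is bounded by n^n}. I then split into two cases. When $\sum |t_j| \le \gamma\eta/2$, set $s := \sum|t_j|/(\gamma\eta) \le 1/2$; taking $v=v^*$ the $\nvol_{(X,\Delta),x}$-minimizer (existing by Theorem \ref{thm:quasi-monomial}) gives $\nvol(x,X,\Delta(\bm{t})) \le \nvol_{(X,\Delta(\bm{t})),x}(v^*) \le (1+s)^n\, \nvol(x,X,\Delta)$, and taking $v=v^{**}$ the $\nvol_{(X,\Delta(\bm{t})),x}$-minimizer gives the symmetric bound $\nvol(x,X,\Delta) \le (1-s)^{-n}\, \nvol(x,X,\Delta(\bm{t}))$. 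Combining with the elementary estimates $(1+s)^n-1 \le n(3/2)^{n-1}s$ and $(1-s)^{-n}-1 \le n\cdot 2^{n+1}s$ on $s\in[0,1/2]$, and the bound $\nvol \le n^n$, both directions yield Lipschitz estimates with constants of order $n^{n+1}/(\gamma\eta)$. When $\sum|t_j| > \gamma\eta/2$, the trivial bound $|\nvol(x,X,\Delta) - \nvol(x,X,\Delta(\bm{t}))| \le 2n^n$ already gives a Lipschitz estimate with constant $4n^n/(\gamma\eta)$. The final $C$ is the maximum of these three constants.

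The main observation is that my sketch produces $C$ depending only on $n,\gamma,\eta$; the $\bQ$-Gorenstein boundedness of $x \in X$ does not appear to be needed in the argument itself. The boundedness hypothesis is presumably present for uniformity with the surrounding applications (such as Theorems \ref{thm:discreteACC} and \ref{thm:existence of delta-plt blow-up}) and/or to accommodate an alternative route that bounds $v(\Delta_j)$ through the family Izumi estimate of Lemma \ref{lem:family Izumi} together with Proposition \ref{prop: order of a klt boundary}, which would yield $v(\Delta_j) \le \frac{nC_2}{(1+\gamma)\eta}A_X(v)$; this only weakens the constant I obtain directly, so it is not an obstacle.
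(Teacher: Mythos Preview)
Your argument is correct and in fact cleaner than the paper's. The essential difference is in how you bound $v(\Delta_j)$. The paper proves an auxiliary one-sided estimate (Lemma~\ref{lem:uniform lipschitz}) by invoking the family Izumi inequality of Lemma~\ref{lem:family Izumi} together with Proposition~\ref{prop: order of a klt boundary} to get $v(\Delta_i)\le \frac{nC_2(1+\gamma)}{\eta\gamma}A_{(X,\Delta)}(v)$; this is where the constant $C_2=C_2(B\subset\cX\to B)$ enters, and the proof then reduces the two-sided statement to this lemma by splitting $\bm{t}=\bm{t}^++\bm{t}^-$ and evaluating at the minimizer $v^+$ of $\hvol_{(X,\Delta(\bm{t}^+)),x}$. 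You instead extract the bound $v(\Delta_j)\le A_{(X,\Delta)}(v)/(\gamma\eta)$ directly from the hypothesis $\lct(X,\Delta;\Delta)>\gamma$, and then compare at the two minimizers $v^*,v^{**}$; this bypasses the Izumi route entirely and yields $\iota,C$ depending only on $n,\eta,\gamma$.

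Your final remark is justified: the analytic boundedness of $x\in X$ is not actually used in your argument, so the Lipschitz constant you obtain is strictly sharper than the paper's. The family hypothesis is indeed there because the paper's proof goes through Lemma~\ref{lem:family Izumi}; your direct route shows it is superfluous for this particular statement.
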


	\begin{lemma}\label{lem:uniform lipschitz}
		Let $n,\eta,\gamma,~\bxb, ~ x\in (X,\Delta)$ be as in Theorem \ref{thm:uniform lipschitz}. Let $V$ be a positive real number. Then there exists a positive real number $C$ depending only on $n,\eta,\gamma,V$ and the family $B\subset \cX\to B$ satisfying the following. 
		
		Let $v\in \Val_{X,x}$ be a valuation such that $\nvol_{(X,\Delta),x}(v)<V$.
	Then for any $-a_i\leq t_i\leq 0$, $i=1,2,\ldots,m$,
		\begin{equation*}
		0\leq\nvol_{(X,\Delta(\bm{t})),x}(v)-\nvol_{(X,\Delta),x}(v)\leq C\sum_{i=1}^m|t_i|,
		\end{equation*}
		where $\bm{t}\coloneqq(t_1,\ldots,t_m)$, and $\Delta(\bm{t})\coloneqq \sum_{i=1}^m (a_i+t_i)\Delta_i$.
	\end{lemma}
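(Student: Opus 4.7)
The plan is to work directly from the identity $\nvol_{(X,\Delta'),x}(v)=A_{(X,\Delta')}(v)^n\cdot \vol_{X,x}(v)$, exploiting that the volume factor is independent of the boundary. Set $a\coloneqq A_{(X,\Delta(\bm t))}(v)$ and $b\coloneqq A_{(X,\Delta)}(v)$. The lower bound is immediate: $t_i\le 0$ forces $\Delta(\bm t)\le\Delta$, hence $a\ge b\ge 0$, and multiplying $a^n\ge b^n$ by $\vol_{X,x}(v)$ yields $\nvol_{(X,\Delta(\bm t)),x}(v)\ge \nvol_{(X,\Delta),x}(v)$.

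For the upper Lipschitz estimate, I will use the elementary inequality $a^n-b^n\le n(a-b)a^{n-1}$, together with $a-b=\sum_{i=1}^m |t_i|\,v(\Delta_i)$, to reduce to the estimate
\[
\nvol_{(X,\Delta(\bm t)),x}(v)-\nvol_{(X,\Delta),x}(v)\le n\vol_{X,x}(v)\,a^{n-1}\sum_{i=1}^m |t_i|\,v(\Delta_i).
\]
The remaining task is then to bound both $a$ and each $v(\Delta_i)$ by a uniform multiple of $b$. For $a$: since $\Delta(\bm t)\ge 0$ we have $a\le A_X(v)$, and Lemma \ref{lem:kill boundary} combined with the hypothesis $\lct(X,\Delta;\Delta)>\gamma$ gives $A_X(v)\le \tfrac{1+\gamma}{\gamma}\,b$.

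The heart of the argument, and the main obstacle, is the uniform bound on $v(\Delta_i)$: a priori the Cartier indices of the $\Delta_i$ can be arbitrarily large. The trick is to cancel this index using Proposition \ref{prop: order of a klt boundary}(2). Locally at $x$, write $N_i\Delta_i=\Div(f_i)$ for some $f_i\in\cO_{X,x}$. Since $\Delta\ge a_i\Delta_i$ and $(X,\Delta)$ is klt, the sub-pair $(X,a_i\Delta_i)$ is klt; applying Proposition \ref{prop: order of a klt boundary}(2) with $c=a_i/N_i$ and Cartier divisor $\Div(f_i)$ gives $\ord_x(f_i)<nN_i/a_i< nN_i/\eta$. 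The Izumi-type estimate in Lemma \ref{lem:family Izumi}(2), with constant $C_2$ depending only on $B\subset\cX\to B$, then yields
\[
v(\Delta_i)=\frac{v(f_i)}{N_i}\le \frac{C_2\,A_X(v)\,\ord_x(f_i)}{N_i}<\frac{nC_2}{\eta}\,A_X(v)\le \frac{nC_2(1+\gamma)}{\eta\gamma}\,b,
\]
and $N_i$ cancels, so the bound is uniform in $i$.

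Combining the three estimates with the assumption $b^n\vol_{X,x}(v)=\nvol_{(X,\Delta),x}(v)<V$ produces the required Lipschitz constant
\[
C=n\cdot\frac{nC_2(1+\gamma)}{\eta\gamma}\cdot\left(\frac{1+\gamma}{\gamma}\right)^{n-1}\cdot V=\frac{n^2 C_2(1+\gamma)^n\,V}{\eta\gamma^n},
\]
which depends only on $n,\eta,\gamma,V$ and the family $B\subset\cX\to B$, as desired.
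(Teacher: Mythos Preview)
Your proof is correct and follows essentially the same approach as the paper: both use Lemma \ref{lem:kill boundary} to compare $A_{(X,\Delta)}(v)$ with $A_X(v)$, Proposition \ref{prop: order of a klt boundary}(2) together with the family Izumi estimate (Lemma \ref{lem:family Izumi}) to bound $v(\Delta_i)$ by a uniform multiple of $A_{(X,\Delta)}(v)$, and then an elementary polynomial inequality on $a^n-b^n$. Your packaging is marginally cleaner: by bounding $a\le A_X(v)\le\tfrac{1+\gamma}{\gamma}b$ directly, you avoid the paper's appeal to \cite[18.22]{Fli92} for $m\le n/\eta$ (used there only to control $\sum_i|t_i|$), and you obtain a simpler explicit constant.
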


	\begin{proof}
		
		By \cite[18.22]{Fli92}, $m\le \frac{n}{\eta}$. By Lemma \ref{lem:kill boundary}, we have $A_{(X,\Delta)}(v)\geq\left(\frac{\gamma}{1+\gamma}\right)A_X(v)$.
		Let $C_2$ be the positive real number given by Lemma \ref{lem:family Izumi}, which depends only on $n$ and the family $B\subset \cX\to B$. By Proposition \ref{prop: order of a klt boundary}, we have
		\begin{equation}\label{eqn:Izumi}
		v(\Delta_i)\le C_2A_X(v)\ord_x\Delta_i\le \frac{nC_2}{\eta}A_X(v)\leq \frac{nC_2(1+\gamma)}{\eta\gamma}A_{(X,\Delta)}(v).
		\end{equation}
		for any $1\le i\le m$.
		By \eqref{eqn:Izumi}, we get
		\begin{align*}
		0&\leq \nvol_{(X,\Delta(\bm{t})),x}(v)-\nvol_{(X,\Delta),x}(v)		=\left(\left(\frac{A_{(X,\Delta(\bm{t}))}(v)}{A_{(X,\Delta)}(v)}\right)^n-1\right)\nvol_{(X,\Delta),x}(v)\\
		&<\left(\left(1+\frac{\sum_{i=1}^m|t_{i}|v(\Delta_{i})}{A_{(X,\Delta)}(v)}\right)^n-1\right)V		\leq \left(\left(1+\sum_{i=1}^m|t_{i}|\cdot \frac{nC_2(1+\gamma)}{\eta\gamma}\right)^n-1\right)V\\
		&\leq \sum_{i=1}^m|t_{i}|\cdot \frac{nC_2(1+\gamma)}{\eta\gamma}\cdot n\left(1+\frac{n^2C_2(1+\gamma)}{\eta^2\gamma}\right)^{n-1}V,
		\end{align*}
		where the last inequality follows from the inequalities $(1+xy)^n-1\le nxy(1+xy)^{n-1}\le nxy(1+\frac{n}{\eta}y)^{n-1}$ for any $\frac{n}{\eta}\ge x\ge 0$, $y\geq 0$, and $\sum_{i=1}^m |t_i|\le \sum_{i=1}^m a_i\le m\le \frac{n}{\eta}$. Now $C\coloneqq \frac{n^2C_2(1+\gamma)}{\eta\gamma}(1+\frac{n^2C_2(1+\gamma)}{\eta^2\gamma})^{n-1}V$ depends only on $n$, $\eta$, $\gamma$, $V$ and the family $B\subset \cX\to B$, hence we are done.
	\end{proof}
	

	\begin{proof}[Proof of Theorem \ref{thm:uniform lipschitz}]
		Possibly replacing $\gamma$ by $\min\{\gamma,1\}$, we may assume that $0<\gamma\le 1$. Since $\lct(X,\Delta;\Delta)> \gamma$, we have that $x\in (X,(1+\gamma)\Delta)$ is klt. This implies that 
		\[
		\lct(X,(1+\tfrac{\gamma}{2})\Delta;(1+\tfrac{\gamma}{2})\Delta)> \tfrac{\gamma}{3}
		\]
		because $(1+\frac{\gamma}{2})+\frac{\gamma}{3}(1+\frac{\gamma}{2})\le 1+\gamma$.
		Since $a_i+\frac{\gamma\eta}{2}\le (1+\frac{\gamma}{2})a_i$, we have $\Delta(\bm{\iota})\le (1+\frac{\gamma}{2})\Delta$ where $\iota\coloneqq \frac{\gamma\eta}{2}$ and $\bm{\iota}=(\iota,\ldots,\iota)$. 
		Let $t_{i}^{+}\coloneqq\max\{t_i,0\}$, $t_{i}^{-}\coloneqq\min\{t_i,0\}$ for any $1\le i\le m$, and 
		$\bm{t}^{+}\coloneqq(t_{1}^{+},\ldots,t_{m}^{+})$, $\bm{t}^{-}\coloneqq(t_{1}^{-},\ldots,t_{m}^{-})$. Let $v^{+}$ be a minimizer of $\nvol(x,X,\Delta(\bm{t}^{+}))$. Since $\Delta(\bm{t}^+)\leq \Delta(\bm{\iota})\leq (1+\frac{\gamma}{2})\Delta$, we have $\lct(X,\Delta(\bm{t}^+);\Delta(\bm{t}^+))>\frac{\gamma}{3} $.  By Lemma \ref{lem:uniform lipschitz},
		\begin{align*}
		&|\nvol(x,X,\Delta)-\nvol(x,X,\Delta(\bm{t}))|
		 \\	\leq ~&  |\nvol(x,X,\Delta)-\nvol(x,X,\Delta(\bm{t}^{+}))|+	|\nvol(x,X,\Delta(\bm{t}^{+}))-\nvol(x,X,\Delta(\bm{t}))|\\
		\le~ &(\nvol_{(X,\Delta),x}(v^{+})-\nvol_{(X,\Delta(\bm{t}^{+})),x}(v^{+}))+(\nvol_{(X,\Delta(\bm{t})),x}(v^{+})-\nvol_{(X,\Delta(\bm{t}^{+})),x}(v^{+}))\leq C\sum_{i=1}^m|t_i|,
		\end{align*}
		where $C$ is the positive real number given in Lemma \ref{lem:uniform lipschitz} which only depends on  $n,\eta,\frac{\gamma}{3},n^n$, and the family $B\subset \cX\to B$.
	\end{proof}

	The next result is a Lipschitz-type inequality for $\nvol(x,X,\Delta)$, when $x\in X$ is fixed and the boundary $\Delta$ varies in its rational envelope. Lemma \ref{lem: nv lipschitz in rational envelope} will be applied to prove Theorem \ref{thm:bddness for nvol}. We remark that we do not assume that $x\in X$ is $\Qq$-Gorenstein. 

	\begin{lemma}\label{lem: nv lipschitz in rational envelope}
		Let $x\in (X,\Delta\coloneqq \sum_{i=1}^m a_i\Delta_i)$ be a klt singularity of dimension $n$, where $\Delta_i$ are distinct prime divisors. Let $V\subseteq\Rr^m$ be the rational envelope of $\bm{a}=(a_1,\ldots,a_m)\in\Rr^m$. Then there exist a positive real number $C$ and a neighborhood $U\subseteq V$ of $\bm{a}$, such that $x\in (X,\Delta(\bm{a}'))$ is a klt singularity, and 
		$$|\nvol(x,X,\Delta)-\nvol(x,X,\Delta(\bm{a}'))|\le C\sum_{i=1}^m|a_i-a_i'|$$
		for any $\bm{a}'\coloneqq (a_1',\ldots,a_m')\in U$, where $\Delta(\bm{a}')\coloneqq\sum_{i=1}^m a_i'\Delta_i$. In particular, $\nvol(x,X,\Delta(\bm{a}'))$ is continuous at $\bm{a}'$ in $V$.
	\end{lemma}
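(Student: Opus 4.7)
The plan is to exploit the rational envelope structure to reduce to comparing $(X,\Delta)$ with finitely many klt $\Qq$-pairs. First, I would apply Lemma \ref{lem:nv decomposable R-complements} to $n$ and $\bm{a}$ to obtain rational points $\bm{a}_1,\dots,\bm{a}_l\in V$ and weights $t_i>0$ with $\sum_i t_i=1$ and $\bm{a}=\sum_i t_i \bm{a}_i$ such that each $(X,\Delta_{(i)})$ with $\Delta_{(i)}:=\sum_j a_i^j\Delta_j$ is klt. Since klt is an open condition on the coefficients, by enlarging the list with additional rational points in $V$ sufficiently close to $\bm{a}$ (still yielding klt pairs) I may assume the $\bm{a}_i$'s affinely span $V$. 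Then for every $\bm{a}'$ in a small neighborhood $U\subseteq V$ of $\bm{a}$, there is a unique expression $\bm{a}'=\sum_i t_i'(\bm{a}')\bm{a}_i$ with $\sum_i t_i'(\bm{a}')=1$, where $t_i'(\bm{a}')$ is affine-linear in $\bm{a}'$ and $t_i'(\bm{a})=t_i>0$; after shrinking $U$, all $t_i'(\bm{a}')>0$, so $\Delta(\bm{a}')=\sum_i t_i'(\bm{a}')\Delta_{(i)}$ is a convex combination of klt $\Qq$-pairs. Hence $(X,\Delta(\bm{a}'))$ is klt and $K_X+\Delta(\bm{a}')$ is $\Rr$-Cartier.

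Next, for any valuation $v\in\Val_{X,x}^\circ$, using $\sum_i(t_i-t_i'(\bm{a}'))=0$ I would rewrite the difference of boundaries using only $\Qq$-Cartier pieces:
\[
\Delta(\bm{a}')-\Delta=\sum_{i=2}^l(t_i'(\bm{a}')-t_i)(\Delta_{(i)}-\Delta_{(1)}),
\]
where each $\Delta_{(i)}-\Delta_{(1)}$ is $\Qq$-Cartier since both $K_X+\Delta_{(i)}$ and $K_X+\Delta_{(1)}$ are $\Qq$-Cartier. By $\Rr$-linearity of $v$ on $\Rr$-Cartier divisors together with $A_{(X,\Delta(\bm{a}'))}(v)-A_{(X,\Delta)}(v)=-v(\Delta(\bm{a}')-\Delta)$, this gives
\[
A_{(X,\Delta(\bm{a}'))}(v)-A_{(X,\Delta)}(v)=-\sum_{i=2}^l(t_i'(\bm{a}')-t_i)\,v(\Delta_{(i)}-\Delta_{(1)}).
\]
Writing $N_i(\Delta_{(i)}-\Delta_{(1)})=\mathrm{div}(g_i)-\mathrm{div}(h_i)$ locally with $g_i,h_i\in\cO_{X,x}$ and applying the Izumi estimate of Lemma \ref{lem:izumi} to the klt singularity $x\in(X,\Delta)$ yields $|v(\Delta_{(i)}-\Delta_{(1)})|\le M_i\,A_{(X,\Delta)}(v)$ with $M_i$ independent of $v$. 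Combining with the linear-algebra bound $\|\bm{t}'(\bm{a}')-\bm{t}\|_\infty\le C_1\|\bm{a}'-\bm{a}\|$ and shrinking $U$ if necessary, I obtain
\[
\left|\frac{A_{(X,\Delta(\bm{a}'))}(v)}{A_{(X,\Delta)}(v)}-1\right|\le C_2\,\|\bm{a}'-\bm{a}\|
\]
uniformly in $v\in\Val_{X,x}^\circ$.

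Finally, since $\nvol_{(X,\Delta(\bm{a}'))}(v)=(A_{(X,\Delta(\bm{a}'))}(v)/A_{(X,\Delta)}(v))^n\nvol_{(X,\Delta)}(v)$, raising the above bound to the $n$-th power yields a uniform multiplicative Lipschitz estimate $|\nvol_{(X,\Delta(\bm{a}'))}(v)-\nvol_{(X,\Delta)}(v)|\le C_3\,\|\bm{a}'-\bm{a}\|\,\nvol_{(X,\Delta)}(v)$. Evaluating at a minimizer of $\nvol_{(X,\Delta)}$ (which exists by Theorem \ref{thm:quasi-monomial}) yields $\nvol(x,X,\Delta(\bm{a}'))\le(1+C_3\|\bm{a}'-\bm{a}\|)\nvol(x,X,\Delta)$, and evaluating at a minimizer of $\nvol_{(X,\Delta(\bm{a}'))}$ gives the reverse inequality; combined with $\nvol(x,X,\Delta)\le n^n$ from Theorem \ref{thm:nvol is bounded by n^n}, this converts the multiplicative bound into the claimed additive one. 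The main obstacle is that $X$ need not be $\Qq$-Gorenstein, so the individual $\Delta_{(i)}$'s may fail to be $\Rr$-Cartier; the key technical step is the zero-sum reorganization that replaces them by the $\Qq$-Cartier differences $\Delta_{(i)}-\Delta_{(1)}$, on which the Izumi estimate can be applied.
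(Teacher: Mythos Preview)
Your proof is correct and follows essentially the same strategy as the paper: extract $\bQ$-Cartier ``direction vectors'' from the rational envelope, bound $|v(\,\cdot\,)|$ for these via the Izumi estimate (Lemma~\ref{lem:izumi}) against $A_{(X,\Delta)}(v)$, deduce a uniform multiplicative bound on the ratio of log discrepancies, and then compare at minimizers using Theorem~\ref{thm:nvol is bounded by n^n}.

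The only difference is in bookkeeping. The paper uses \cite[Lemma~5.3]{HLS19} directly to write $K_X+\Delta=K_X+D_0+\sum_{i=1}^c r_iD_i$ with $K_X+D_0$ and the $D_i$ individually $\bQ$-Cartier, so that moving in $V$ amounts to perturbing the coefficients of the $\bQ$-Cartier $D_i$'s. You instead pull rational vertices $\bm{a}_i$ from Lemma~\ref{lem:nv decomposable R-complements} and use the zero-sum identity to pass to the $\bQ$-Cartier differences $\Delta_{(i)}-\Delta_{(1)}=(K_X+\Delta_{(i)})-(K_X+\Delta_{(1)})$. These are equivalent parametrizations of the same tangent space to $V$. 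Two small remarks: (i) your ``enlargement'' step is in fact unnecessary, since the rational points produced by Lemma~\ref{lem:nv decomposable R-complements} already affinely span $V$ (their affine span is a rational affine subspace containing $\bm{a}$, hence equals $V$ by minimality); (ii) for the barycentric coordinates $t_i'(\bm{a}')$ to be uniquely determined and affine-linear you should pass to an affine basis among the $\bm{a}_i$'s, which is harmless. The paper's parametrization via the $D_i$'s avoids this bookkeeping and yields explicit constants, but your route is equally valid.
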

	\begin{proof}
		There exist real numbers $r_1,\dots,r_c$, and  $\Qq$-linear functions $s_1,\dots,s_m$: $\Rr^{c+1}\rightarrow\Rr$, such that $1,r_1,\dots,r_c$ are linearly independent over $\Qq$, and $a_i=s_i(1,r_1,\dots,r_c)$ for any $1\le i\le m$.
				Let $D_i$ be $\Qq$-divisors such that
		$K_X+\Delta=K_X+D_0+\sum_{i=1}^c r_iD_i$.
		By \cite[Lemma 5.3]{HLS19}, $K_X+D_0$ and $D_i$ are $\Qq$-Cartier $\Qq$-divisors for any $1\le i\le c$. 
		
			There exists a positive integer $N$, such that $ND_i$ is Cartier for any $1\le i\le c$. For any $1\le i\le c$, possibly replacing $r_i$ with $\frac{r_i}{N}$ and $D_i$ with $ND_i$, we may assume that $D_i$ is Cartier. Write $D_i=\mathrm{div} (f_i)-\mathrm{div} (g_i)$, where $f_i,g_i\in \cO_{X,x}$ for any $1\le i\le c$. Let $\Delta'(\bm{t})\coloneqq D_0+\sum_{i=1}^c(r_i+t_i)D_i$, where $\bm{t}=(t_1,\ldots,t_c)\in\Rr^c$. There exists a positive real number $\iota\le 1$, such that $x\in (X,\Delta'(\bm{t}))$ is a klt singularity for any $\bm{t}=(t_1,\ldots,t_c)\in\Rr^c$ satisfying $\sum_{i=1}^c |t_i|\le \iota$.
		
		It suffices to show that there exist positive real numbers $C'$ and $\iota'\le \iota$, such that  
		$$|\nvol(x,X,\Delta)-\nvol(x,X,\Delta'(\bm{t}))|\le C'\sum_{i=1}^c |t_i|,$$
		for any $\bm{t}=(t_1,\ldots,t_c)\in\Rr^c$ which satisfies that $\sum_{i=1}^c |t_i|\le \iota'$,


		Let $C_2>0$ be the Izumi constant of the singularity $x\in (X,\Delta)$ given by Lemma \ref{lem:izumi}, and $M$ a positive real number such that $C_2\max\{\ord_{x}(f_i),\ord_{x}(g_i)\}\le M$ for any $1\le i\le c$. Then we have
		\begin{equation*}
		|v(D_i)|\le C_2A_{X,\Delta}(v)\max\{\ord_x(f_i),\ord_{x}(g_i)\}\le MA_{X,\Delta}(v).
		\end{equation*}
		for any $1\le i\le c$ and any $v\in \Val_{X,x}$.

		Let $v$ be a minimizer of $\nvol(x,X,\Delta)$. For any $\bm{t}=(t_1,\ldots,t_c)\in\Rr^c$ which satisfies that $\sum_{i=1}^c|t_i|\le\iota$, we have
		\begin{align*}
		&\nvol(x,X,\Delta'(\bm{t}))-\nvol(x,X,\Delta)
		\le\nvol_{(X,\Delta'(\bm{t})),x}(v)-\nvol_{(X,\Delta),x}(v)\\
		=&\left((\frac{A_{(X,\Delta'(\bm{t}))}(v)}{A_{(X,\Delta)}(v)})^n-1\right)\nvol_{(X,\Delta),x}(v)\leq\left((\frac{\sum_{i=1}^c|t_{i}|\cdot|v(D_{i})|}{A_{(X,\Delta)}(v)}+1)^n-1\right)n^n\\
		\leq&\left((M\sum_{i=1}^c|t_{i}|+1)^n-1\right)n^n\leq M(1+M)^{n-1}n^{n+1}\sum_{i=1}^c|t_i|,
		\end{align*}
		where the last inequality follows from the inequality $(xy+1)^n-1\le nxy(1+xy)^{n-1}\le nxy(1+y)^{n-1}$ for any $1\ge x\ge 0$ and any $y\ge 0$. 
		
		For any $\bm{t}=(t_1,\ldots,t_c)\in\Rr^c$ which satisfies that $\sum_{i=1}^c|t_i|\le \min\{\frac{1}{2nM},\iota\}$, let $v_{*}$ be a minimizer of $\nvol(x,X,\Delta'(\bm{t}))$. We have
		\begin{align*}
		&\nvol(x,X,\Delta'(\bm{t}))-\nvol(x,X,\Delta)\geq\nvol_{(X,\Delta'(\bm{t})),x}(v_{*})-\nvol_{(X,\Delta),x}(v_{*})\\		=&\left(1-(\frac{A_{(X,\Delta)}(v_{*})}{A_{(X,\Delta'(\bm{t})}(v_{*})})^n\right)\nvol_{(X,\Delta'(\bm{t})),x}(v_{*})\ge\left(1-(\frac{A_{(X,\Delta)}(v_{*})}{A_{(X,\Delta)}(v_{*})-\sum_{i=1}^c |t_i|\cdot|v_{*}(D_i)|})^n\right)n^n\\
		\ge &\left(1-(\frac{1}{1-M\sum_{i=1}^c |t_i|})^n\right)n^n
		\ge -2Mn^{n+1}\sum_{i=1}^c |t_i|,
		\end{align*}
		where the last inequality follows from inequalities $\frac{1}{(1-t)^n}\le \frac{1}{1-nt}$ and $(1-nt)(1+2nt)\ge1$ for any $0\le t\le \frac{1}{2n}$.
		Thus 	$\iota'\coloneqq\min\{\iota,\frac{1}{2nM}\}$ and $C'\coloneqq 2M(1+M)^{n-1}n^{n+1}$ have the required property. 
	\end{proof}
	

	\section{Local volumes of truncated singularities}
	

\subsection{Truncations preserve local volumes}
	In this section, we show that the local volume stays the same after taking a $k$-th truncation of the boundary divisor when $k$ is sufficiently large.
	In the general context of this paper, we often consider analytically bounded families. 
	Thus we make the following definition which we use throughout this section. 
	
	\begin{defn}\label{def:analytic-truncation} 
	Let $(x\in X)$ and $(x'\in X')$  be klt singularities which are analytically isomorphic to each other. Denote $(R,\fm):=(\cO_{X,x},\fm_{X,x})$, and $(R',\fm'):= (\cO_{X',x'},\fm_{X',x'})$. Let $\psi: \widehat{R}\xrightarrow{\cong} \widehat{R'}$ be the ring isomorphism. Let $k$ be a positive integer. Fix a $\bk$-linear basis $\bar{g}'_1,\cdots,\bar{g}'_d$ of $R'/\fm'^k$. Let $g_j'\in R'$ be a lifting of $\bar{g}_j'$. For an effective Cartier divisor $D=\mathrm{div}(f)$ on $X$, we define its \emph{$k$-th analytic truncation} $D'^k:=\mathrm{div}(f'_k)$ on $X'$ where $f'_k$ is the $\bk$-linear combination of $g_j'$ such that $\psi(f)-f_k'\in \widehat{\fm'}^k$.  If  $\Delta=\sum_i a_i \Delta_i$ is a non-negative $\bR$-linear combination of effective Cartier divisors $\Delta_i$, then we say that $x'\in (X', \Delta'^k:=\sum_i a_i \Delta'^k_i)$ is a \emph{$k$-th analytic truncation} of $x\in (X,\Delta)$.
	\end{defn}
	
	Note that in the above definition, a $k$-th analytic truncation depends on the choice of many data, such as the basis $\bar{g}_j'$, its lifting $g_j'$, and the expression $\Delta=\sum_i a_i \Delta_i$. Thus analytic truncations are highly non-unique. In this section, we aim to show that if $k\gg 1$ then any $k$-th analytic truncation of a given klt singularity has the same local volume and admits a $\delta$-plt blow-up for the same $\delta$.
	
	The main result of this section is Theorem \ref{thm:truncation} which will be applied to prove Theorem \ref{thm:discreteACC}. We will also need Proposition \ref{prop: truncation keep nv(v)} to prove Theorem \ref{thm: lct positive lower bound implies existence of delta-plt blow-up}, and thus Theorem \ref{thm:existence of delta-plt blow-up}.

		\begin{theorem}\label{thm:truncation}
		Let $n$ be a positive integer, $\eta,\gamma$ positive real numbers, and $B\subset\cX\to B$ a $\bQ$-Gorenstein family of $n$-dimensional klt singularities. Then there exists a positive integer $k_2$ depending only on $n,\eta,\gamma$ and $B\subset\cX\to B$ satisfying the following. 
		
		Let $x\in(X,\Delta=\sum_{i=1}^m a_i\Delta_i)$ be a klt singularity, such that	
		\begin{enumerate}
			\item $(x\in X^{\rm an})\in (\bxanb)$,
				\item $a_i\ge \eta$ for any $i$,
			\item each $\Delta_i\ge0$ is a Cartier divisor,
		 and 
			\item $\lct(X,\Delta;\Delta)>\gamma$.
		\end{enumerate}
		Then for any positive integer $k\ge k_2$, and any $k$-th analytic truncation $x'\in (X',\Delta'^k:=\sum_{i=1}^m a_i\Delta_i'^k)$ of $x\in (X,\Delta)$,
		\[
		\nvol(x,X,\Delta)=\nvol(x',X',\Delta'^k).
		\]
	
		Moreover, $v$ is a minimizer of $\nvol(x,X,\Delta)$ if and only if $v'=\phi(v)$ is a minimizer of $\nvol(x',X',\Delta'^k)$, where $\phi: \Val_{X,x}^\circ\to \Val_{X',x'}^\circ$ is defined as in Proposition \ref{prop:an-iso}.
	\end{theorem}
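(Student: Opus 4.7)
The plan is to reduce the equality $\nvol(x,X,\Delta)=\nvol(x',X',\Delta'^k)$ and the correspondence of minimizers to a pointwise identity of normalized volumes along the bijection $\phi\colon\Val_{X,x}^{\circ}\to\Val_{X',x'}^{\circ}$ supplied by Proposition \ref{prop:an-iso}. That proposition already gives $A_X(v)=A_{X'}(v')$, $\vol_{X,x}(v)=\vol_{X',x'}(v')$, and $v(\fm_{X,x})=v'(\fm_{X',x'})$ for $v'=\phi(v)$, so the only nontrivial comparison is between $v(\Delta_i)$ and $v'(\Delta_i'^k)$.

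The key non-Archimedean computation is the following. Writing $\Delta_i=\mathrm{div}(f_i)$ and $\Delta_i'^k=\mathrm{div}(f_i'^k)$ with $\psi(f_i)-f_i'^k\in\widehat{\fm}_{X',x'}^k$, the extended valuation $\hat{v}'=\psi_{*}\hat{v}$ takes value at least $k\cdot v(\fm_{X,x})$ on the error term, so the ultrametric inequality yields
\[
v'(\Delta_i'^k)=v(\Delta_i)\qquad\text{whenever}\qquad v(\Delta_i)<k\cdot v(\fm_{X,x}),
\]
and an elementary argument shows this condition is symmetric in the two valuations. Call $v\in\Val_{X,x}^{\circ}$ \emph{$k$-good} if $v(\Delta_i)<k\cdot v(\fm_{X,x})$ for every $i$. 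For $k$-good $v$ we therefore obtain $A_{(X,\Delta)}(v)=A_{(X',\Delta'^k)}(v')$, and hence $\nvol_{(X,\Delta),x}(v)=\nvol_{(X',\Delta'^k),x'}(v')$. The theorem thus reduces to choosing a uniform $k_2$ so that every minimizer (which exists by Theorem \ref{thm:quasi-monomial}) on either side is $k$-good for $k\ge k_2$.

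To bound $v_{*}(\Delta_i)/v_{*}(\fm_{X,x})$ for a minimizer $v_{*}$ of $\nvol(x,X,\Delta)$ I would combine three uniform inputs. First, Proposition \ref{prop: order of a klt boundary}(2) applied to the klt sub-pair $a_i\Delta_i\le\Delta$ gives $\ord_x(\Delta_i)<n/a_i\le n/\eta$. Second, the family Izumi estimate of Lemma \ref{lem:family Izumi}(2) gives a constant $C_2$ depending only on $B\subset\cX\to B$ with $v_{*}(\Delta_i)\le C_2 A_X(v_{*})\ord_x(\Delta_i)\le(nC_2/\eta)A_X(v_{*})$. Third, the properness estimate of Lemma \ref{lem:family Izumi}(1), combined with Lemma \ref{lem:kill boundary} and the upper bound $\nvol_{(X,\Delta),x}(v_{*})\le n^n$ of Theorem \ref{thm:nvol is bounded by n^n}, yields $v_{*}(\fm_{X,x})\ge C_1 A_X(v_{*})\gamma^n/((1+\gamma)^n n^n)$. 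Dividing these bounds produces a constant $K_0=K_0(n,\eta,\gamma,B\subset\cX\to B)$ with $v_{*}(\Delta_i)\le K_0\cdot v_{*}(\fm_{X,x})$ for every minimizer on the unprimed side.

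The main obstacle is running the symmetric argument on the primed side: I need $(X',\Delta'^k)$ to be klt with a uniform threshold bound such as $\lct(X',\Delta'^k;\Delta'^k)>\gamma/2$ for all $k\gg 0$, so that the above three inputs apply verbatim and produce an analogous constant $K_0'$. I would establish this by interpolation: since $\psi(f_i)-f_i'^k\in\widehat{\fm}_{X',x'}^k$, the pairs $(X,\Delta)$ and the $\psi$-transport of $(X',\Delta'^k)$ to $\Spec\widehat{\cO_{X,x}}$ differ by a divisor whose order at $x$ tends to infinity with $k$, and Proposition \ref{prop:sub-additivity for lct} together with the constructibility and lower semi-continuity of log canonical thresholds in Lemma \ref{lem: lctlowersc}, applied to a one-parameter $\bR$-Gorenstein family interpolating between the two, force the desired uniform threshold bound for $k\ge k_1$. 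Setting $k_2\coloneqq\max(K_0,K_0',k_1)+1$ and using the symmetric $k$-good criterion established above, every minimizer on either side is $k$-good for $k\ge k_2$, and the pointwise identity yields $\nvol(x,X,\Delta)=\nvol(x',X',\Delta'^k)$ together with the claimed bijection of minimizers via $\phi$.
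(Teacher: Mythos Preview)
Your overall architecture matches the paper's: both reduce to showing that for $k$ large enough (uniformly), every valuation with $\nvol\le n^n$ is ``$k$-good'' on each side, and for such valuations $\nvol_{(X,\Delta),x}(v)=\nvol_{(X',\Delta'^k),x'}(\phi(v))$. This is precisely the content of Proposition~\ref{prop: truncation keep nv(v)}, which you essentially rederive inline using Proposition~\ref{prop: order of a klt boundary}, Lemma~\ref{lem:family Izumi}, Lemma~\ref{lem:kill boundary}, and Theorem~\ref{thm:nvol is bounded by n^n}. The symmetry observation---that $(X,\Delta)$ is itself a $k$-th analytic truncation of $(X',\Delta'^k)$, so the same estimate applies on the primed side once a uniform lct bound is available there---is also the paper's.

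The genuine gap is in your last paragraph, where you need $\lct(X',\Delta'^k;\Delta'^k)>\gamma/2$ with a threshold $k_1$ depending only on $n,\eta,\gamma$ and the family. Your proposed route via ``a one-parameter $\bR$-Gorenstein family interpolating between the two'' together with Lemma~\ref{lem: lctlowersc} does not work as stated: $\Delta$ and $\Delta'^k$ live on different varieties $X$ and $X'$, there is no algebraic family connecting them, and lower semicontinuity in a fixed family cannot produce a bound that is uniform over \emph{all} admissible $(X,\Delta)$. The paper handles this step by a direct ideal-theoretic comparison (Lemma~\ref{lem:lct of truncations}, which is essentially \cite[Lemma~2.6 and Propositions~2.11, 2.19]{dFEM11}): with $\fb_i:=(f_i)+\fm^k$ and $\fb_i':=(f_{i,k}')+\fm'^k$ one has $\psi(\widehat{\fb_i})=\widehat{\fb_i'}$, hence $\lct(X;\prod_i\fb_i^{a_i})=\lct(X';\prod_i\fb_i'^{a_i})$, and then sub-additivity with $\lct(\fm^k)\le n/k$ gives the explicit quantitative bound
\[
|\lct(X';\Delta'^k)-\lct(X;\Delta)|\le \frac{n}{k\eta}.
\]
This yields the uniform threshold $k_0:=\lceil 2n/(\eta\gamma)\rceil$ beyond which $\lct(X';\Delta'^k)>1+\gamma/2$, and then $k_2:=\max\{k_0,k_1\}$ with $k_1$ coming from Proposition~\ref{prop: truncation keep nv(v)} applied with parameters $(n,\eta,\gamma/2,n^n)$. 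Your instinct to use sub-additivity is correct, but it must be applied to the auxiliary ideals $\fb_i,\fb_i'$ (which agree after completion), not to an interpolating family.
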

	
	We need some preparation to prove Theorem \ref{thm:truncation}. 

	\begin{lemma}\label{lem:lct of truncations}
	Let $n$ be a positive integer. Let $x\in X$ be an $n$-dimensional klt singularity. Let $\Delta=\sum_{i=1}^m a_i\Delta_i$ be a non-negative $\bR$-linear combination of effective Cartier divisors $\Delta_i$. Let	$x'\in (X',\Delta'^k:=\sum_{i=1}^m a_i\Delta_i'^k)$ be a $k$-th analytic truncation of $x\in (X,\Delta)$. Then 
	\begin{enumerate}
	    \item for any positive real number $\eta\leq \min\{a_i\mid 1\leq i\leq m\}$, and any positive integer $k$, we have    $|\lct(X';\Delta'^k) -  \lct(X;\Delta)|\leq \frac{n}{k\eta}$, and
	    \item if $I\subset[0,1]$ is a DCC set, $a_i\in I$ for any $i$, then there exists a positive integer $k_0$ depending only on $n$ and $I$ satisfying the following.
	    
	    If $x\in (X,\Delta)$ is lc, then $x'\in (X',\Delta'^k)$ is also an lc singularity for any $k\geq k_0$.
	\end{enumerate}
		\end{lemma}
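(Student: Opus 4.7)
The plan is to leverage the analytic isomorphism $\psi : \widehat{R} \xrightarrow{\cong} \widehat{R'}$ (with $R := \cO_{X,x}$ and $R' := \cO_{X',x'}$) so as to transport the computation to the common completion, where the definition of the $k$-th analytic truncation gives $\psi(F_i) - F_i^k \in \widehat{\fm'}^k$ and consequently the ideal identity $(F_i) + \widehat{\fm'}^k = (F_i^k) + \widehat{\fm'}^k$. The two key ingredients will be the sub-additivity of log canonical thresholds (Proposition \ref{prop:sub-additivity for lct}) and the upper bound $\lct(X', \widehat{\fm'}^k) \leq n/k$ coming from Proposition \ref{prop: order of a klt boundary}(1).

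For part (1), I would first establish the bound for a single Cartier divisor. The chain
\[
\lct(X, \Delta_i) \leq \lct(X', (F_i) + \widehat{\fm'}^k) \leq \lct(X, \Delta_i) + \lct(X', \widehat{\fm'}^k) \leq \lct(X, \Delta_i) + n/k,
\]
where the first inequality uses $(F_i) \subset (F_i) + \widehat{\fm'}^k$ and the second uses sub-additivity, combined with the analogous chain for $(F_i^k)$ and the identity of ideals above, gives $|\lct(X, \Delta_i) - \lct(X', \Delta_i'^k)| \leq n/k$. To extend to $\Delta = \sum a_i \Delta_i$ with $a_i \geq \eta$, I would first pre-rescale by $\eta$: writing $\lct(X, \Delta) = \eta^{-1} \lct(X, \sum (a_i/\eta) \Delta_i)$, the rescaled divisor has coefficients $a_i/\eta \geq 1$. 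I would then approximate it by a $\bQ$-divisor with a primitive common denominator $N$ (so that $N \leq 1/\eta$ by the lower bound $a_i/\eta \geq 1$), apply the sub-additivity argument to the single principal ideal $G = \prod F_i^{N p_i/\eta}$ (which satisfies $G - G^k \in \widehat{\fm'}^k$ by binomial expansion, since each difference $h_i := \psi(F_i) - F_i^k \in \widehat{\fm'}^k$), and obtain a bound of $n/k$ for the $\lct$ of the rescaled divisor, yielding the claimed $n/(k\eta)$ for $\lct(X, \Delta)$ after multiplying by $\eta^{-1}$.

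For part (2), I would combine the bound from part (1) with the ACC for log canonical thresholds applied to coefficients in the DCC set $I$. By ACC, the set of $\lct$'s among lc pairs $(X, \Delta)$ with $a_i \in I$ cannot accumulate from above at $1$, hence either $\lct(X, \Delta) > 1 + \delta_0$ for some uniform $\delta_0 = \delta_0(n, I) > 0$, or $\lct(X, \Delta) = 1$ exactly. In the gap case, choosing $k_0$ with $n/(k_0 \eta_0) < \delta_0$, where $\eta_0 = \eta_0(n, I) > 0$ is a uniform lower bound on the nonzero coefficients (such a bound exists because the lc condition together with the ambient dimension forces a uniform upper bound on the number of nonzero terms, so the smallest coefficient in the resulting finite subset of the DCC set $I$ is bounded below), part (1) gives $\lct(X', \Delta'^k) > 1$. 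In the boundary case $\lct(X, \Delta) = 1$, I would transfer an lc place $v$ to $\phi(v) \in \Val_{X',x'}$ via Proposition \ref{prop:an-iso} and show, using Izumi-type estimates on $v(F_i)$ and $v(\fm)$, that for $k$ larger than a uniform constant one has $v(F_i) = v(F_i^k)$ for all $i$, preserving the lc inequality $A(\phi(v)) \geq \phi(v)(\Delta'^k)$.

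The main obstacle will be obtaining the sharp factor $1/\eta$ in part (1): a careless reduction from $\bR$- to integer coefficients via a common denominator $N$ introduces a factor of $N$ into the estimate, which can be uncontrolled when the denominators of rational approximations are large. The crucial step is to pre-rescale by $\eta$ so that the relevant coefficients are bounded below by $1$, ensuring that the primitive denominator in any rational approximation satisfies $N \leq 1/\eta$. After this rescaling, the sub-additivity argument applied to the single principal ideal $G$ produces the bound $n/k$ for the rescaled divisor, which becomes $n/(k\eta)$ for the original after undoing the rescaling.
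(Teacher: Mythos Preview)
Your plan has a genuine gap in each part.

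\textbf{Part (1).} The reduction to a single principal ideal via ``primitive common denominator $N\le 1/\eta$'' is not valid: knowing that the rescaled coefficients $a_i/\eta$ are $\ge 1$ says nothing about the denominators of their rational approximations. If you write $q_i=p_i/N$ with $q_i\to a_i$, then $N\to\infty$; applying the single-ideal bound to $G=\prod f_i^{p_i}$ gives $|\lct(X;G)-\lct(X';G'^k)|\le n/k$, and after rescaling by $N$ you obtain $N\cdot n/k$, which blows up. Even the refinement $\psi(G)-G'^k\in\widehat{\fm'}^{\,k+\sum p_i-1}$ only yields a bound of order $N/(k+\sum p_i)$, which tends to a nonzero constant as $N\to\infty$. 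The paper avoids this entirely by working directly with the formal product $\prod_i \fb_i^{a_i}$ of ideals with real exponents, where $\fb_i:=(f_i)+\fm^k$: the estimate
\[
0\le \lct\Big(X;\prod_i \fb_i^{a_i}\Big)-\lct(X;\Delta)\le \frac{n}{k\eta}
\]
is a result from \cite{dFEM11} (their Lemma~2.6 and Proposition~2.19). Since $\psi(\widehat{\fb_i})=\widehat{\fb_i'}$ with $\fb_i':=(f_{i,k}')+\fm'^k$, one has $\lct(X;\prod\fb_i^{a_i})=\lct(X';\prod(\fb_i')^{a_i})$, and the two displayed inequalities on the $X$- and $X'$-sides combine to give the claim. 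No passage to integer exponents is needed.

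\textbf{Part (2).} Your dichotomy ``either $\lct(X;\Delta)>1+\delta_0$ or $\lct(X;\Delta)=1$'' does not follow from ACC: ACC forbids accumulation of lct values at $1$ \emph{from below}, not from above. For instance, with $I=\{1-1/j:j\ge 2\}\cup\{1\}$ and $X=\bA^1$, the values $\lct(\bA^1;(1-1/j)\{0\})=j/(j-1)$ decrease to $1$. So the ``gap case'' need not occur, and your ``boundary case'' argument (transferring an lc place) does not produce a $k_0$ depending only on $n$ and $I$, since the Izumi constant depends on the singularity $x\in X$. The paper instead applies ACC on the \emph{truncated} side: the coefficients of $\Delta'^k$ still lie in $I$, so by ACC there is a uniform gap below $1$, meaning $\lct(X';\Delta'^k)\ge 1-\frac{n}{k_0\eta}$ forces $\lct(X';\Delta'^k)\ge 1$. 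Part~(1) then supplies exactly this lower bound once $k\ge k_0$.
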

	
		


	\begin{proof}
	(1)	Denote $\mathrm{div}(f_i)=\Delta_i$ and $\mathrm{div}(f_{i,k}')=\Delta_i '^k$. Let $\fb_i:=(f_i)+\fm^k$ and $\fb_{i}':=(f_{i,k}')+\fm'^k$, where $\fm,\fm'$ are the maximal ideals of $\cO_{X,x},\cO_{X',x'}$ respectively. By definition  $\psi(\widehat{\fb_i})=\widehat{\fb_i'}$ where $\psi$ is the isomorphism between complete local rings in Definition \ref{def:analytic-truncation}. 	By \cite[Lemma 2.6 and Proposition 2.19]{dFEM11}, we know that 
		\[
		0\leq \lct(X;\prod_{i=1}^m \fb_i^{a_i})- \lct(X;\Delta)\leq \frac{n}{k\eta}, \qquad
		0\leq \lct(X';\prod_{i=1}^m \fb_i'^{a_i})-\lct(X';\Delta'^k)\leq \frac{n}{k\eta}.
		\]
		Since $\lct(X;\prod_{i=1}^m \fb_i^{a_i})=\lct(X';\prod_{i=1}^m \fb_i'^{a_i})$ by \cite[Proposition 2.11]{dFEM11}, the above inequalities yield
		\[
		|\lct(X';\Delta'^k)-\lct(X;\Delta)|\leq \frac{n}{k\eta}. 
		\]
		
		(2) We may assume that $1\in I$. Set $\eta\coloneqq \min I\backslash\{0\}$. On the one hand, by the ACC of log canonical thresholds for analytically bounded singularities \cite[Theorem 1.1]{dFEM11} (see also \cite{HMX14}), there exists a positive integer $k_0=k_0(n, I)$ depending only on $n$ and $I$, such that for any positive integer $k\ge k_0$, if $\lct (X';\Delta'^k)\geq 1-\frac{n}{k_0\eta}$, then $x'\in(X',\Delta'^k)$ is lc. On the other hand, by (1), \[
		\lct(X';\Delta'^k)\geq \lct(X;\Delta)-\frac{n}{k\eta}\ge 1-\frac{n}{k_0\eta}
		\]
		for any positive integer $k\ge k_0$. Hence $x'\in (X',\Delta'^k)$ is lc for any positive integer $k\ge k_0$ by our choice of $k_0$. 
		\end{proof}

	\begin{proposition}\label{prop: truncation keep nv(v)}
	Let $n, \eta, \gamma,~ \bxb,~ x\in (X,\Delta)$ be as in Theorem \ref{thm:truncation}. Let $V$ be a positive real number. 

Then there exists a positive integer $k_1$ depending only on $n,\eta,\gamma,V$ and $B\subset\cX\to B$ satisfying the following.
		
		Let $v\in \Val_{X,x}^\circ$ be a valuation such that  $\nvol_{(X,\Delta),x}(v)\le V$.
		Then for any positive integer $k\ge k_1$,
		\begin{itemize}
		    \item 	$v(\Delta_i)<kv(\fm)$ for any $i$, where $\fm$ is the maximal ideal of $\cO_{X,x}$, and
 \item $v(\Delta_i)=v'(\Delta_i'^k)$, and $\nvol_{(X,\Delta),x}(v)=\nvol_{(X',\Delta'^k),x'}(v')$ for any $i$, and any $k$-th analytic truncation $x'\in (X',\Delta'^k:=\sum_{i=1}^m a_i\Delta_i'^k)$ of $x\in (X,\Delta)$, where $v'=\phi(v)$, and $\phi: \Val_{X,x}^\circ\to \Val_{X',x'}^\circ$ is defined as in Proposition \ref{prop:an-iso}.
		\end{itemize}
	Moreover, if $v'=\ord_{S'}$ is a divisorial valuation, and $S'$ is a $\delta$-Koll\'ar component of $x'\in (X',\Delta'^k)$ for some positive real number $\delta$, then $S$ is also a $\delta$-Koll\'ar component of $x\in (X,\Delta)$, where $v=\phi^{-1}(v')=\ord_S$.
	\end{proposition}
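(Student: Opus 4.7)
The plan is to bound $v(\Delta_i)/v(\fm)$ uniformly by combining Izumi with the properness estimate and the boundary-killing inequality, then invoke the ultrametric inequality on $\widehat{R'}$ to force $v'(\Delta'^k_i)=v(\Delta_i)$, from which the normalized-volume equality follows via Proposition~\ref{prop:an-iso}. The moreover part requires, in addition, upgrading the formal isomorphism of Koll\'ar components from Proposition~\ref{prop:an-iso}(4) to an isomorphism of pairs that also identifies the boundary $\mu_*^{-1}\Delta$ with its analytic truncation $\mu'^{-1}_*\Delta'^k$ on formal neighborhoods of $S$ and $S'$.

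\textbf{The bound and the transfer of $v(\Delta_i)$.} First, Proposition~\ref{prop: order of a klt boundary}(2) applied to the klt decomposition $(X,(\Delta-a_i\Delta_i)+a_i\Delta_i)$ gives $\ord_x(\Delta_i)<n/\eta$. Chaining the Izumi bound $v(\Delta_i)\le(nC_2/\eta)A_X(v)$ from Lemma~\ref{lem:family Izumi}(2), the boundary-killing inequality $\nvol_{X,x}(v)\le((1+\gamma)/\gamma)^n V$ from Lemma~\ref{lem:kill boundary}, and the properness bound $A_X(v)\le C_1^{-1}\nvol_{X,x}(v)\cdot v(\fm)$ from Lemma~\ref{lem:family Izumi}(1), one obtains
\[
v(\Delta_i)\le \frac{nC_2}{\eta C_1}\!\left(\frac{1+\gamma}{\gamma}\right)^{\!n}\!\!V\cdot v(\fm)=:C\cdot v(\fm),
\]
with $C_1,C_2$ depending only on $B\subset\cX\to B$. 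Take $k_1:=\lceil C\rceil+1$. Writing $\Delta_i=\mathrm{div}(f_i)$ and $\Delta'^k_i=\mathrm{div}(f'_{i,k})$, Definition~\ref{def:analytic-truncation} gives $\psi(f_i)=f'_{i,k}+g_i$ with $g_i\in\widehat{\fm'}^k$. Extending $v$ to $\hat v$ on $\widehat R$ and setting $v'=(\psi_*\hat v)|_{R'}$ as in Proposition~\ref{prop:an-iso}, one has $v(\fm)=v'(\fm')$, so $\hat v'(g_i)\ge kv(\fm)>v(\Delta_i)=\hat v'(\psi(f_i))$ for $k\ge k_1$. Ultrametricity then forces $v'(\Delta'^k_i)=v(\Delta_i)$, and combining with $A_X(v)=A_{X'}(v')$ and $\vol_{X,x}(v)=\vol_{X',x'}(v')$ from Proposition~\ref{prop:an-iso}(1)--(2) yields the desired equality of normalized volumes.

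\textbf{Transfer of $\delta$-plt.} Suppose now $v=\ord_S$ and $v'=\ord_{S'}$, with $\mu':Y'\to X'$ the given $\delta$-plt blow-up. By Proposition~\ref{prop:an-iso}(4), $S$ is a Koll\'ar component of $x\in X$ with plt blow-up $\mu:Y\to X$, and the formal completions of $Y$ along $S$ and $Y'$ along $S'$ are isomorphic, identifying $(S,\Gamma)\cong(S',\Gamma')$. The plan is to promote this to a formal isomorphism of pairs. Writing $h_S,h_{S'}$ for local defining equations of $S,S'$, the strict transforms have local defining functions $\tilde f_i=\mu^*f_i/h_S^{v(f_i)}$ and $\tilde f'_i=\mu'^*f'_{i,k}/h_{S'}^{v'(f'_{i,k})}$; using $v(f_i)=v'(f'_{i,k})$ from the previous step, these differ by $\mu'^*g_i/h_{S'}^{v(f_i)}$, a function vanishing along $S'$ to the positive order $kv(\fm)-v(f_i)>0$. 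A formal implicit-function-type change of coordinates on $\widehat{Y'}$ then absorbs this perturbation, identifying $\mathrm{div}(\tilde f_i)$ with $\mathrm{div}(\tilde f'_i)$ and producing a formal iso $(Y,S+\mu_*^{-1}\Delta)\cong(Y',S'+\mu'^{-1}_*\Delta'^k)$ on neighborhoods of $S\cong S'$; since log discrepancies are formal invariants, $\delta$-plt transfers.

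\textbf{Main obstacle.} The principal difficulty is the last step: the $\delta$-plt condition must be tested on \emph{every} divisorial valuation $w$ centered on $S$, for which no uniform bound on $\nvol_{(X,\Delta),x}(w)$ is available, so one cannot simply run the earlier argument valuation-by-valuation. The fix is to argue globally, via a formal iso of pairs, which reduces the transfer to the analytic statement that, after a formal automorphism of $\widehat{Y'}$, the two boundaries agree on a formal neighborhood of $S'$.
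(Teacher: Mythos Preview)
Your argument for the two bullet points is essentially the paper's: the same chain of estimates (Proposition~\ref{prop: order of a klt boundary}, Lemma~\ref{lem:kill boundary}, both parts of Lemma~\ref{lem:family Izumi}) gives the same bound $v(\Delta_i)\le C\cdot v(\fm)$ with $C=\frac{nC_2}{\eta C_1}\bigl(\frac{1+\gamma}{\gamma}\bigr)^n V$, and then ultrametricity plus Proposition~\ref{prop:an-iso}(1)--(2) finishes. Fine.

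The ``moreover'' part, however, has a genuine gap. You want a formal automorphism of $\widehat{Y'}$ along $S'$ absorbing the perturbations $\mu'^*g_i/h_{S'}^{v(f_i)}$ \emph{for all $i$ simultaneously}, so that the full pairs $(Y,S+\mu_*^{-1}\Delta)$ and $(Y',S'+\mu'^{-1}_*\Delta'^k)$ become formally isomorphic near the exceptional divisors. The ``implicit-function-type change of coordinates'' you invoke is not a standard move here and is not justified: knowing that two Cartier divisors on $\widehat{Y'}$ differ by a function vanishing to positive order along $S'$ does not by itself produce an automorphism carrying one to the other, let alone one that works for several divisors at once while fixing $S'$. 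Your ``main obstacle'' paragraph correctly identifies that one cannot run the earlier estimate valuation-by-valuation, but the proposed global fix is itself the missing step.

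The paper avoids this entirely. Rather than comparing the pairs $(Y,S+\mu_*^{-1}\Delta)$ and $(Y',S'+\mu'^{-1}_*\Delta'^k)$ in a formal neighborhood, it compares only their \emph{restrictions} to $S$ and $S'$, namely the different divisors $\Delta_S$ and $\Delta'^k_{S'}$. The point is that $(\mu_*^{-1}\Delta_i)|_S$ is determined by the image $\bar f_i$ of $f_i$ in $\fa_{m_i}(v)/\fa_{m_i+1}(v)$ where $m_i=v(f_i)$, i.e.\ by the initial form of $f_i$ in the associated graded ring. Since $\psi(f_i)-f'_{i,k}\in\widehat{\fm'}^k$ has $\hat v'$-value $\ge kv(\fm)>m_i$, the graded isomorphism $\gr_v\psi:\gr_v R\to\gr_{v'}R'$ sends $\bar f_i$ to $\bar f'_{i,k}$, hence $(\psi_S)_*\Delta_S=\Delta'^k_{S'}$. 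So $(S,\Delta_S)\cong(S',\Delta'^k_{S'})$ is $\delta$-klt, and inversion of adjunction \cite[Corollary~1.4.5]{BCHM10} lifts this to $\delta$-plt for $(Y,S+\mu_*^{-1}\Delta)$ near $S$. This reduces the transfer to an exact statement about initial terms rather than an approximate statement about formal neighborhoods, and is what makes the argument go through.
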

	\begin{proof}
	Let $C_1(B\subset\cX\to B)$ be the positive constant defined as in Lemma \ref{lem:family Izumi}. 
		Let $k_1\coloneqq\lceil \frac{V}{\eta C_1}(\frac{1+\gamma}{\gamma})^n\rceil$, and $k\ge k_1$ a positive integer. If there exists $i$ such that $v(\Delta_i)\geq kv(\fm)$, then by Lemmata \ref{lem:kill boundary} and \ref{lem:family Izumi}, we get
		
			\begin{align*}
 \nvol_{(X,\Delta),x}(v)&\geq  \left(\frac{\gamma}{1+\gamma}\right)^{n}\nvol_{X,x}(v)\geq \left(\frac{\gamma}{1+\gamma}\right)^{n} C_1\frac{A_X(v)}{v(\fm)}\geq \left(\frac{\gamma}{1+\gamma}\right)^{n} C_1\frac{kA_X(v)}{v(\Delta_i)}\\
		    &\geq \left(\frac{\gamma}{1+\gamma}\right)^{n}C_1\cdot k\lct(X;\Delta_i)\geq \left(\frac{\gamma}{1+\gamma}\right)^{n}C_1\cdot k\eta>V,
		\end{align*}
a contradiction. Thus $v(\Delta_i)<kv(\fm)$ for any $i$. 
	
	    Let $\Delta_i=\mathrm{div}(f_i)$ and $\Delta_i'^k=\mathrm{div}(f_{i,k}')$. Then by Definition \ref{def:analytic-truncation}, $h_i\coloneqq \psi(f_i)-f_{i,k}'\in\widehat{\fm'}^k$, where $\psi$ is the isomorphism between complete local rings. Since $\hat v(f_i)=v(f_i)<kv(\fm)=k\hat v(\hat\fm)$ for any $i$, we get 
	    \[
	        v'(f_{i,k}')=\hat v'(f_{i,k}')=\hat v(f_i-\psi^{-1}(h_i))=\hat v(f_i)=v(f_i),
	    \]
	    where $\hat{v}$ and $\hat{v'}$ are the unique extensions of $v$ and $v'$ in $\Spec\,\widehat{R}$ and $\Spec\,\widehat{R'}$ respectively (see \cite[Corollary 5.11]{JM12} and the proof of Proposition \ref{prop:an-iso}). By Lemma \ref{lem:A-pull-back} and Proposition \ref{prop:an-iso}(1),  $A_{(X,\Delta)}(v)=A_{(X',\Delta'^k)}(v')$. By Proposition \ref{prop:an-iso}(2), $\nvol_{(X,\Delta),x}(v)=\nvol_{(X',\Delta'^k),x'}(v')$.
		
		Suppose that $v'=\ord_{S'}$ is a divisorial valuation, and $S'$ is a $\delta$-Koll\'ar component of $x'\in (X',\Delta'^k)$ for some positive real number $\delta$. By Proposition \ref{prop:an-iso}(4), $S$ is also a Koll\'ar component of $x\in X$. Let $\mu':Y'\to X'$ and $\mu:Y\to X$ be the corresponding plt blow-ups with Koll\'ar components $S'$ and $S$ respectively. Let $\Gamma$ and $\Gamma'$ be the different divisors of $(Y, S)$ and $(Y', S')$ on $S$ and $S'$ respectively. Then by Proposition \ref{prop:an-iso}(4), we know that there is an isomorphism $\psi_S: S\to S'$ induced from taking graded algebra of $\psi: \widehat{R}\to \widehat{R'}$ such that $\Gamma'=(\psi_S)_*\Gamma$.
		
		Let $\Delta_S$ and $\Delta'^k_{S'}$ be the different divisors of $(Y, S+\mu_*^{-1}\Delta)$ and $(Y', S'+\mu'^{-1}_*\Delta'^k)$ on $S$ and $S'$ respectively. Let $m_i:=v(f_i)=v'(f'_{i,k})$. Let $\bar{f}_i$ and $\bar{f}'_{i,k}$ be the images of $f_i$ and $f'_{i,k}$ in $\fa_{m_i}(v)/\fa_{m_i+1}(v)$ and $\fa_{m_i}(v')/\fa_{m_i+1}(v')$ respectively. Then we know that $\bar{f}_i$ and $\bar{f}'_{i,k}$ define effective $\bQ$-Cartier $\bQ$-divisors $\bar{\Delta}_i$ and $\bar{\Delta}'^k_{i}$ on $S$ and $S'$ respectively, such that	$\bar{\Delta}_i= (\mu_*^{-1}\Delta_i)|_S$ and 
		$\bar{\Delta}'^k_i= (\mu'^{-1}_*\Delta'^k_{i})|_{S'}$.
		It is clear that $
		\Delta_S=\Gamma+\sum_{i=1}^m a_i \bar{\Delta}_i$ and $
		\Delta'^k_{S'}=\Gamma'+\sum_{i=1}^m a_i \bar{\Delta}'^k_i$. 
		Since $\hat{v'}(\psi(f_i)-f_{i,k})\geq \hat{v'}(\widehat{\fm'}^k) > m_i$, we know that $\mathrm{gr}_v\psi: \mathrm{gr}_v R\xrightarrow{\cong} \mathrm{gr}_{v'} R'$ maps $\bar{f}_i$ to $\bar{f}'_{i,k}$. In particular, we have $(\psi_S)_*\bar{\Delta}_i=\bar{\Delta}'^k_{i}$ and hence $(\psi_S)_*\Delta_S=\Delta'^k_{S'}$, i.e. $(S,\Delta_S)\cong (S',\Delta'^k_{S'})$. 
		Since $(Y',S'+\mu'^{-1}_*\Delta'^k)$ is $\delta$-plt near $S'$, we know that $(S',\Delta'^k_S)$ is $\delta$-klt and $\delta\leq 1$.  It follows that $(S,\Delta_{S})$ is also $\delta$-klt. By the inversion of adjunction \cite[Corollary 1.4.5]{BCHM10}, $(Y,S+{\mu_{*}}^{-1}\Delta)$ is $\delta$-plt near $S$. We conclude that $S$ is a $\delta$-Koll\'ar component of $x\in (X,\Delta)$.
	\end{proof}
	
	\begin{proof}[Proof of Theorem \ref{thm:truncation}]
	Let $k_0:=\lceil\frac{2n}{\eta \gamma}\rceil$. By Lemma \ref{lem:lct of truncations}, $$\lct(X';\Delta'^k)\ge\lct(X;\Delta)-\frac{n}{k\eta}>1+\gamma-\frac{n}{k\eta}\ge 1+\frac{\gamma}{2}$$
	for any positive integer $k\ge k_0$. Let $k_1$ be the positive integer given by Proposition \ref{prop: truncation keep nv(v)} depending only on $n,\eta,\frac{\gamma}{2},V:=n^n$ and $B\subset\cX\to B$, and $k_2\coloneqq\max\{k_0,k_1\}$. 
		
	For any positive integer $k\ge k_2$, if $v\in\Val_{X,x}^{\circ}$ satisfies that $\nvol_{(X,\Delta),x}(v)\leq n^n$, then by the construction of $k_2$, $
		\nvol_{(X,\Delta),x}(v)=\nvol_{(X',\Delta'^k),x'}(v')$, where $v'=\phi(v)$. Recall that $(x'\in X'^{\rm an})\subset(\bxanb)$, and 
	$x\in (X,\Delta)$ is a $k$-th analytic truncation of $x'\in (X',\Delta'^k)$. Similarly, if $v'\in\Val_{X',x'}^{\circ}$ satisfies that $\nvol_{(X',\Delta'^k),x'}(v')\leq n^n$, then 	$\nvol_{(X',\Delta'^k),x'}(v')=\nvol_{(X,\Delta),x}(v)$, where $v=\phi^{-1}(v')$.
		
	Now the theorem follows from Theorem \ref{thm:nvol is bounded by n^n} and \cite[Theorem A]{Blu18b}.
	\end{proof}
	
	\begin{proposition}\label{prop:truncation is bdd}
		Let $n, \gamma, B\subset\cX\to B$ be as in Theorem \ref{thm:truncation}. Let $I\subset [0,1]$ be a finite set. Let $\eta:= \min ((I\setminus\{0\})\cup \{\frac{1}{2}\})$. Let $k_2$ be the positive integer from Theorem \ref{thm:truncation} depending only on $n, \eta, \gamma$ and $B\subset\cX\to B$. Let $k\geq k_2$ be a positive integer.
		
		Then there is an $\Rr$-Gorenstein family of klt singularities over a (possibly disconnected) smooth base  $T\subset (\cY,\cE)\to T$ depending only on $n, I, \gamma, k$ and $B\subset \cX\to B$ satisfying the following.
		
		Let $x\in(X,\Delta=\sum_{i=1}^m a_i\Delta_i)$ be a klt singularity, such that	
		\begin{enumerate}
			\item $(x\in X^{\rm an})\in (\bxanb)$, 
			\item $a_i\in I$ for any $i$, and
			\item each $\Delta_i\ge0$ is a Cartier divisor.
			\item $\lct(X,\Delta;\Delta)>\gamma$.
		\end{enumerate}
		Then there exists a closed point $t\in T$ such that $t\in (\cY_t,\cE_t)$ is a $k$-th analytic truncation of $x\in (X,\Delta)$. 
	\end{proposition}
	\begin{proof}
		By \cite[18.22]{Fli92}, $m$ is bounded from above. It suffices to show the proposition for any fixed positive integer $m$. 
		
		By Noetherian induction and Grothendieck's generic freeness theorem, possibly shrinking $B$, we may assume that $B=\Spec(A)$ and $\cO_{\cX,B}/I_B^k$ is a free $A$-module with a basis $\bar g_1,\ldots,\bar g_d$ for some $g_i\in \cO_{\cX,B}$, where $I_B$ is the ideal sheaf of $B\subset \cX$. 
		
		Possibly replacing $I$ with $I\cup\{0\}$, we may assume that $0\in I$. Denote by $I^m\subset \bR^m$ the $m$-th Cartesian power of $I$. Let $L\coloneqq|I^m|<+\infty$, and $I^m=\{\bm{a}_1,\ldots,\bm{a}_L\}$, where $|I|$ is the cardinality of $I$. Set  $U\coloneqq\bA_{A}^{dm}\backslash\{\bm{0}\}$, where $\bA_{A}^{dm}=\Spec\, A[x_1,\ldots,x_{dm}]$. For each $1\le l\le L$, let $\cE\supl\to U$ be the space, such that for any closed point $u=(u_{11},\ldots,u_{1m},\ldots,u_{d1},\ldots,u_{dm})\in U$, the fiber $\cE\supl_u$ parametrizes the divisor $\sum_{j=1}^ma_{jl}E_{j,u}\subset (\cX\times_{B}U)_u$,
		where $\bm{a}_l=(a_{1l},\ldots,a_{ml})\in I^m$, and $E_{j,u}\coloneqq (\sum_{i=1}^d u_{ij}g_i=0)$ for any $1\le j\le m$. Thus we get a family $U\subset (\cX\times_{B}U,\cE\supl)\to U$. By construction, there exist a closed point $u\in U$ and a positive integer $l$, such that $u\in ((\cX\times_{B}U)_u,\cE_u\supl)$ is a $k$-th analytic truncation of $x\in (X,\Delta)$. By Theorem \ref{thm:truncation} we have $\hvol(u, (\cX\times_{B}U)_u,\cE_u\supl)=\hvol(x, X,\Delta)>0$, which implies that $u\in ((\cX\times_{B}U)_u,\cE_u\supl)$ is a klt singularity.
	
	By Lemma \ref{lem: klt locus open}, for each $l$, possibly stratifying the base $U$ into a disjoint union of finitely many constructible subsets, we can assume that there exists a decomposition $U=\sqcup_{\alpha\in J_{1,l}} U_{\alpha}\bigsqcup \sqcup_{\alpha\in J_{2,l}}U_{\alpha}$ into irreducible smooth strata $U_{\alpha}$, such that $U_{\alpha}\subset (\cX\times_{B}U_{\alpha},\cE\supl)\to U_{\alpha}$ is an $\Rr$-Gorenstein family of klt singularities over a smooth base $U_{\alpha}$ for any $\alpha\in J_{1,l}$ and $u'\in ((\cX\times_{B}U_{\alpha})_{u'},\cE\supl_{u'})$ is not klt for any $\alpha\in J_{2,l}$ and any closed point $u'\in U_{\alpha}$. Since $u\in ((\cX\times_{B}U)_u,\cE_u\supl)$ is klt, we know that $u\in U_\alpha$ for some $l$ and $\alpha\in J_{1,l}$. 
		Let $T:=\bigsqcup_{l,\alpha\in J_{1,l}} U_{\alpha}$, and $(\cY,\cE)\to T$ be the pullback of $\bigsqcup_{l} \left((\cX\times_{B}U,\cE^{(l)})\to U\right)$ by $T\to U^L$.
		Then $T\subset(\cY,\cE)\to T$ is an $\Rr$-Gorestein family of klt singularities over a smooth base. Let $t\in T$ be the unique preimage of $u$ under the injective map $T\to U$, then by construction $t\in (\cY_t,\cE_t) $ is isomorphic to $u\in ((\cX\times_{B}U)_u,\cE_u\supl)$. Thus $t\in (\cY_t,\cE_t)$ is a $k$-th analytic truncation of $x\in (X,\Delta)$. 
	\end{proof}
	
	\subsection{Singularities with analytic boundary}
	
	\begin{defn}
	Let $x\in X$ be a normal $\bQ$-Gorenstein singularity. Denote $(R,\fm):=(\cO_{X,x},\fm_{X,x})$. Let $\hat{x}\in \widehat{X}:=\Spec\,\widehat{R}$ be the completion of $x\in X$. Let $\fD:=\sum_{i=1}^m a_i \fD_i$ be a non-negative $\bR$-combination (i.e. $a_i\in\bR_{\geq 0}$) of effective Cartier divisors $\fD_i$ on $\hX$. We say that $\hx\in (\hX,\fD)$ is a \emph{$\bQ$-Gorenstein singularity with analytic $\bR$-boundary}. We use  \cite[Section 2]{dFEM11} to define klt and lc of such a singularity $\hx\in (\hX,\fD)$. 
	\end{defn}
	
	\begin{defn}\label{def:nv-analytic-bdry}
	    Let $\hx\in (\hX,\fD)$ be an $n$-dimensional $\bQ$-Gorenstein singularity with analytic $\bR$-boundary that is klt. Denote by $\iota: \hX\to X$ the completion morphism. 
	    \begin{enumerate}
	        \item For a valuation $v\in \Val_{X,x}^\circ$, we define the \emph{log discrepancy, volume, and normalized volume} of $\hat{v}$ (defined as in Proposition \ref{prop:an-iso}) with respect to $\hx\in (\hX,\fD)$ as
	        \[
	        A_{(\hX,\fD)}(\hat{v}) := A_X(v)- \hat{v}(\fD),
	        \qquad\vol_{\hX, \hx}(\hat{v}) :=\vol_{X,x}(v),
	        \]
	        \[
	        \hvol_{(\hX,\fD), \hx}(\hat{v}) :=  A_{(\hX,\fD)}(\hat{v})^n\cdot \vol_{\hX, \hx}(\hat{v}).
	        \]
	        We define the \emph{local volume} of $\hx\in (\hX,\fD)$ as 
	        \[
	        \hvol(\hx,\hX,\fD):=\inf_{v\in \Val_{X,x}^\circ}\hvol_{(\hX,\fD), \hx}(\hat{v}).
	        \]
	        \item We say that a projective birational map $\hmu: \hY\to \hX$ provides a \emph{Koll\'ar component} $\hS$ over $\hx\in (\hX,\fD)$ if there exists a plt blow-up $\mu: Y\to X$ over $x\in X$ and a Cartesian diagram
	    \begin{center}
	        \begin{tikzcd}
	         \hS\arrow[r, hook]\arrow[d, "\iota_S", "\cong" '] & \hY \arrow[r, "\hmu"] \arrow[d, "\iota_Y"] &  \hX \arrow[d, "\iota"]\\
	         S \arrow[r, hook] & Y \arrow[r, "\mu"] &  X
	        \end{tikzcd}
	    \end{center}
	    such that $(\hS, \hGamma +(\hmu_*^{-1}\fD)|_{\hS})$ is klt in the sense of \cite[Section 2]{dFEM11}, where $\Gamma$ is the different divisor of $S$ in $Y$ and $\hGamma:=\iota_S^{*} \Gamma$.
	    \end{enumerate}
	\end{defn}
	
	We note that the above definition only depends on the analytic isomorphism class of $x\in X$ due to the equivalence of valuations of finite log discrepancy and Koll\'ar components over analytic isomorphic singularities (see Proposition \ref{prop:an-iso}).
	
	\begin{defn}
	    Let $\hx\in (\hX,\fD)$ be a $\bQ$-Gorenstein singularity with analytic $\bR$-boundary. Assume that $\fD=\sum_{i=1}^m a_i\fD_i$ where $a_i\in \bR_{> 0}$ and $\fD_i=\mathrm{div}(h_i)$ with $h_i\in \hR$ for each $1\leq i\leq m$. Let $k$ be a positive integer. Fix a $\bk$-linear basis $\bar{g}_1,\cdots,\bar{g}_d$ of $R/\fm^k$. Let $g_j\in R$ be a lifting of $\bar{g}_j$. We define the \emph{$k$-th analytic truncation} $\fD^k$ of $\fD$ on $X$ as $\fD^k:=\sum_{i=1}^m a_i \fD_i^{k}$  where $\fD_i^k=\mathrm{div}(h_{i,k})$ and  $h_{i,k}\in R$ is the $\bk$-linear combination of $g_j$ such that $h_{i}-h_{i,k}\in \widehat{\fm}^k$. We also set $\fD^k=0$ when $\fD=0$. 
	\end{defn}

	\begin{theorem}\label{thm:truncation-an-bdry}
	    Let $\hx\in (\hX,\fD)$ be a $\bQ$-Gorenstein singularity with analytic $\bR$-boundary that is klt. Then we have 
	    \begin{enumerate}
	    	\item $\hvol(\hx,\hX,\fD)=\hvol(x,X,\fD^k)$ for $k\gg 1$ where $\fD^k$ is a $k$-th analytic truncation of $\fD$  on $X$.
	        \item $\hvol(\hx,\hX,\fD)=\inf_{\hS} \hvol_{(\hX,\fD),\hx}(\ord_{\hS})$ where $\hS$ runs over all Koll\'ar components over $\hx\in (\hX,\fD)$.
	    \end{enumerate}
	\end{theorem}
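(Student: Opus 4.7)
The strategy is to transplant the truncation argument of Proposition~\ref{prop: truncation keep nv(v)} to the analytic-boundary setting. Write $\fD_i=\mathrm{div}(h_i)$ with $h_i\in\hR$ and $\fD_i^k=\mathrm{div}(h_{i,k})$ with $h_i-h_{i,k}\in\widehat{\fm}^k$. The key observation is that any $v\in\Val_{X,x}^\circ$ extends canonically to $\hat{v}\in\Val_{\widehat{X},\hat{x}}^\circ$ with $\hat{v}(\widehat{\fm})=v(\fm)$ and $\vol_{\widehat{X},\hat{x}}(\hat{v})=\vol_{X,x}(v)$ (Proposition~\ref{prop:an-iso}); any function agrees with a sufficiently-close algebraic approximation on $\hat{v}$.

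For part~(1), I would first verify that $x\in(X,\fD^k)$ is klt for $k\gg 1$: the ideals $(h_{i,k})+\fm^k\subset R$ complete to $(h_i)+\widehat{\fm}^k\subset\hR$, so by \cite[Propositions~2.11 and~2.19]{dFEM11}, $\lct\bigl(X;\prod_i((h_{i,k})+\fm^k)^{a_i}\bigr)\to\lct(\hX;\fD)>1$ as $k\to\infty$, the strict inequality by the klt hypothesis. Next, for any $v\in\Val_{X,x}^\circ$ satisfying $\hat{v}(h_i)<k\hat{v}(\widehat{\fm})$ for every $i$, the relation $\hat{v}(h_i-h_{i,k})\geq k\hat{v}(\widehat{\fm})>\hat{v}(h_i)$ forces $v(h_{i,k})=\hat{v}(h_i)$; combined with volume invariance, this yields $\hvol_{(X,\fD^k),x}(v)=\hvol_{(\hX,\fD),\hx}(\hat{v})$. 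For the uniformity needed to compare infima, set $\gamma:=\lct(\hX,\fD;\fD)>0$ (positive since $(\hX,\fD)$ is klt), so the argument of Lemma~\ref{lem:kill boundary} gives $A_X(v)\leq((1+\gamma)/\gamma)A_{(\hX,\fD)}(\hat{v})$. Restricting to near-minimizers with $\hvol_{(\hX,\fD),\hx}(\hat{v})\leq n^n+1$, this bounds $\hvol_{X,x}(v)$ uniformly; the properness and Izumi estimates of Lemma~\ref{lem:izumi} then deliver a uniform bound $\hat{v}(h_i)/\hat{v}(\widehat{\fm})\leq C_2 M\ord_{\hx}(h_i)$. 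Choosing $k$ beyond this threshold and the kltness threshold proves $\hvol(\hx,\hX,\fD)=\hvol(x,X,\fD^k)$.

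For part~(2), fix such a $k$. By Theorem~\ref{thm:lx-kc-minimizing}(1) applied to the algebraic klt pair $x\in(X,\fD^k)$, $\hvol(x,X,\fD^k)=\inf_S\hvol_{(X,\fD^k),x}(\ord_S)$ over Koll\'ar components $S$ of $x\in(X,\fD^k)$. For each such $S$ with $\hvol_{(X,\fD^k),x}(\ord_S)\leq n^n+1$, the associated plt blow-up $\mu:Y\to X$ completes to $\hmu:\hY\to\hX$ with exceptional $\hS$. Replicating the different-divisor computation at the end of the proof of Proposition~\ref{prop: truncation keep nv(v)}: the restriction $(\hmu_*^{-1}\fD)|_{\hS}$ is encoded by the images of $h_i$ in $\fa_{m_i}(\hat{v})/\fa_{m_i+1}(\hat{v})$ with $m_i:=\hat{v}(h_i)$, and since $k>m_i$ by our choice these images coincide with those of $h_{i,k}$. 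Hence $(\hS,\hGamma+(\hmu_*^{-1}\fD)|_{\hS})$ is klt if and only if $(S,\Gamma+(\mu_*^{-1}\fD^k)|_S)$ is, identifying $\hS$ as a Koll\'ar component over $\hx\in(\hX,\fD)$ with the same normalized volume. Conversely, any Koll\'ar component $\hS$ over $\hx\in(\hX,\fD)$ is by definition built from a plt blow-up $\mu:Y\to X$, and the symmetric argument shows $S$ is a Koll\'ar component of $x\in(X,\fD^k)$. Thus the two infima coincide, proving~(2).

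The principal technical hurdle is the uniformity in part~(1): controlling $\hat{v}(h_i)/\hat{v}(\widehat{\fm})$ by a constant independent of $v$ as $v$ ranges over near-minimizers. The klt hypothesis on the analytic boundary is essential here, since it supplies $\lct(\hX,\fD;\fD)>0$, the exact positivity needed to convert the bound on $\hvol_{(\hX,\fD),\hx}$ into a bound on $\hvol_{X,x}$ via the kill-boundary trick; once this is in hand, the algebraic properness and Izumi estimates take over.
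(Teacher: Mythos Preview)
Your proposal is correct and follows essentially the same route as the paper: the kill-boundary inequality together with the properness and Izumi estimates to control $\hat{v}(h_i)/\hat{v}(\widehat{\fm})$ uniformly for near-minimizers, the resulting identification $\hvol_{(\hX,\fD),\hx}(\hat v)=\hvol_{(X,\fD^k),x}(v)$ once $k$ exceeds the threshold, and for part~(2) the lifting of Koll\'ar components via completion with the different-divisor comparison from Proposition~\ref{prop: truncation keep nv(v)}. Two minor remarks: the Izumi bound you invoke for $h_i\in\hR$ is stated in Lemma~\ref{lem:izumi} only for $f\in\cO_{X,x}$, so you should note its extension to $\hR$ via completion of valuation ideals; and your ``conversely'' clause in part~(2) is unnecessary since $\hvol(\hx,\hX,\fD)\le\inf_{\hS}\hvol_{(\hX,\fD),\hx}(\ord_{\hS})$ is immediate from Definition~\ref{def:nv-analytic-bdry}.
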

	
	\begin{proof}
	If $\fD=0$, then the statements follow from Theorem \ref{thm:lx-kc-minimizing} and Proposition \ref{prop:an-iso}. So we may assume that $\fD\neq 0$. Choose $\eta,\gamma\in \bR_{>0}$ such that $\lct(\hX,\fD;\fD)\geq \gamma$ and  $a_i\geq \eta$ for any $i$. Let $V:=n^n+1$. Then by similar arguments as the proof of Proposition \ref{prop: truncation keep nv(v)}, there exists $k_1=k_1(n, \eta,\gamma, V, x\in X) \in \bZ_{>0}$ such that for any positive integer $k\geq k_1$ and any valuation $v\in \Val_{X,x}^\circ$ satisfying $\hvol_{(\hX,\fD),\hx}(\hv)\leq V$, we have \begin{equation}\label{eq:an-bdry}
    \hv(\fD_i)< kv(\fm), \quad \hv(\fD_i)=v(\fD_i^k), \quad \textrm{and}\quad\hvol_{(\hX,\fD),\hx}(\hv)=\hvol_{(X,\fD^k),x}(v).
    \end{equation}
    By Definition \ref{def:nv-analytic-bdry} and Theorem \ref{thm:nvol is bounded by n^n}, we have that 
    \[
    \hvol(\hx, \hX,\fD)\leq \hvol(x, X) \leq n^n \quad \textrm{and}\quad \hvol(x,X,\fD^k)\leq n^n. 
    \]
    Since $V>n^n$, by \eqref{eq:an-bdry} we have that $\hvol(\hx,\hX,\fD)=\hvol(x, X, \fD^k)$ for any $k\gg 1$. This proves part (1). 
    
    Next we prove part (2). Fix an arbitrary $\epsilon\in (0,1)$. Let $k\geq k_1$ be a positive integer where $k_1$ is chosen as before. By Theorem \ref{thm:lx-kc-minimizing} and part (1), there exists a Koll\'ar component $S$ over $x\in (X,\fD^k)$ such that 
    \begin{equation}\label{eq:an-bdry2}
        \hvol_{(X,\fD^k),x} (\ord_S) \leq \hvol(x,X,\fD^k)+\epsilon = \hvol(\hx,\hX,\fD)+\epsilon < V.
    \end{equation}
    Let $\hS$ be the pull-back of $S$ under $\tau:\hX\to X$ as a Koll\'ar component over $\hx\in \hX$. Let $\mu:Y\to X$ (resp. $\hmu: \hY\to \hX$) be the plt blow-up providing $S$ (resp. $\hS$). By similar arguments to the proof of Propsition \ref{prop: truncation keep nv(v)} and \eqref{eq:an-bdry}, we know that
    \begin{equation}\label{eq:an-bdry3}
            \ord_{S}(\fD^k)=\ord_{\hS}(\fD) < k \ord_S(\fm)\quad \textrm{and}\quad (\hmu_*^{-1}\fD)|_{\hS} = (\hmu_*^{-1}\widehat{\fD^k})|_{\hS}.
    \end{equation}
    Thus we have $(\hS, \hGamma + (\hmu_*^{-1}\fD)|_{\hS})\cong (S, \Gamma + (\mu_*^{-1}\fD^k)|_S)$ is klt. This implies that $\hS$ is a Koll\'ar component over $\hx\in (\hX,\fD)$. Hence by \eqref{eq:an-bdry3} we have 
    \[
    A_{(\hX,\fD)}(\ord_{\hS}) =A_{\hX}(\ord_{\hS}) -\ord_{\hS}(\fD) = A_X(\ord_S)-\ord_S(\fD^k)=  A_{(X,\fD^k)}(\ord_{S}).
    \]
    Since the volumes of $\ord_S$ and $\ord_{\hS}$ are the same by Proposition \ref{prop:an-iso}, the inequality \eqref{eq:an-bdry2} implies that  
    \[
    \hvol_{(\hX,\fD),\hx}(\ord_{\hS}) = \hvol_{(X,\fD^k),x}(\ord_S)\leq \hvol(\hx, \hX, \fD)+\epsilon.
    \]
    Thus the proof of part (2) is finished as $\epsilon$ can be arbitrarily small.
	\end{proof}

	\section{Proofs of main results}
	\subsection{Existence of \texorpdfstring{$\delta$}{}-plt blow-ups} In this subsection, we will prove Theorems \ref{thm:existence of delta-plt blow-up} and \ref{thm:bddness for nvol}.
	\begin{theorem}\label{thm: lct positive lower bound implies existence of delta-plt blow-up}
		Let $n\ge 2$ be a positive integer, $\eta,\epsilon $ positive real numbers, and $B\subset \cX\to B$ a $\bQ$-Gorenstein family of $n$-dimensional klt singularities. Then there exists a positive real number $\delta$ depending only on $n,\eta,\gamma$ and $B\subset \cX\to B$ satisfying the following.
		
		If $x\in (X,\Delta=\sum_{i=1}^m a_i\Delta_i)$ is an $n$-dimensional klt singularity such that
		\begin{enumerate}
		\item  $(x\in X^{\rm an})\in (B\subset \cX^{\rm an}\to B)$,
			\item $a_i>\eta$ for any $i$, 
				\item each $\Delta_i\geq 0$ is a $\bQ$-Cartier Weil divisor, and
			\item $\lct(X,\Delta;\Delta)>\gamma$,
		\end{enumerate}
		then $x\in(X,\Delta)$ admits a $\delta$-plt blow-up.
	\end{theorem}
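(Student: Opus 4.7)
The strategy is to use the truncation machinery of Theorem~\ref{thm:truncation} to reduce to the case where $(X,\Delta)$ sits as a fiber of an $\bR$-Gorenstein family of klt singularities over a constructible base, and then apply Theorem~\ref{thm: family kc with nv bdd} to extract a uniform $\delta$-plt blow-up. The second half of Proposition~\ref{prop: truncation keep nv(v)} asserts that a $\delta$-plt blow-up of a truncated pair pulls back to a $\delta$-plt blow-up of the original, so it suffices to construct $\delta$-plt blow-ups for the truncated pairs uniformly.

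Concretely, I would first apply Theorem~\ref{thm:truncation} to the data $(n,\eta,\gamma,B\subset\cX\to B)$ to obtain an integer $k_2$ such that, after identifying $\widehat{\cO_{X,x}}\cong \widehat{\cO_{\cX_b,b}}$ for some closed point $b\in B$, any $k_2$-th analytic truncation $b\in(\cX_b,\Delta'^{k_2})$ of $x\in(X,\Delta)$ has the same local volume. The Cartier hypothesis in Theorem~\ref{thm:truncation} I would handle either by first multiplying through by a uniform Cartier index, or more cleanly by transporting the problem to the analytic $\bR$-boundary setting of Theorem~\ref{thm:truncation-an-bdry}, where $\bQ$-Cartier Weil divisors acquire Cartier lifts on the completion. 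Next, I would show that all possible truncated pairs $(b\in\cX_b,\Delta'^{k_2})$ fit into finitely many $\bR$-Gorenstein families: the number of components $m$ is bounded above by $n/\eta$ via Proposition~\ref{prop: order of a klt boundary}; the ambient germ ranges over $B$; the support of each $\Delta'^{k_2}_i$ is parameterized by an element of the finite-rank $\cO_B$-module $\cO_{\cX,B}/I_B^{k_2}$ (with $I_B$ the ideal sheaf of $B\subset\cX$); and the coefficients $(a_1,\dots,a_m)$ range over the compact cube $[\eta,1]^m$. Arguing as in Proposition~\ref{prop:truncation is bdd}, together with Lemma~\ref{lem: klt locus open} to cut out the klt locus and a standard stratification into irreducible smooth strata, this produces the desired finite collection of $\bR$-Gorenstein families of klt singularities over smooth bases.

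Finally, Theorem~\ref{thm: family kc with nv bdd} applied to each of these families yields a positive real number $\delta_\alpha$ and a flat family of Koll\'ar components which are $\delta_\alpha$-plt on every fiber; taking the minimum over the finitely many strata gives a single $\delta>0$ depending only on $n,\eta,\gamma$ and $B\subset\cX\to B$. Pulling the resulting $\delta$-plt blow-up back through Proposition~\ref{prop: truncation keep nv(v)} produces a $\delta$-plt blow-up of the original $x\in(X,\Delta)$. I expect the main technical obstacle to be twofold: the reduction to Cartier divisors mentioned above, and the fact that Proposition~\ref{prop:truncation is bdd} is stated only for finite coefficient sets while our coefficients vary continuously in $[\eta,1]$. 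The resolution of the latter is to run the stratification argument directly on the parameter space that already incorporates real coefficients, exploiting that Theorem~\ref{thm: family kc with nv bdd} is formulated for $\bR$-Gorenstein (rather than merely $\bQ$-Gorenstein) families.
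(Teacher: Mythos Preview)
Your overall strategy---truncate, assemble the truncated pairs into an $\bR$-Gorenstein family, apply Theorem~\ref{thm: family kc with nv bdd}, and pull the Koll\'ar component back via Proposition~\ref{prop: truncation keep nv(v)}---is exactly the paper's. But the two obstacles you flag are more serious than you let on, and the paper resolves each by a move you do not propose.

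For the Cartier reduction: neither of your suggestions works. A $\bQ$-Cartier Weil divisor does not become Cartier upon completion, so Theorem~\ref{thm:truncation-an-bdry} does not sidestep the issue. And there is no a priori uniform Cartier index to ``multiply through by'': that is precisely what must be proved. The paper's fix is a bootstrap. First apply Theorem~\ref{thm: family kc with nv bdd} and Proposition~\ref{prop: truncation keep nv(v)} to the case $\Delta=0$ (which is trivially Cartier) to conclude that $x\in X$ itself admits a $\delta_0$-plt blow-up with $\delta_0$ depending only on $\bxb$. Combined with the uniform $\epsilon_0$-lc bound from \cite{dFEM11}, this feeds into \cite[Theorem~1.6]{HLS19} to give a uniform bound $N$ on the Cartier index of each $\Delta_i$; only then can one replace $\Delta_i$ by $N\Delta_i$ and assume Cartier.

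For the continuous coefficient set: stratifying over the real cube $[\eta,1]^m$ is not directly compatible with finite-type schemes over $\bk$, and Proposition~\ref{prop:truncation is bdd} genuinely needs $I$ finite to enumerate $I^m$. The paper avoids this entirely by first rounding the coefficients up to $\Delta^+:=\sum_i \frac{\lceil l a_i\rceil}{l}\Delta_i$ for a suitable $l=l(\gamma,\eta)$, so that the coefficients land in the finite set $\frac{1}{l}\bZ\cap[0,1]$ and one still has $\lct(X,\Delta^+;\Delta^+)>\gamma/2$. Since $\Delta^+\ge\Delta$, any $\delta$-plt blow-up of $(X,\Delta^+)$ is automatically a $\delta$-plt blow-up of $(X,\Delta)$, and now Proposition~\ref{prop:truncation is bdd} applies verbatim.
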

	\begin{proof}
	Let $l\coloneqq \lceil \frac{2+\gamma}{\gamma \eta}\rceil$, $\Delta^{+}\coloneqq\sum_{i=1}^m\frac{\lceil la_i\rceil}{l}\Delta_i$. Then $\Delta^{+}\ge\Delta$, $ (1+\frac{\gamma}{2})\cdot\frac{\lceil la_i\rceil }{l}\le (1+\gamma)a_i$ for any $i$, and $\lct(X,\Delta^{+};\Delta^{+})>\frac{\gamma}{2}$.
	
		Since for any positive real number $\delta$, any $\delta$-plt blow-up of $x\in (X,\Delta^{+})$ is also a $\delta$-plt blow-up of $x\in (X,\Delta)$, possibly replacing $(X,\Delta)$ by $(X,\Delta^+)$, and $\gamma$ by $\frac{\gamma}{2}$, we may assume that any coefficient $a_i$ of $\Delta$ belongs to the finite rational set $I=\frac{1}{l}\Zz\cap[0,1]$.
		
		By Theorem \ref{thm: family kc with nv bdd} and Proposition \ref{prop: truncation keep nv(v)}, there exists a positive real number $\delta_0$ which only depends on $n,\eta,\epsilon$ and $\bxb$, such that $x\in X$ admits a $\delta_0$-plt blow-up. By \cite[Theorem 1.2]{dFEM11}, there exists a positive real number $\epsilon_0$ which only depends on $\bxb$, such that $x\in X$ is $\epsilon_0$-lc. Thus by \cite[Theorem 1.6]{HLS19}, for each $i$, the Cartier index of $\Delta_i$ near $x$ is bounded from above by a positive integer $N$ which only depends on $n,\eta,\gamma$ and $\bxb$. Therefore, possibly replacing $\Delta_i$ with $N\Delta_i$ and $I$ with $\frac{1}{N}I$, we may assume that each $\Delta_i$ is Cartier.
		
		Let $k_1$ be the positive integer given in Proposition \ref{prop: truncation keep nv(v)} depending only on $n,\eta,\gamma,V:=n^n+1$ and $\bxb$. 
		Let $k_2$ be the positive integer given in Theorem \ref{thm:truncation} depending only on $n, \eta, \gamma$ and $\bxb$. Choose $k:=\max\{k_1,k_2\}$.
		It suffices to show that there exist a $k$-th analytic truncation $x'\in(X',\Delta'^k)$ of $x\in (X,\Delta)$, and a $\delta$-Koll\'ar component $S'$ of $x'\in(X',\Delta'^k)$ such that $\nvol_{(X',\Delta'^k),x'}(\ord_{S'})\le n^n+1$, for some positive real number $\delta$ depending only on $n,\eta,\gamma$ and $\bxb$.
		
		By Proposition \ref{prop:truncation is bdd}, there is an $\Rr$-Gorenstein family of klt singularities over a smooth base $T\subset (\cY,\cE)\to T$, such that
		$t\in (\cY_t,\cE_t)$ is a $k$-th analytic truncation of $x\in (X,\Delta)$ for some closed point $t\in T$. Now the theorem follows from Theorem \ref{thm: family kc with nv bdd}.
	\end{proof}
	
\begin{proof}[Proof of Theorem \ref{thm:existence of delta-plt blow-up}]
 This follows from Theorem \ref{thm:estimate on lct} and
 Theorem \ref{thm: lct positive lower bound implies existence of delta-plt blow-up}.
 \end{proof}

	If the coefficients of $\Delta$ belong to a finite set, then we may relax the assumption ``each $\Delta_i$ is a $\Qq$-Cartier Weil divisor'' in Theorem \ref{thm:existence of delta-plt blow-up} to ``each $\Delta_i$ is a Weil divisor'', as stated in Conjecture \ref{conj:existence of delta-plt blow-up}.
	\begin{theorem}\label{thm:existence of delta-plt blow-up finite coeff}
		Let $n\ge 2$ be a positive integer, $\epsilon$ a positive real number, $I$ a finite set, and $\bxb$ an $\bQ$-Gorenstein family of $n$-dimensional klt singularities. Then there exists a positive real number $\delta$ depending only on $n,\epsilon,I$ and $\bxb$ satisfying the following. 
		
		If $x\in (X,\Delta=\sum_{i=1}^m a_i\Delta_i)$ is an $n$-dimensional klt singularity such that
		\begin{enumerate}
		    \item  $(x\in X^{\rm an})\in(\bxanb)$, 
			\item $a_i\in I$ for any $i$, 
				\item each $\Delta_i\geq 0$ is a Weil divisor, and
			\item $\nvol(x,X,\Delta)>\epsilon$,
			\end{enumerate}
		then $x\in(X,\Delta)$ admits a $\delta$-plt blow-up.
	\end{theorem}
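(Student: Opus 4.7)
The strategy is to reduce this theorem to Theorem \ref{thm:existence of delta-plt blow-up}, whose only obstruction in our more general setting is the $\mathbb{Q}$-Cartier hypothesis on each $\Delta_i$. Finiteness of $I$ immediately supplies $\eta := \min(I \setminus \{0\}) > 0$ (after discarding any $\Delta_i$ with $a_i=0$), which matches condition (2) of that theorem, so the remaining task is to arrange $\mathbb{Q}$-Cartier-ness of the components. I would accomplish this by taking a small $\mathbb{Q}$-factorial modification.

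Concretely, apply \cite[Corollary 1.4.3]{BCHM10} to the klt pair $(X,\Delta)$ to obtain a small projective birational morphism $\pi: Y \to X$ over a neighborhood of $x$ with $Y$ being $\mathbb{Q}$-factorial. Setting $\tilde\Delta := \pi_*^{-1}\Delta = \sum_i a_i \tilde\Delta_i$, smallness of $\pi$ forces $K_Y + \tilde\Delta = \pi^*(K_X + \Delta)$, so $(Y,\tilde\Delta)$ is klt with identical log discrepancies, and each $\tilde\Delta_i$ is $\mathbb{Q}$-Cartier since $Y$ is $\mathbb{Q}$-factorial. For each closed point $y \in \pi^{-1}(x)$, the inclusion $\Val_{Y,y} \hookrightarrow \Val_{X,x}$ on the common function field $K(Y)=K(X)$ preserves both log discrepancy and volume, so $\hvol(y,Y,\tilde\Delta) \geq \hvol(x,X,\Delta) > \epsilon$.

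The technical core is verifying that each such $y \in Y$ analytically belongs to a $\mathbb{Q}$-Gorenstein bounded family determined only by $B \subset \cX \to B$ and $I$. For this, I would carry out the $\mathbb{Q}$-factorialization uniformly in the family: after stratifying $B$ into finitely many smooth locally closed subsets and passing to finite \'etale covers (as permitted by the fiberwise log resolutions recalled in the preliminaries), apply the relative MMP of BCHM over each stratum to produce an $\mathbb{R}$-Gorenstein family $B' \subset \cY \to B'$ together with a small birational morphism $\cY \to \cX \times_B B'$ providing simultaneous $\mathbb{Q}$-factorializations of the fibers. Fiberwise $\mathbb{Q}$-factoriality of $\cY$ then makes this family $\mathbb{Q}$-Gorenstein. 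By Proposition \ref{prop:an-iso}, the analytic germ at $y \in Y$ matches that at some closed point of $\cY \to B'$, so Theorem \ref{thm:existence of delta-plt blow-up} applied to $(y \in (Y,\tilde\Delta))$ yields a $\delta_Y$-plt blow-up of $(Y,\tilde\Delta)$ extracting a Koll\'ar component $S$ via $\nu: Z \to Y$, with $\delta_Y$ depending only on $n$, $I$, $\epsilon$ and $B \subset \cX \to B$.

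Finally, descend $S$ from $Y$ to $X$: since $A_{(X,\Delta)}(\ord_S) = A_{(Y,\tilde\Delta)}(\ord_S) > 0$, one may invoke \cite[Corollary 1.4.3]{BCHM10} to extract $\ord_S$ as the unique exceptional divisor of a projective birational morphism $\mu: W \to X$. The two extraction models $W \to X$ and $Z \to Y$ agree along a formal neighborhood of $S$ because they extract the same divisorial valuation and $\pi$ is a small crepant morphism, so the different divisors $\Diff_S(\mu_*^{-1}\Delta)$ computed on $X$ and $\Diff_S(\nu_*^{-1}\tilde\Delta)$ computed on $Y$ coincide; hence $(S, \Diff_S(\mu_*^{-1}\Delta))$ inherits the $\delta_Y$-klt property from $Y$, and by inversion of adjunction $(W, S + \mu_*^{-1}\Delta)$ is $\delta_Y$-plt near $S$. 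We then take $\delta := \delta_Y$. The principal obstacle is the uniform construction of the relative $\mathbb{Q}$-factorialization in the third step, which must treat possibly positive-dimensional fibers $\pi^{-1}(x)$ and the non-uniqueness of $\mathbb{Q}$-factorializations in a way compatible with the analytic germ structure at every point lying over $x$.
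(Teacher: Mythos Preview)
The paper's proof is entirely different and much shorter. Since the family $B\subset\cX\to B$ carries no boundary, the Weil divisor $K_{\cX/B}$ being $\bR$-Cartier is automatically $\bQ$-Cartier, so each fiber (and hence $X$, by Proposition~\ref{prop: Q-Gorestein}) is $\bQ$-Gorenstein near $x$. The paper then applies Lemma~\ref{lem:nv decomposable R-complements} to write $\Delta=\sum_{i=1}^l t_i\Delta_{(i)}$ where each $\Delta_{(i)}$ is a $\bQ$-divisor with $(X,\Delta_{(i)})$ klt; since $K_X$ is $\bQ$-Cartier, each $\Delta_{(i)}$ is itself $\bQ$-Cartier. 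Clearing denominators yields $\Delta=\sum_i\frac{t_i}{N}(N\Delta_{(i)})$ with each $N\Delta_{(i)}$ a $\bQ$-Cartier Weil divisor and coefficients $\frac{t_i}{N}$ lying in a finite set depending only on $n$ and $I$, so Theorem~\ref{thm:existence of delta-plt blow-up} applies directly. No birational modification of $X$ is needed.

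Your route has a genuine gap you did not flag, in step~5. A Koll\'ar component $S$ over $y\in(Y,\tilde\Delta)$ need not be a Koll\'ar component over $x\in(X,\Delta)$. If $\pi^{-1}(x)$ contains a curve $C$ through $y$, its strict transform $\tilde C\subset Z$ satisfies $S\cdot\tilde C>0$, so $-S$ is not even nef over $X$; your claim that the formal neighborhoods of $S$ in $W$ and $Z$ agree is false precisely because $Z$ carries this extra exceptional locus meeting $S$, so the different divisors need not coincide. Moreover, \cite[Corollary 1.4.3]{BCHM10} only extracts divisors with $A_{(X,\Delta)}(S)\le 1$, which fails in general (for the Atiyah flop, blowing up a point on the flopping curve of a small resolution gives $A=3$). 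The obstacle you do flag in step~3 is also real: relative MMP produces a $\bQ$-factorial total space $\cY$, but the fibers $\cY_{b'}$ need not be $\bQ$-factorial, so the strict transforms $\tilde\Delta_i$ need not be $\bQ$-Cartier on the fiber and Theorem~\ref{thm:existence of delta-plt blow-up} still does not apply there.
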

	
	\begin{proof}
	Suppose $I=\{c_1,\cdots, c_{|I|}\}$ where $c_i<c_j$ for any $i<j$. Since each $a_i$ is the same as $c_j$ for some $1\leq j\leq |I|$, we may write $\Delta=\sum_{j=1}^{|I|} c_j \Delta_j'$ where  $\Delta_j'\geq 0$ is a Weil divisor. 
	By Lemma \ref{lem:nv decomposable R-complements}, there exist positive real numbers $t_i$, rational points $\bm{a}_i=(a_i^1,\ldots,a_i^{|I|})\in\Qq^{|I|}$ for $1\leq i\leq l$ depending only on $n$ and $\bm{c}\coloneqq(c_1,\ldots,c_{|I|})$, such that  $\Delta=\sum_{i=1}^l t_i\Delta_{(i)}$, where $\Delta_{(i)}\coloneqq \sum_{j=1}^{|I|} a_i^j\Delta_j'$ is a $\Qq$-Cartier $\Qq$-divisor for any $i$. Let $N$ be a positive integer such that $N a_i^j$ is a positive integer for any $i,j$. Since $N\Delta_{(i)}$ is a $\bQ$-Cartier Weil divisor for any $i$, and $\Delta=\sum_{i=1}^l \frac{t_i}{N}(N\Delta_{(i)})$, Theorem \ref{thm:existence of delta-plt blow-up finite coeff} follows from Theorem \ref{thm:existence of delta-plt blow-up} as  $\{\frac{t_i}{N}\}_{1\leq i\leq l}$ has a positive lower bound.
	\end{proof}

	\begin{proof}[Proof of Corollary \ref{cor: accmld when nv bdd}]
		This follows from Theorem \ref{thm:existence of delta-plt blow-up} and \cite[Theorem 1.3]{HLS19}.
	\end{proof}
	
	The following example shows that both Theorems \ref{thm: lct positive lower bound implies existence of delta-plt blow-up} and \ref{thm:existence of delta-plt blow-up} no longer hold without assuming the positive lower bound of the nonzero coefficients of the boundary. 
	
	\begin{example}\label{eg:coeff no lower bound}
		Let $k>2$ be a positive integer and $\epsilon\in\bQ\cap[1/4,1/2)$.
		Consider the klt singularity $o\in(\bA^2, D_k:=(1-\epsilon)(\frac{1}{k-1}+\frac{1}{k})C_k)$, where $o$ is the origin and $C_k\coloneqq(x^{k-1}=y^{k})$.
		Let $E_k\subset Y_k\xrightarrow{\mu_k} \bA^2$ be the
		weighted blow-up of $o\in\bA^2$ with weight $(k,k-1)$. Let $\Delta_k\coloneqq \Diff_{E_k}((\mu_k)^{-1}_*D_k)$. Then $A_{(\bA^2,D_k)}(E_k)=(2k-1)\epsilon$, and by adjunction formula, we have
		\begin{equation*}
		\Delta_k=\left(1-\frac{1}{k-1}\right)p+\left(1-\frac{1}{k}\right)q
		+(1-\epsilon)\left(\frac{1}{k-1}+\frac{1}{k}\right)r,
		\end{equation*}
		where $p$ and $q$ are the singularities of $Y_k$ along $E_k$ and $r$ is a smooth point. So we get $\alpha(E_k,\Delta_k)=k^{-1}\epsilon^{-1}(\frac{1}{k}+\frac{1}{k-1})^{-1}=\frac{k-1}{\epsilon(2k-1)}\geq 1$
		for $k\gg 1$. Hence $o\in (\bA^2,D_k)$ is weakly exceptional, see for example, \cite[Theorem 4.3]{Pro00}. In particular, $E_k$ is the unique Koll\'{a}r component of $0\in (\bA^2,D_k)$. Thus for $k\gg 1$ we have
		\begin{align*}
		&\nvol(o,\bA^2, D_k)=\nvol_{(\bA^2,D_k),o}(\ord_{E_k})\\
		=&A_{(\bA^2,D_k)}(E_k)^2\cdot\vol_{\bA^2,o}(\ord_{E_k})=
		\frac{\epsilon^2(2k-1)^2}{k(k-1)}> \frac{1}{4}.
		\end{align*}
		However, for any given positive real number $\delta$, there exists a positive integer $k$, such that $0\in (\bA^2,D_k)$ does not admit a $\delta$-plt blow-up as the total log discrepancy of $(E_k,\Delta_k)$
		is $\frac{1}{k}\to 0$. 
	\end{example}
	
    The goal of the rest of this subsection is to prove Theorem \ref{thm:bddness for nvol}, that is, the converse direction of Conjecture \ref{conj:existence of delta-plt blow-up}. It is a consequence of Birkar--Borisov--Alexeev--Borisov Theorem, and an inequality involving the local volume and the $\delta$-invariant (see Proposition \ref{prop:control of nvol}). We will not need this result in the rest of this paper. 
	
    The $\delta$-invariant of a $\bQ$-Fano variety is introduced in \cite[Theorem 0.3]{FO18}, and is further studied by many people. We refer readers to \cite{Blu18b} for the definition of the $\delta$-invariant for log Fano pairs. Recall the following characterization of K-semistability in terms of the $\delta$-invariant.
    \begin{theorem}[{\cite[Theorem D]{Blu18b},\cite[Theorem 0.3]{FO18}, \cite[Theorem B]{BJ17}}]  \label{thm: kss equiv delta larger than or equal to 1}
        Let $(X, \Delta)$ be a log Fano pair, where $\Delta$ is a $\Qq$-divisor. Then $(X, \Delta)$ is K-semistable if and only if $\delta(X, \Delta) \geq 1$.
    \end{theorem}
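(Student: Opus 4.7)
The plan is to rewrite both sides of the equivalence as the same valuative condition, namely nonnegativity of the $\beta$-invariant along every divisor over $X$. Given a prime divisor $E$ over $X$ extracted by $\mu\colon Y\to X$, recall the $S$-invariant (expected order of vanishing of sections of $-m(K_X+\Delta)$ along $E$)
\[
S_{(X,\Delta)}(E) \;:=\; \frac{1}{\vol(-(K_X+\Delta))}\int_{0}^{\infty}\vol\bigl(\mu^{*}(-(K_X+\Delta))-tE\bigr)\,dt,
\]
and the associated Fujita invariant $\beta_{(X,\Delta)}(E) := A_{(X,\Delta)}(E) - S_{(X,\Delta)}(E)$.

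I would first invoke the Fujita--Li valuative criterion, formulated in the log setting in \cite{Blu18b}: the log Fano pair $(X,\Delta)$ is K-semistable if and only if $\beta_{(X,\Delta)}(E)\geq 0$ for every prime divisor $E$ over $X$. Then I would combine the original Fujita--Odaka definition $\delta(X,\Delta)=\lim_m \delta_m(X,\Delta)$, where $\delta_m$ is the infimum of $\lct(X,\Delta;D)$ over all $m$-basis type $\bQ$-divisors $D\sim_{\bQ}-(K_X+\Delta)$, with the main theorem of \cite{BJ17} identifying this limit with the valuative infimum
\[
\delta(X,\Delta) \;=\; \inf_{E}\frac{A_{(X,\Delta)}(E)}{S_{(X,\Delta)}(E)},
\]
the infimum ranging over all prime divisors $E$ over $X$. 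Consequently, $\delta(X,\Delta)\geq 1$ is equivalent to $\beta_{(X,\Delta)}(E)\geq 0$ for every such $E$. Chaining the two equivalences through the common condition \emph{``$\beta\geq 0$ on every divisor over $X$''} completes the proof.

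The substantive step, and the only place where genuine work occurs, is the Blum--Jonsson identification of $\delta$ as an infimum over valuations: it requires passing from basis-type divisors of a single linear system $|-m(K_X+\Delta)|$ to an arbitrary divisorial valuation, which is achieved by constructing $m$-basis-type divisors adapted to the filtration induced by that valuation and then exploiting convergence of volumes of graded linear series as $m\to\infty$. The Fujita--Li direction is by now a standard translation of Donaldson--Futaki invariants of test configurations into $\beta$-invariants of the induced divisorial valuations, so once both equivalences are in hand the rest of the argument is bookkeeping.
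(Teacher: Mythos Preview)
Your outline is correct and matches the approach in the cited references \cite{Blu18b}, \cite{FO18}, \cite{BJ17}. Note, however, that the paper does not supply its own proof of this statement: it is quoted as an external result and used as a black box (in the proof of Proposition~\ref{prop:control of nvol} and in Example~\ref{eg:Cartier cone sing}), so there is no ``paper's proof'' to compare against beyond the citations themselves.
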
 



    \begin{proposition}\label{prop:control of nvol}
        	Let $x\in(X,\Delta)$ be an $n$-dimensional klt singularity, where $\Delta$ is a $\Qq$-divisor. Let $\mu:(Y,S)\to (X,x)$ be a plt blow-up of $(X,\Delta)$, and $S$ the corresponding Koll\'ar component of $x\in (X,\Delta)$. Let $\Delta_Y\coloneqq \mu_*^{-1}\Delta$, and $K_S+\Delta_S\coloneqq (K_Y+\Delta_Y+S)|_{S}$. Then
		\begin{equation*}
		\nvol(x,X,\Delta)\geq \nvol_{(X,\Delta),x}(\ord_S)\cdot\min\{1,\delta(S,\Delta_S)\}^n.
		\end{equation*}
	\end{proposition}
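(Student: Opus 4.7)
The inequality is essentially trivial when $\delta(S,\Delta_S)\geq 1$: in that case $\min\{1,\delta(S,\Delta_S)\}^n=1$ and we only need $\nvol(x,X,\Delta)\geq \nvol_{(X,\Delta),x}(\ord_S)$. But by Theorem \ref{thm: kss equiv delta larger than or equal to 1} the hypothesis $\delta(S,\Delta_S)\geq 1$ means that $(S,\Delta_S)$ is a K-semistable log Fano pair, so Theorem \ref{thm:minimizer is K-ss} yields that $\ord_S$ is a minimizer of $\nvol_{(X,\Delta),x}$. This gives the inequality (in fact equality). So the interesting case is $0<\delta:=\delta(S,\Delta_S)<1$, where the target becomes $\nvol(x,X,\Delta)\geq \nvol_{(X,\Delta),x}(\ord_S)\cdot \delta^n$.

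Set $a:=A_{(X,\Delta)}(\ord_S)$. Pulling back $K_Y+\Delta_Y+(1-a)S=\mu^*(K_X+\Delta)$ to $S$ and applying adjunction (using $\mu(S)=\{x\}$) gives the standard identity $K_S+\Delta_S=a\cdot S|_S$, which confirms that $(S,\Delta_S)$ is log Fano with $-S|_S$ ample and shows that
\[
\nvol_{(X,\Delta),x}(\ord_S)=a^n\cdot\vol_{X,x}(\ord_S)=a\cdot(-K_S-\Delta_S)^{n-1}
\]
by \cite[Lemma 2.11]{LX16}. The plan is to reduce the bound to the valuative characterization of the $\delta$-invariant
\[
\delta=\inf_{F/S}\frac{A_{(S,\Delta_S)}(F)}{S(-K_S-\Delta_S;F)},
\]
where $F$ runs over prime divisors over $S$. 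By Theorem \ref{thm:lx-kc-minimizing} it suffices to prove, for every Koll\'ar component $T$ over $x\in(X,\Delta)$, that $\nvol_{(X,\Delta),x}(\ord_T)\geq \nvol_{(X,\Delta),x}(\ord_S)\cdot\delta^n$, which frees us to work with a single divisorial valuation.

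For such $T$ we exploit the identification of graded algebras
\[
\gr_{\ord_S}\cO_{X,x}\cong\bigoplus_{m\geq 0}H^0(S,-mS|_S)
\]
afforded by the plt blow-up, transferring $\ord_T$ to a filtration on the section ring of $(S,-S|_S)$. Writing $c:=\ord_T(S)\geq 0$ for the weight of $\ord_T$ along $\ord_S$ and $\bar v$ for the residual valuation datum on $S$ obtained from this filtration, an Abban--Zhuang style adjunction decomposition gives
\[
A_{(X,\Delta)}(\ord_T)\geq c\cdot a+A_{(S,\Delta_S)}(\bar v),
\]
while the Newton--Okounkov body of $(S,-S|_S)$ twisted by $\bar v$ identifies $\vol_{X,x}(\ord_T)$ with a precise integral over that Okounkov body. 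Using $A_{(S,\Delta_S)}(\bar v)\geq\delta\cdot S(-K_S-\Delta_S;\bar v)$ from the valuative characterization of $\delta$, and optimizing in the parameter $c$, converts the integral expression for $\vol_{X,x}(\ord_T)$ into the product $(-K_S-\Delta_S)^{n-1}\cdot\delta^n$ up to the $A_{(X,\Delta)}(\ord_T)^n$ factor, yielding the desired inequality.

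\textbf{Main obstacle.} The bulk of the work lies in Case 2: making the Newton--Okounkov/degeneration step precise. Concretely, one must (i) construct the residual valuation $\bar v$ on $S$ canonically from a given Koll\'ar component $T$ over $x$, including the case where the center of $T$ on $Y$ is a higher-codimension subvariety of $S$; (ii) prove the Abban--Zhuang decomposition of log discrepancies; and (iii) execute the final optimization in $c$ so that the factor $\delta(S,\Delta_S)^n$ emerges cleanly from the Okounkov-body volume formula and the $\delta$-invariant bound. Steps (ii) and (iii) are the essential technical content; once they are in place, the reduction from an arbitrary valuation to Koll\'ar components via Theorem \ref{thm:lx-kc-minimizing} and the dichotomy between $\delta\geq 1$ and $\delta<1$ assemble the argument.
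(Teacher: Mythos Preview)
Your treatment of the K-semistable case matches the paper's. For $\delta(S,\Delta_S)<1$, however, the paper takes a much shorter route that completely avoids the Abban--Zhuang adjunction, Newton--Okounkov bodies, and the optimization you flag as the main obstacle. For any rational $\beta<\delta(S,\Delta_S)$, the paper invokes \cite[Theorem~7.2]{BL18b} to produce an effective $\bQ$-divisor $D_S\sim_{\bQ}-(K_S+\Delta_S)$ with $(S,\Delta_S+(1-\beta)D_S)$ K-semistable and $(S,\Delta_S+D_S)$ klt; lifts $D_S$ to $D_Y\sim_{\bQ}-(K_Y+\Delta_Y+S)$ on $Y$ with $D_Y|_S=D_S$ via \cite[Lemma~7.1]{HLS19}; and checks by inversion of adjunction that $(Y,\Delta_Y+D_Y+S)$ is plt near $S$. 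Setting $D:=\mu_*D_Y$ gives $A_{(X,\Delta+D)}(\ord_S)=0$, hence $A_{(X,\Delta+(1-\beta)D)}(\ord_S)=\beta\,A_{(X,\Delta)}(\ord_S)$, and $S$ is now a K-semistable Koll\'ar component of $x\in(X,\Delta+(1-\beta)D)$. Theorem~\ref{thm:minimizer is K-ss} then yields
\[
\hvol(x,X,\Delta)\geq \hvol(x,X,\Delta+(1-\beta)D)=\hvol_{(X,\Delta+(1-\beta)D),x}(\ord_S)=\beta^n\,\hvol_{(X,\Delta),x}(\ord_S),
\]
and letting $\beta\nearrow\delta(S,\Delta_S)$ concludes. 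Your approach instead attempts a direct valuation-by-valuation comparison via restriction to $S$; this is in the spirit of the local-to-global estimates in Abban--Zhuang and \cite{XZ20}, and could plausibly be made to work, but the steps you list as obstacles (constructing the residual valuation $\bar v$ for arbitrary centers, the precise volume identity, and the ``optimization in $c$''---noting that $c=\ord_T(S)$ is fixed by $T$, not a free parameter) are exactly the substantive content, and none are carried out. The paper's boundary-interpolation trick replaces all of that with a two-line monotonicity argument.
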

	\begin{proof}
	If $(S,\Delta_S)$ is $K$-semistable, then by Theorem \ref{thm:minimizer is K-ss}, $\ord_S$ is the minimizer of $\nvol(x,X,\Delta)$. Thus $	\nvol(x,X,\Delta)=\nvol_{(X,\Delta),x}(\ord_S)$.
	
	Otherwise, $(S,\Delta_S)$ is not $K$-semistable. By Theorem \ref{thm: kss equiv delta larger than or equal to 1}, $\delta(S,\Delta_S)<1$. It suffices to show that for any positive real number $\beta<\delta(S,\Delta_S)$,
	$$	\nvol(x,X,\Delta)\geq \nvol_{(X,\Delta),x}(\ord_S)\cdot\beta^n.$$
	
	By Theorem \cite[Theorem 7.2]{BL18b}, there exists an effective $\Qq$-divisor $D_S\sim_{\Qq}-(K_S+\Delta_S)$, such that $(S,\Delta_S+(1-\beta)D_S)$ is K-semistable and $(S,\Delta_S+D_S)$ is klt. By \cite[Lemma 7.1]{HLS19}, there exists an effective $\Qq$-divisor $D_Y\sim_{\Qq}-(K_Y+\Delta_Y+S)$, such that $D_Y|_{S}=D_S$. By inversion of adjunction \cite[Corollary 1.4.5]{BCHM10}, $(Y,\Delta_Y+D_Y+S)$ is plt near $S$. Let $D\coloneqq\mu_{*}D_Y$. Then $A_{(X,\Delta+D)}(\ord_S)=0$ which implies that $A_{(X,\Delta+(1-\beta)D)}(\ord_S)=\beta A_{(X,\Delta)}(\ord_S)$. By Theorem \ref{thm:minimizer is K-ss}, $\ord_S$ is the minimizer of $\nvol(x,X,\Delta+(1-\beta)D)$. Thus 
	\begin{align*}
	\nvol(x,X,\Delta)& \geq \nvol(x,X,\Delta+(1-\beta)D) =\beta^n A_{(X,\Delta)}(\ord_S)^n\vol_{X,x}(\ord_S)= \beta^n\nvol_{(X,\Delta),x}(\ord_S),
	\end{align*}
	and we are done.
	\end{proof}

	\begin{proof}[Proof of Theorem \ref{thm:bddness for nvol}]
		
		We first show the theorem for the case when $\Delta$ is a $\Qq$-divisor. 
		
		Let $\mu:(Y,S+\mu_*^{-1}\Delta)\to (X,\Delta)$ be the 
		$\delta$-plt blow-up. By the adjunction formula (see e.g. \cite[(17.2.2)]{Fli92}),
		$(S,\Delta_S)$ is $\delta$-klt, where $\Delta_S\coloneqq\Diff_{S}(\mu_*^{-1}\Delta)$.
		Then by Birkar--Borisov--Alexeev--Borisov Theorem \cite[Theorem 1.1]{Bir16}, $S$ belongs to a bounded family. 
		
		Let $L\coloneqq(-S)|_S$. By \cite[Proposition 4.4]{HLS19}, there exists a positive integer $M=M(\delta,\epsilon_1,n)$ which only depends on $\delta,\epsilon_1$ and $n$, such that $ML$ is a Cartier divisor on $S$.
		
		Now by Proposition \ref{prop:control of nvol}, we know that
		\[
		\nvol(x,X,\Delta)\geq \nvol_{(X,\Delta),x}(\ord_S)\cdot\min\{1,\delta(S,\Delta_S)\}^n.
		\]
		By \cite[Lemma 2.7]{LX16}, we have
		\[
		\nvol_{(X,\Delta),x}(\ord_S)=A_{(X,\Delta)}(\ord_S)^n\cdot L^{n-1}
		\geq \epsilon_1^n M^{1-n}(ML)^{n-1}\geq \epsilon_1^n M^{1-n}.
		\]
		Thus it is enough to give a positive lower bound of $\delta(S,\Delta_S)$. By \cite[Theorem C]{Blu18b} (see also \cite[Theorem A]{BJ17}),  $\delta(S,\Delta_S)\geq 
		\alpha(S,\Delta_S)$, where $\alpha(S,\Delta_S)$ is Tian's $\alpha$-invariant. Since $(S,\Delta_S)$
		is a $\delta$-klt log Fano pair, \cite[Theorem 1.4]{Bir16}
		implies that there exists a positive real number $t=t(\delta,n)$ which only depends on $\delta$ and $n$, such that $\alpha(S,\Delta_S)\geq t$. Therefore,
		\[
		\nvol(x,X,\Delta)\geq \epsilon_1^n M^{1-n}t^n.
		\]
		
		For the general case, let $M'\coloneqq M(\frac{\delta}{2},\frac{\epsilon_1}{2},n)$, $t'\coloneqq t(\frac{\delta}{2},n)$, and $\epsilon$ any positive real number such that $\epsilon<(\frac{\epsilon_1}{2})^n M'^{1-n}t'^n$. By Lemma \ref{lem: nv lipschitz in rational envelope}, there exists a $\Qq$-divisor $\Delta'$, such that $x\in (X,\Delta')$ admits a $\frac{\delta}{2}$-plt blow-up, $\mld(X,\Delta')\ge\frac{\epsilon_1}{2}$, and 
		\[
		\nvol(x,X,\Delta)\ge \nvol(x,X,\Delta')-\left(\left(\frac{\epsilon_1}{2}\right)^n M'^{1-n}t'^n-\epsilon\right)\ge \epsilon.
		\]
		Therefore the theorem is proved. 
	\end{proof}
	\subsection{Boundedness of Cartier indices in a family}
		In this section, we will show Theorem \ref{thm:boundedness of global index}. 

	\begin{theorem}\label{thm:boundedness of local index}
		Let $B\subset (\cX,\cD)\to B$ be an $\Rr$-Gorenstein family of klt singularities, then there exists a positive integer $N$ such that for any closed point $b\in B$, if $D$ is a $\bQ$-Cartier Weil divisor near $b\in\cX_b$, then $ND$ is Cartier near $b\in\cX_b$. 
	\end{theorem}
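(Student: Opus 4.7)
The plan is to combine the uniform existence of $\delta$-plt blow-ups along the family provided by Theorem \ref{thm:delta-plt for family} with the Cartier index bound \cite[Theorem 1.6]{HLS19}, which states that for an $\epsilon$-lc klt singularity admitting a $\delta$-plt blow-up, the Cartier index near the singular point of any $\bQ$-Cartier Weil divisor is bounded by some $N_0 = N_0(\dim, \epsilon, \delta)$. This is essentially the same mechanism already invoked implicitly in the proof of Theorem \ref{thm: lct positive lower bound implies existence of delta-plt blow-up}.

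First I would extract uniform constants $\epsilon_0$ and $\delta$ valid across the whole family. By the remark following Definition \ref{defn:family of singularities}, after stratifying $B$ and passing to finite \'etale covers the family admits a fiberwise log resolution on each stratum; this yields a uniform $\epsilon_0 > 0$ such that $b \in (\cX_b, \cD_b)$ is $\epsilon_0$-lc for every closed point $b \in B$. In particular, the ambient singularity $b \in \cX_b$ (with zero boundary) is $\epsilon_0$-lc as well, since removing the effective boundary only increases log discrepancies. Next, Theorem \ref{thm:delta-plt for family} supplies a uniform $\delta > 0$ such that every $b \in (\cX_b, \cD_b)$ admits a $\delta$-plt blow-up $\mu \colon \cY_b \to \cX_b$ with Koll\'ar component $S$.

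I would then observe that the same $\mu$ serves as a $\delta$-plt blow-up of the boundary-free singularity $b \in \cX_b$: since $(\cY_b, S + \mu_*^{-1}\cD_b)$ is $\delta$-plt near $S$ and $\mu_*^{-1}\cD_b \geq 0$, the pair $(\cY_b, S)$ remains $\delta$-plt near $S$. Applying \cite[Theorem 1.6]{HLS19} to $b \in \cX_b$ then bounds the Cartier index near $b$ of any $\bQ$-Cartier Weil divisor $D$ by $N := N_0(\dim \cX_b, \epsilon_0, \delta)$, a constant depending only on $B \subset (\cX, \cD) \to B$, which is the desired uniform bound.

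The main potential obstacle is verifying that the precise form of \cite[Theorem 1.6]{HLS19} indeed bounds the Cartier index of an \emph{arbitrary} $\bQ$-Cartier Weil divisor near the singularity, rather than only one appearing as a component of a prescribed boundary; granted this, the bookkeeping above is routine. A secondary point to verify is that the stratification/\'etale cover reduction used to obtain $\epsilon_0$ is compatible with the analytic Cartier index statement — but since being Cartier near $b$ is a local property on the formal neighborhood $\widehat{\cO}_{\cX_b, b}$, this causes no issue.
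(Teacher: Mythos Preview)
Your proposal follows essentially the same route as the paper: extract a uniform $\epsilon_0$ from a fiberwise log resolution, invoke Theorem \ref{thm:delta-plt for family} for a uniform $\delta$, and conclude via \cite[Theorem 1.6]{HLS19}. The paper's proof is exactly these three lines.

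There is, however, one subtle issue with your detour through the boundary-free singularity. The family is only assumed $\bR$-Gorenstein, so only $K_{\cX_b}+\cD_b$ is guaranteed to be $\bR$-Cartier; $K_{\cX_b}$ alone need not be $\bQ$-Cartier (nor $\cD_b$ individually). Thus the ``boundary-free singularity $b\in\cX_b$'' may fail to be a pair at all, and the sentence ``$(\cY_b,S)$ remains $\delta$-plt'' may not make sense because $K_{\cY_b}+S$ need not be $\bR$-Cartier. This step is in any case unnecessary: \cite[Theorem 1.6]{HLS19} applies directly to the klt singularity $b\in(\cX_b,\cD_b)$ with its boundary intact and bounds the Cartier index near $b$ of any $\bQ$-Cartier Weil divisor --- this is precisely how the paper invokes it (and how it is used earlier in the proof of Theorem \ref{thm: lct positive lower bound implies existence of delta-plt blow-up}). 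Drop the boundary-stripping step and apply the citation straight to $(\cX_b,\cD_b)$; then your argument matches the paper's.
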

	
	\begin{proof}Possibly shrinking $B$ and replacing it by a finite \'{e}tale covering, we may assume that $B\subset (\cX,\cD)\to B$ admits a fiberwise log resolution. Thus there exists a positive real number $\epsilon_0$, such that $b\in (\cX_b,\cD_b)$ is $\epsilon_0$-lc. Now Theorem follows from Theorem \ref{thm: family kc with nv bdd} and \cite[Theorem 1.6]{HLS19}.
	\end{proof}	
	
	\begin{remark}
	Theorem \ref{thm:boundedness of local index} could also be proved by Theorem \ref{thm: finite degree formula}.
	\end{remark}
	
	\begin{proof}[Proof of Theorem \ref{thm:boundedness of global index}]
	Let $(X',\Delta')\to (X,\Delta)$ be a small $\Qq$-factorialization, $K_{X'}+{B'}$ the pullback of $K_X+B$, and $D'$ the pullback of $D$. By \cite[Theorem 1.2]{Bir18}, $(X',\Delta')$ belongs to a bounded family. Since $X'\to X$ is a blow-up, there exists a $\Qq$-divisor $H'\ge0$ on $X'$, such that $-H'$ is ample over $X$. In particular, $K_{X'}+\Delta'+H'$ is antiample over $X$. Possibly rescaling $H'$, we may assume that $(X',\Delta'+H')$ is klt. Then $X'\to X$ is a $(K_{X'}+\Delta'+H')$-negative contraction of an extremal face of the Mori-Kleiman cone of $X'$. Hence the Cartier index of $D'$ and $D$ are the same by the cone theorem. Thus possibly replacing $(X,\Delta)$ with $(X',0)$, we may assume that $X$ is $\Qq$-factorial, and $\Delta=0$. 
	
	Let $\cX\to B$ be the bounded family. Possibly shrinking $B$, using Noetherian induction and replacing $\cX$ with its normalization, by \cite[Proposition 2.4]{HX15} and generic flatness, we may assume that $\cX$ is normal, $\cX\to B$ is flat, and $({\cX},0)$ is klt. 
	
	Consider the following diagram:
	\begin{equation*}
	\xymatrix{
	\cX\ar[rdd]_\id\ar[rrd]^\id \ar[rd]^\sigma& &\\
	&\cX\times_B\cX\ar[r]\ar[d]_p &\cX\ar[d]_\pi\\
	&\cX\ar[r] &B
	}\label{derivedequiv}
	\end{equation*}
	where $\sigma(\cX)$ is a section of $p:\cX\times_B\cX\to\cX$. We remark that  $\sigma(\cX)\subset \cX\times_B\cX\to \cX$ is a $\Qq$-Gorestein family of klt singularities according to Remark \ref{rem: basechage q-gorestein family sing}.
	
	Since $(\cX\times_B\cX)_x\cong \cX_{\pi(x)}=X$ for any closed point $x\in X\subset \cX$, $D$ is a $\Qq$-Cartier Weil divisor on $(\cX\times_B\cX)_x$. 
By Theorem \ref{thm:boundedness of local index}, there exists a positive integer $N$ which only depends on $\cP$, such that $ND$ is Cartier near $x\in (\cX\times_B\cX)_x\cong X$ for any closed point $x\in X$. Hence $ND$ is Cartier.
	\end{proof}	
	
	\subsection{Discreteness and ACC for local volumes}
	
	\begin{proof}[Proof of Theorem \ref{thm:discreteACC}(1)]
		We may assume that $n\ge 2$. It suffices to prove that for any positive real number $\epsilon$, the set 
		\[
		V_\epsilon:=\left\{ \nvol(x,X,\Delta)\left| \begin{array}{l}
		(x\in X^{\rm an})\in (B\subset \cX^{\rm an}\to B),~ \Delta=\sum_{i=1}^m a_i\Delta_i,\\
		\text{ where $a_i\in I$,} \text{ each $\Delta_i\ge0$ is a $\bQ$-Cartier}\\
		\text{ Weil divisor,} \textrm{ and }\hvol(x,X,\Delta)>\epsilon.
		\end{array}\right.
		\right\}.
		\]
		is finite. 
		By Theorem \ref{thm:existence of delta-plt blow-up} and \cite[Theorem 1.6]{HLS19}, there exists a positive integer $N$ which only depends on $n,I,\epsilon$, and $\bxb$, such that $N\Delta_i$ is Cartier near $x$ for any $i$. Possibly replacing $I$ with $\frac{1}{N}I$, we may assume that each $\Delta_i$ is Cartier. By Theorems \ref{thm:estimate on lct} and  \ref{thm:truncation}, there exists a positive integer $k$ depending only on $n,I,\epsilon$ and $\bxb$, such that if $\nvol(x,X,\Delta)\in V_\epsilon$, then 
		\[
		\nvol(x,X,\Delta)=\nvol(x',X',\Delta'^k),
		\]
		for any $k$-th analytic truncation of $x\in (X,\Delta)$. By Proposition \ref{prop:truncation is bdd} and Theorem \ref{thm:constructibility R-div}, $\nvol(x,X,\Delta^k)$ belongs to a finite set.
	\end{proof}
	
	\begin{proof}[Proof of Theorem \ref{thm:discreteACC}(2)]
		We may assume that $n\ge 2$. Assume to the contrary that there exists a sequence of klt singularities $x_j\in(X_j,\Delta\supj=\sum_{i=1}^{m_j} a_i\supj \Delta_i\supj)$, such that $a_i\supj\in I$, $(x_j\in X_j^{\rm an})\in (B\subset \cX^{\rm an}\to B)$, and the sequence of normalized volumes $\{V\supj\coloneqq \nvol(x_j,X_j,\Delta\supj)\}_{j=1}^{\infty}$ is strictly increasing. In particular, there exists $\epsilon>0$ such that $\nvol(x_j,X_j,\Delta\supj)>\epsilon$ for all $j$. 
		
		
		By Theorem \ref{thm:estimate on lct}, possibly shrinking $X_j$ near $x_j$, there exists a positive real number $\gamma$ which only depends on $n,\bxb$ and $\frac{\epsilon}{2}$ such that $\lct(X_j,\Delta\supj;\Delta\supj)>\gamma$. 
		By \cite[18.22]{Fli92}, possibly passing to a subsequence, we may assume that $m_j=m$ for any $j$, and $\{a_i\supj\}_{j=1}^{\infty}$ is increasing for each $1\leq i\leq m$. Set $a_i\coloneqq \lim_{j\to+\infty} a_i\supj\le 1$ for each $1\leq i\leq m$. By Theorem \ref{thm:uniform lipschitz}, possibly passing to a subsequence, we may assume that $\nvol(x_j,X_j,\Delta'^{(j)})>\epsilon/2$, where $\Delta'^{(j)}\coloneqq \sum_{i=1}^m a_i\Delta_i\supj$. In particular, $x_j\in (X_j,\Delta'^{(j)})$ is klt. Since $I'\coloneqq \{a_1,\ldots,a_m\}$ is a finite set, by Theorem \ref{thm:discreteACC}(1), $\nvol(x_j,X_j,\Delta'^{(j)})$ belongs to a finite set. Possibly passing to a subsequence, we may assume that there exists a positive real number $V>\frac{\epsilon}{2}$, such that $\nvol(x_j,X_j,\Delta'^{(j)})=V$ for any $j$. 
		
		By Theorem \ref{thm:estimate on lct}, we have $\lct(X_j,\Delta'^{(j)};\Delta'^{(j)})>\gamma$.
		By Theorem \ref{thm:uniform lipschitz} again, there exists a positive real number $C$ which only depends on $n,I',\gamma$ and $\bxb$, such that
		\begin{equation*}
		V\leq V\supj \leq V+C\sum_{i=1}^m|a_i\supj-a_i|.
		\end{equation*}
		Let $j\to +\infty$, we derive a contradiction as we assume that $\{V\supj\}_{j=1}^{\infty}$ is strictly increasing.
	\end{proof}
	
	If the coefficients of $\Delta$ belong to a finite set, then we may relax the assumption ``each $\Delta_i\ge0$ is a $\Qq$-Cartier Weil divisor'' in Theorem \ref{thm:discreteACC}(1) to ``each $\Delta_i\ge0$ is a Weil divisor'', as stated in Conjecture \ref{conj:ACC for nvol}.
	\begin{theorem}\label{thm:discrete nv}
		Let $n$ be a positive integer and let $I\subset [0,1]$ be a finite set. Let $B\subset \cX\to B$ be a $\bQ$-Gorenstein family of $n$-dimensional klt singularities. The set of local volumes
		\[  
		    \Vol_{B\subset\cX\to B, I}:=  \left\{ \nvol(x,X,\Delta)\left| \begin{array}{l}
		    (x\in X^{\rm an})\in (B\subset \cX^{\rm an}\to B),~ \Delta=\sum_{i=1}^m a_i\Delta_i,\\
		    \text{ where $a_i\in I$,} \text{ each $\Delta_i\ge0$ is a Weil divisor,}\\
		    \textrm{ and }x\in(X,\Delta) \text{ is klt.} 
		    \end{array}\right.
		\right\}\]
		has no non-zero accumulation point.
	\end{theorem}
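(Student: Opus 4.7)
The plan is to reduce Theorem \ref{thm:discrete nv} to Theorem \ref{thm:discreteACC}(1) by rewriting $\Delta$ as a non-negative $\bR$-linear combination of \emph{$\bQ$-Cartier} Weil divisors whose coefficients lie in a single finite set depending only on $n$ and $I$. The key tool is Lemma \ref{lem:nv decomposable R-complements}, which was already used in exactly this spirit in the proof of Theorem \ref{thm:existence of delta-plt blow-up finite coeff}.

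First I would bound the number of components. Setting $\eta := \min(I \setminus \{0\})$ (assuming $I \ne \{0\}$; otherwise the statement is trivial), the klt hypothesis forces $m \le n/\eta$ by \cite[18.22]{Fli92}, so the coefficient vector $\bm{a} = (a_1, \ldots, a_m)$ ranges over the finite set $\bigcup_{m \le n/\eta} I^m$. For each such $\bm{a}$, I would apply Lemma \ref{lem:nv decomposable R-complements} to obtain a decomposition $\bm{a} = \sum_{i=1}^{l} t_i \bm{a}_i$ with positive reals $t_i$ summing to one and rational vectors $\bm{a}_i = (a_i^1, \ldots, a_i^m) \in \bQ^m$ lying in the rational envelope of $\bm{a}$. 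Setting $\Delta_{(i)} := \sum_{j} a_i^j \Delta_j$, the lemma asserts that $(X, \Delta_{(i)})$ is klt for every $i$.

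Next I would verify that $\Delta_{(i)}$ is $\bQ$-Cartier. Since the family $B \subset \cX \to B$ is $\bQ$-Gorenstein and $(x \in X^{\rm an}) \in (B \subset \cX^{\rm an} \to B)$, Proposition \ref{prop: Q-Gorestein} implies that $K_X$ is $\bQ$-Cartier near $x$. The pair $(X, \Delta_{(i)})$ being klt forces $K_X + \Delta_{(i)}$ to be $\bR$-Cartier, so $\Delta_{(i)} = (K_X + \Delta_{(i)}) - K_X$ is an $\bR$-Cartier $\bQ$-divisor on a normal variety, hence $\bQ$-Cartier (an $\bR$-Cartier $\bQ$-divisor is $\bQ$-Cartier since $\bR/\bQ$ is $\bQ$-flat, so $\mathrm{Div}(X)_\bQ/\mathrm{CaDiv}(X)_\bQ \hookrightarrow \mathrm{Div}(X)_\bR/\mathrm{CaDiv}(X)_\bR$). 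Because the finite set of $\bm{a}$'s produces only finitely many collections $\{\bm{a}_i\}$, one can pick a single positive integer $N$, depending only on $n$ and $I$, clearing the denominators of every $\bm{a}_i$ that arises. Then $N\Delta_{(i)}$ is a $\bQ$-Cartier Weil divisor, and we have
\[
\Delta \;=\; \sum_{i=1}^{l} \tfrac{t_i}{N}\bigl(N\Delta_{(i)}\bigr),
\]
with coefficients $\tfrac{t_i}{N}$ belonging to a finite set $I' \subset [0,1]$ depending only on $n$ and $I$.

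Finally, this exhibits $\nvol(x, X, \Delta)$ as an element of $\Vol_{B \subset \cX \to B,\, I'}$ in the sense of Theorem \ref{thm:discreteACC}(1); applying that theorem to $I'$ concludes the proof. There is no substantive obstacle here — the argument is essentially bookkeeping around Lemma \ref{lem:nv decomposable R-complements}, whose uniformity in the finite parameter $\bm{a} \in \bigcup_{m \le n/\eta} I^m$ is exactly what guarantees that the replacement coefficient set $I'$ is finite. The only mildly non-formal step is confirming the $\bQ$-Cartierness of $\Delta_{(i)}$, which rests on the $\bQ$-Gorenstein assumption on the ambient family.
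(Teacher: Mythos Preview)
Your proof is correct and follows essentially the same approach as the paper's own proof: apply Lemma \ref{lem:nv decomposable R-complements} to each finite coefficient vector $\bm{a}$, rewrite $\Delta=\sum_i \tfrac{t_i}{N}(N\Delta_{(i)})$ with $N\Delta_{(i)}$ a $\bQ$-Cartier Weil divisor, and invoke Theorem \ref{thm:discreteACC}(1). You are in fact more explicit than the paper on two points it leaves to the reader---the uniform bound $m\le n/\eta$ from \cite[18.22]{Fli92}, and the justification that $\Delta_{(i)}$ is $\bQ$-Cartier via the $\bQ$-Gorenstein hypothesis on the family and Proposition \ref{prop: Q-Gorestein}---so your write-up is arguably cleaner.
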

	\begin{proof}
	We may assume that $n\ge 2$. By Lemma \ref{lem:nv decomposable R-complements}, there exist positive real numbers $t_i$, rational points $\bm{a}_i=(a_i^1,\ldots,a_i^m)\in\Qq^m$ depending only on $n$ and $\bm{a}\coloneqq(a_1,\ldots,a_m)$, such that  $\Delta=\sum_{i=1}^l t_i\Delta_{(i)}$, where $\Delta_{(i)}\coloneqq \sum_{j=1}^m a_i^j\Delta_j$ is a $\Qq$-Cartier $\Qq$-divisor for any $i$. Let $N$ be a positive integer such that $N a_i^j$ is a positive integer for any $i,j$. Since $N\Delta_{(i)}$ is a Weil divisor for any $i$, and $\Delta=\sum_{i=1}^l \frac{t_i}{N}(N\Delta_{(i)})$, Theorem \ref{thm:discrete nv} follows from Theorem \ref{thm:discreteACC}(1).
	\end{proof}

	The following result is a direct consequence of  Theorem \ref{thm:discreteACC}.

		\begin{cor}\label{cor:smooth}
		Let $n$ be a positive integer and let $I\subset [0,1]$ be a subset. Consider the set of local volumes
		\[
		\Vol_{n,I}^{\rm sm}:=\left\{\hvol(x,X,\Delta)\left| \begin{array}{l}
		\textrm{$x\in X$ is $n$-dimensional smooth}, ~x\in (X,\Delta)\textrm{ is klt}, \\
		\text{where }\Delta=\sum_{i=1}^m a_i\Delta_i,~\text{$a_i\in I$ for any $i$,} \\
		\text{and each $\Delta_i\ge 0$ is a Weil divisor}
		\end{array}\right.
		\right\}.
		\]
		\begin{enumerate}
			\item If $I$ is finite, then $\Vol_{n,I}^{\rm sm}$ has no non-zero accumulation point.
			\item If $I$ satisfies the DCC, then $\Vol_{n,I}^{\rm sm}$ satisfies the ACC.
		\end{enumerate}
	\end{cor}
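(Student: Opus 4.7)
The plan is to reduce directly to Theorem \ref{thm:discreteACC} by exhibiting a single $\bQ$-Gorenstein bounded family of klt germs that analytically captures every $n$-dimensional smooth point. First I would take $B := \Spec \bk$ (a reduced point), $\cX := \bA^n_{\bk}$, and the tautological section $B \subset \cX$ given by the origin $0 \in \bA^n$. This is trivially a $\bQ$-Gorenstein family of $n$-dimensional klt singularities in the sense of Definition \ref{defn:family of singularities}. For any $n$-dimensional smooth closed point $x \in X$, Cohen's structure theorem gives $\widehat{\cO_{X,x}} \cong \bk\llbracket t_1, \ldots, t_n \rrbracket \cong \widehat{\cO_{\bA^n,0}}$, so $(x \in X^{\rm an}) \in (B \subset \cX^{\rm an} \to B)$.

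Next, I would observe that on a smooth variety every Weil divisor is Cartier (since the local rings are regular, hence UFDs), so in particular every Weil divisor $\Delta_i \geq 0$ appearing in Corollary \ref{cor:smooth} is automatically $\bQ$-Cartier. Consequently, the defining set of $\Vol_{n,I}^{\rm sm}$ is contained in the set $\Vol_{B \subset \cX \to B, I}$ appearing in Theorem \ref{thm:discreteACC} for the family chosen above. Applying that theorem then yields both conclusions: part (1) produces the desired discreteness away from $0$ when $I$ is finite, and part (2) yields the ACC when $I$ satisfies the DCC. There is no substantive obstacle here; the only point requiring care is checking the $\bQ$-Cartier hypothesis, which is automatic in the smooth setting.
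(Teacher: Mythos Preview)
Your proof is correct and is precisely the argument the paper has in mind: the corollary is stated as a direct consequence of Theorem \ref{thm:discreteACC}, and your reduction via the one-point family $\cX=\bA^n$ together with Cohen's structure theorem and the observation that Weil equals Cartier on a smooth variety is exactly how one makes this reduction.
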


	
	
	\subsection{Case of surfaces}
	In this subsection, we will prove Theorem \ref{thm:surface ACC} and Theorem \ref{thm:delta-plt-blowup-lowdim}(1). 
	\begin{lem}\label{lem: surface klt sing nv an bdd}
	Let $\epsilon$ be a positive real number. There exists a finite set of surface klt singularities $\{(x_{i}\in X_{i})\}_{i}$ depending only on $\epsilon$ satisfying the following. 
	
	For any  klt surface singularity $x\in X$ such that
	$\nvol(x,X,\Delta)>\epsilon$, $(x\in X)$ is analytically isomorphic to $(x_{i}\in X_{i})$ for some $i$.
	\end{lem}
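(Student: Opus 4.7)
The plan is to reduce to the boundary-free case and then invoke the classical classification of klt surface singularities together with the local volume formula for quotient singularities. First, observe that for any valuation $v \in \Val_{X,x}$ one has $A_{(X,\Delta)}(v) \le A_X(v)$, so $\hvol_{(X,\Delta),x}(v) \le \hvol_{X,x}(v)$. Taking infima gives $\hvol(x,X) \ge \hvol(x,X,\Delta) > \epsilon$. Hence it suffices to prove the statement assuming $\Delta = 0$, that is, to show that the set of analytic isomorphism classes of $2$-dimensional klt singularities $x \in X$ with $\hvol(x,X) > \epsilon$ is finite.

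Next, I would invoke the classification of klt surface singularities: over an algebraically closed field of characteristic $0$, any $2$-dimensional klt singularity $x \in X$ is analytically isomorphic to a quotient singularity $(0 \in \mathbb{A}^2/G)$ for some finite subgroup $G < GL_2(\bk)$ acting freely in codimension one (see e.g.\ \cite{KM98}). Moreover, by \cite[Example 7.1]{LX16} (or as a consequence of the finite degree formula, Theorem \ref{thm: finite degree formula}, applied to the Galois cover $\mathbb{A}^2 \to \mathbb{A}^2/G$), we have
\[
\hvol(0, \mathbb{A}^2/G) \;=\; \frac{4}{|G|}.
\]
Combined with the lower bound $\hvol(x,X) > \epsilon$, this forces $|G| < 4/\epsilon$.

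Finally, I would use the fact that, up to conjugation inside $GL_2(\bk)$, there are only finitely many finite subgroups of $GL_2(\bk)$ of order at most a given constant $4/\epsilon$. Since conjugate subgroups produce isomorphic (and hence analytically isomorphic) quotient singularities $\mathbb{A}^2/G$, this yields a finite list $\{(x_i \in X_i)\}_i$, depending only on $\epsilon$, that captures every analytic isomorphism class meeting our volume lower bound. The main potential obstacle is ensuring the classification of klt surface singularities as quotients and the volume formula $4/|G|$ are being applied in the correct generality (arbitrary algebraically closed field of characteristic zero and with the conventions of this paper); both are standard and already cited in the introduction, so no serious difficulty is expected.
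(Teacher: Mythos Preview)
Your proposal is correct and follows essentially the same route as the paper: reduce to $\Delta=0$ via $\hvol(x,X)\ge\hvol(x,X,\Delta)$, identify the klt surface singularity analytically with a quotient $\bA^2/G$ by a finite group without pseudo-reflections, use the finite degree formula (or \cite[Example 7.1]{LX16}) to get $|G|<4/\epsilon$, and conclude by finiteness of conjugacy classes of small subgroups of $GL_2(\bk)$. The only cosmetic difference is that the paper spells out the finiteness of subgroups via the explicit classification of finite subgroups of $\mathrm{PGL}_2(\bk)$, whereas you invoke the general representation-theoretic fact directly; both are valid.
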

	\begin{proof}
	It is well-known that $(x\in X)$ is analytically isomorphic to a klt surface singularity $(x'\in X')$ which is a quotient of $0\in \bA^2$ by a finite group $G$ containing no pseudo-reflections, see for example, \cite[Proposition 4.18]{KM98}. By Proposition \ref{prop:an-iso}(3), $\nvol(x',X')=\nvol(x,X)>\epsilon$.  
	
	Let $(y\in Y)\coloneqq (0\in \bA^2)$. There exists a finite Galois morphism $f:(y\in Y)\to (x'\in X')$, such that $K_{Y}=f^{*}K_{X'}$, and $\deg f=|G|$. By Theorem \ref{thm: finite degree formula} and Theorem \ref{thm:nvol is bounded by n^n}, $$\epsilon<\nvol(x',X')=\frac{1}{|G|}\nvol(y,Y)\le \frac{4}{|G|},$$
	which implies that $|G|< \frac{4}{\epsilon}$.
	
	It is well known that any finite subgroups of $\mathrm{PGL}_{2}(\bk)$ is isomorphic to $\mathbb{Z}/r, D_{r}$ (the dihedral group), $\mathfrak{A}_{4}$, $\mathfrak{S}_{4}$ or $\mathfrak{A}_{5}$, and there is only one conjugacy class for each of these groups (see e.g. \cite{Kle93}). As $G\in \mathrm{GL}_{2}(\bk)$ and $|G|<\frac{4}{\epsilon}$, $G$ is isomorphic to $\mathbb{Z}/r, D_{r}$, $\mathfrak{A}_{4}$, $\mathfrak{S}_{4}$ or $\mathfrak{A}_{5}$ up to a scaling of a $\lfloor\frac{4}{\epsilon}\rfloor!$-th unit root, and there is only one conjugacy class for each of these groups. Thus the isomorphism class of $(x'\in X')$ belongs to a finite set 
    only depending on $\epsilon$, and we are done.
	\end{proof}
	
	\begin{proof}[Proof of Theorem \ref{thm:surface ACC}] 
	This follows from Lemma \ref{lem: surface klt sing nv an bdd} and Theorem \ref{thm:discreteACC}.
	\end{proof}
	
	\begin{proof}[Proof of Theorem \ref{thm:delta-plt-blowup-lowdim}(1)]
		This follows from Lemma \ref{lem: surface klt sing nv an bdd} and Theorem \ref{thm:existence of delta-plt blow-up}.
	\end{proof}
	
	\subsection{\texorpdfstring{$3$}{}-dimensional terminal singularities}
	
	In this subsection, we prove Theorems \ref{thm:3fold-terminal-vol} and \ref{thm:delta-plt-blowup-lowdim}(2). First of all, by \cite[Proposition 14]{BL18a} and Corollary \ref{cor:delta-plt-fields} since local volumes and the existence of $\delta$-plt blow-ups are preserved under algebraically closed field extension and restriction, we may assume that the base field $\bk=\bC$ in this subsection. Our approach here is largely based on the classification of $3$-dimensional terminal singularities by Mori \cite{Mor85} as explained by Reid \cite{Rei87} (see also \cite[Chapter 5.3]{KM98}).
	
	We first give the following useful lemma on $3$-dimensional Gorenstein terminal singularities.
	
	\begin{lemma}[{\cite{Mor85}}]\label{lem:3-fold involution}
	Let $z\in Z$ be a $3$-dimensional Gorenstein terminal singularity. Then there exists a formal power series $f(z_2,z_3,z_4)$ such that $\widehat{\cO_{Z,z}}\cong \bC\llbracket z_1,z_2,z_3,z_4\rrbracket/(z_1^2+f(z_2,z_3,z_4))$.
	\end{lemma}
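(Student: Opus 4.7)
The plan is to combine the classification of $3$-dimensional terminal Gorenstein singularities with two formal power series manipulations (Weierstrass preparation and completing the square).

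First I would invoke the classification theorem of Reid and Mori: by \cite{Rei87} (see also \cite[Theorem 5.34]{KM98}), any $3$-dimensional terminal Gorenstein singularity $z \in Z$ is a compound Du Val (cDV) singularity. In particular, $\widehat{\cO_{Z,z}}$ is isomorphic to a hypersurface singularity, so there exists a formal power series $F \in \bC\llbracket z_1,z_2,z_3,z_4\rrbracket$ with $F(\mathbf{0})=0$ such that $\widehat{\cO_{Z,z}}\cong \bC\llbracket z_1,z_2,z_3,z_4\rrbracket/(F)$. Moreover, since a generic hyperplane section through $z$ is Du Val, and every Du Val surface singularity is of multiplicity $2$, the multiplicity of $Z$ at $z$ equals $2$. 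Equivalently, the order of $F$ as a formal power series is exactly $2$, so the quadratic part $F_2$ of $F$ is nonzero.

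Next, working in characteristic $0$, I would diagonalize the nonzero quadratic form $F_2$ by a linear change of coordinates $(z_1,z_2,z_3,z_4)$. After such a change I may assume that the coefficient of $z_1^2$ in $F_2$ is nonzero; equivalently, $F(z_1,0,0,0)\in\bC\llbracket z_1\rrbracket$ is of order exactly $2$. At this point the Weierstrass preparation theorem applies to $F$ with distinguished variable $z_1$: I can write
\[
F \;=\; u\cdot\bigl(z_1^2 + A(z_2,z_3,z_4)\,z_1 + B(z_2,z_3,z_4)\bigr),
\]
where $u\in\bC\llbracket z_1,z_2,z_3,z_4\rrbracket$ is a unit and $A,B\in\bC\llbracket z_2,z_3,z_4\rrbracket$ both vanish at the origin. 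Since multiplying by a unit does not change the ideal, I may replace the generator by $z_1^2+Az_1+B$.

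Finally, I complete the square by introducing the new coordinate $z_1':=z_1+A/2$, which is again a regular system of parameters on $\bC\llbracket z_1,z_2,z_3,z_4\rrbracket$. Then
\[
z_1^2+Az_1+B \;=\; (z_1')^2 + \bigl(B-\tfrac{A^2}{4}\bigr),
\]
so setting $f(z_2,z_3,z_4):=B(z_2,z_3,z_4)-\tfrac{1}{4}A(z_2,z_3,z_4)^2\in\bC\llbracket z_2,z_3,z_4\rrbracket$ yields the desired isomorphism $\widehat{\cO_{Z,z}}\cong\bC\llbracket z_1,z_2,z_3,z_4\rrbracket/(z_1^2+f)$. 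The only serious input is the classification used in the first step; once the hypersurface presentation and the multiplicity-$2$ statement are in hand, the remainder is a textbook application of Weierstrass preparation plus a quadratic completion.
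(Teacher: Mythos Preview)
The paper does not give its own proof of this lemma; it simply records the statement with the citation \cite{Mor85} and uses it as a black box. Your argument supplies the standard derivation of the normal form from the cDV classification and is correct in substance: once one knows the singularity is an isolated hypersurface double point, Weierstrass preparation in the distinguished variable followed by completing the square (valid since $\mathrm{char}\,\bC=0$) yields exactly the claimed presentation.

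One small point to patch: the lemma as stated (and as used later in the paper, e.g.\ for index-$1$ covers of type~(I) cyclic quotient singularities) must also cover the case where $z\in Z$ is a \emph{smooth} point. In that case the generic hyperplane section is smooth rather than Du Val, the multiplicity is $1$, and your order-$2$ argument does not apply verbatim. This is of course trivial to handle directly, since $\bC\llbracket z_1,z_3,z_4\rrbracket\cong\bC\llbracket z_1,z_2,z_3,z_4\rrbracket/(z_1^2+z_2)$; adding a one-line remark to that effect makes the proof complete.
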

	
	The following theorem and table summarize the classification of $3$-dimensional terminal singularities from \cite{Mor85} and \cite[Section 6]{Rei87}. Here  $\bmu_r$ is the multiplicative group of $r$-th roots of unity.
	
	\begin{theorem} [\cite{Mor85}]\label{thm:terminal-classify}
	Let $x\in X$ be a $3$-dimensional terminal singularity. Let $r$ be the Gorenstein index of $x\in X$. Assume that $r\geq 2$. Then $x\in X$ is isomorphic to the $\bmu_r$-quotient of a $3$-dimensional Gorenstein terminal singularity $z\in Z$ as the index $1$ cover of $x\in X$. Moreover, there exist local analytic coordinates $(x_1,x_2,x_3, x_4)$ with a diagonal $\bmu_r$-action and a $\bmu_r$-semi-invariant formal power series $\phi(x_1,x_2,x_3,x_4)$ such that $\widehat{\cO_{Z,z}}$ is $\bmu_r$-equivariantly isomorphic to $\bC\llbracket x_1,x_2,x_3,x_4\rrbracket/(\phi)$. For a list of types of the $\bmu_r$-action and $\phi$, see Table \ref{table:terminal}.
	\end{theorem}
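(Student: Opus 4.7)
The statement is a classification result essentially due to Mori, so the proof will be largely expository, assembling the ingredients already available in the literature and the paper. Here is how I would proceed.

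First, I would construct the index-$1$ cover. Since $x\in X$ is terminal of Gorenstein index $r$, the Weil divisor class group $\mathrm{Cl}(\widehat{\cO_{X,x}})$ contains $K_X$ as an element of order $r$, which produces a canonical cyclic $\bmu_r$-cover $z\in Z\to x\in X$ that is étale in codimension one and on which $K_Z$ is Cartier. By the standard inversion-of-adjunction type results (see Reid's exposition in \cite[\S 6]{Rei87}), $z\in Z$ is again terminal, and now Gorenstein. The action of $\bmu_r$ on $Z$ is free in codimension one and fixes $z$, and by construction $x\in X\cong (z\in Z)/\bmu_r$, so it suffices to analyse the $\bmu_r$-equivariant local structure of $z\in Z$.

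Next, by Lemma \ref{lem:3-fold involution} applied to the Gorenstein terminal singularity $z\in Z$, there is an isomorphism $\widehat{\cO_{Z,z}}\cong \bC\llbracket z_1,z_2,z_3,z_4\rrbracket/(z_1^2+f(z_2,z_3,z_4))$. The group $\bmu_r$ acts on $\widehat{\cO_{Z,z}}$ fixing the maximal ideal, so it acts on $\widehat{\fm_{Z,z}}/\widehat{\fm_{Z,z}}^{2}$ as a finite-order linear representation. Since $\bmu_r$ is abelian (in fact cyclic) and $\bk=\bC$ is characteristic zero, this representation is diagonalizable; lifting a diagonalizing basis back by the Cohen structure theorem (and using the standard equivariant formal Morse/lifting argument, i.e.\ averaging) produces $\bmu_r$-semi-invariant formal coordinates $(x_1,x_2,x_3,x_4)$ on $\widehat{\cO_{\bA^4,0}}$ in which the equation $\phi$ of $\widehat{\cO_{Z,z}}$ is itself $\bmu_r$-semi-invariant.

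The remaining, and by far the hardest, step is to identify the finite list of possible weight data and normal forms for $\phi$ in Table \ref{table:terminal}. This is exactly the content of Mori's theorem \cite{Mor85}: the constraints coming from $K_Z$ being Cartier, $z\in Z$ being an isolated terminal singularity, and the $\bmu_r$-action being free in codimension one together force $r\in\{2,3,4,5,6,7,8,12\}$ and pin down $\phi$ up to an analytically trivial change of coordinates inside each weight stratum. The main obstacle in writing this step honestly is that there is no short proof: one has to combine the general-elephant argument, a case-by-case analysis of the induced action on the tangent cone, and Reid's terminal lemma on admissible weights. In our proposal we therefore simply invoke Mori's classification for this step, referring to \cite[\S 6]{Rei87} and \cite[\S 5.3]{KM98} for the explicit tabulation; the first two steps above reduce the original statement cleanly to that classification input.
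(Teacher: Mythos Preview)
The paper does not give its own proof of this theorem; it is stated as a classification result due to Mori \cite{Mor85} (with exposition in \cite{Rei87}) and is used as a black box. Your proposal is therefore not so much a competing proof as an expository outline of how the cited result is obtained, and in that sense it is consistent with what the paper does, namely defer to the literature.

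That said, two points in your sketch deserve correction. First, your assertion that the constraints force $r\in\{2,3,4,5,6,7,8,12\}$ is wrong: type (I) in Table~\ref{table:terminal} allows \emph{any} $r\geq 2$ (the cyclic quotient singularities $\frac{1}{r}(a,-a,1)$ are terminal for all $r$ with $\gcd(a,r)=1$), so the Gorenstein index of a $3$-dimensional terminal singularity is unbounded. Second, the order in which you invoke Lemma~\ref{lem:3-fold involution} and then diagonalize the $\bmu_r$-action is not quite right. For type (I) the $\bmu_r$-diagonal coordinates give $\phi=x_1x_2+g(x_3^r,x_4)$, which is \emph{not} of the form $x_1^2+f(x_2,x_3,x_4)$; the coordinate change bringing it to that form (namely $z_1=\tfrac{x_1+x_2}{2}$, $z_2=\tfrac{x_1-x_2}{2}$) destroys the diagonal $\bmu_r$-action. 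The paper itself has to confront exactly this incompatibility later, in the proof of Theorem~\ref{thm:delta-plt-blowup-lowdim}(2), where type (I) is treated separately. The correct logical order for the classification is: first embed $\hZ$ $\bmu_r$-equivariantly as a hypersurface in $\bA^4$ (using that $\bmu_r$ acts linearly on $\widehat{\fm}/\widehat{\fm}^2$), and then classify the possible semi-invariant equations---this is precisely Mori's analysis, which you ultimately invoke anyway.
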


	\begin{table}[htbp!]\renewcommand{\arraystretch}{1.5}
\caption{$3$-dimensional terminal singularities (cf. \cite[p. 391]{Rei87})}\label{table:terminal}
	\begin{tabular}{|c|c|l|l|l|}
	\hline
	 Type &  $r$  &  $\bmu_r$-action & $\phi$  \\ \hline\hline
	(I) &  any  & $\frac{1}{r}(a,-a,1,0;0)$ &  $x_1 x_2+g(x_3^r,x_4)$ \\
	(II) & $4$ & $\frac{1}{4}(3,1,1,2;2)$ & $x_1^2+g(x_2,x_3,x_4)$ \\
	(III) & $3$ & $\frac{1}{3}(0,2,1,1;0)$ & $x_1^2 + x_2^3 +x_2 g(x_3,x_4) + h(x_3,x_4)$ \\
	(IV) & $2$ & $\frac{1}{2}(1,0,1,1;0)$ & $x_1^2+g(x_2,x_3,x_4)$ \\\hline
	\end{tabular}
	\end{table}
	
	In Table \ref{table:terminal}, $\frac{1}{r}(a_1,a_2,a_3,a_4;b)$ means that the generator $\zeta=e^{2\pi i/r}$ of $\bmu_r$ acts on the coordinates $(x_1,x_2,x_3,x_4)$ and the formal power series $\phi$ as $(x_i;\phi)\mapsto (\zeta^{a_i}x_i;\zeta^b\phi)$.
	\medskip
	
	Let $z\in Z$ be a $3$-dimensional Gorenstein terminal singularity. Let $R:=\cO_{Z,z}$, and $\fm$ the maximal ideal of $R$. 
	We denote by $\hat{z}\in \widehat{Z}=\Spec\, \widehat{\cO_{Z,z}}$ the completion of $z\in Z$. Then by Lemma \ref{lem:3-fold involution} we know that $\widehat{R}\cong \bC\llbracket z_1,z_2,z_3,z_4\rrbracket/(z_1^2+f(z_2,z_3,z_4))$.  Denote by $\tau: \widehat{Z}\to \widehat{Z}$ the involution given by $(z_1,z_2,z_3,z_4)\mapsto (-z_1,z_2,z_3,z_4)$. A valuation $v\in \Val_{Z,z}^{\circ}$ is called \emph{$\tau$-invariant} if $v=\phi_{\tau}(v)$ where $\phi_{\tau}:\Val_{Z,z}^{\circ}\to \Val_{Z,z}^{\circ}$ is the involution induced by $\tau$ according to Proposition \ref{prop:an-iso}. Let $\fD:=(f(z_2,z_3,z_4)=0)$ be an effective Cartier divisor on $\widehat{\bA^3}=\Spec \bC\llbracket z_2,z_3,z_4 \rrbracket$. Hence by taking quotient of the $\bmu_2$-action on $\hZ$ induced by $\tau$,  we obtain a finite crepant Galois morphism $\pi: \hZ\to (\widehat{\bA^3}, \frac{1}{2}\fD)$ given by $(z_1,z_2,z_3,z_4)\mapsto (z_2, z_3,z_4)$.
	
	\begin{lem}\label{lem:kc-involution} With the above notation, there is a $1$-to-$1$ correspondence between $\tau$-invariant Koll\'ar components $S$ over $z\in Z$ and Koll\'ar components $\bar{S}$ over $0\in (\widehat{\bA^3}, \frac{1}{2}\fD)$. Moreover, we have $\hvol_{Z,z}(\ord_S)=2\hvol_{(\widehat{\bA^3}, \frac{1}{2}\fD),0}(\ord_{\bar{S}})$.
	\end{lem}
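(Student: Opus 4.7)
The strategy is to realize $\pi$ as the formal analog of a finite crepant $\bmu_2$-Galois cover and to deduce both the bijection and the volume formula from Proposition \ref{prop:finite deg formula} via algebraic truncation.

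First, I would verify that $\pi$ is finite crepant of degree $2$ with Galois group $\langle\tau\rangle\cong\bmu_2$. The invariant ring in $\widehat{\cO_{Z,z}} = \bC\llbracket z_1,z_2,z_3,z_4\rrbracket/(z_1^2+f)$ under $z_1\mapsto -z_1$ is exactly $\bC\llbracket z_2,z_3,z_4\rrbracket$, so $\pi$ is indeed the $\bmu_2$-quotient. A Poincar\'e residue computation $dz_1\wedge dz_2\wedge dz_3\wedge dz_4/d(z_1^2+f) = dz_2\wedge dz_3\wedge dz_4/(2z_1)$ yields the crepant identity $K_{\widehat{Z}} = \pi^*\bigl(K_{\widehat{\bA^3}} + \tfrac{1}{2}\fD\bigr)$, where the branch divisor $\fD = (f=0)$ appears with coefficient $\tfrac{1}{2}$ since $\pi$ is a double cover branched along it.

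Next, I would reduce the Koll\'ar component correspondence to an algebraic statement via truncation. For $k$ sufficiently large, the truncation $f^k$ of $f$ defines an affine algebraic variety $Z^k := \{z_1^2 + f^k = 0\}\subset \bA^4$ whose completion $\widehat{\cO_{Z^k,z}}$ is analytically isomorphic to $\widehat{\cO_{Z,z}}$; in particular, $z\in Z^k$ is again a Gorenstein terminal singularity. The involution $\tau_k: z_1\mapsto -z_1$ is algebraic on $Z^k$ and produces an algebraic finite crepant $\bmu_2$-Galois cover $\pi_k: Z^k \to (\bA^3, \tfrac{1}{2}\fD^k)$. Proposition \ref{prop:finite deg formula} applies directly to $\pi_k$, producing a bijection between $\tau_k$-invariant Koll\'ar components $S^k$ over $z\in Z^k$ and Koll\'ar components $\bar S$ over $0\in(\bA^3,\tfrac{1}{2}\fD^k)$. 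Transporting along the analytic isomorphism $\widehat{\cO_{Z^k,z}}\cong\widehat{\cO_{Z,z}}$ via Proposition \ref{prop:an-iso} (which identifies $\tau_k$-invariant Koll\'ar components over $Z^k$ with $\tau$-invariant ones over $Z$) and applying Theorem \ref{thm:truncation-an-bdry}(2) on the base (which identifies Koll\'ar components over $0\in(\widehat{\bA^3},\tfrac{1}{2}\fD)$ with those over $0\in(\bA^3,\tfrac{1}{2}\fD^k)$ for $k\gg 1$), we obtain the desired correspondence $S\leftrightarrow\bar S$.

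Finally, the volume formula follows from Proposition \ref{prop:finite deg formula}(1) for $\pi_k$: $\hvol_{Z^k,z}(\ord_{S^k}) = \deg(\pi_k)\cdot\hvol_{(\bA^3,\frac{1}{2}\fD^k),0}(\ord_{\bar S}) = 2\hvol_{(\bA^3,\frac{1}{2}\fD^k),0}(\ord_{\bar S})$. Since local volumes of valuations are preserved under analytic isomorphism (Proposition \ref{prop:an-iso}) and under truncation of analytic boundary (Theorem \ref{thm:truncation-an-bdry}(1) together with the analog of Proposition \ref{prop: truncation keep nv(v)} for individual valuations), we conclude $\hvol_{Z,z}(\ord_S) = 2\hvol_{(\widehat{\bA^3},\frac{1}{2}\fD),0}(\ord_{\bar S})$. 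The main obstacle is executing the truncation argument compatibly on both sides of the cover: one must select the truncation level $k$ large enough that $Z^k$ is analytically isomorphic to $Z$ near $z$, that the $\bmu_2$-action on $Z^k$ agrees with $\tau$ under this identification, and that Theorem \ref{thm:truncation-an-bdry} governs the relevant Koll\'ar components downstairs. The $\bmu_2$-compatibility is automatic because $\tau$ fixes $z_2,z_3,z_4$ and $f^k$ involves only these variables, but verifying the analytic isomorphism $\widehat{\cO_{Z^k,z}}\cong\widehat{\cO_{Z,z}}$ may require an explicit formal change of variables absorbing the error term $f - f^k\in\widehat{\fm}^k$, or an appeal to Artin approximation.
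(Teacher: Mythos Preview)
Your approach via algebraic truncation differs from the paper's and, as written, has a genuine gap at Step~3. You invoke Theorem~\ref{thm:truncation-an-bdry}(2) to ``identify Koll\'ar components over $0\in(\widehat{\bA^3},\tfrac{1}{2}\fD)$ with those over $0\in(\bA^3,\tfrac{1}{2}\fD^k)$,'' but that theorem only says the infimum of normalized volumes over Koll\'ar components computes the local volume; it gives no bijection. More fundamentally, the truncation machinery (Proposition~\ref{prop: truncation keep nv(v)}, Theorem~\ref{thm:truncation-an-bdry}) only controls valuations with \emph{bounded} normalized volume: the threshold $k_1$ there depends on the bound $V$. Since Koll\'ar components over $z\in Z$ can have arbitrarily large normalized volume, no single truncation level $k$ captures all of them, so your chain $Z\leftrightarrow Z^k\leftrightarrow (\bA^3,\tfrac{1}{2}\fD^k)\leftrightarrow (\widehat{\bA^3},\tfrac{1}{2}\fD)$ cannot produce a bijection of \emph{all} Koll\'ar components for fixed $k$. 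One could attempt a repair by choosing $k$ depending on each individual $S$, but then you must verify well-definedness (that the corresponding $\bar S$ is independent of the choice of $k$ large) and that the two maps are inverse to one another; this is considerably more work than what you wrote, and you have not indicated how to do it. A secondary point: the analytic isomorphism $\widehat{\cO_{Z^k,z}}\cong\widehat{\cO_{Z,z}}$ follows from finite determinacy of the isolated hypersurface singularity $f$ (and the $\bmu_2$-equivariance from choosing the right-equivalence on $z_2,z_3,z_4$ only); Artin approximation is not the relevant tool here.

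By contrast, the paper's proof avoids truncation entirely and works directly at the level of formal schemes. Given a $\tau$-invariant Koll\'ar component $S\subset Y\to Z$, one takes the formal neighborhood $\hS\subset\hY$ on which $\bmu_2$ acts, and the $\bmu_2$-quotient of $\hY$ yields $\bar S$ over $(\widehat{\bA^3},\tfrac{1}{2}\fD)$. Conversely, a Koll\'ar component $\bar S\subset\bar Y\to\widehat{\bA^3}$ pulls back via $\hY:=\bar Y\times_{\widehat{\bA^3}}\hZ$; irreducibility of the preimage (needed for it to be a single Koll\'ar component) comes from Koll\'ar--Shokurov connectedness, as in \cite[Proof of Lemma~2.13]{LX16}. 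The volume formula then follows from the same degree-two computation as in Proposition~\ref{prop:finite deg formula}. This direct approach sidesteps both the uniformity-in-$k$ problem and the finite-determinacy input.
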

	
	\begin{proof}
	This is a straightforward consequence of Proposition \ref{prop:finite deg formula}. To be more precise, if $S\subset Y\to Z$ is a $\tau$-invariant Koll\'ar component over $z\in Z$, we may take a formal neighborhood  $\hS\subset \hY$ of $S\subset Y$ where $\bmu_2$ acts. Then taking the $\bmu_2$-quotient of $(\hY, \hS)$ provides a Koll\'ar component $\bar{S}$ over $0\in (\widehat{\bA^3}, \frac{1}{2}\fD)$. Conversely, if $\bar{S}\subset \bar{Y}\to \widehat{\bA^3}$ is a Koll\'ar component over $0\in (\widehat{\bA^3}, \frac{1}{2}\fD)$, by taking Cartesian product we obtain $\hS\subset \hY:=\bar{Y}\times_{\widehat{\bA^3}}\hZ$, and $\hS$ is a Koll\'ar component over $\hat{z}\in \hZ$ by the Koll\'ar-Shokurov connectedness theorem as in \cite[Proof of Lemma 2.13]{LX16}. The equality on normalized volumes follows from similar arguments as \cite[Proof of Lemma 2.13]{LX16}.
	\end{proof}
	
	\begin{lem}\label{lem:xz-analytic} With the above notation, we have $\hvol(z,Z)=\inf_{S} \hvol_{Z,z}(\ord_S)$ where $S$ runs over all $\tau$-invariant Koll\'ar components over $z\in Z$.  
	\end{lem}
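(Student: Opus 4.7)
The inequality $\hvol(z,Z)\le\inf_{S}\hvol_{Z,z}(\ord_S)$ taken over $\tau$-invariant Koll\'ar components is immediate from the definition of the local volume. For the reverse inequality, the plan is to push everything through the degree-two quotient $\pi:\hZ\to(\widehat{\bA^3},\tfrac{1}{2}\fD)$ and identify both sides with the local volume of the analytic pair $0\in(\widehat{\bA^3},\tfrac{1}{2}\fD)$. Combining the bijection in Lemma~\ref{lem:kc-involution} with the volume identity $\hvol_{Z,z}(\ord_S)=2\hvol_{(\widehat{\bA^3},\frac{1}{2}\fD),0}(\ord_{\bar S})$ stated there, and applying Theorem~\ref{thm:truncation-an-bdry}(2) to the analytic pair, I get
\[
\inf_{S\text{ $\tau$-invariant}}\hvol_{Z,z}(\ord_S)=2\,\hvol\bigl(0,\widehat{\bA^3},\tfrac{1}{2}\fD\bigr).
\]
So the lemma reduces to establishing the analytic finite-degree formula $\hvol(z,Z)=2\,\hvol(0,\widehat{\bA^3},\tfrac{1}{2}\fD)$.

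To prove this formula, I would truncate to reduce to the algebraic setting. Let $f^k\in\bC[z_2,z_3,z_4]$ be the polynomial truncation of the power series $f$ modulo $\widehat{\fm}^k$, and set $\fD^k:=(f^k=0)$ and $Z_k:=(z_1^2+f^k=0)\subset\bA^4$ with its natural projection $\pi_k:Z_k\to\bA^3$, which is a finite crepant Galois double cover branched along $\fD^k$. On the base side, Theorem~\ref{thm:truncation-an-bdry}(1) gives $\hvol(0,\widehat{\bA^3},\tfrac{1}{2}\fD)=\hvol(0,\bA^3,\tfrac{1}{2}\fD^k)$ for $k\gg 1$. On the cover side, finite determinacy of isolated hypersurface singularities—applicable since any 3-dimensional terminal Gorenstein singularity has finite Tjurina number—gives $\widehat{\cO_{Z_k,0}}\cong\widehat{\cO_{Z,z}}$ for $k\gg 1$, and hence $\hvol(0,Z_k)=\hvol(z,Z)$ by Proposition~\ref{prop:an-iso}(3). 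Applying the algebraic finite-degree formula (Theorem~\ref{thm: finite degree formula}) to $\pi_k$ then chains together to
\[
\hvol(z,Z)=\hvol(0,Z_k)=2\,\hvol\bigl(0,\bA^3,\tfrac{1}{2}\fD^k\bigr)=2\,\hvol\bigl(0,\widehat{\bA^3},\tfrac{1}{2}\fD\bigr),
\]
as required.

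The main technical point is the invocation of finite determinacy: one must verify that the formal isomorphism class of $z_1^2+f$ is determined by a sufficiently high Taylor truncation of $f$. This is classical for isolated hypersurface singularities, and any $k$ strictly exceeding the Tjurina number of $f$ suffices. If one wishes to avoid external finite-determinacy results entirely, an alternative is to appeal directly to Mori's polynomial normal forms for 3-dimensional Gorenstein terminal singularities (cf.\ \cite{Mor85}), reducing to the case where $f$ is already a polynomial so that $Z$ itself is algebraic and Theorem~\ref{thm: finite degree formula} applies on the nose with $f^k=f$.
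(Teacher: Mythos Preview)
Your argument is correct and takes a genuinely different route from the paper's proof. The paper argues directly: by the uniqueness of the $\hvol$-minimizer \cite[Theorem~1.1]{XZ20}, the minimizer $v_*$ must coincide with $\phi_\tau(v_*)$ and hence be $\tau$-invariant; its valuation ideals $\fa_m(v_*)$ are then $\bmu_2$-invariant, so the equivariant version of Lemma~\ref{lem:kc-compute-lct} (cf.\ \cite[Lemma~4.8]{Zhu20}) produces $\tau$-invariant Koll\'ar components $S_m$ computing $\lct(Z;\fa_m(v_*))$, and the usual approximation from Theorem~\ref{thm:lx-kc-minimizing} finishes. Your proof instead establishes the analytic degree-two formula $\hvol(z,Z)=2\,\hvol(0,\widehat{\bA^3},\tfrac{1}{2}\fD)$ by truncating $f$ to a polynomial $f^k$, invoking finite determinacy of isolated hypersurface singularities to identify $\widehat{\cO_{Z_k,0}}\cong\widehat{\cO_{Z,z}}$, and then applying the algebraic finite degree formula (Theorem~\ref{thm: finite degree formula}) together with Theorem~\ref{thm:truncation-an-bdry}(1). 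Both approaches ultimately rest on results of \cite{XZ20}, but on different ones: the paper uses uniqueness of the minimizer, while you use the finite degree formula. Your route has the pleasant feature of yielding equation~\eqref{eq:xz-3fold} directly, which the paper derives later \emph{from} Lemma~\ref{lem:xz-analytic}; the cost is the external input of finite determinacy. Note, however, that your proposed alternative of appealing to Mori's normal forms does not quite work as stated, since the functions $g,h$ in Table~\ref{table:terminal} are in general formal power series rather than polynomials.
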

	
	\begin{proof}
	By Theorem \ref{thm:uniqueness}, there exists a unique valuation $v_*\in \Val_{Z,z}$ up to scaling that minimizes $\hvol_{Z,z}$. Since $A_Z(\phi_\tau(v_*))=A_Z(v_*)$ and $\hvol_{Z,z}(\phi_\tau(v_*))=\hvol_{Z,z}(v_*)$ by Proposition \ref{prop:an-iso}, we have $\phi_\tau(v_*)=v_*$ by Theorem \ref{thm:uniqueness}. Let $\fa_m:=\fa_m(v_*)$ be valuation ideals of $v_*$ for $m\in \bZ_{>0}$. By \cite[Proof of Theorem 27]{Liu18}, we know that 
	\[
	\lim_{m\to \infty} \lct(Z;\fa_m)^3\cdot \e(\fa_m)=\hvol_{Z,z}(v_*)=\hvol(z,Z).
	\]
	Let $\widehat{\fa_m}$ be the completion of $\fa_m$ in $\hR$. Hence $\widehat{\fa_m}$ is $\tau$-invariant since $\phi_\tau(v_*)=v_*$. By the $\bmu_2$-equivariant version of Lemma \ref{lem:kc-compute-lct} (see e.g. \cite[Lemma 4.8]{Zhu20}), 
	there exists a $\tau$-invariant Koll\'ar component $S_m$ over $z\in Z$ computing $\lct(Z;\fa_m)$. Thus the proof of Theorem \ref{thm:lx-kc-minimizing} implies that $	\lim_{m\to \infty} \hvol_{Z,z}(\ord_{S_m})=\hvol(z,Z)$.
	The proof is finished.
	\end{proof}

	\begin{proof}[Proof of Theorem \ref{thm:3fold-terminal-vol}]
	Let $\epsilon>0$ be a positive real number. Then it suffices to show that $\Vol_{3}^{\mathrm{term}}\cap (\epsilon, 27]$ is a finite set. For any $3$-dimensional terminal singularity $(x\in X)$ of Gorenstein index $r$, we may take its index $1$ cover $(z\in Z)$, and Theorem \ref{thm: finite degree formula} implies that 
	\[
	\hvol(z, Z)= r\cdot \hvol(x, X).
	\]
	If $\hvol(x,X)\geq \epsilon$, then we know that $r\leq \frac{27}{\epsilon}$. Hence it suffices to show that the following set 
	\[
		V_\epsilon:=\left\{ \nvol(z,Z)\mid 
		z\in Z\textrm{ is $3$-dimensional Gorenstein terminal}
		\textrm{ and }\hvol(z,Z)>\epsilon.
		\right\}
	\]
    is finite. In the rest of the proof, we will denote $z\in Z$ a $3$-dimensional Gorenstein terminal singularity satisfying $\hvol(z,Z)>\epsilon$.  Denote $(R,\fm):=(\cO_{Z,z},\fm_{Z,z})$. Let $\hz\in \hZ=\Spec\,\hR$ be the completion of $z\in Z$.
    By Lemma \ref{lem:3-fold involution} we know that $\hR\cong \bC\llbracket z_1,z_2,z_3,z_4\rrbracket/(z_1^2+f(z_2,z_3,z_4))$. Thus there exists a crepant double cover
    \[
    \pi: (\hz\in \hZ)\to (0 \in (\widehat{\bA^3}, \tfrac{1}{2} \fD)) \quad\textrm{ where } \fD=(f(z_2, z_3, z_4)=0).
    \]
    By Lemmata \ref{lem:kc-involution}, \ref{lem:xz-analytic}, and Theorem \ref{thm:truncation-an-bdry}(2) we know that 
    \begin{equation}\label{eq:xz-3fold}
    \hvol(z,Z)=\inf_{S} \hvol_{Z,z}(\ord_S) 
    = 2\inf_{\bar{S}}\hvol_{(\widehat{\bA^3},\frac{1}{2}\fD),0}(\ord_{\bar{S}})=2\hvol(0, \widehat{\bA^3},\tfrac{1}{2}\fD ),
    \end{equation}
    where $S$ runs through all $\tau$-invariant Koll\'ar components over $z\in Z$, and $\bar{S}$ runs through all Koll\'ar components over $0 \in (\widehat{\bA^3}, \tfrac{1}{2} \fD)$. By Theorem \ref{thm:truncation-an-bdry}(1), for $k\gg 1$ we have 
    $\hvol(0,\bA^3,\frac{1}{2}\fD^k)=\hvol(0,\widehat{\bA^3},\frac{1}{2}\fD)$ where $\fD^k$ is a $k$-th analytic truncation of $\fD$ on $\bA^3$. Hence for $k\gg 1$ we have 
    \[
    \epsilon<\hvol(z,Z)= 2\hvol(0,\bA^3,\tfrac{1}{2}\fD^k),
    \]
    and the right-hand-side belongs to a finite set by Corollary  \ref{cor:smooth}. Thus $V_\epsilon$ is a finite set.
	\end{proof}
	
	\begin{proof}[Proof of Theorem \ref{thm:delta-plt-blowup-lowdim}(2)]
	Fix a positive number $\epsilon>0$.
	Let $x\in X$ be a $3$-dimensional terminal singularity satisfying $\hvol(x,X)\geq \epsilon$. For simplicity, we assume that $\epsilon\in (0,1)$.  We will show that there exists a $\delta$-plt blow up of $x\in X$ where $\delta>0$ only depends on $\epsilon$. Let $\rho: (z\in Z)\to (x\in X)$ be the index $1$ cover of $K_X$. Denote by $r$ the Gorenstein index of $x\in X$. By 
	Theorems  \ref{thm:nvol is bounded by n^n} and \ref{thm: finite degree formula}, we have 
	\[
	27\geq \hvol(z,Z)=r\cdot \hvol(x,X)\geq r\epsilon.
	\]
	Thus we have $r\leq r_{\max}:= \lfloor\frac{27}{\epsilon}\rfloor$. 
	
	Let $\tau: \hZ\to \hZ$ be the analytic involution as before. 
	Let $\pi: (\hz\in \hZ) \to (0\in ( \widehat{\bA^3}, \frac{1}{2}\fD))$ be the double cover where $\fD= (f=0)$. By \eqref{eq:xz-3fold} we have 
	\[
	\hvol(0, \widehat{\bA^3}, \tfrac{1}{2}\fD)= \frac{1}{2}\hvol(z,Z)\geq \frac{r\epsilon}{2}\geq \frac{\epsilon}{2}.
	\]
	Let $f_k$ be a polynomial in $z_2, z_3, z_4$ of degree less than $k$ such that $f_k$ is a $k$-th analytic truncation of $f$. Denote by $\fD^k:= (f_k=0)\subset \bA^3$. 
	Then by Theorem \ref{thm:truncation-an-bdry}, we have that $\hvol(0, \bA^3, \frac{1}{2}\fD^k)=\hvol(0, \widehat{\bA^3}, \frac{1}{2}\fD)$ for any $k\gg 1$. By Theorem \ref{thm:estimate on lct}, there exists a positive constant $c$ such that 
	\[
	\lct(\bA^3,\tfrac{1}{2}\fD^k;\tfrac{1}{2}\fD^k)\geq c^{-1}\hvol(0, \bA^3, \tfrac{1}{2}\fD^k)\geq \frac{\epsilon}{2c}.
	\]
	By Lemma \ref{lem:lct of truncations} it is clear that $\lct(\bA^3,\tfrac{1}{2}\fD^k;\tfrac{1}{2}\fD^k)$ converges to $\lct(\widehat{\bA^3},\frac{1}{2}\fD;\frac{1}{2}\fD)$ as $k\to \infty$. Thus we have   $\lct(\widehat{\bA^3},\tfrac{1}{2}\fD;\tfrac{1}{2}\fD)\geq \frac{\epsilon}{2c}$. 
	Then by Theorem \ref{thm:truncation}, Proposition \ref{prop: truncation keep nv(v)}, and Theorem \ref{thm:truncation-an-bdry}, there exist $k_1, k_2\in \bZ_{>0}$ depending only on $\epsilon$ such that for any $k\geq k_3:=\max\{k_1,k_2\}$ and any $\delta\in \bR_{\geq 0}$ we have
	\begin{enumerate}[label=(\alph*)]
	    \item $\hvol(0, \bA^3, \tfrac{1}{2}\fD^{k})=\hvol(0, \widehat{\bA^3}, \tfrac{1}{2}\fD)\geq \frac{\epsilon}{2}$, and
	    \item any $\delta$-Koll\'ar component $S$ of $0\in (\bA^3, \frac{1}{2}\fD^{k})$ satisfying $\hvol_{(\bA^3, \frac{1}{2}\fD^{k}),0}(\ord_S)\leq 28r_{\max}$ corresponds to  a $\delta$-Koll\'ar component $\hS$ of $0\in (\widehat{\bA^3}, \frac{1}{2}\fD)$ by taking completion.
	\end{enumerate}

	Next, we show the existence of a  $\delta$-plt blow-up of $x\in X$ for $\delta=\delta(\epsilon)>0$. Firstly, we consider the case where $r=1$, i.e. $x\in X$ is Gorenstein. Since $k_3$ only depends on $\epsilon$, we know that $0\in (\bA^3, \frac{1}{2}\fD^{k_3})$ belongs to a bounded $\bQ$-Gorenstein family of klt singularities. By Theorem \ref{thm: family kc with nv bdd}, there exists $\delta_1=\delta_1(\epsilon)>0$ such that $0\in (\bA^3, \frac{1}{2}\fD^{k_3})$ admits a $\delta_1$-Koll\'ar component $S$ satisfying $\hvol_{(\bA^3, \frac{1}{2}\fD^{k}),0}(\ord_S)\leq 28$. Hence by (b), we have a $\delta_1$-Koll\'ar component $\hS$ over $0\in (\widehat{\bA^3}, \frac{1}{2}\fD)$. Hence by pulling back $\hS$ under $\pi$ then push-forward under the completion map $\iota: \hX\to X$, we obtain a $\delta_1$-Koll\'ar component $\tau_*\pi^*\hS$ over $x\in X$ by \cite[Proposition 5.20]{KM98}.
	
	Next, we consider the case where $r\geq 2$ and the covering morphism $\rho:(z\in Z)\to (x\in X)$  has type (II), (III), or (IV) in Table \ref{table:terminal}. We may assume that the  coordinates $(z_i)$ from Lemma \ref{lem:3-fold involution} coincide with the coordinates $(x_i)$ from Theorem \ref{thm:terminal-classify} and Table \ref{table:terminal}. Denote by $G:= \bmu_r$. Then by restricting to the last three coordinates, the $G$-action on $\hZ$ induces a $G$-action on $\bA^3$ such that $\fD$ is $G$-invariant and $\pi$ is $G$-equivariant. In particular, $\fD^{k_3}$ is also $G$-invariant. Let $w\in (W, \Delta_W)$ be the crepant $G$-quotient of $0\in (\bA^3, \frac{1}{2}\fD^{k_3})$. Since $0\in (\bA^3, \frac{1}{2}\fD^{k_3})$ belongs to a bounded $\bQ$-Gorenstein family of klt singularities and the $G$-action on $\bA^3$ has finitely many choices, we know that $w\in (W,\Delta_W)$ also belongs to a bounded $\bQ$-Gorenstein family of klt singularities. Then by Theorem \ref{thm: family kc with nv bdd}, there exists $\delta_2=\delta_2(\epsilon)>0$ such that $w\in (W,\Delta_W)$ admits a $\delta_2$-Koll\'ar component $S_W$ satisfying $\hvol_{(W,\Delta_W),w}(\ord_{S_W})\leq 28$. Denote by $S$ the pull-back of $S_W$ under the $G$-quotient morphism $\bA^3\to W$. Hence by \cite[Proposition 5.20]{KM98} and Proposition \ref{prop:finite deg formula}, $S$ is a $G$-equivariant $\delta_2$-Koll\'ar component over $0\in (\bA^3, \frac{1}{2}\fD^{k_3})$ satisfying $\hvol_{(\bA^3, \frac{1}{2}\fD^{k}),0}(\ord_S)\leq 28r_{\max}$. Hence by (b), we have a $G$-equivariant $\delta_2$-Koll\'ar component $\hS$ over $0\in (\widehat{\bA^3},\frac{1}{2}\fD)$. By pulling back $\hS$ under $\pi$ then push-forward under the completion map $\iota: \hZ\to Z$, we obtain a $G$-equivariant $\delta_2$-Koll\'ar component $\iota_*\pi^*\hS$ over $z\in Z$. Then taking the $G$-quotient of $\iota_*\pi^*\hS$ and applying \cite[Proposition 5.20]{KM98} again, we obtain a $(\delta_2/r_{\max})$-Koll\'ar component over $x\in X$. 
	
	Finally, we consider the case where $r\geq 2$ and the covering morphism $\rho$ has type (I) in Table \ref{table:terminal}. Let $z_1=\frac{x_1+x_2}{2}$, $z_2=\frac{x_1-x_2}{2}$, $z_3=x_3$, and $z_4=x_4$. Then the local analytic equation of $\hz\in \hZ$ is given by $(z_1^2 - z_2^2 + h(z_3, z_4)=0)$ where $h$ is some formal power series in $(z_3,z_4)$. Let $h_k$ be the $k$-th analytic truncation of $h$ as a polynomial in $(z_3,z_4)$ of degree less than $k$.
	Let $0\in Z^k$ be the hypersurface singularity $(z_1^2 -z_2^2 + h_k (z_3, z_4)=0)$ in $\bA^4$. Then clearly the $G$-action on $\hZ$ induces a $G$-action on $Z^k$. Let $\tau_k: Z^k\to Z^k$ be the involution given by $\tau_k (z_1,z_2,z_3,z_4)= (-z_1,z_2,z_3,z_4)$. Then $G$ and $\{\id, \tau_k\}$ generate a finite subgroup $H<\Aut(0, Z^k)$ of size $|H|=2r$. 
	Let $w\in (W,\Delta_W)$ be the crepant $H$-quotient of $0\in Z^{k_3}$. Since $0\in Z^{k_3}$ belongs to a bounded $\bQ$-Gorenstein family of klt singularities and the $H$-action on $\bA^4$ has finitely many choices, we know that $w\in (W,\Delta_W)$ also belongs to a bounded $\bQ$-Gorenstein family of klt singularities. Thus Theorem \ref{thm: family kc with nv bdd} implies that there exists $\delta_3=\delta_3(\epsilon)>0$ such that $w\in (W,\Delta_W)$ admits a $\delta_3$-Koll\'ar component $S_W$ satisfying $\hvol_{(W,\Delta_W),w}(\ord_{S_W})\leq 28$. Denote by $\tS^{k_3}$ the pull-back of $S_W$ under the $H$-quotient morphism $Z^{k_3}\to W$. So \cite[Proposition 5.20]{KM98} and Proposition \ref{prop:finite deg formula} imply that $\tS^{k_3}$ is an $H$-equivariant $\delta_3$-Koll\'ar component over $0\in Z^{k_3}$ such that $\hvol_{Z^{k_3}, 0}(\ord_{\tS^{k_3}})\leq 56r_{\max}$. 
	Taking quotient of the involution $\tau_{k_3}$, we obtain a crepant covering morphism  $\pi_{k_3}: (0\in Z^{k_3})\to (0\in(\bA^3, \frac{1}{2}\fD^{k_3}))$. Hence \cite[Proposition 5.20]{KM98} and Proposition \ref{prop:finite deg formula} imply that $S:=(\pi_{k_3})_* \tS^{k_3}$ is a $(\delta_3/2)$-Koll\'ar component over $0\in(\bA^3, \frac{1}{2}\fD^{k_3})$ satisfying $\hvol_{(\bA^3, \frac{1}{2}\fD^{k_3}),0}(\ord_S)\leq 28r_{\max}$. By (b), we have a $(\delta_3/2)$-Koll\'ar component $\hS$ over $0\in (\widehat{\bA^3}, \frac{1}{2}\fD)$. Pulling back $\hS$ under $\pi$ yields an $H$-equivariant (hence $G$-equivariant) $(\delta_3/2)$-Koll\'ar component $\pi^*\hS$ over $\hz\in \hZ$. Thus by taking push-forward $\pi^*\hS$ under $\iota$ and then quotient out by $G$, we obtain a $(\delta_3/(2r_{\max}))$-Koll\'ar component over $x\in X$. Therefore, the  proof is finished by taking $\delta:=\min\{\delta_1, \frac{\delta_2}{r_{\max}}, \frac{\delta_3}{2r_{\max}}\}$. 
	\end{proof}

	\section{Discussions}
	In this section, we discuss some topics related to our main results, and ask several questions.

	\subsection{Relations among three classes of singularities}
	
    When $(x\in X^{\rm an})\in (B\subset\cX^{\rm an}\to B)$ (``bounded'') and under some mild assumptions, we showed that the three classes of singularities \textcircled{a}, \textcircled{b}, \textcircled{c} in Figure \ref{abc classes of singularities} are equivalent to each other, see Theorems \ref{thm:bddness for nvol} (\textcircled{b} $\Rightarrow$ \textcircled{a}), \ref{thm:estimate on lct} (\textcircled{a} $\Rightarrow$ \textcircled{c}), and \ref{thm: lct positive lower bound implies existence of delta-plt blow-up} (\textcircled{c} $\Rightarrow$ \textcircled{a}). In this subsection, we will discuss the relations among these three classes of singularities without the assumption ``$(x\in X^{\rm an})\in (B\subset\cX^{\rm an}\to B)$''. Note that in this general setting, Theorem \ref{thm:bddness for nvol} (\textcircled{b} $\Rightarrow$ \textcircled{a}) holds, and Conjecture \ref{conj:existence of delta-plt blow-up} is about the implication ``\textcircled{a} $\Rightarrow$ \textcircled{b}''. 
    

 
     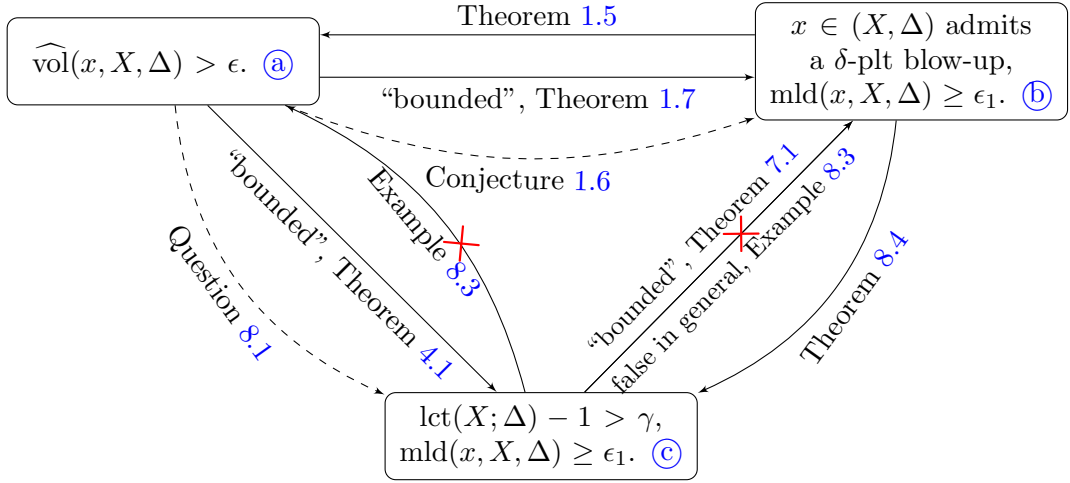
\begin{figure}[ht]     
        \centering
    \tikzstyle{block} = [rectangle, draw, text width=10em, text centered, rounded corners, minimum height=3em]
	\tikzstyle{line} = [draw, -latex']
	
	\begin{tikzpicture}[auto]
		\node [block] (nv) {$\widehat{\mathrm{vol}}(x,X,\Delta)>\epsilon$. \textcolor{blue}{\textcircled{a}}};
		
		\node [block, below right of=nv, node distance=200pt] (lct) {$\mathrm{lct}(X;\Delta)-1>\gamma$, $\mld(x,X,\Delta)\ge\epsilon_1$. \textcolor{blue}{\textcircled{c}}};
		
		\node [block, right of=nv, node distance=280pt] (dplt) {$x\in(X,\Delta)$ admits a $\delta$-plt blow-up, $\mld(x,X,\Delta)\ge\epsilon_1$. \textcolor{blue}{\textcircled{b}}};
		
		\draw [line, transform canvas={shift={(0,0)}}] (nv) --node[below, sloped]{``bounded'', Theorem \ref{thm:estimate on lct}} (lct);
		
			\path[line] (lct) edge [bend right=25] node[below,sloped] {Example \ref{ex: lct does not imply nv bdd}}node[right, red, sloped] {\huge $\times$}  (nv);

		
		\path [line, transform canvas={shift={(0,10pt)}}] (dplt) --node[above, sloped]{Theorem \ref{thm:bddness for nvol}}(nv); 
		
		\path [line, transform canvas={shift={(0,0)}}] (lct) --node[above, xshift=-5pt,sloped]{\small ``bounded'', Theorem \ref{thm: lct positive lower bound implies existence of delta-plt blow-up}} (dplt);
		
	    \path [line, transform canvas={shift={(0,0)}}] (lct) --node[below, sloped]{\small false in general, Example \ref{ex: lct does not imply nv bdd}}node[right, red, sloped] {\huge $\times$} (dplt) ;

		
		\path[line] (dplt) edge [bend left=30] node[below, sloped] {Theorem \ref{thm: delta plt imply lct lower bound}}  (lct);
		
		\path[line, dashed] (nv) edge [bend right=30] node[below,sloped] {Question \ref{conj: nv bdd below implies lct bdd below}}  (lct);
		

	    \path[line, transform canvas={shift={(0,-6pt)}}] (nv) --node[below, sloped]{``bounded'', Theorem \ref{thm:existence of delta-plt blow-up}} (dplt);

	\path[line, dashed] (nv) edge [bend right=20]node[below,sloped] {Conjecture \ref{conj:existence of delta-plt blow-up}}  (dplt);
	
		
	\end{tikzpicture}
        \caption{Three classes of singularities \textcircled{a}, \textcircled{b}, and \textcircled{c}.}
       \label{abc classes of singularities}
    \end{figure}

	We expect that Theorem \ref{thm:estimate on lct} holds (\textcircled{a} $\Rightarrow$ \textcircled{c}) in this general setting. 
	
	\begin{question}\label{conj: nv bdd below implies lct bdd below}
	Let $n$ be a positive integer. Then does there exist a positive real number $c(n)$ depends only on $n$ satisfying the following statement?
	
	Let $x\in (X,\Delta)$ be an $n$-dimensional klt $\Qq$-Gorenstein singularity. Then
		\begin{equation*}
		c(n)\cdot\lct(X,\Delta;\Delta)\geq  \nvol(x,X,\Delta). 
		\end{equation*}
	\end{question}

    \begin{remark}\label{rmk:sharp_constant}
        One might also ask for a sharp value $c_{\min}(n)$ in Question \ref{conj: nv bdd below implies lct bdd below}. We guess that $c_{\min}(n)=n^n$. When $n=2$, it is not hard to show that $c_{\min}(2)\le 8$, see Theorem \ref{thm:surface lct}. 
	\end{remark}
	

Example \ref{ex: lct does not imply nv bdd} shows that the implication ``\textcircled{c} $\Rightarrow$ \textcircled{a}'' does not hold when $x\in X$ is not analytically bounded. Thus the implication ``\textcircled{c} $\Rightarrow$ \textcircled{b}''  does not hold either by Theorem \ref{thm:bddness for nvol}. 
	
	\begin{example}\label{ex: lct does not imply nv bdd} 
	
	Let $m$ be a positive integer. Consider the surface klt singularity $(x\in (X,\Delta=\frac{1}{2}(L_1+L_2)))$, where  $(x\in X\cong (o\in\bA^2)/\bmu_{m+1})$ is the $A_m$-singularity, and $L_1$, $L_2$ are the images of two coordinate lines in $\bA^2$. Then $\lct(X,\Delta;\Delta)=1$, $\mld(x,X,\Delta)=\frac{1}{2}$, and $\nvol(x,X,\Delta)<\nvol(x,X,0)=\frac{4}{m+1}\to 0$ when $m\to +\infty$. 
	\end{example}
	
	If we assume that all the (non-zero) coefficients of $\Delta$ have a positive lower bound, then Theorem \ref{thm: delta plt imply lct lower bound} together with Conjecture \ref{conj:existence of delta-plt blow-up} would imply that Question \ref{conj: nv bdd below implies lct bdd below} has an affirmative answer immediately. Theorem \ref{thm: delta plt imply lct lower bound} is a consequence of Birkar's proof of Birkar--Borisov--Alexeev--Borisov Theorem \cite[Theorems 1.1 and 1.6]{Bir16}, and it is embedded in \cite{HLS19}.
	
	\begin{theorem}\label{thm: delta plt imply lct lower bound}
		Let $n\ge 2$ be a positive integer and $\eta,\delta,\epsilon_1$ positive real numbers. Then there exists a positive real number $\gamma$ which only depends on $n,\eta,\delta$, and $\epsilon_1$ satisfying the following.
		
		Let $x\in (X,\Delta=\sum_{i=1}^m a_i\Delta_i)$ be an $n$-dimensional klt $\Qq$-Gorenstein singularity, such that
		\begin{enumerate}
		\item $a_i>\eta$,
		\item each $\Delta_i\ge0$ is a Weil divisor, 
			\item $x\in(X,\Delta)$ admits a $\delta$-plt blow-up, and
			\item $\mld(x,X,\Delta)\geq \epsilon_1$,
		\end{enumerate}
		then $\lct(X,\Delta;\Delta)\ge \gamma$. 
	\end{theorem}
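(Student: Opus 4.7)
The plan is to proceed by contradiction, combining the boundedness of Koll\'ar components from Birkar's BAB theorem \cite{Bir16} with lower semi-continuity of log canonical thresholds in bounded families. Let $\mu:Y\to X$ be the $\delta$-plt blow-up with Koll\'ar component $S$. By adjunction, $(S,\Gamma)$ is a $\delta$-klt log Fano pair, where $\Gamma=\Diff_S(\mu_*^{-1}\Delta)$. The $\delta$-plt assumption forces every coefficient of $\Gamma$ to be at most $1-\delta$; in particular, the local index of $K_Y+S$ at each codimension-one point of $S$ is at most $\lfloor 1/\delta\rfloor$, and by the ramification formula for the different, each coefficient of $\Gamma$ is an explicit non-negative $\bQ$-linear combination of (a subset of) the coefficients $a_i$ of $\Delta$ with boundedly many possible ``types''.

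To convert these constraints into DCC hypotheses, I would first apply \cite[Theorem 1.6]{HLS19} (the boundedness of Cartier indices under $\delta$-plt plus $\epsilon_1$-lc assumptions) together with the rational envelope decomposition of Lemma \ref{lem:nv decomposable R-complements} to reduce to the case where the $\Delta_i$ are $\bQ$-Cartier Weil divisors, with each coefficient $a_i$ lying in a fixed \emph{finite} set depending only on $n,\eta,\delta,\epsilon_1$. Under this reduction, the coefficients of $\Gamma$ themselves lie in a finite set $\Phi=\Phi(n,\eta,\delta,\epsilon_1)$, and by Birkar's BAB theorem \cite[Theorems 1.1 and 1.6]{Bir16} the pairs $(S,\Gamma)$ form a log bounded family. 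Combined with the $\mu$-ampleness of $-S$ and the bounded self-intersection $(-S|_S)^{n-1}=\vol_{X,x}(\ord_S)$ (controlled via Theorem \ref{thm:nvol is bounded by n^n} and the $\epsilon_1$-lc hypothesis), this promotes to log boundedness of the plt blow-ups $\mu:(Y,S+\mu_*^{-1}\Delta)\to(X,\Delta)$ and hence of the singularities $(X,\Delta)$ themselves, invoking \cite[Theorems 1.1 and 4.1]{HLM19} to absorb any special degenerations.

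Once the $(X,\Delta)$ are parametrized by a log bounded family, the desired uniform lower bound $\gamma$ on $\lct(X,\Delta;\Delta)$ follows from the lower semi-continuity and constructibility of log canonical thresholds in $\bR$-Gorenstein families (Lemma \ref{lem: lctlowersc}) together with the klt assumption: on each fiber one has $\lct(X,\Delta;\Delta)>0$ strictly, and the infimum over the bounded parameter space is therefore a strictly positive real number depending only on $n,\eta,\delta,\epsilon_1$. Special degenerations are handled by the inequality $\lct(X_0,\Delta_0;\Delta_0)\le \lct(X,\Delta;\Delta)$ which holds because lct is lower semi-continuous under $\bQ$-Gorenstein degenerations.

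The main obstacle is the first reduction step: verifying that the coefficients of the different $\Gamma$ lie in a DCC (indeed finite) set depending only on the fixed constants, \emph{without} assuming a priori that the $a_i$'s themselves lie in a DCC set (they only satisfy $a_i>\eta$, and $(\eta,1]$ is not DCC). This is where \cite[Theorem 1.6]{HLS19}—the boundedness of Cartier indices, itself a substantial consequence of BAB—plays the crucial role, converting the mere positive lower bound $\eta$ on nonzero coefficients into the algebraic finiteness condition required to invoke Birkar's boundedness theorem; the rational envelope decomposition of Lemma \ref{lem:nv decomposable R-complements} is also essential to bridge the gap between $\bR$-Cartier $K_X+\Delta$ and the $\bQ$-Cartier hypothesis on individual $\Delta_i$ required by \cite[Theorem 1.6]{HLS19}.
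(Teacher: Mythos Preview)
Your route differs substantially from the paper's and has a genuine gap at exactly the point you flag as ``the main obstacle,'' plus a second gap in the final deduction.

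The reduction to a finite coefficient set does not work. Lemma~\ref{lem:nv decomposable R-complements} produces rational approximations of a \emph{single fixed} point $\bm{a}\in\bR^m$; those approximations depend on $\bm{a}$, so as the coefficient vector ranges over $(\eta,1]^m$ you do not land in one finite set. And \cite[Theorem 1.6]{HLS19} bounds the Cartier index of $\bQ$-Cartier Weil divisors, but here the individual $\Delta_i$ are only Weil---nothing in the hypotheses forces them to be $\bQ$-Cartier, only $K_X$ and $K_X+\Delta$ are. More seriously, even if you skip this step and invoke \cite[Theorem 4.1]{HLM19} directly (it already applies under your hypotheses), the final deduction fails: log boundedness constrains only $\Supp\Delta$, while the coefficients still range over $(\eta,1)$. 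In such a family $\lct(X,\Delta;\Delta)$ is \emph{not} uniformly positive---already for $o\in(\bA^2,aL)$ with $L$ a line and $a\to 1^{-}$, the pair stays $1$-lc yet $\lct(\bA^2,aL;aL)=(1-a)/a\to 0$. Lemma~\ref{lem: lctlowersc} does not help, since it treats the lct of a \emph{fixed} ideal sheaf along the family, not $\lct(\cX_b,\cD_b;\cD_b)$ with the coefficients of $\cD_b$ themselves varying. The $\delta$-plt hypothesis is precisely what keeps coefficients from drifting toward $1$, but your detour through boundedness discards that control before you can use it.

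The paper's argument is direct and short. Taking a $\bQ$-factorial weak $\delta$-plt blow-up $f:Y\to X$ with exceptional divisor $E$ (\cite[Lemma 3.13]{HLS19}), one has $A_X(E)\le M(n,\delta)$ by \cite[Proposition 4.3]{HLS19}; combined with $A_{(X,\Delta)}(E)\ge\epsilon_1$ this yields $\gamma_1=\gamma_1(n,\delta,\epsilon_1)$ with $A_{(X,(1+\gamma_1)\Delta)}(E)\ge 0$. Independently, \cite[Proposition 7.7]{HLS19} supplies $\gamma_2=\gamma_2(n,\eta,\delta)$ with $(Y,(1+\gamma_2)f_*^{-1}\Delta+E)$ plt near $E$. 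With $\gamma=\min\{\gamma_1,\gamma_2\}$ one reads off that $(X,(1+\gamma)\Delta)$ is lc. No log-bounding of the singularities, no degenerations, no reduction of coefficients.
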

	\begin{proof}
	
	By \cite[Lemma 3.13]{HLS19}, there exists a $\Qq$-factorial weak $\delta$-plt blow-up $f:Y\to X$ of $x\in(X,\Delta)$, that is, $f$ is a birational morphism with the exceptional prime divisor $E$ such that
		\begin{itemize}
		\item $(Y,f_{*}^{-1}\Delta+E)$ is $\Qq$-factorial $\delta$-plt near $E$, 
		\item $-E$ is nef over $X$, 
		\item $-(K_Y+f_{*}^{-1}\Delta+E)|_{E}$ is big, and
		\item $f^{-1}(x)=\Supp E$. 
	\end{itemize}
	 By \cite[Proposition 4.3]{HLS19}, there exists a positive real number $M$ which only depends on $n$ and $\delta$, such that $A_{X}(E)\le M$. By assumption, $A_{(X,\Delta)}(E)\ge \epsilon_1$. Thus there exists a positive real number $\gamma_1$ which only depends on $n$, $\delta$ and $\epsilon_1$, such that $A_{(X,(1+\gamma_1)\Delta)}(E)\ge 0$. 
	 
	 By \cite[Proposition 7.7]{HLS19}, there exists a positive real number $\gamma_2$ which only depends on $n$, $\eta$ and $\delta$, such that $(Y,(1+\gamma_2)f_{*}^{-1}\Delta+E)$ is plt near $E$.  
	
	Let $\gamma\coloneqq\min\{\gamma_1,\gamma_2\}$. Then
	$x\in (X,(1+\gamma)\Delta)$ is lc, and $\lct(X,\Delta;\Delta)\ge \gamma$.
	\end{proof}
	\begin{remark}
	Example \ref{eg:coeff no lower bound} also indicates that the assumption ``$a_i>\eta$'' in Theorem \ref{thm: delta plt imply lct lower bound} is necessary. 
	\end{remark}

\subsection{Boundedness of singularities up to a special degeneration}\label{sec:spdeg}

\begin{defn}
Let $(X,\Delta)$ be a klt pair with $x\in X$ a closed point. A \emph{special test configuration}  of $x\in (X,\Delta)$ consists of the following data:
\begin{itemize}
    \item a normal variety $\cX$ and an effective $\bR$-divisor $\Delta_{\tc}$ on $\cX$ such that $K_{\cX}+\Delta_{\tc}$ is $\bR$-Cartier;
    \item a flat morphism $\pi: (\cX,\Supp(\Delta_{\tc}))\to \bA^1$ and a section $\sigma: \bA^1\to \cX$ of $\pi$;
    \item a $\bG_m$-action on $(\cX,\Delta_{\tc})$ such that both $\pi$ and $\sigma$ are $\bG_m$-equivariant with respect to the standard $\bG_m$-action on $\bA^1$ by scalar multiplication;
    \item $\sigma(\bA^1\setminus 0)\subset (\cX\setminus \cX_0, \Delta_{\tc}|_{\cX\setminus\cX_0})$ is $\bG_m$-equivariantly isomorphic to $(x\in (X,\Delta))\times (\bA^1\setminus 0)$;
    \item $(\cX,\cX_0+\Delta_{\tc})$ is plt.
\end{itemize}
We call the central fiber $(\sigma(0)\in (\cX_0, \Delta_{\tc,0}))$ of the  special test configuration a \emph{special degeneration} of $(x\in (X,\Delta))$. By adjunction, $(\sigma(0)\in (\cX_0, \Delta_{\tc,0}))$ is also a klt singularity. 
\end{defn}

\begin{defn}
A set of klt singularities $\cP$ is said to be \emph{log bounded up to special degeneration} if there is a log bounded set $\cC$ of projective pairs, such that the following holds.

For any klt singularity $x\in (X,\Delta)$ in $\cP$, there exist  a special degeneration $x_0\in (X_0, \Delta_0)$ of $x\in (X,\Delta)$, a pair $(Y, B)\in \cC$ together with a closed point $y\in Y$, and open neighborhoods $U$ and $V$ of $x_0\in X_0$ and $y\in Y$ respectively, such that $(x_0\in (U, \Supp(\Delta_0)|_{U}))\cong (y\in (V, \Supp(B)|_{V}))$. 
\end{defn}

The following theorem from \cite{HLM19} shows that $\epsilon$-lc singularities admitting $\delta$-plt blow-ups with positive lower bounds on boundary coefficients are log bounded up to special degeneration. We expect that the $\bQ$-Gorenstein assumption from Theorem \ref{thm:hlm} can be dropped.

\begin{theorem}[{\cite[Theorem 4.1 and its proof]{HLM19}}]\label{thm:hlm}
Let $n$ be a positive integer, and $\epsilon_1,\delta,\eta$ three positive numbers. Then the set of $n$-dimensional $\epsilon_1$-lc $\bQ$-Gorenstein singularities $x\in (X,\Delta)$ admitting a $\delta$-plt blow-up, and  coefficients of $\Delta$ are at least $\eta$ is log bounded up to special degeneration.
\end{theorem}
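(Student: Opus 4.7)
The plan is to deduce the statement from three main ingredients: (i) a $\delta$-plt blow-up gives a bounded family of Kollár components via Birkar's BAB theorem, (ii) boundedness of Cartier indices allows us to polarize the central fiber uniformly, and (iii) the extended Rees algebra construction applied to $\mathrm{ord}_S$ provides the special degeneration. Throughout, the hypotheses $\epsilon_1\text{-lc}$, $\delta\text{-plt}$ and coefficients $\ge\eta$ are used to guarantee uniformity of all invariants.

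First, given $x\in (X,\Delta)$ satisfying the hypotheses, let $\mu:Y\to X$ be a $\delta$-plt blow-up with exceptional Kollár component $S$, and let $\Delta_S:=\mathrm{Diff}_S(\mu_*^{-1}\Delta)$ so that $(S,\Delta_S)$ is a $\delta$-klt log Fano pair. The coefficients of $\mu_*^{-1}\Delta|_S$ are at least $\eta$ (modulo the finitely many values $1-\tfrac{1}{k}$ appearing in the different, which still form a DCC set bounded below in terms of $\delta$). Hence $(S,\Delta_S)$ is a $\delta$-klt log Fano pair with coefficients in a fixed DCC set, and by Birkar's BAB theorem \cite[Theorem 1.1]{Bir16} the set of such $(S,\Delta_S)$ is log bounded.

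Second, I would upgrade the polarization: by Theorem \ref{thm:boundedness of global index} (or by \cite[Theorem 1.6]{HLS19}) applied to the bounded family of log Fano pairs $(S,\Delta_S)$, there exists a uniform positive integer $N$ such that $-NS|_S$ is a Cartier divisor on $S$, and after possibly enlarging $N$ it becomes very ample with uniformly bounded degree. This controls both the projective embedding of $S$ and the grading on the associated graded ring of $\mathrm{ord}_S$.

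Third, I would construct the special test configuration via degeneration to the orbifold cone. Since $\mathrm{ord}_S$ is a divisorial valuation and $-S$ is $\mu$-ample, the Rees algebra $\bigoplus_m \mathfrak{a}_m(\mathrm{ord}_S)\,t^{-m}$ is finitely generated, and the Proj/Spec of the extended Rees algebra produces a $\mathbb{G}_m$-equivariant flat family $\pi:(\cX,\Delta_{\tc})\to\mathbb{A}^1$ whose general fiber is $(X,\Delta)$ and whose central fiber is the orbifold cone $\cX_0=\mathrm{Spec}\,\mathrm{gr}_{\mathrm{ord}_S}\cO_{X,x}$ over $(S,-\tfrac{1}{N}S|_S)$, with section the vertex. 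Plt-ness of $(\cX,\cX_0+\Delta_{\tc})$ along $\cX_0$ follows from the plt-ness of $(Y,S+\mu_*^{-1}\Delta)$ near $S$ together with inversion of adjunction (applying \cite[Corollary 1.4.5]{BCHM10} fiberwise in the Rees construction). This produces the required special degeneration; moreover, the central fiber is a cone over the uniformly polarized pair $(S,\Delta_S,-NS|_S)$, hence the germ at the vertex of $(\cX_0,\Delta_{\tc,0})$ lives in a log bounded family parametrized by the base of the moduli of $(S,\Delta_S)$ together with the choice of polarization degree.

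The main obstacle I expect is the construction in the third step, specifically verifying that the Rees-algebra degeneration is a bona fide special test configuration in the sense defined in the previous subsection. The finite generation of $\bigoplus_m \mathfrak{a}_m(\mathrm{ord}_S)$ is standard since $\mathrm{ord}_S$ is divisorial, but one must carefully check that (a) the total space $\cX$ is normal, (b) the central fiber $\cX_0$ is irreducible and reduced (so that $\cX_0$ is a Cartier divisor and plt-ness makes sense), and (c) plt-ness of $(\cX,\cX_0+\Delta_{\tc})$ holds in a neighborhood of the section $\sigma(\mathbb{A}^1)$. Point (c) is the most delicate: it requires passing from the geometry of $(Y,S+\mu_*^{-1}\Delta)$ on the Kollár-component side to the geometry of the cone, for which inversion of adjunction applied to the Cartier divisor $\cX_0\subset\cX$ is the natural tool, but a certain compatibility of log discrepancies across the Rees degeneration must be established (essentially, $A_{(\cX,\Delta_{\tc})}(\cX_0)=0$ while every other log discrepancy is controlled by the corresponding one on $(Y,S+\mu_*^{-1}\Delta)$). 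Once this is settled, log boundedness of the germs at the vertices follows from log boundedness of $(S,\Delta_S)$ together with the uniform bound on $N$.
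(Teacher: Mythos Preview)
The paper does not give its own proof of this statement: Theorem~\ref{thm:hlm} is quoted verbatim from \cite[Theorem 4.1]{HLM19} and used as a black box in the discussion of Conjecture~\ref{conj:bddspecial}. There is therefore no in-paper argument to compare your proposal against.

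That said, your outline captures the strategy of \cite{HLM19} accurately: bound the Koll\'ar components $(S,\Delta_S)$ by BAB, bound the Cartier index of $-S|_S$ using \cite[Proposition 4.4]{HLS19} (which needs both the $\epsilon_1$-lc and $\delta$-plt assumptions), and then degenerate via the extended Rees algebra of $\ord_S$ to the orbifold cone. Two places where your write-up would need more care if you were to fill it in: first, the plt property of $(\cX,\cX_0+\Delta_{\tc})$ is not obtained by ``applying inversion of adjunction fiberwise''---rather, one uses that the degeneration of a Koll\'ar component yields a special test configuration as in \cite[Section 2.4]{LX16}, where the needed log-discrepancy comparison between $(Y,S+\mu_*^{-1}\Delta)$ and the cone is carried out directly. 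Second, ``log bounded family parametrized by the moduli of $(S,\Delta_S)$'' is not quite the conclusion you want: you must argue that the projective compactifications of the orbifold cones (together with the cone over $\Supp\Delta_S$) form a log bounded set of couples, which uses both the boundedness of $(S,\Delta_S)$ and a uniform upper bound on the volume $(-S|_S)^{n-1}$; the latter follows from $-(K_S+\Delta_S)=A_{(X,\Delta)}(S)\cdot(-S|_S)$ and $A_{(X,\Delta)}(S)\ge\epsilon_1$. Your remark about coefficients of $\Delta_S$ being in a DCC set is not needed for BAB (the $\delta$-klt condition suffices), though it is used in \cite{HLM19} when tracking $\Supp\Delta_{\tc,0}$.
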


We ask the following conjecture.

\begin{conj}\label{conj:bddspecial}
Let $n$ be a positive integer, and $\epsilon,\eta$ two positive numbers. Then the set of $n$-dimensional klt singularities $x\in (X,\Delta)$ satisfying that $\hvol(x,X,\Delta)\geq \epsilon$ and coefficients of $\Delta$ are at least $\eta$ is log bounded up to special degeneration.
\end{conj}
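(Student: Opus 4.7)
The plan is to reduce Conjecture~\ref{conj:bddspecial} to the combination of Conjecture~\ref{conj:existence of delta-plt blow-up} (existence of $\delta$-plt blow-ups from a positive lower bound on the local volume) and Theorem~\ref{thm:hlm} (log boundedness up to special degeneration from an $\epsilon_1$-lc hypothesis plus a $\delta$-plt blow-up, when the nonzero coefficients are bounded below). In other words, I would try to show that the hypotheses of Theorem~\ref{thm:hlm} are fed by the single assumption $\hvol(x,X,\Delta)\geq\epsilon$ together with $\mathrm{coeff}(\Delta)\geq\eta$.

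First, I would extract an $\epsilon_1$-lc property for free: the universal upper bound $\hvol(x,X,\Delta)\leq n^n\cdot\min\{1,\mld(x,X,\Delta)\}$ from Theorem~\ref{thm:nvol is bounded by n^n} converts $\hvol(x,X,\Delta)\geq \epsilon$ into $\mld(x,X,\Delta)\geq \epsilon_1:=\epsilon/n^n$. Second, I would invoke Conjecture~\ref{conj:existence of delta-plt blow-up} with inputs $(n,\eta,\epsilon)$ to produce a uniform $\delta=\delta(n,\eta,\epsilon)>0$ and a $\delta$-plt blow-up of $x\in(X,\Delta)$. At this point, every hypothesis of Theorem~\ref{thm:hlm} (with parameters $n$, $\epsilon_1$, $\delta$, $\eta$) is in place, and its conclusion is exactly log boundedness up to special degeneration. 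Unconditionally, this reduction already yields Conjecture~\ref{conj:bddspecial} whenever Conjecture~\ref{conj:existence of delta-plt blow-up} is known: namely in dimension $2$ and for $3$-dimensional terminal singularities (via Theorem~\ref{thm:delta-plt-blowup-lowdim}), and more generally whenever $x\in X$ analytically belongs to a $\bQ$-Gorenstein bounded family (via Theorem~\ref{thm:existence of delta-plt blow-up}).

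The main obstacle is therefore Conjecture~\ref{conj:existence of delta-plt blow-up} itself, which is the genuinely hard input. A secondary technical obstacle is that Theorem~\ref{thm:hlm} as quoted assumes $X$ is $\bQ$-Gorenstein, while the definition of a klt pair $(X,\Delta)$ only requires $K_X+\Delta$ to be $\bR$-Cartier. To remove this assumption I would proceed by passing to a small $\bQ$-factorialization $\pi:(X',\Delta')\to(X,\Delta)$ with $K_{X'}+\Delta'=\pi^*(K_X+\Delta)$; this preserves $\hvol$ under the finite degree formula (trivially, since $\pi$ is small and birational), and any $\delta$-plt blow-up of $(X',\Delta')$ descends to one of $(X,\Delta)$. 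One then needs to check that the special degeneration produced by \cite{HLM19} for $(X',\Delta')$ descends to a special degeneration of $(X,\Delta)$ up to a further small modification; this should follow by standard MMP arguments but is the most delicate point of the reduction.

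In summary, I expect the proof to take the shape: $(\hvol\geq\epsilon)\ \Rightarrow\ (\mld\geq \epsilon/n^n)$ via Theorem~\ref{thm:nvol is bounded by n^n}, $(\hvol\geq\epsilon$, $\mathrm{coeff}\geq\eta)\ \Rightarrow\ (\delta\text{-plt blow-up})$ via Conjecture~\ref{conj:existence of delta-plt blow-up}, and then $(\epsilon_1\text{-lc}$, $\delta\text{-plt blow-up}$, $\mathrm{coeff}\geq\eta)\ \Rightarrow\ $log bounded up to special degeneration via Theorem~\ref{thm:hlm}. The whole conjecture thus becomes equivalent, modulo the expected non-$\bQ$-Gorenstein strengthening of \cite{HLM19}, to Conjecture~\ref{conj:existence of delta-plt blow-up}. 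This is consistent with the analogy to the Fano case, where boundedness of K-semistable Fanos up to special degeneration ultimately rests on an analogous $\delta$-plt (i.e., Koll\'ar-component) extraction step together with BAB.
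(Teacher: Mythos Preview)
Your reduction is exactly what the paper does: immediately after stating Conjecture~\ref{conj:bddspecial}, the paper records in one line that it follows from Conjecture~\ref{conj:existence of delta-plt blow-up} via Theorem~\ref{thm:hlm} for $\bQ$-Gorenstein singularities, and it likewise flags the $\bQ$-Gorenstein hypothesis in Theorem~\ref{thm:hlm} as an expected removable technicality rather than resolving it. Your explicit extraction of $\epsilon_1=\epsilon/n^n$ from Theorem~\ref{thm:nvol is bounded by n^n} is the step the paper leaves implicit.

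One small correction in your discussion of dropping the $\bQ$-Gorenstein assumption: a small $\bQ$-factorialization $\pi:X'\to X$ is birational, not finite, so the finite degree formula (Theorem~\ref{thm: finite degree formula}) is not the relevant tool and does not apply ``trivially'' here. The preimage $\pi^{-1}(x)$ need not be a single point, so comparing $\hvol$ at $x$ with $\hvol$ at a point upstairs, and transporting a $\delta$-plt blow-up across $\pi$, require a separate argument. This does not affect your main reduction, which (like the paper) is content to state the conclusion for $\bQ$-Gorenstein singularities.
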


By Theorem \ref{thm:hlm}, Conjecture \ref{conj:bddspecial} follows from Conjecture \ref{conj:existence of delta-plt blow-up} for  $\bQ$-Gorenstein singularities.

	One can ask about the converses of Theorem \ref{thm:hlm} and Conjecture \ref{conj:bddspecial}. Assume that  coefficients of $\Delta$ belong to a finite set $I$. We expect that the converse of Theorem \ref{thm:hlm} holds under this assumption, although we do not have a proof at the moment. Meanwhile, using the lower semi-continuity of local volumes \cite{BL18a} and the constructibility \cite{Xu19} (see also Theorem \ref{thm:constructibility R-div}), we can show that the converse of Conjecture \ref{conj:bddspecial} is true under this assumption.

	The following example provides one more prototype for Conjecture \ref{conj:existence of delta-plt blow-up} and Conjecture \ref{conj:ACC for nvol}.
	
	\begin{example}\label{eg:Cartier cone sing}
		Let $n$ be a positive integer and $I\subset \bQ\cap [0,1]$ a finite set. Let $(V,\Delta)$ be an $n$-dimensional $K$-semistable log Fano pair, such that all the coefficients of $\Delta$ belong to $I$. 
		Consider the affine cone $X\coloneqq \Spec \oplus_{m=0}^\infty H^0(V,-mr(K_V+\Delta))$ over $(V,\Delta)$ with the ample Cartier polarization $-r(K_V+\Delta)$, where $r\in\bQ_{> 0}$. Let $D\coloneqq C(\Delta)$ be the cone divisor, and $o\in X$ the vertex. Then by \cite[Theorem 4.5]{LX16}, the canonical valuation $\ord_S$ obtained by blowing up the vertex $o\in X$, $\mu:Y\to X$, minimizes $\nvol_{(X,D),o}$ on $\Val_{X,o}$. Let $D_S$ be the different divisor of $(Y,S+\mu_{*}^{-1}D)$, then $(S,D_S)\cong (V,\Delta)$. We claim if $\nvol(o,X,D)>\epsilon$ for some $\epsilon>0$, then there exists an integer $N$ which only depends on $n,I$ and $\epsilon$, such that $N(K_S+D_S)$ is Cartier and $\nvol(o,X,D)$ belongs to a finite set. In particular, $(S,D_S)$ is $\frac{1}{N}$-lc, and $o\in X$ admits a $\frac{1}{N}$-plt blow up. Hence Conjectures \ref{conj:ACC for nvol} and  \ref{conj:existence of delta-plt blow-up} hold for those cone singularities.
		
		Now we show the claim. By \cite[Theorem 4.5]{LX16} and \cite[Proposition 3.14(4)]{Kol13}, we have 
		\begin{equation}\label{eqn:nvol_Cartier_cone}
		\nvol(o,X,D)=\nvol_{(X,D),o}(\ord_S)=\frac{(-(K_V+\Delta))^n}{r}>\epsilon.
		\end{equation}
		Since $-r(K_V+\Delta)$ is an ample Cartier divisor, by the length of extremal rays, we know that $r\geq\frac{1}{2n}$, so $(-(K_V+\Delta))^n>\frac{\epsilon}{2n}$. On the other hand, by Theorem \ref{thm: kss equiv delta larger than or equal to 1} and \cite[Theorem C]{Blu18b}, we know that $\alpha(V,\Delta)\geq \frac{1}{n+1}$, so $\alpha(V,\Delta)^n(-(K_V+\Delta))^n>\frac{\epsilon}{2n(n+1)^{n}}$. 
		Thus by \cite[Corollary 6.14]{LLX18}, $(V,\Supp\Delta)$ is log bounded. Now the existence of $N$ follows from \cite[Lemma 2.24]{Bir19}. It follows that $r$ belongs to a finite set. By \cite[Lemma 3.26]{HLS19}, $\nvol(o,X,D)=\frac{(-(K_V+\Delta))^n}{r}$ belongs to a finite set.
	\end{example}
	
    If $(V,\Delta)$ is as in Example \ref{eg:Cartier cone sing} and $X\coloneqq \Spec\oplus_{m=0}^\infty H^0(X,mL)$ is an orbifold cone over $(V,\Delta)$, where $o\in X$ is the vertex and the polarization $L\sim_\bQ -r(K_V+\Delta)$ is an integral Weil divisor, then \eqref{eqn:nvol_Cartier_cone} is true. So it is not difficult to see that the discreteness of $\{\nvol(o,X,D)\}$ away from $0$ follows from an affirmative answer to the following question (see also \cite[Example 4.4]{LX19}).     Indeed, Conjecture \ref{conj:ACC for nvol} for general klt singularities is not far from the case of  orbifold cone singularities if we assume the Stable Degeneration Conjecture \cite[Conjecture 1.2]{LX18}.
	 
	\begin{question}\label{que:weil-index}
	    Let $n$ be a positive integer and $I\subset \bQ\cap [0,1]$ a finite set. Let $(V,\Delta)$ be an $n$-dimensional $K$-semistable log Fano pair, such that all the coefficients of $\Delta$ belong to $I$. Consider the Fano-Weil index $q(X,\Delta)$ of $(X,\Delta)$ defined as 
	    \[
	        q(X,\Delta)\coloneqq \max\{q\in\bQ_{>0}\mid \textrm{there exists an integral Weil divisor } L\sim_\bQ q^{-1}(-K_V-\Delta)\},
        \]
	    then does there exist a positive real number $M$ depending only on $n$ and $I$ such that $q(X,\Delta)\le M$?
    \end{question}

	In view of Example \ref{eg:Cartier cone sing}, we also recall the following folklore question.
	
	\begin{question}\label{q:bdd alpha}
		Let $n$ be a positive integer. For any $n$-dimensional klt singularity $x\in(X,\Delta)$, is there a sequence of Koll\'ar components $\{S_k\}$ of $x\in(X,\Delta)$ with 
		$\lim\limits_{k\to+\infty}\nvol_{(X,\Delta),x}(\ord_{S_k})=\nvol(x,X,\Delta)$, such that
		\[
		    \limsup_{k\to\infty}\alpha(S_k,\Delta_{S_k})\geq \frac{1}{n},\quad
		\text{ or }	\quad	    \limsup_{k\to\infty}\delta(S_k,\Delta_{S_k})\geq 1\textrm{?}
		\]
	\end{question}

		\appendix
	
	\section{An elementary approach toward the case of surface singularities}
	
	We include in this appendix an elementary proof for Conjecture \ref{conj:ACC for nvol} in the $2$-dimensional case and an estimate of $c_{\min}(2)$ in Question \ref{conj: nv bdd below implies lct bdd below}. \footnote{This appendix will not be contained in the published version of this article.}
	
	We will need the following explicit criterion for log K-semistability when $X=\bP^1$.
	
	\begin{proposition}[{\cite[Theorem 3]{Li15}, \cite[Example 6.6]{Fuj19a}}]\label{prop:K-ss condition}
		Consider the log Fano pair $(X,\Delta)=(\bP^1,\sum_{i=1}^{m}d_ip_i)$, where $1\geq d_1\geq\ldots\geq d_m>0$ are rational numbers and $p_i$ are distinct closed points. Then $(X,\Delta)$ is $K$-semistable if and only if $\sum_{i=2}^md_i\geq d_1$.
	\end{proposition}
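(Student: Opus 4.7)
The plan is to apply the valuative criterion for K-semistability. By Theorem \ref{thm: kss equiv delta larger than or equal to 1} together with \cite[Theorem D]{Blu18b}, the log Fano pair $(X,\Delta)$ is K-semistable if and only if $A_{(X,\Delta)}(E) \geq S_{(X,\Delta)}(E)$ for every divisorial valuation $\ord_E$ on $K(X)$, where
\[
S_{(X,\Delta)}(E) \;=\; \frac{1}{\vol(-(K_X+\Delta))}\int_0^{\infty}\vol\!\left(-(K_X+\Delta)-t E\right)\,dt.
\]
Since $X = \bP^1$ is a smooth curve, every divisorial valuation of $K(X)/\bk$ has the form $\lambda\cdot\ord_p$ for a closed point $p\in \bP^1$ and $\lambda > 0$; both $A$ and $S$ are homogeneous of degree one in the valuation, so it suffices to test the inequality $A_{(X,\Delta)}(\ord_p) \geq S_{(X,\Delta)}(\ord_p)$ for every closed point $p\in \bP^1$.

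The log discrepancy is read off from Definition \ref{defn: positivity}: one has $A_{(X,\Delta)}(\ord_{p_i}) = 1 - d_i$ for each $i$, and $A_{(X,\Delta)}(\ord_p) = 1$ for any $p\notin\{p_1,\dots,p_m\}$. For the $S$-invariant, set $L := \deg(-(K_X+\Delta)) = 2 - \sum_{i=1}^m d_i$, which is positive by the log Fano hypothesis. On the curve $\bP^1$, for any closed point $p$ the divisor $-(K_X+\Delta) - t\cdot p$ has degree $L - t$ and is therefore of positive volume exactly when $L > t$; hence $\vol(-(K_X+\Delta)-t\cdot p) = \max\{L - t,\,0\}$, and
\[
S_{(X,\Delta)}(\ord_p) \;=\; \frac{1}{L}\int_0^{L}(L-t)\,dt \;=\; \frac{L}{2}.
\]

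Combining these, the K-semistability inequality at a point $p\notin\Supp(\Delta)$ reduces to $1 \geq L/2$, equivalently $\tfrac{1}{2}\sum_j d_j \geq 0$, which is automatic; at $p = p_i$ it becomes $1 - d_i \geq L/2$, equivalently $d_i \leq \sum_{j\neq i} d_j$. Because $d_1$ is the largest of the $d_i$, this entire family of inequalities collapses to the single condition $d_1 \leq \sum_{i=2}^m d_i$, giving the claimed criterion. There is no substantive obstacle in this argument: the only mild technical points are the homogeneity reduction and the observation that rationality of the $d_i$ ensures $(X,\Delta)$ is a $\bQ$-pair, so Theorem \ref{thm: kss equiv delta larger than or equal to 1} applies directly.
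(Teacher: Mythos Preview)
Your argument is correct. The paper itself does not supply a proof of this proposition; it simply cites \cite[Theorem 3]{Li15} and \cite[Example 6.6]{Fuj19} as references. Your approach via the valuative criterion and the $\delta$-invariant is precisely the route taken in the second of these references (Fujita's computation in \cite[Example 6.6]{Fuj19}), so there is nothing to compare beyond noting that you have reproduced that argument cleanly. The first reference, \cite{Li15}, proceeds instead through a direct analysis of test configurations and the log Futaki invariant, which is more hands-on but arrives at the same numerical condition; your method is the more streamlined of the two once one has Theorem~\ref{thm: kss equiv delta larger than or equal to 1} available.
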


	The following result seems to be well known for experts, for reader's convenience we include its proof here.
	
	\begin{proposition}\label{prop:weighted blow-up}
		Let $x\in (X,\Delta)$ be a klt singularity, where $X$ is a smooth surface. Then any Koll\'ar component of $x\in (X,\Delta)$ can be obtained by a weighted blow-up under suitable coordinates.
	\end{proposition}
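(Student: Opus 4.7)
The plan is to identify $\mu: Y \to X$ with a weighted blow-up by analyzing its minimal resolution. Note that the boundary $\Delta$ plays no essential role here: the plt condition on $(Y, S + \mu_*^{-1}\Delta)$ forces $(Y, S)$ itself to be plt, and whether $\mu$ is a weighted blow-up in suitable coordinates depends only on the birational morphism $Y \to X$. Let $\pi: Z \to Y$ denote the minimal resolution and set $f := \mu \circ \pi: Z \to X$.

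The first step is to show that the total exceptional divisor of $f$ is a chain of smooth rational curves. Since $(Y, S)$ is plt with $S \cong \bP^1$ (as $S$ carries a klt log Fano structure from the different), the pair $(S, \mathrm{Diff}_S 0)$ is a klt log Fano curve on $\bP^1$, so $\mathrm{Diff}_S 0$ is supported at most at two points of $S$ with coefficients of the standard form $1 - 1/n_i$. Equivalently, $Y$ has at most two cyclic quotient singularities along $S$, at these two points. Resolving them by $\pi$ attaches a (possibly empty) Hirzebruch-Jung string to the proper transform $\tilde S$ at one or both of its endpoints; consequently, the total $f$-exceptional divisor over $x$ is a linear chain of smooth rational curves with $\tilde S$ as a distinguished middle component.

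Next, I would invoke a toric characterization of such chains: since $X$ is smooth, $f$ factors as an iterated blow-up of points, and when such a sequence produces a chain of exceptional rational curves over $x$, there exist local coordinates $(u, v)$ at $x$ and coprime positive integers $a, b$ such that $f$ coincides with the minimal resolution of the $(a, b)$-weighted blow-up of $X$ at $x$ in these coordinates, with the pair $(a, b)$ recovered from the Hirzebruch-Jung continued fraction expansion determined by the self-intersection numbers of the chain. Contracting the two Hirzebruch-Jung strings inside this resolution then recovers $\mu: Y \to X$ as precisely the desired $(a, b)$-weighted blow-up, completing the proof.

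The main obstacle will be making the toric characterization in the previous paragraph rigorous. The inductive content is the following: for any sequence of point blow-ups of a smooth surface over $x$ whose total exceptional divisor is a chain of rational curves, the first blow-up must be the standard blow-up of $x$, and each subsequent blow-up must occur either at the intersection of two adjacent components of the current chain or at a suitable free point on a chain endpoint, in such a way that the entire sequence can be realized as a sequence of blow-ups at torus-fixed points in coordinates $(u, v)$ adapted to the initial blow-up. Formalizing this induction and matching the resulting combinatorial data with a continued-fraction expansion is the combinatorial core of the argument; it amounts to the Hirzebruch-Jung theory for cyclic quotient resolutions, applied in reverse to reconstruct the weighted blow-up from its resolution chain.
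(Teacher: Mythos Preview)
Your proposal is correct and would yield a valid proof, but it takes a substantially different route from the paper. The paper's argument is a five-line reduction to a result of Kawakita \cite[Theorem~1]{Kaw17}, which says that any divisor computing the minimal log discrepancy of some pair over a smooth surface point arises from a weighted blow-up. The only work is to produce such a pair: since $-(K_Y+\mu_*^{-1}\Delta+S)\sim_{X,\bR}-A_{(X,\Delta)}(S)\cdot S$ is $\mu$-ample, one can choose an effective $G\ge 0$ on $Y$ with $(Y,\mu_*^{-1}\Delta+S+G)$ lc and $K_Y+\mu_*^{-1}\Delta+S+G\sim_{\bR,X}0$; then $S$ computes $\mld(x,X,\Delta+\mu_*G)$, and Kawakita's theorem finishes the job.

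Your approach is instead self-contained. You analyze the minimal resolution of $Y$, use the plt condition to see that the exceptional configuration over $x$ is a linear chain of rational $(-n)$-curves, and then argue that any such chain over a smooth surface point can be made toric in suitable local coordinates, whence the contraction to $Y$ is a weighted blow-up. The inductive coordinate-adjustment you sketch in the last paragraph (handling blow-ups at nodes versus at free points on endpoint curves via automorphisms of the form $(x,y)\mapsto(x,y+cx^k)$) is correct and can be carried out, but it is essentially a reproof of the surface case of Kawakita's theorem. So your argument trades a short citation for a longer but more elementary combinatorial verification; both are valid, and yours has the merit of not relying on an external black box, at the cost of the Hirzebruch--Jung bookkeeping you flag as the main obstacle.
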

	
	\begin{proof}
		Suppose that $\mu:(Y,S)\to(X,x)$ is a Koll\'ar component of $(X,\Delta)$. By \cite[Theorem 1]{Kaw17}, it suffices to prove that $S$ computes the mld of some log canonical pair $(X,\Delta')$.
		
		By assumption, $-(K_Y+\mu^{-1}_*\Delta+S) \sim_{X,\bR} -A_{(X,\Delta)}(S)\cdot S$ is $\mu$-ample. There exists an $\Rr$-Cartier $\Rr$-divisor $G\ge0$, such that $(Y,\mu^{-1}_*\Delta+S+G)$ is lc, and $K_{Y}+\mu^{-1}_*\Delta+S+G\sim_{\Rr,X}0$. Then $S$ computes the mld of $(X,\Delta'\coloneqq \Delta+\mu_*G)$. 
	\end{proof}
	
	For simplicity, we only state the results for $o\in\bA^2$ with a $\bQ$ boundary. 
	First, we prove the following version of Theorem \ref{thm:surface ACC}.
	
	\begin{theorem}\label{thm:ACC for Q coeff}
		Let $I\subset\bQ\cap [0,1]$ be a set which satisfies the DCC, then the set
		\[
		\{\nvol(o,\bA^2,\Delta=\sum_{i=1}^ma_i\Delta_i) \mid o\in(\bA^2,\Delta) \text{ is klt, } a_i\in I \text{ for any } i\}
		\] 
		satisfies the ACC, where $o\in \bA^2$ is the origin. 
	\end{theorem}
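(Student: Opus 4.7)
The strategy is to reduce the problem to an explicit analysis of weighted blow-ups. By Proposition \ref{prop:weighted blow-up}, every Koll\'ar component over $o\in(\bA^2,\Delta)$ arises as a weighted blow-up with coprime weights $(a,b)$ in some local analytic coordinates $(x,y)$. Combined with Theorem \ref{thm:lx-kc-minimizing}, this yields the formula
\[
\hvol(o,\bA^2,\Delta)=\inf_{(a,b),\,(x,y)}\frac{\bigl(a+b-\sum_{i}a_{i}v_{a,b}(f_{i})\bigr)^{2}}{ab},
\]
where $\Delta_{i}=\mathrm{div}(f_{i})$ and $v_{a,b}$ is the monomial valuation of weight $(a,b)$ in the chosen coordinates. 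By Theorem \ref{thm:minimizer is K-ss} and the fact that the minimizing valuation is quasi-monomial (Theorem \ref{thm:quasi-monomial}), the infimum above is either attained at a K-semistable Koll\'ar component or is approached by a sequence of such Koll\'ar components. By adjunction, the induced log Fano pair on $S\cong\bP^{1}$ is
\[
\Bigl(\bP^{1},\;\bigl(1-\tfrac{1}{a}\bigr)[1{:}0]+\bigl(1-\tfrac{1}{b}\bigr)[0{:}1]+\sum_{i}a_{i}\widetilde{\Delta}_{i}|_{S}\Bigr),
\]
and K-semistability is the explicit inequality of Proposition \ref{prop:K-ss condition}.

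Suppose for contradiction that there exists a strictly increasing sequence of local volumes $V_{j}=\hvol(o,\bA^{2},\Delta^{(j)})$ with $\Delta^{(j)}=\sum_{i=1}^{m_{j}}a_{i}^{(j)}\Delta_{i}^{(j)}$ and $a_{i}^{(j)}\in I$. For each $j$, fix a K-semistable Koll\'ar component $S_{j}$ (coming from a weighted blow-up with weights $(a_{j},b_{j})$, $a_{j}\geq b_{j}\geq 1$, in suitable coordinates) whose normalized volume differs from $V_{j}$ by at most $\epsilon_{j}\to 0$. Since $I$ is DCC we may assume $\eta:=\min(I\setminus\{0\})>0$, hence $m_{j}\leq n/\eta$ is bounded by Proposition \ref{prop: order of a klt boundary}; after passing to a subsequence we may assume $m_{j}=m$ is constant, and $a_{i}^{(j)}$ is monotone non-decreasing in $j$ for every $i$, converging to some $a_{i}^{\infty}\in(0,1]$. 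Let $\mu_{i}^{(j)}:=v_{a_{j},b_{j}}(f_{i}^{(j)})\in\bZ_{>0}$; klt-ness forces $a_{j}+b_{j}-\sum_{i}a_{i}^{(j)}\mu_{i}^{(j)}>0$, so $\sum_{i}\mu_{i}^{(j)}\leq (a_{j}+b_{j})/\eta$.

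The argument then splits into two cases. If $(a_{j},b_{j})$ is bounded, then after passing to a subsequence $(a_{j},b_{j})=(a,b)$ is constant and each $\mu_{i}^{(j)}=\mu_{i}$ is a fixed integer (boundedness of $\mu_{i}^{(j)}$ follows from the estimate above). Then
\[
V_{j}+\epsilon_{j}=\frac{\bigl(a+b-\sum_{i}a_{i}^{(j)}\mu_{i}\bigr)^{2}}{ab}
\]
is non-increasing in $j$, contradicting the strict monotonicity of $V_{j}$. If instead $a_{j}\to+\infty$, the bound $V_{j}\leq 4$ from Theorem \ref{thm:nvol is bounded by n^n} gives $\sum_{i}a_{i}^{(j)}\mu_{i}^{(j)}\geq a_{j}+b_{j}-2\sqrt{a_{j}b_{j}}$. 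Combined with $a_{i}^{(j)}\leq 1$ and $m\leq n/\eta$, this forces the weighted orders $\mu_{i}^{(j)}$ to grow at least linearly in $a_{j}+b_{j}$. One then exploits the K-semistability inequality on $\bP^{1}$, noting that the orbifold coefficient $1-1/a_{j}$ tends to $1$, to rule out the existence of an increasing sequence: essentially, either the leading part of some $f_{i}^{(j)}$ forces the strict transform $\widetilde{\Delta}_{i}|_{S_{j}}$ to concentrate mass in a controlled way at $[1{:}0]$ or $[0{:}1]$, in which case K-semistability combined with the ACC for the configuration of points with $I$-coefficients (an elementary finiteness on $\bP^{1}$) gives ACC for $V_{j}$; or one can find a weighted blow-up with smaller weights computing a no larger volume, reducing to the bounded case.

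The main obstacle is the second case: making the trade-off between large weights $(a_{j},b_{j})$ and accumulating mass along $S_{j}\cong\bP^{1}$ quantitative enough to extract a subsequence along which $V_{j}$ is non-increasing. The cleanest way to accomplish this is to use K-semistability on $\bP^{1}$ to show that, up to passing to a subsequence and modifying $(a_{j},b_{j})$ by Farey-like reductions, one may replace $S_{j}$ by a Koll\'ar component with bounded weights whose normalized volume is at most $V_{j}$; once this reduction is achieved, the argument of Case 1 applies and yields the contradiction. This Farey reduction is the step requiring the most care, as one must verify that the replacement still yields a Koll\'ar component of $(\bA^{2},\Delta^{(j)})$ and not merely of $\bA^{2}$.
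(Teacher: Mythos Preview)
Your overall strategy matches the paper's: reduce to weighted blow-ups via Proposition \ref{prop:weighted blow-up}, use that the minimizing Koll\'ar component is K-semistable, and split on whether the weights are bounded. Two remarks on the bounded case. First, in dimension $2$ the minimizer is always divisorial (the paper cites \cite[Theorem~4.14]{LLX18}), so you may take $\epsilon_j=0$ and avoid the approximation entirely; as written, your equality ``$V_j+\epsilon_j=\dots$'' is not quite right. Second, with that fix your Case~1 is essentially the paper's sub-case a1) and is correct.

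The genuine gap is your unbounded-weights case. You propose a ``Farey-like reduction'' to bounded weights but give no argument, and in fact no such reduction is needed. The paper instead makes a further case split that you miss: whether the strict transform of $\Delta$ meets $S_n$ away from the two orbifold points $p_1,p_2$ of the weighted blow-up. If it does not (Case~b), K-semistability on $\bP^1$ with only two marked points forces $d_1=d_2$, and a direct computation gives $\hvol=4(1-\sum_i r_{1,i}a_i)(1-\sum_i r_{2,i}a_i)$, which cannot be strictly increasing by the DCC. If it does (Case~a), then $\sum_{j>2}d_j>\epsilon$, and now one splits on the weights: if $\min(m_1,m_2)\le M$ but $\max(m_1,m_2)\to\infty$ (a2), the K-semistability inequality $\sum_{j\ne 2}d_j\ge d_2$ yields $2-\sum d_j\le 2/m_2^{(n)}$, hence $\hvol\le 4M/m_2^{(n)}\to 0$, contradicting the lower bound; if both weights tend to infinity (a3), then $d_1,d_2\to 1$ while $\sum_{j>2}d_j>\epsilon$, so $\sum d_j>2$ for $n\gg 0$, contradicting the log Fano condition. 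These short direct estimates completely replace your proposed Farey reduction, which would in any case require checking that the reduced weighted blow-up is still a Koll\'ar component of the \emph{pair}---a nontrivial point you flag but do not address.
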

	
	\begin{proof}		
		Suppose to the contrary that there exists a sequence of boundaries $\Delta\supn=\sum_{i=1}^{m_n}a_i\supn \Delta_i\supn$ on $\bA^2$, where $a_i^{(n)}\in I$ for any $i$ and $n$, such that $\nvol(o,\bA^2,\Delta\supn)$ is strictly increasing. In particular we may assume there exists some $\epsilon>0$ such that \begin{equation}\label{eqn:nvol bdd}
			\nvol(o,\bA^2,\Delta\supn)>\epsilon.
		\end{equation}
		Moreover since $I$ satisfies DCC we may also assume $
			a_i^{(n)}>\epsilon$.
%
		By \cite[Theorme 4.14]{LLX18}, for every $n$, there is a plt blow-up $\mu_n:(Y_n,S_n)\to (\bA^2,o)$ of $o\in(\bA^2,\Delta\supn)$ such that $\nvol_{o,(\bA^2,\Delta\supn)}(\ord_{S_n})=\nvol(o,\bA^2,\Delta^{(n)})$. 
		By Proposition \ref{prop:weighted blow-up}, $\mu_n$ is obtained by a weighted blow-up of $o$ with weight $(m_1\supn,m_2\supn)$, where $m_1\supn$ and $m_2\supn$ are coprime. In particular $S_n\cong \bP^1$ for every $n$.
		
		Define $\Delta_{S_n}\coloneqq \Diff_{S_n}(\Delta\supn)$, then by Theorem \ref{thm:minimizer is K-ss} $(S_n,\Delta_{S_n})$ is a $K$-semistable log Fano pair. By adjunction we have
		\begin{equation}
			\begin{aligned}
				& (K_{Y_n}+(\mu_n^{-1})_*\Delta\supn+S_n)|_{S_n}=K_{S_n}+\Delta_{S_n}\\
				&=K_{\bP^1}+
				\frac{m_1\supn-1+\sum_i r_{1,i}\supn a_i\supn}{m_1\supn}p_1\supn+
				\frac{m_2\supn-1+\sum_i r_{2,i}\supn a_i\supn}{m_2\supn}p_2\supn +\sum_{j>2}\sum_i r_{j,i}\supn a_i\supn p_j\supn,
			\end{aligned}
		\end{equation}
		where $r_{j,i}$ are nonnegative integers. 
		Let $r_i\supn\coloneqq \sum_{j>2}r_{j,i}\supn$ for $j>2$. Denote $\Delta_{S_n}=\sum_j d_j p_j\supn$ with distinct $p_j\supn\in S_n$, then $d_1=\frac{m_1\supn-1+\sum_i r_{1,i}\supn a_i\supn}{m_1\supn}$, $d_2=\frac{m_2\supn-1+\sum_i r_{2,i}\supn a_i\supn}{m_2\supn}$ and $\sum r_i\supn a_i\supn=\sum_{j>2}d_j$. Note that by \cite[Lemma 2.7]{LX16},
		\begin{equation}\label{eqn:formula for nvol}
			\begin{aligned}
				\nvol(o,\bA^2,\Delta\supn)=\nvol(\ord_{S_n})=
				m_1\supn m_2\supn\cdot \deg(-(K_{S_n}+\Delta_{S_n}))^2
				= m_1\supn m_2\supn (2-\sum d_i\supn)^2.
			\end{aligned}
		\end{equation} 
		
		We have the following cases.
		
		\noindent	{\bf Case a)}. $\sum r_i\supn a_i\supn\neq 0$ for infinitey many $n$. Then by the assumption that $a_i^{(n)}>\epsilon$, after passing to a sbusequence, we may assume that for all $n$,
		\begin{equation}\label{eqn:correction bdd}
			\sum r_i\supn a_i\supn>\epsilon.
		\end{equation}
		
		{\bf a1)} If there exists $M>0$ such that $m_1\supn m_2\supn \leq M$, in particular $\{m_1\supn m_2\supn\}$ is a finite set, so after passing to a subsequence we may assume that it is constant. Then by \eqref{eqn:formula for nvol} and our assumption, $\{2-\sum d_j\supn\}$ is strictly increasing. But by \cite[Proposition 3.4.1]{HMX14}, 
		$\{\sum d_j\supn\}$ satisfies the DCC, which is a contradiction.
		
		{\bf a2)} If there exists $M>0$ such that $m_1\supn\leq M$ but $m_2\supn\to+\infty$ as $n\to+\infty$. Since $(S_n,\Delta_{S_n})$ is $K$-semistable, by Proposition \ref{prop:K-ss condition} we have $
		\sum_{j\neq 2} d_j\supn\geq
		d_2\supn$, 
		whence 
		\begin{equation*}
			\frac{1-\sum_i r_{1,i}\supn a_i\supn}{m_1\supn}-
			\sum_i r_i\supn a_i\supn\leq
			\frac{1-\sum_i r_{2,i}\supn a_i\supn}{m_2\supn}\leq \frac{1}{m_2\supn}.
		\end{equation*}
		Thus by \eqref{eqn:formula for nvol} 
		\begin{equation*}
			\nvol(o,\bA^2,D\supn)=\nvol(\ord_{S_n})
			\leq Mm_2\supn\cdot\big(\frac{2}{m_2\supn}\big)^2
			=\frac{4M}{m_2\supn}\to 0 \text{ as } n\to+\infty,
		\end{equation*}
		which contradicts \eqref{eqn:nvol bdd}.
		
		{\bf a3)} If $m_1\supn\to+\infty$, $m_2\supn\to+\infty$ as $n\to+\infty$, then by \eqref{eqn:correction bdd} we get $\sum d_j\supn>2$ for $n\gg 0$, which contradicts the condition that $(S_n,\Delta_{S_n})$ is log Fano.
		
		\noindent	{\bf Case b)}. $\sum r_i\supn a_i\supn=0$ for $n\gg 0$. In this case, since $(S_n,\Delta_{S_n})$ is $K$-semistable, by Proposition \ref{prop:K-ss condition} we must have $d_1=d_2$, that is
		\begin{equation}\label{eqn:ss condition for 2 pts}
			\frac{m_1\supn-1+\sum_i r_{1,i}\supn a_i\supn}{m_1\supn}=
			\frac{m_2\supn-1+\sum_i r_{2,i}\supn a_i\supn}{m_2\supn}
		\end{equation}
		It follows from \eqref{eqn:formula for nvol} that
		\begin{equation}\label{eqn:nvol for 2 pts}
			\nvol(o,\bA^2,\Delta\supn)=4(1-\sum r_{1,i}\supn a_i\supn)(1-\sum r_{2,i}\supn a_i\supn),
		\end{equation}
		which cannot be strictly increasing by \cite[Proposition 3.4.1]{HMX14} again. This contradiction finishes the proof.
	\end{proof}
	
	\begin{remark}
	    One can also prove Theorem \ref{thm:existence of delta-plt blow-up} for $o\in(\bA^2,\Delta)$ using essentially the same calculation as above, we leave this to the reader.
	\end{remark}
	
Note that we can get $c_{\min}(2)\le 64$ by the proof of Theorem \ref{thm:estimate on lct}. Using the following result, one can improve this estimate with $c_{\min}(2)\le 8$, as stated in Remark \ref{rmk:sharp_constant}.
	
	\begin{theorem}\label{thm:surface lct}
		Let $o\in(\bA^2,\Delta)$ be a klt singularity, where $\Delta\ge 0$ is a $\bQ$-divisor. 
		Then we have
		\begin{equation*}
			8\cdot \lct(\bA^2,\Delta;\Delta)\ge \nvol(o,\bA^2,\Delta).
		\end{equation*}
	\end{theorem}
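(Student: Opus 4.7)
The plan is to specialize the argument of Proposition \ref{lem:estimate on lct Cartier} to the smooth germ $o\in\bA^2$, using that $v=\ord_o$ is the minimizer of $\nvol_{\bA^2,o}$, that $e(\fm_{\bA^2,o})=1$, and that the Izumi-type bound (which was the source of the extra factor $C_2\cdot\ord_x D$ in that proposition) can be avoided because every quantity involved is already completely explicit.

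Since $\bA^2$ is smooth, I would write $\Delta=\tfrac{1}{N}D$ with $D:=N\Delta$ an effective Cartier divisor, and set $m:=\ord_o\Delta$, $d:=\ord_o D=Nm$ and $\gamma:=\lct(\bA^2,\Delta;\Delta)$, so that $\lct(\bA^2,\Delta;D)=\gamma/N$. The case $m=0$ is trivial since then $\gamma=+\infty$, so assume $m>0$. Klt-ness at $o$ gives $A_{(\bA^2,\Delta)}(\ord_o)=2-m>0$, while lc-ness of $(\bA^2,(1+\gamma)\Delta)$ forces $m(1+\gamma)\le 2$.

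The core computation uses the $\fm_{\bA^2,o}$-primary ideals $\fa_{s,t}:=(f^s)+\fm_{\bA^2,o}^{t}$, where $f\in R:=\cO_{\bA^2,o}$ is a local equation of $D$. Subadditivity of log canonical thresholds (Proposition \ref{prop:sub-additivity for lct}) gives
\[
\lct(\bA^2,\Delta;\fa_{s,t})\le \tfrac{\gamma}{Ns}+\tfrac{2-m}{t},
\]
while a direct length calculation together with Lech's inequality yields
\[
e(\fa_{s,t})\le 2\ell(R/\fa_{s,t})=ds(2t-ds+1),
\]
valid for $t\ge ds$. Choosing $s:=\lfloor \gamma t/(N(2-m))\rfloor$ and letting $t\to\infty$, the asymptotic constraint $t\ge ds$ is exactly $m(1+\gamma)\le 2$, which holds by the definition of $\gamma$. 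Lemma \ref{lem:nv=lcte} then gives
\[
\nvol(o,\bA^2,\Delta)\le \liminf_{t\to\infty}e(\fa_{s,t})\cdot\lct(\bA^2,\Delta;\fa_{s,t})^{2}=4m\gamma\bigl(2(2-m)-m\gamma\bigr)\le 8m(2-m)\gamma,
\]
and AM-GM yields $m(2-m)\le 1$ for $m\in[0,2]$, so $\nvol(o,\bA^2,\Delta)\le 8\gamma$. The only delicate point is the asymptotic evaluation of the above limit (and the verification that $s$ respects the constraint $t\ge ds$, which is where $\gamma$ enters crucially); both steps are routine given the explicit shape of the valuation ideals of $\ord_o$, and I do not foresee any serious obstacle.
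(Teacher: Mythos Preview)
Your argument is correct and genuinely different from the paper's own proof. The paper proceeds via the minimizing Koll\'ar component: by Proposition~\ref{prop:weighted blow-up} this is a weighted blow-up with exceptional $S\cong\bP^1$, and one writes out the different $\Delta_S$ explicitly using adjunction, invokes the K-semistability criterion of Proposition~\ref{prop:K-ss condition} to control the coefficients $d_j$, and then does a case analysis (on whether $\Supp\Delta_S$ has points away from the two orbifold points, and on the sizes of the weights $m_1,m_2$) to show $(\bA^2,(1+\tfrac{\epsilon}{8})\Delta)$ is lc. Your approach bypasses all of this by specialising Proposition~\ref{lem:estimate on lct Cartier} with $v=\ord_o$: the Izumi factor $C_2$ disappears because $v(f)=\ord_o(f)$ tautologically, the volume and multiplicity are both $1$, and the exact colength formula $2\ell(R/\fa_{s,t})=ds(2t-ds+1)$ replaces the cruder bound $\vol(v)\cdot nv(f)st^{n-1}$. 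The limit then collapses to $4m\gamma(2(2-m)-m\gamma)\le 8m(2-m)\gamma\le 8\gamma$, exactly as you claim.

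What each route buys: your argument is short, coordinate-free beyond the choice of $f$, and avoids the classification of Koll\'ar components on smooth surfaces and the ensuing case analysis entirely. The paper's route is longer but yields structural information (the explicit form of $\Delta_S$ and which coefficient bounds come from K-semistability) and in the two-point case actually gives the better constant $4$. Your intermediate bound $\hvol\le 4m\gamma\bigl(2(2-m)-m\gamma\bigr)$ is also slightly sharper than $8m(2-m)\gamma$, which is a small bonus. The constraint check $t\ge ds$ does reduce to $m(1+\gamma)\le 2$ as you say, and holds with the floor since $s\le \gamma t/(N(2-m))$ gives $ds\le m\gamma t/(2-m)\le t$; there is no hidden obstacle.
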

	
	\begin{proof}
		We may assume that $\nvol(o,\bA^2,\Delta)=\epsilon< 4$. 
		By \cite[Theorem 4.14]{LLX18} and Theorem \ref{thm:minimizer is K-ss}, 
		we may assume that there is a plt blow-up $\mu:(Y,S)\to(\bA^2,o)$ of $o\in(\bA^2,\Delta)$ such that $(S,\Delta_S\coloneqq\Diff_S \Delta)$ is a $K$-semistable log Fano pair.
		
		By Proposition \ref{prop:weighted blow-up} we know that $\Delta_S$ is of the form
		\begin{equation}\label{eqn:expression for Delta_S}
			\Delta_S=\sum_{j=1}^l\frac{m_j-1+\sum_i r_{j,i}a_i}{m_j}p_j=\sum_j d_jp_j,
		\end{equation}
		where $d_j\neq 0$, $m_1$ and $m_2$ are coprime, and $m_j=1$ for $j>2$. 
		
		We first assume that $l\geq 3$. As $(S,\Delta_S)$ is klt, we know $\sum_ir_{j,i}a_i<1$ for any $j$. Set $d_{j_0}\coloneqq\max\{d_j\}$, 
		then by Proposition \ref{prop:K-ss condition} we have $d_{j_0}\leq \sum_{j\neq j_0}d_j$, so for any $1\leq j\leq l$,
		\begin{equation*}
		    \frac{1-\sum_i r_{j,i}a_i}{m_j}=1-d_j\geq
			1-d_{j_0}
			\geq 1-\sum_{j\neq j_0}d_j,
		\end{equation*}
		so by \eqref{eqn:formula for nvol} we get
		\begin{equation}\label{eqn:first control for coefficients}
			4m_1m_2\left(\frac{1-\sum_i r_{j,i}a_i}{m_j}\right)^2\geq
			m_1m_2(2-\sum_j d_j)^2=\nvol(o,\bA^2,\Delta)= \epsilon
		\end{equation}
		Denote by $m\coloneqq\min\{m_1,m_2\}$ and $M\coloneqq\max\{m_1,m_2\}$.
		Then by \eqref{eqn:first control for coefficients} we have $\frac{m}{M}\geq\frac{\epsilon}{4}$, so
		\begin{equation}\label{eqn:control for first coefficients}
			1-\sum_ir_{j,i}a_i\geq
			\frac{\epsilon}{4} \text{ for } j\leq 2,
		\end{equation}
		and
		\begin{equation}\label{eqn:log Fano}
			2-\sum_j d_j= \sqrt{\frac{\epsilon}{mM}}
			\geq \frac{\epsilon}{2m}.
		\end{equation}
		To estimate $d_j$ for $j>2$, we treat two subcases seperately. If $m\geq 3$, then from \eqref{eqn:log Fano} we can see $\sum_{j>2}d_j\leq 2-d_1-d_2\leq \frac{2}{m}\leq \frac{2}{3}$, so
		\begin{equation}\label{eqn:control for last coefficients_large m}
			1-d_j\geq \frac{1}{3}.
		\end{equation}
		If $m\leq 2$, then by \eqref{eqn:first control for coefficients} we have 
		$	(1-d_j)^2= \frac{\epsilon}{4mM}\geq\frac{\epsilon}{4m}\cdot\frac{\epsilon}{4m}\geq\frac{\epsilon^2}{64}$,
		so 
		\begin{equation}\label{eqn:control for last coefficients_small m}
			1-d_j\geq\frac{\epsilon}{8}.
		\end{equation}
		
		Now set $\Delta'\coloneqq (1+\frac{\epsilon}{8})\Delta$, then combining \eqref{eqn:control for first coefficients}, \eqref{eqn:control for last coefficients_large m} and \eqref{eqn:control for last coefficients_small m}, we know that the pair $(S,\Delta_S'=\sum d'_jp_j)$ is lc, where $\Delta_S'\coloneqq\Diff_S \Delta'$. So by inversion of adjunction $(Y,S+\mu^{-1}\Delta')$ is lc.
		
		On the other hand, we have
		\begin{equation*}
			\begin{aligned}
				2-\sum_jd'_j=&\frac{1-(1+\epsilon/8)\sum_i r_{1,i}a_i}{m_1}+\frac{1-(1+\epsilon/8)\sum_i r_{2,i}a_i}{m_2}-\sum_{j>2}(1+\epsilon/8)d'_j\\
				\geq&2-\sum_jd_j-\frac{\epsilon}{8}(\frac{1}{m_1}+\frac{1}{m_2}+\sum_{j>2}d_j)
				\geq \frac{\epsilon}{2m}-\frac{\epsilon}{8}\cdot\frac{4}{m}=0,
			\end{aligned}
		\end{equation*}
		where the second inequality follows from \eqref{eqn:log Fano}. So $A_{(\bA^2,\Delta')}(S)\ge 0$, and we know that $o\in(\bA^2,\Delta')$ is lc. Hence $8\cdot \lct(\bA^2,\Delta;\Delta\ge\nvol(o,\bA^2,\Delta)$.
		
		If $l=2$, then we are in case b) of the proof of Theorem \ref{thm:ACC for Q coeff}. By \eqref{eqn:nvol for 2 pts} we have 
		\begin{equation*}
			\epsilon=\nvol(o,\bA^2,\Delta)
			=\frac{4m_2(1-\sum r_{1,i} a_i)^2}{m_1}
			=\frac{4m_1(1-\sum r_{2,i} a_i)^2}{m_2},
		\end{equation*}
		so we get $1-\sum_i r_{j,i}a_i>\frac{\epsilon}{4}$ for $j=1,2$. Moreover, if we set $\Delta'=(1+\frac{\epsilon}{4})\Delta$ and $\Delta'_S=\Diff_S \Delta'$, then we have 
		\begin{equation*}
			\begin{aligned}
				A_{(\bA^2,\Delta')}(S)=m_1 m_2 \cdot\deg(-(K_S+\Delta_S'))
				=2(m_2)^2\left(1-(1+\frac{\epsilon}{4})\sum r_{1,i} a_i\right)>0.
			\end{aligned}
		\end{equation*}
		So $(\bA^2,\Delta')$ is klt. Thus in this case we get $4\cdot \lct(\bA^2,\Delta;\Delta)>\nvol(o,\bA^2,\Delta)$.
	\end{proof}

\end{document}